\def \trans{^{\scriptscriptstyle{\intercal}}}
\def \Inf{\displaystyle\inf}
\def \b1{\bf{1}}
\def \tildJ{\tilde{J}}
\def \B{\mathbb{B}}
\def \I{\mathbb{I}}
\def \R{\mathbb{R}}
\def \M{\mathbb{M}}
\def \E{\mathbb{E}}
\def \F{\mathbb{F}}
\def \P{\mathbb{P}}
\def \S{\mathbb{S}}
\def \vec{\text{vec}}
\def \bpi{\boldsymbol{\pi}}
\def \bpi{\boldsymbol{\pi}}
\def \d{\mathrm{d}}
\def\esssup_#1{\underset{#1}{\mathrm{ess\,sup\, }}}
\def\argmin_#1{\underset{#1}{\mathrm{argmin\, }}}
\def\argmax_#1{\underset{#1}{\mathrm{argmax\, }}}
\def\Err{{\textit{Err}}}
\def \Ac{{\cal A}}
\def \Dc{{\cal D}}
\def \Fc{{\cal F}}
\def \Gc{{\cal G}}
\def \Jc{{\cal J}}
\def \Kc{{\cal K}}
\def \Pc{{\cal P}}
\def \Rc{{\cal R}}
\def \Nc{{\cal N}}
\def \Sc{{\cal S}}
\def\Brm{\mathrm{B}}
\def \ep{\hbox{ }\hfill$\Box$}
\def\xib{{\boldsymbol{\xi}}}
\def\bpi{{\boldsymbol \pi}}
\def\Bd{\textbf{Bd}}
\def\diag{\text{diag}}
\def\bC{{\bf C}}
\def\bop{{\boldsymbol p}}
\def\bw{{\bf w}}
\def\xb{{\bf x}}
\def\yb{{\bf y}}
\def\zb{{\bf z}}
\def \Ob{{\bf O}}
\def \Var{\text{Var}}
\def\Gb{{\bf G}}
\def\Qc{\mathcal{Q}}
\def \d{\mathrm{d}} 
\def \h{\mathrm{h}}
\def \dt{\d t}
\def\Itm{\text{Itm}}
\def\b{\mathrm{b}}
\def \Tr{\text{Tr}}
\def\beqs{\begin{eqnarray*}}
\def\enqs{\end{eqnarray*}}
\def\beq{\begin{eqnarray}}
\def\enq{\end{eqnarray}}
\def\*{\times}
\def\Pone{{(1)}}
\def\Ptwo{{(2)}}
\newcommand{\red}[1]{\textcolor{red}{#1}}
\newcommand{\bl}[1]{\textcolor{blue}{#1}}
\def\red#1{{\color{red}#1}}
\def\tr{\text{tr}}
\newtheorem{Theorem}{Theorem}[section]
\newtheorem{Definition}{Definition}[section]
\newtheorem{Proposition}{Proposition}[section]
\newtheorem{Assumption}{Assumption}[section]
\newtheorem{Lemma}{Lemma}[section]
\newtheorem{Remark}{Remark}[section]
\numberwithin{equation}{section}
\title{Full error  analysis of policy gradient learning algorithms 
for exploratory linear quadratic mean-field control problem in continuous time with common noise}
\author{Noufel FRIKHA\footnote{
Universit\'e Paris 1 Panth\'eon-Sorbonne, Centre d'Economie de la Sorbonne (CES), 106 Boulevard de l’H\^opital, 75642 Paris Cedex 13, \sf  noufel.frikha at univ-paris1.fr.
The work of this author has benefited from the support of the Institut Europlace de Finance.
} 
\and Huy\^en PHAM \footnote{LPSM, Universit\'e Paris Cit\'e and Sorbonne University, \sf pham at lpsm.paris. 
The work of this author is  partially supported by the BNP-PAR Chair ``Futures of Quantitative Finance", and the 
Chair Finance \& Sustainable Development / the FiME Lab (Institut Europlace de Finance)} 
\and Xuanye SONG\footnote{LPSM,  Universit\'e Paris Cité, \sf \href{mailto:pham at lpsm.paris}{xsong at lpsm.paris}; Research assistant at Hong Kong Polytechnic University.}}
\date{}
\begin{document}

\maketitle

\begin{abstract}
We consider reinforcement learning (RL) methods for finding optimal policies in linear quadratic (LQ) mean field control (MFC) problems over an infinite horizon in continuous time, with common noise and entropy regularization. We study policy gradient (PG) learning and first demonstrate convergence in a model-based setting by establishing a suitable gradient domination condition.
Next, our main contribution is a comprehensive error analysis, where we prove the global linear convergence and sample complexity of the PG algorithm with two-point gradient estimates in a model-free setting with unknown parameters. In this setting, the parameterized optimal policies are learned from samples of the states and population distribution.
Finally, we provide numerical evidence supporting the convergence of our implemented algorithms.  
\end{abstract}



\vspace{5mm}

\noindent {\bf Key words}:  Mean-field control; 
reinforcement learning; linear-quadratic; two-point gradient estimation; Polyak-Lojasiewicz inequality; 
gradient descent; sample complexity.

\tableofcontents

\section{Introduction}


The last decade has seen significant advances in solving optimal control of dynamical systems in unknown environments using reinforcement learning (RL) methods. The essence of RL is to learn optimal decisions through trial and error, which involves repeatedly trying a policy, observing the state, receiving and evaluating the reward, and subsequently improving the policy. There are two main approaches in RL: (i) {$Q$-learning}, which is based on dynamic programming, and (ii) {policy gradient} (PG), which is based on the parametrization of policies. A key feature of RL is the \textit{exploration} of the unknown environment to broaden the search space, achievable through randomized policies. RL is a very active branch of machine learning. For an overview of this field in the discrete-time setting, we refer to the second edition of the monograph \cite{sutbar18}, and for recent advances in the continuous-time setting, see \cite{PFandAC} and \cite{jiazhou23}. 

\textit{Mean-field control} (MFC), also known as the \textit{McKean-Vlasov} (MKV) control problem, is a class of stochastic control problems that focuses on the study of large population models of interacting agents who cooperate and act for collective welfare according to a central decision-maker (or social planner). This field has attracted growing interest over the last decade, resulting in a substantial body of literature on both its theory and its various applications in economics, finance, population dynamics, social sciences, and herd behavior. For a detailed treatment of the topic, we refer to the seminal two-volume monograph \cite{cardel19a, cardel19b}.

RL for MFC has recently attracted attention in the research community, see e.g. \cite{carlautan19a}, \cite{guetal20}, \cite{angfoulau21}, \cite{frietal23}, \cite{phawar23}. The challenge lies in accurately learning optimal policies and value functions defined on the infinite-dimensional space of probability measures. The mathematical understanding and convergence analysis of these RL algorithms are still in their infancy.

In this paper, we aim to address questions surrounding convergence and sample complexity, focusing on policy gradient methods in RL within the context of infinite horizon linear quadratic (LQ) MFC with common noise for continuous time systems. The LQ problem is indeed the cornerstone of optimal control theory due to its tractability and can be viewed as an approximation of more general nonlinear control problems. To encourage exploration in unknown environments, we employ randomized policies and add entropy regularization, following the approach of recent papers \cite{PFandAC}, \cite{guolixu23}, \cite{RLinContinuousTime}, \cite{giereizha22}, \cite{YZhangLSzpruch2023}.

\paragraph{Our main contributions.}  
Our paper proposes and analyzes convergent PG algorithms to solve infinite horizon exploratory LQ MFC problems in a continuous time setting, with common noise and entropy regularization. 
\begin{itemize}
 \item Our first contribution is to derive the explicit form of the optimal solution using coupled algebraic Riccati equations, thereby generalizing the results in \cite{MartingApproLQ} to include entropy regularization for randomized policies (Theorem \ref{OptRelaxedControlPbl}). Motivated by the explicit form of the optimal randomized policy, we reformulate the LQ MFC problem into a minimization problem over Gaussian policies. The mean of each Gaussian policy being linear in the state and conditional mean with respect to the common noise is parameterised using two matrix-valued coefficients $\Theta=(\theta, \zeta)$. The parameterized cost function is shown to be smooth and satisfy a gradient domination condition, also known as the Polyak-Lojasiewicz inequality (Propositions \ref{GradJ} and \ref{GradDomCond}) following the approach of \cite{fazetal18} and \cite{GCPFLQMF}. Such inequality is known to be crucial to ensure the convergence of PG algorithms in non-convex landscape.
 
\item We then propose and study PG methods in both exact and model-free settings. Our work provides theoretical guarantees of convergence for the gradient descent (GD) algorithms with suitable step sizes (Theorems \ref{ConvExGradDes} and \ref{ThmConvSGD}). In the model-free case, where the exact gradient is unavailable, we adapt the two-point gradient estimation method of \cite{ConvSampGradMethod} to our mean-field setting by relying on samples of discrete-time trajectories and population distributions. For the first time to the best of our knowledge, we provide a comprehensive error analysis accounting respectively for the error of perturbation with respect to the exact expected functional cost, for the horizon truncation, for the time and particle discretizations, for the statistical error and finally for the optimization error from gradient iterations, demonstrating global linear convergence with polynomial computational sample complexities.

\end{itemize}

\vspace{-2mm} 

\paragraph{Related works.} The closest papers related to our work are \cite{carlautan19a} and \cite{GCPFLQMF}. In \cite{carlautan19a}, the authors consider an LQ MFC problem with common noise in a discrete time setting and prove the convergence of PG algorithms for deterministic policies in both model-based and model-free settings. The paper \cite{GCPFLQMF} addresses an infinite-horizon time average LQ MFC control problem in a continuous time setting without an entropy regularizer and demonstrates the convergence of PG with deterministic policies in the exact model-based setting, using a varying step size at each iteration. Finally, it is worth mentioning that our proofs extend the arguments presented in \cite{fazetal18} and \cite{hamxuyan20} for the discrete time setting, covering both finite and infinite horizons, as well as those in \cite{ConvSampGradMethod} and \cite{bumesmes20} for the continuous time setting. In these references, the authors demonstrated convergence results for standard LQ problems.

\vspace{-2mm} 

\paragraph{Outline.} The paper is organized as follows. In Section \ref{sec:problem:formulation}, we formulate the exploratory LQ MFC problem in continuous time with common noise, provide the theoretical optimal policy, and discuss parameterization in the model-free case. In Section \ref{sec:model:based:pg}, we demonstrate the convergence of the model-based gradient descent algorithm using the gradient domination condition. Section \ref{sec:model:free} presents the gradient estimation algorithm employing the population simulator and develops the convergence analysis of the model-free gradient descent algorithm. In Section \ref{sec:numerical:example}, we provide numerical experiments that illustrate our convergence results for both the model-based and model-free algorithms. The proofs of all results are included in the Appendix.

\paragraph{Notations.} 
\begin{itemize}
\item We denote by $x\cdot y$ the scalar product between the two vectors $x$, $y$, and by $M:N$ $=$ 
${\rm tr}(M N\trans)$ the inner product of the two matrices $M,N$ with compatible dimensions, where $N\trans$ is the transpose matrix of $N$. The Frobenius norm of a matrix $A$ is defined by 
$\lVert A\rVert_F:=\sqrt{A:A}$. 
\item $\S^d$ is the set of symmetric $\d\times d$ matrices, and $\S^d_+$ (resp. $\S^d_{>+})$ is the set of 
nonnegative (resp. positive definite)  matrices in $\S^d$. The partial order $\geq$ on $\S^d$ is defined as: $M$ $\geq$ $N$ if $M-N$ $\in$ $\S^d_+$. We also write $M$ $>$ $0$ to mean that $M$ $\in$ $\S^d_{>+}$. 
\end{itemize}

\section{Problem formulation}\label{sec:problem:formulation}

\subsection{Setup and preliminaries}

The linear dynamics of the mean-field state equation with randomized controls and common noise 
is described by 
\begin{align} \label{dynX}
\d X_t &= \;  \Big[ BX_t + \bar B \E_0[X_t] + D \int a \bpi_t(\d a)\Big] \dt    + \gamma \d W_t + \gamma_0 \d W_t^0, 
\end{align}
on a probability space $(\Omega,\Fc,\P)$ supporting two independent Brownian motions $W$ (the \textit{idiosyn\-cratic noise}), and $W^0$ (the \textit{common noise}), of dimension $d$ and $d_0$. For convenience, we choose the probability space in the product form $(\Omega^0\times\Omega^1,\Fc^0\otimes\Fc^1,\P^0\otimes\P^1)$, and denote by 
$\F^1=(\Fc^1_t)_{t\geq 0}$ the right-continuous $\P^1-$completion of the canonical filtration generated by $W$, and 
by $\F^0=(\Fc^0_t)_{t\geq 0}$ the  right-continuous $\P^0-$completion of the canonical filtration  generated by $W^0$. The initial condition $X_0$ is an $\R^d$-random variable, which is $\Gc$-measurable, where $\Gc$ is a $\sigma$-algebra independent of $(W,W^0)$. We denote by $\F$ $=$ $\F^0\vee\F^1\vee\Gc$.

Here, $\E_0[.]$ stands for the conditional expectation given $\F^0$, $B$, $\bar B$ are constant matrices in $\R^{d\times d}$, $D$ is a constant matrix in $\R^{d\times m}$, $\gamma$ is a constant matrix in $\R^{d\times d}$, 
$\gamma^0$ is a constant matrix in $\R^{d\times d_0}$, and in the sequel, we shall denote by $\hat B$ $:=$ $B+\bar B$. 

The randomized control $\bpi$ $=$ $(\bpi_t)_t$ is an $\F$-progressively measurable process in  $\Pc_2(\R^m)$, the set of probability measures on the action space $A$ $=$ $\R^m$ with a finite second order moment. We shall consider randomized controls $\bpi$ with densities 
$a$ $\mapsto$ $\bop_t(a)$, $t$ $\geq$ $0$.

The infinite horizon LQ MFC problem consists in minimizing over such randomized controls $\pi$ the quadratic cost functional with entropy regularizer of parameter $\lambda$ $>$ $0$:  
\begin{align}
J(\bpi;\lambda) &= \;  \E \Big[ \int_0^\infty e^{-\beta t} \Big( X_t\trans Q X_t + 
\E_0[X_t]\trans\bar Q \E_0[X_t]  \\
& \qquad \qquad + \;  \int a\trans R a \; \bpi_t(\d a) + \lambda \int \log \bop_t(a) \bpi_t(\d a) \Big) \d t    \Big].   
\end{align}
Here $Q$, $\bar Q$ are constant matrices in $\S^d$ such that $Q$  $>$ $0$, $\hat{Q} := Q+\bar Q$ $>$ $0$ 
and $R$ $\in$ $\S^m_{>+}$.  Notice that the cost functional is written equivalently as 
\begin{align}\label{defJ'}
J(\bpi;\lambda) &= \; \E\Big[ \int_0^\infty e^{-\beta t} 
\Big(  (X_t-\E_0[X_t])\trans Q (X_t-\E_0[X_t])+\E_0[X_t]\trans\hat Q \E_0[X_t] \\
& \qquad \qquad  + \; \int a\trans R a \;\bpi_t(\d a) + \lambda \int \log \bop_t(a) \bpi_t(\d a) \Big) \d t \Big]. 
\end{align}

\vspace{1mm}


\begin{Assumption}\label{AssumForKLambda}
    The two following Algebraic Riccati Equations (ARE) for $K\in\S^d$ and $\Lambda\in\S^d$ respectively admit a unique positive definite solutions:  
\begin{align} 
    -\beta K+KB+B\trans K+Q-K D R^{-1}D\trans K &= \; 0,  \label{AREForK} \\
   -\beta \Lambda+\Lambda\hat B+\hat B\trans \Lambda+\hat Q-\Lambda D R^{-1}D\trans \Lambda &= \; 0. \label{AREForLamb}
\end{align}
\end{Assumption}
\begin{Remark}
    According to Section 6 in \cite{MartingApproLQ}, the condition: $Q> 0,\hat Q=Q+\bar Q> 0,R>0$,  guarantees the existence of positive definite solutions to \eqref{AREForK}-\eqref{AREForLamb} and Assumption \ref{AssumForKLambda} is specifically for ensuring uniqueness.
\end{Remark}


Under Assumption \ref{AssumForKLambda},  the optimal randomized control, solution to \eqref{defJ'} is given in feedback policy form as $\bpi_t^*$ $=$ $\pi^*(.|X_t^*-\E_0[X_t^*],\E_0[X_t^*])$, 
where $\pi^*(.|y,z)$ is the normal distribution 
\begin{align} \label{piopt} 
\pi^*(.|y,z) &= \;  \Nc \Big( - R^{-1}D\trans  K y -R^{-1}D\trans \Lambda z ; \frac{\lambda}{2} R^{-1}  \Big), 
\quad y,z \in \R^d, 
\end{align}
 and where $(X^{*}_t)_{t\in[0,T]}$ is the state process with randomized control $\bpi^*$ and $(K,\Lambda)$ is the unique positive definite solution to \eqref{AREForK}-\eqref{AREForLamb}. 
Moreover, the optimal cost is given by 
\begin{align} 
J(\bpi^*;\lambda) 
& = \; K:M +  \Lambda : \hat M + \upsilon(\lambda)  \label{OptCostInfHor2}
\end{align}
where 
\begin{equation}\label{defupsilonlambda}
\begin{aligned} 
M & := \; {\rm Var}(X_0) + \frac{1}{\beta}\gamma\gamma\trans, \quad
\hat M \; := \; \E[X_0]\E[X_0]\trans +\frac{1}{\beta}\gamma_0\gamma_0\trans, \\
\upsilon(\lambda) & := \; \frac{1}{\beta}\Big(   - \frac{\lambda m}{2} \log (\pi\lambda) + \frac{\lambda}{2} \log\big| {\rm det}(R) \big| \Big).   
\end{aligned}
\end{equation}
Here, ${\rm Var}(X_0)$ denotes the covariance matrix of $X_0$. 
In Appendix \ref{ProofOptSol}, we state and prove this result in a more general case.

\vspace{2mm}

Throughout the remainder of this paper, we shall also assume that the following assumption is in force.

\begin{Assumption}\label{AssumForM}
$M,\hat M$ belongs to $\S^d_{>+}$, i.e. $\sigma_{\min}(M),\sigma_{\min}(\hat M)>0,$ where $\sigma_{\min}(\cdot)$ denotes the smallest eigenvalue of a square matrix.
\end{Assumption}

\begin{Remark}
The above assumption for $M$ is satisfied when ${\rm Var}(X_0)> 0$. As for $\hat M$, denoting by $\bar \gamma_0=[\E[X_0],\frac{1}{\sqrt{\beta}}\gamma_0]\in\R^{d\times (d_0+1)}$, the augmented matrix formed by $\E[X_0]$ and $\gamma_0$, we have $\hat M=\bar\gamma_0\bar\gamma_0\trans$. Thus, $\hat M> 0$ if and only if $\bar \gamma_0$ is of rank $d$, which requires that that  $d_0\geq d-1$.
\end{Remark}

\subsection{Model free perspective and reparametrization}

In this section, we are interested in the  model-free setting for the linear mean-field dynamics of state process, i.e., when the parameters $B,\bar B,D,\gamma,\gamma_0$ in \eqref{dynX} are unknown, and so the optimal policy in \eqref{piopt} cannot be implemented from the unique solution to the Riccati system \eqref{AREForK}-\eqref{AREForLamb}.  

Motivated by the Gaussian distribution of the optimal randomized policy, whose mean is a linear combination of $Y_t = X_t - \E_0[X_t]$ and $Z_t = \E_0[X_t]$, we propose the following parameterization of the randomized policy:   
\begin{align}
 \pi^{\Theta}(\cdot|y,z) &= \; \mathcal{N}\big(\theta y +\zeta z;\frac{\lambda}{2}R^{-1} \big), \quad y,z \in \R^d,  
\end{align}
where $\Theta = (\theta,\zeta) \in (\R^{m\times d})^2$ are the two-parameter matrices to be optimized. The density of the parametrized randomized policy
is explicitly given by 
\begin{align*}
a \in \R^m & \longmapsto    p^\Theta(y,z,a)
\;  = \; \sqrt{\frac{\det(R)}{(\pi\lambda)^m}}\exp\Big(-\frac{1}{\lambda} \big(a-(\theta y+\zeta z)\big)\trans R 
\big(a-(\theta y+\zeta z)\big) \Big). 
\end{align*}
The associated dynamics of the parametrized process $(X^\Theta_t)$, starting from $X_0^\Theta$ $=$ $X_0$, is given by
\begin{align}\label{DynXtheta}
\d X^{\Theta}_t & = \; 
\Big[ BX^\Theta_t + \bar B \E_0[X^\Theta_t] + D \int a \bpi_t^\Theta(\d a) \Big] \dt    + \gamma \d W_t + \gamma_0 \d W_t^0,
\end{align}
where $\bpi_t^\Theta$ $=$ $\pi^\Theta(\cdot|X^\Theta_t-\E_0[X^\Theta_t],\E_0[X^\Theta_t])$, 
with density $\bop_t^\Theta(a)$ $=$ $p^\Theta(X^\Theta_t-\E_0[X^\Theta_t],\E_0[X^\Theta_t],a)$.  
The corresponding cost function, now defined as a function on $(\R^{m \times d})^2$, is expressed (with a slight abuse of notation) as follows:
\begin{align}
J(\theta,\zeta;\lambda) & : = \;  J(\bpi^\Theta;\lambda) \\
& = \; \E_{} \Big[ \int_0^\infty e^{-\beta t} \big(  (X^\Theta_t - \E_0[X^\Theta_t])\trans Q(X^\Theta_t - \E_0[X^\Theta_t])+\E_0[X^\Theta_t]\trans \hat Q\E_0[X^\Theta_t]  \nonumber \\
& \qquad \qquad  + \; \int a\trans R a \;\bpi^\Theta_t(\d a) 
+ \lambda \int \log \bop^\Theta_t(a) \bpi^\Theta_t(\d a) \Big) \d t \Big] \label{defJtheta}
\end{align}
where $\hat{Q} := Q + \bar{Q}$, with the objective of minimizing the parameterized cost function $J(\theta, \zeta; \lambda)$ over $\Theta = (\theta, \zeta) \in (\R^{m \times d})^2$. It is clear from \eqref{piopt} that $\Inf_{\bpi}J(\bpi;\lambda)$ $=$ 
$\Inf_{\Theta} J(\theta,\zeta;\lambda)$, and the solution to this minimization problem is given by 
\begin{align} \label{opttheta}
\theta^* \; = \; -R^{-1} D\trans K, & \quad \zeta^* \; = \; -R^{-1}D\trans \Lambda,
\end{align} 
 where $(K,\Lambda)$ $\in$ $(\S^d_{>+})^2$ is the solution to the ARE \eqref{AREForK} and \eqref{AREForLamb}, ensuring that $J(\theta^*,\zeta^*;\lambda)$ $=$ $J(\bpi^*;\lambda)$.

\vspace{3mm}

Let us introduce the parametrized auxiliary processes
\begin{align}
Y^\Theta_t \; = \; X^\Theta_t-\E_0[X^\Theta_t], & \quad  Z^\Theta_t=\E_0[X^\Theta_t], \quad t \geq 0, \quad 
\Theta = (\theta,\zeta) \in (\R^{m\times d})^2.     
\end{align}
By observing from the definition of $\bpi_t^\Theta$ $=$ $\pi^\Theta(\cdot|Y_t^\Theta,Z_t^\Theta)$ that 
\begin{align}
\int a \bpi_t^\Theta(\d a) & = \; \theta Y_t^\Theta + \zeta Z_t^\Theta, \quad 
\E_0\Big[ \int a \bpi_t^\Theta(\d a) \Big] \; = \; \zeta Z_t^\Theta, 
\end{align}
 we see from \eqref{DynXtheta} that the dynamics of $(Y^\Theta,Z^\Theta)$ is decoupled and governed by 
\begin{equation}\label{dynamics:ytheta:ztheta}
\begin{aligned}
\begin{cases}
\d Y_t^\Theta &= \; (B+D\theta) Y_t^\Theta \dt + \gamma \d W_t,  \\
\d Z_t^\Theta &= \; (\hat B + D \zeta) Z_t^\Theta \ dt + \gamma^0 \d W_t^0,  
\end{cases}
\end{aligned} \quad t\geq0
\end{equation}
\noindent with $Y_0^\Theta := Y_0 =  
X_0 - \E[X_0]$ and $ Z_0^\Theta := Z_0 = \E[X_0]$. Notice that $Y^\Theta$ (resp. $Z^\Theta$) depends on $\Theta$ only via the first (resp. second) 
component parameter $\theta$ (resp. $\zeta$), and we shall then write $Y^\theta$ $=$ $Y^\Theta$, $Z^\zeta$ $=$ 
$Z^\Theta$. 

Moreover, by noting that 
\begin{align}
\int a\trans R a \, \bpi_t^\Theta(\d a) 
& = \; \theta Y_t^\theta + \zeta (Z_t^\zeta)\trans R \theta Y_t^\theta + \zeta Z_t^\zeta + \frac{\lambda m}{2} \\
\int \log \bop_t^\Theta(a) \, \bpi_t(d a) &= \;  -\frac{m}{2}(1+\log(2\pi))-\frac{m}{2}\log\lvert\frac{\lambda}{2\det(R)}\rvert\; 
\end{align}
the parametrized cost in \eqref{defJtheta} can be  written as a quadratic function of $(Y_t^\theta,Z_t^\zeta)$, namely:
\begin{align}
J(\theta,\zeta;\lambda) &= \;   
\E\Big[ \int_0^\infty e^{-\beta t} \Big(  (Y^\theta_t)\trans (Q+\theta\trans R\theta)Y^\theta_t 
+ (Z^\zeta_t)\trans(\hat Q+\zeta\trans R\zeta) Z^\zeta_t \Big)  \d t \Big] +\upsilon(\lambda), 
\end{align}

\noindent recalling that $v(\lambda)$ is given by \eqref{defupsilonlambda}.
An important observation is that it can be decomposed as follows
\begin{align}
J(\theta,\zeta;\lambda) &= \; J_1(\theta) + J_2(\zeta) + \upsilon(\lambda),       
\end{align}
where 
\begin{align} \label{defJ12} 
J_1(\theta) \; := \;   \big( Q+\theta\trans R\theta \big) : \Sigma_\theta,&  \quad 
J_2(\zeta) \; := \; \big( \hat Q+\zeta\trans R\zeta \big) : \hat\Sigma_\zeta, 
\end{align}
with 
\begin{align} \label{defSig} 
\Sigma_\theta \; := \; \int_0^\infty e^{-\beta t}\E\big[Y^\theta_t(Y^\theta_t)\trans \big]\d t, & \quad  
\hat\Sigma_\zeta \; := \; \int_0^\infty e^{-\beta t}\E\big[Z^\zeta_t(Z^\zeta_t)\trans \big]\d t,      
\end{align}
so that the minimization over $\Theta$ $=$ $(\theta,\zeta)$ amounts to separate minimization problems: 
\begin{align} \label{minJ12}
\inf_{\theta \in \R^{m\times d}} J_1(\theta), &  \qquad \mbox{ and } \qquad \inf_{\zeta \in \R^{m\times d}} J_2(\zeta). 
\end{align}

In fact, we now demonstrate that the two aforementioned minimization problems over $\theta$ and $\zeta$ can be reduced to suitable smaller sets defined by
\begin{align}
\Sc \; = \; \Big\{ \theta\in\R^{m\times d}:B-\frac{\beta}{2}I_d+D\theta\text{ is stable}\Big\}, & \;\;  
\hat\Sc \; = \; \Big\{\zeta\in\R^{m\times d}: \hat B-\frac{\beta}{2}I_d+D\zeta\text{ is stable}\Big\}, 
\end{align}
recalling that $\hat{B} = B + \bar{B}$, on which the problem is well-posed.

\begin{Lemma}\label{OptInS}
The minimizers in \eqref{opttheta} 
satisfy $\theta^*\in\Sc$, $\zeta^*\in\hat\Sc$,  and thus 
\begin{align}
\inf_{\theta \in \R^{m\times d}} J_1(\theta) \; = \; \inf_{\theta \in \Sc}  J_1(\theta), &
\qquad \mbox{ and } \qquad
\inf_{\zeta \in \R^{m\times d}} J_2(\theta) \; = \; \inf_{\zeta \in \hat\Sc}  J_2(\theta). 
\end{align}
\end{Lemma}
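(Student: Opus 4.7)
The plan is to split the lemma into two self-contained steps. First, I will show $\theta^{*}\in\Sc$ and $\zeta^{*}\in\hat\Sc$ by combining each ARE with the Lyapunov stability theorem. Second, I will establish $J_1(\theta)=+\infty$ for every $\theta\notin\Sc$ (and $J_2(\zeta)=+\infty$ for every $\zeta\notin\hat\Sc$), so that the minimization outside the stability sets contributes nothing; the two assertions together yield both the membership of the minimizers and the equality of infima.

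For the first step, set $A^{*}:=B+D\theta^{*}=B-DR^{-1}D\trans K$ and compute
\begin{align*}
K\big(A^{*}-\tfrac{\beta}{2}I_d\big)+\big(A^{*}-\tfrac{\beta}{2}I_d\big)\trans K &= KB+B\trans K-2KDR^{-1}D\trans K-\beta K.
\end{align*}
Substituting $KB+B\trans K=\beta K+KDR^{-1}D\trans K-Q$ obtained from the ARE \eqref{AREForK} collapses the right-hand side to $-(Q+KDR^{-1}D\trans K)$, which is strictly negative definite since $Q>0$. As $K\in\S^d_{>+}$ is thus a positive definite solution to a Lyapunov equation whose right-hand side is negative definite, the classical Lyapunov stability theorem implies that $A^{*}-\tfrac{\beta}{2}I_d$ is stable, i.e.\ $\theta^{*}\in\Sc$. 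The identical computation with $(\Lambda,\hat B,\hat Q)$ in place of $(K,B,Q)$ and using \eqref{AREForLamb} gives $\zeta^{*}\in\hat\Sc$.

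For the second step, fix $\theta\notin\Sc$, set $A_\theta:=B+D\theta$, and consider the rescaled process $\tilde Y^\theta_t:=e^{-\beta t/2}Y^\theta_t$, which solves $\d\tilde Y^\theta_t=(A_\theta-\tfrac{\beta}{2}I_d)\tilde Y^\theta_t\dt+e^{-\beta t/2}\gamma\d W_t$, so that $\Sigma_\theta=\int_0^\infty\E[\tilde Y^\theta_t(\tilde Y^\theta_t)\trans]\dt$. The explicit integral representation of $\tilde Y^\theta_t$, together with a Jordan-form analysis along a left-eigenvector $v$ of $A_\theta$ whose eigenvalue has real part $\geq\beta/2$, shows that $v\trans\E[\tilde Y^\theta_t(\tilde Y^\theta_t)\trans]v$ does not decay as $t\to\infty$; here the positivity of $M={\rm Var}(X_0)+\tfrac{1}{\beta}\gamma\gamma\trans$ from Assumption \ref{AssumForM} is what guarantees a nontrivial contribution in the direction $v$. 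Hence $v\trans\Sigma_\theta v=+\infty$, and since $Q+\theta\trans R\theta>0$ the trace pairing $J_1(\theta)=(Q+\theta\trans R\theta):\Sigma_\theta$ diverges to $+\infty$; the analogous argument with $(\hat B,\hat Q,\hat M)$ handles $J_2$. The main delicate point of the whole proof is precisely this divergence claim for $\Sigma_\theta$ outside $\Sc$ — making the Jordan-form lower bound fully rigorous when the unstable eigenvalues are complex or sit in nontrivial Jordan blocks — whereas the Lyapunov--ARE computation in step one is essentially mechanical once the substitution from \eqref{AREForK} is spotted.
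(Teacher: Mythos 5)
Your first step coincides with the paper's proof: both rewrite the ARE \eqref{AREForK} as a Lyapunov identity $ (B-\tfrac{\beta}{2}I_d-DR^{-1}D\trans K)\trans K+K(B-\tfrac{\beta}{2}I_d-DR^{-1}D\trans K)=-(Q+KDR^{-1}D\trans K)$ and invoke the standard argument that a positive definite $K$ solving a Lyapunov equation with strictly negative definite right-hand side forces stability (if $Sv=\lambda v$ with $\mathrm{Re}\,\lambda\geq 0$ then $v^{*}(S\trans K+KS)v=2\,\mathrm{Re}(\lambda)\,v^{*}Kv\geq 0$, contradicting negativity). Where you diverge is the second half. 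The paper dispenses with the equality of infima in one word ("thus"): since Theorem \ref{OptRelaxedControlPbl} already identifies $(\theta^{*},\zeta^{*})$ as the global minimizer of the separable sum $J_1+J_2$ over $(\R^{m\times d})^2$, each of $\theta^{*}$, $\zeta^{*}$ minimizes its own summand globally, and membership in $\Sc$, $\hat\Sc$ immediately gives $\inf_{\R^{m\times d}}J_1=J_1(\theta^{*})=\inf_{\Sc}J_1$. You instead prove directly that $J_1(\theta)=+\infty$ for $\theta\notin\Sc$, which is self-contained (it does not lean on the optimality result) but costs more work. That divergence claim is true, and you can make it rigorous more cheaply than you fear: no Jordan-form analysis is needed. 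Take a left eigenvector $v^{*}(B-\tfrac{\beta}{2}I_d+D\theta)=\lambda v^{*}$ with $\mathrm{Re}\,\lambda\geq 0$; then $v^{*}e^{t(B-\beta I_d/2+D\theta)}=e^{\lambda t}v^{*}$ exactly, so $v^{*}e^{-\beta t}\E[Y^\theta_t(Y^\theta_t)\trans]v=e^{2\mathrm{Re}(\lambda)t}\,v^{*}{\rm Var}(X_0)v+\big(\int_0^t e^{2\mathrm{Re}(\lambda)u}\d u\big)\,v^{*}\gamma\gamma\trans v$, and Assumption \ref{AssumForM} ($M>0$) guarantees at least one of the two quadratic forms is strictly positive, whence the time integral diverges and $J_1(\theta)\geq \sigma_{\min}(Q)\,\mathrm{tr}(\Sigma_\theta)=+\infty$. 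With that replacement your argument is complete; as written, the "delicate point" you flag is the only place where the proposal falls short of a full proof, and it is repairable along the lines above.
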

\begin{proof} 
\noindent The ARE  \eqref{AREForK} for $K$ rewrites as 
\begin{align} \label{Kinter}
 (B-\frac{\beta}{2}I_d-DR^{-1}D\trans K)\trans K+K(B-\frac{\beta}{2}I_d-DR^{-1}D\trans K) 
 &=  -(Q+KDR^{-1}D\trans K). 
\end{align} 
 Since $Q> 0$ then $Q+KDR^{-1}D\trans K > 0$ , 
 hence all the eigenvalues of the matrix on the r.h.s. of \eqref{Kinter} are strictly negative.
Moreover, since $K > 0$, if the matrix $(B-\frac{\beta}{2}I_d-DR^{-1}D\trans K)$ has a non-negative real part eigenvalue, then the largest eigenvalue of the matrix on the l.h.s. of \eqref{Kinter} is non-negative and therefore 
cannot be equal to the matrix on the r.h.s. It follows that $B-\frac{\beta}{2}I_d-DR^{-1}D\trans K$ is stable, which means tht  $\theta^*\in\Sc$.

Similarly, by using the ARE  \eqref{AREForLamb} for $\Lambda$, we show that 
$\hat B-\frac{\beta}{2}I_d-DR^{-1}D\trans \Lambda$ is also stable, i.e.,  $\zeta^*\in\hat\Sc$
\end{proof}

 \vspace{3mm}

We now state a characterization of the two matrices $\Sigma_\theta$ and $\hat\Sigma_\zeta$ defined in \eqref{defSig}.

\begin{Proposition}\label{PropSigmaTheta}
  For all $\theta\in\Sc,\zeta\in\hat\Sc$, the matrices $\Sigma_\theta$ and $\hat\Sigma_\zeta$ are well-defined in $\S^d$ and are the unique positive definite solution to the following Algebraic Lyapunov Equations (ALE):
\begin{equation}\label{ALEForSigmaTheta}
\begin{aligned} 
-\beta \Sigma_\theta+(B+D\theta)\Sigma_\theta+\Sigma_\theta(B+D\theta)\trans+M &= \; 0, \\
-\beta \hat\Sigma_\zeta+(\hat B+D\zeta)\hat\Sigma_\zeta+\hat\Sigma_\zeta(\hat B+D\zeta)\trans+\hat M &= \; 0. 
\end{aligned}
\end{equation}
\end{Proposition}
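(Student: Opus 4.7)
The plan is to reduce the claim to classical Lyapunov theory by first computing second moments of the linear SDEs, then performing an integration by parts against the discount weight, and finally invoking standard facts on ALEs driven by stable generators. I handle $\Sigma_\theta$ in detail; the argument for $\hat\Sigma_\zeta$ is completely symmetric.

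\textbf{Step 1 (dynamics of the second moment and well-posedness of the integral).} Since $Y^\theta$ is a linear SDE with constant diffusion coefficient $\gamma$ and drift matrix $B+D\theta$, It\^o's formula applied to $Y^\theta_t(Y^\theta_t)\trans$, followed by taking expectations, yields that $P_t:=\E[Y^\theta_t(Y^\theta_t)\trans]\in\S^d$ satisfies the matrix ODE
\begin{equation*}
\dot P_t \;=\; (B+D\theta)P_t+P_t(B+D\theta)\trans+\gamma\gamma\trans,\qquad P_0={\rm Var}(X_0).
\end{equation*}
Setting $\bar A:=B+D\theta-\tfrac{\beta}{2}I_d$, which is stable by the very definition of $\Sc$, the substitution $\tilde P_t:=e^{-\beta t}P_t$ transforms the equation into $\dot{\tilde P}_t=\bar A\tilde P_t+\tilde P_t\bar A\trans+e^{-\beta t}\gamma\gamma\trans$, whose explicit solution shows $\tilde P_t$ decays exponentially. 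Hence $\Sigma_\theta=\int_0^{\infty}\tilde P_t\,\d t$ is well defined in $\S^d$, and in fact admits the closed form $\Sigma_\theta=\int_0^{\infty}e^{\bar A t}\,M\,e^{\bar A\trans t}\,\d t$ once the constant forcing term is identified (see Step~2).

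\textbf{Step 2 (derivation of the ALE via integration by parts).} Multiply the ODE for $P_t$ by $e^{-\beta t}$ and integrate over $[0,\infty)$. The right-hand side produces
\begin{equation*}
(B+D\theta)\Sigma_\theta+\Sigma_\theta(B+D\theta)\trans+\tfrac{1}{\beta}\gamma\gamma\trans,
\end{equation*}
while integration by parts on the left-hand side, together with the exponential decay established in Step~1, yields
\begin{equation*}
\int_0^{\infty}e^{-\beta t}\dot P_t\,\d t \;=\; \bigl[e^{-\beta t}P_t\bigr]_0^{\infty}+\beta\Sigma_\theta \;=\; -{\rm Var}(X_0)+\beta\Sigma_\theta.
\end{equation*}
Rearranging and invoking the definition $M={\rm Var}(X_0)+\tfrac{1}{\beta}\gamma\gamma\trans$ from \eqref{defupsilonlambda} gives precisely the first equation of \eqref{ALEForSigmaTheta}.

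\textbf{Step 3 (positivity, uniqueness, and the companion equation).} By Assumption \ref{AssumForM} we have $M>0$, so the representation $\Sigma_\theta=\int_0^{\infty}e^{\bar A t}Me^{\bar A\trans t}\,\d t$ shows $\Sigma_\theta\in\S^d_{>+}$: for any $v\neq 0$, $v\trans\Sigma_\theta v=\int_0^{\infty}\|M^{1/2}e^{\bar A\trans t}v\|^2\,\d t>0$ because $e^{\bar A\trans t}v$ cannot vanish identically. Uniqueness follows from the classical spectrum argument for Lyapunov operators: any two solutions of \eqref{ALEForSigmaTheta} have their difference $\Delta$ satisfying $\bar A\Delta+\Delta\bar A\trans=0$, whence $\tfrac{d}{dt}(e^{\bar A t}\Delta e^{\bar A\trans t})=0$ forces $e^{\bar A t}\Delta e^{\bar A\trans t}\equiv\Delta$; letting $t\to\infty$ and using stability of $\bar A$ yields $\Delta=0$. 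Replacing $(B,\theta,Y^\theta,\gamma,M)$ by $(\hat B,\zeta,Z^\zeta,\gamma_0,\hat M)$ and noticing that $\E[Z_0(Z_0)\trans]=\E[X_0]\E[X_0]\trans$ so that the integration by parts recovers $\hat M=\E[X_0]\E[X_0]\trans+\tfrac{1}{\beta}\gamma_0\gamma_0\trans$, the identical argument establishes the second ALE in \eqref{ALEForSigmaTheta} together with $\hat\Sigma_\zeta\in\S^d_{>+}$ under $\zeta\in\hat\Sc$. There is no genuine obstacle here; the only point deserving care is keeping the bookkeeping of $P_0$ versus the full constant term $M$ correct in the integration by parts.
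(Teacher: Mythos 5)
Your proof is correct and follows essentially the same route as the paper: It\^o's formula to obtain the ODE for $\E[Y^\theta_t(Y^\theta_t)\trans]$, exponential decay of $e^{-\beta t}\E[Y^\theta_t(Y^\theta_t)\trans]$ from stability of $B-\tfrac{\beta}{2}I_d+D\theta$, integration by parts against $e^{-\beta t}$ to produce the ALE, and a reduction to the standard Lyapunov equation with stable generator. The only difference is that you spell out the positivity and uniqueness arguments explicitly, where the paper invokes classical Lyapunov theory; both are fine.
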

\begin{proof}
    Cf Appendix \ref{ProofSigmaTheta}
\end{proof}

\vspace{2mm}

We conclude this section by providing an alternate useful expression of the cost functions $J_1$ and $J_2$ defined by 
\eqref{defJ12} on which the minimization over $\theta$ $\in$ $\Sc$ and $\zeta$ $\in$ $\hat\Sc$ will be performed.

\begin{Proposition}\label{PropValueFuncForKtheta}
For all $\theta\in\Sc,\zeta\in\hat{\Sc}$, we have 
\begin{align}
J_1(\theta) \; = \;  
K_\theta : M, & \qquad 
J_2(\zeta) \; = \;  
\Lambda_\zeta : \hat M,
\end{align}
where $K_\theta$ and $\Lambda_\zeta$ are the unique elements in $\S^d_{>+}$ solutions to the ALE 
\begin{align} 
-\beta K_\theta+(B+D\theta)\trans K_\theta+K_\theta(B+D\theta)+Q+\theta\trans R\theta &= \; 0 \label{ALEForKtheta} \\    -\beta \Lambda_\zeta+(\hat B+D\zeta)\trans \Lambda_\zeta+\Lambda_\zeta(\hat B+D\zeta)+\hat Q+\zeta\trans R\zeta 
    & = \; 0. \label{ALEForLambdatheta}
\end{align}
\end{Proposition}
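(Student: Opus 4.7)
The plan is to first establish existence and uniqueness of $K_\theta \in \S^d_{>+}$ solving the ALE \eqref{ALEForKtheta} (and analogously of $\Lambda_\zeta$ for \eqref{ALEForLambdatheta}), then derive the identity $J_1(\theta) = K_\theta : M$ by exploiting the algebraic duality between the two Lyapunov equations \eqref{ALEForKtheta} and \eqref{ALEForSigmaTheta}, leveraging Proposition \ref{PropSigmaTheta}. The corresponding argument for $J_2$ will then be immediate by symmetry.

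For the existence and uniqueness step, I would rewrite \eqref{ALEForKtheta} in standard symmetric Lyapunov form as $\tilde A_\theta\trans K_\theta + K_\theta \tilde A_\theta = -(Q + \theta\trans R \theta)$, where $\tilde A_\theta := B - \frac{\beta}{2} I_d + D\theta$. By the very definition of $\Sc$, the matrix $\tilde A_\theta$ is stable, and the right-hand side is negative definite since $Q + \theta\trans R\theta \geq Q > 0$. The classical Lyapunov theorem then yields a unique solution $K_\theta \in \S^d_{>+}$, with the explicit integral representation $K_\theta = \int_0^\infty e^{\tilde A_\theta\trans s}(Q+\theta\trans R\theta)\, e^{\tilde A_\theta s}\, \d s$. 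For the identity, Proposition \ref{PropSigmaTheta} provides $\Sigma_\theta \in \S^d_{>+}$ solving \eqref{ALEForSigmaTheta}, equivalently $\tilde A_\theta \Sigma_\theta + \Sigma_\theta \tilde A_\theta\trans = -M$. Multiplying the ALE for $K_\theta$ by $\Sigma_\theta$ on the right, taking the trace, and exploiting cyclicity together with the ALE for $\Sigma_\theta$, I would obtain
$$J_1(\theta) = (Q + \theta\trans R \theta) : \Sigma_\theta = -\tr\bigl((\tilde A_\theta\trans K_\theta + K_\theta \tilde A_\theta) \Sigma_\theta\bigr) = -\tr\bigl(K_\theta (\tilde A_\theta \Sigma_\theta + \Sigma_\theta \tilde A_\theta\trans)\bigr) = K_\theta : M.$$
The identity $J_2(\zeta) = \Lambda_\zeta : \hat M$ then follows by the exact same reasoning, replacing $\tilde A_\theta$ by $\hat A_\zeta := \hat B - \frac{\beta}{2} I_d + D\zeta$ (stable by $\zeta \in \hat\Sc$), $Q$ by $\hat Q$, and $M$ by $\hat M$.

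A more probabilistic alternative would be to apply Itô's formula to $e^{-\beta t} (Y_t^\theta)\trans K_\theta Y_t^\theta$, take expectation, and let $t \to \infty$: the drift combined with \eqref{ALEForKtheta} produces the running cost $-e^{-\beta t}(Y_t^\theta)\trans(Q+\theta\trans R\theta)Y_t^\theta$, the quadratic-variation integral contributes $\frac{1}{\beta} K_\theta : \gamma\gamma\trans$, the initial boundary term gives $K_\theta : {\rm Var}(X_0)$, and the martingale part vanishes in expectation, so that the decomposition $M = {\rm Var}(X_0) + \frac{1}{\beta}\gamma\gamma\trans$ yields the claim once the terminal contribution $e^{-\beta T}\E[(Y_T^\theta)\trans K_\theta Y_T^\theta]$ is shown to vanish as $T\to\infty$. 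This last point is the only real subtlety of either approach, and it follows from the stability of $\tilde A_\theta$ via the variation-of-constants representation of $Y^\theta$. Overall I do not anticipate a significant obstacle: the Lyapunov duality exhibited above is the essential mechanism, and once Proposition \ref{PropSigmaTheta} is available, the derivation reduces to essentially forced matrix algebra.
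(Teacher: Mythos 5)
Your proposal is correct, but your primary argument is genuinely different from the paper's. The paper establishes existence and uniqueness exactly as you do (rewriting \eqref{ALEForKtheta} in standard Lyapunov form with the stable matrix $B-\frac{\beta}{2}I_d+D\theta$), but then proves the identity $J_1(\theta)=K_\theta:M$ probabilistically: it applies It\^o's formula to $e^{-\beta t}(Y_t^\theta)\trans K_\theta Y_t^\theta$, substitutes the ALE into the integrand $(Y_t^\theta)\trans(Q+\theta\trans R\theta)Y_t^\theta$, integrates over $[0,\infty)$, and collects the boundary term $\E[Y_0\trans K_\theta Y_0]={\rm Var}(X_0):K_\theta$ together with the quadratic-variation contribution $\frac{1}{\beta}\gamma\trans K_\theta\gamma$ --- i.e.\ precisely the ``probabilistic alternative'' you sketch at the end, including the need to kill the terminal term via stability (the paper's Lemma on $e^{-\beta t}\E[Y_t^\theta(Y_t^\theta)\trans]\to 0$). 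Your main route instead takes the definition $J_1(\theta)=(Q+\theta\trans R\theta):\Sigma_\theta$ from \eqref{defJ12} and Proposition \ref{PropSigmaTheta} as given and closes the argument by pure trace duality between the two adjoint Lyapunov equations; this is shorter, avoids any limiting argument or martingale/integrability discussion, and is the cleaner mechanism once $\Sigma_\theta$ is known to solve \eqref{ALEForSigmaTheta}. What the paper's It\^o route buys in exchange is the intermediate explicit formula $J_1(\theta)=\E[Y_0\trans K_\theta Y_0]+\frac{1}{\beta}\gamma\trans K_\theta\gamma$, which makes the decomposition of $M$ into its two pieces transparent; but since $M={\rm Var}(X_0)+\frac{1}{\beta}\gamma\gamma\trans$ by definition, your algebraic derivation recovers the same statement with no loss. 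Both approaches are sound.
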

\begin{proof}
    Cf Appendix \ref{ProofKTheta}
\end{proof}

\begin{Remark}
    Note that, according to \eqref{OptCostInfHor2}, we have $K_{\theta^*}=K$ and $\Lambda_{\zeta^*}=\Lambda$.
\end{Remark}

\section{Model-based PG algorithm}\label{sec:model:based:pg}

In this section, we establish the convergence of the gradient descent algorithm towards the optimal parameters $(\theta^*,\zeta^*)$ assuming full knowledge of the model parameters, thus allowing for the exact computation of the gradient.  
This foundation will facilitate the learning of optimal parameters in the model-free setting in the subsequent section, despite the nonconvex optimization framework. Furthermore, the gradient descent method in the model-based scenario offers an alternative to solving the Riccati system \eqref{AREForK}-\eqref{AREForLamb}, which is computationally intensive, while the convergence result (Theorem \ref{ConvExGradDes}) remains dimension-free.

Given $(\theta_{[0]}, \zeta_{[0]})\in \Sc \times \hat\Sc $, the updating rule at step $k\geq 0$ for the exact gradient descent of the minimization problem \eqref{minJ12} is given by 
 \begin{equation}\label{exact:GD:algorithm}
\begin{cases}
\theta_{[k+1]} &= \; \theta_{[k]}-\rho \nabla J_1(\theta_{[k]}),  \\
\zeta_{[k+1]} &= \; \zeta_{[k]}-\rho \nabla J_2(\zeta_{[k]}),
\end{cases}
\end{equation}
where $\rho>0$ is the constant step size (learning rate), and $\nabla J_1(\theta)$ and $\nabla J_2(\zeta)$ are the gradients of $J_1$ and $J_2$ with respect to their respective parameters.

\subsection{Gradient domination condition}

We first provide an explicit formula for the gradient that will be used for the implementation of the 
(exact) gradient descent rule.

\begin{Proposition}[Expression of the gradients]\label{GradJ}
For $\theta\in\Sc, \, \zeta\in\hat\Sc$, it holds
\begin{align}
\nabla J_1(\theta) \; = \;  2E_{\theta}\Sigma_{\theta}, & \quad   \mbox{ and } \quad
\nabla J_2(\zeta) \; = \;   2\hat{E}_{\zeta}\hat\Sigma_{\zeta}, 
    \end{align}
where $E_\theta= R\theta+D\trans K_{\theta}$ and $\hat E_\zeta= R\zeta+D\trans \Lambda_{\zeta}$.
\end{Proposition}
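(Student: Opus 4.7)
I will prove the identity for $J_1$; the identity for $J_2$ is obtained by replacing $(B, Q, M, K_\theta, \Sigma_\theta, \theta)$ with $(\hat B, \hat Q, \hat M, \Lambda_\zeta, \hat\Sigma_\zeta, \zeta)$ throughout, since the two problems have the exact same structure. The starting point is the alternate representation $J_1(\theta) = K_\theta : M$ from Proposition \ref{PropValueFuncForKtheta}, together with the ALE \eqref{ALEForKtheta} for $K_\theta$ and the ALE \eqref{ALEForSigmaTheta} for $\Sigma_\theta$.

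The plan is to compute a directional derivative. Fix $\theta \in \Sc$ and a perturbation direction $\delta\theta \in \R^{m\times d}$. Since $\Sc$ is open and the map $\theta \mapsto K_\theta$ is characterized implicitly by the ALE \eqref{ALEForKtheta} whose linearization is the Sylvester-type operator $X \mapsto (B + D\theta - \tfrac{\beta}{2} I_d)^\intercal X + X (B + D\theta - \tfrac{\beta}{2} I_d)$ (invertible on $\S^d$ because $B + D\theta - \tfrac{\beta}{2} I_d$ is stable), the implicit function theorem ensures $\theta \mapsto K_\theta$ is $C^1$. Differentiating \eqref{ALEForKtheta} in the direction $\delta\theta$, and writing $\delta K$ for the corresponding directional derivative of $K_\theta$, one gets
\begin{equation}
-\beta\, \delta K + (B+D\theta)^\intercal \delta K + \delta K (B+D\theta) + (\delta\theta)^\intercal E_\theta + E_\theta^\intercal \delta\theta \;=\; 0,
\end{equation}
where I regrouped the cross terms using $E_\theta = R\theta + D^\intercal K_\theta$.

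Next I would apply the standard Lyapunov duality: if $A$ is stable, $X$ solves $A^\intercal X + XA + P = 0$ and $Y$ solves $AY + YA^\intercal + N = 0$, then $X : N = Y : P$ (a one-line consequence of the cyclicity of the trace applied to $\tr((A^\intercal X + XA)Y)$). Applied with $A = B + D\theta - \tfrac{\beta}{2} I_d$, $X = \delta K$, $P = (\delta\theta)^\intercal E_\theta + E_\theta^\intercal \delta\theta$, $Y = \Sigma_\theta$ and $N = M$ — the ALE for $Y$ being exactly \eqref{ALEForSigmaTheta} — this gives
\begin{equation}
\delta K : M \;=\; \Sigma_\theta : \bigl((\delta\theta)^\intercal E_\theta + E_\theta^\intercal \delta\theta\bigr).
\end{equation}
Since $\Sigma_\theta$ is symmetric, the right-hand side simplifies by the trace identity to $2\, \tr\!\bigl(E_\theta \Sigma_\theta (\delta\theta)^\intercal\bigr) = 2\,(E_\theta \Sigma_\theta) : \delta\theta$. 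Therefore, the directional derivative of $J_1$ at $\theta$ in direction $\delta\theta$ equals $2(E_\theta \Sigma_\theta) : \delta\theta$, which identifies $\nabla J_1(\theta) = 2 E_\theta \Sigma_\theta$.

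The analogous computation applied to $J_2(\zeta) = \Lambda_\zeta : \hat M$, using \eqref{ALEForLambdatheta} and the ALE for $\hat\Sigma_\zeta$ in \eqref{ALEForSigmaTheta}, yields $\nabla J_2(\zeta) = 2 \hat E_\zeta \hat\Sigma_\zeta$. I do not expect a serious obstacle here: the only delicate point is justifying the smoothness of $\theta \mapsto K_\theta$ on $\Sc$, but this is immediate from the implicit function theorem once one observes that stability of $B + D\theta - \tfrac{\beta}{2}I_d$ is an open condition and that the linearized Lyapunov operator is invertible precisely under this stability.
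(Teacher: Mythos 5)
Your proof is correct and follows essentially the same route as the paper's: both differentiate the ALE for $K_\theta$ to obtain a Lyapunov equation with source term $(\delta\theta)\trans E_\theta + E_\theta\trans \delta\theta$ and then pair it with the ALE for $\Sigma_\theta$ via cyclicity of the trace --- your ``Lyapunov duality'' identity $X:N = Y:P$ is exactly the paper's inline substitution of $M$ from the $\Sigma_\theta$ equation followed by the same trace manipulations. The only organizational difference is that the paper justifies differentiability of $\theta\mapsto K_\theta$ by solving explicitly for $K_{\theta'}-K_\theta$ as the solution of a Lyapunov equation and reading off the derivative from its integral representation (Lemma \ref{DiffKtheta}), rather than by your implicit-function-theorem argument; both are valid.
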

\begin{proof}
    Cf. Appendix \ref{ProofGradE}
\end{proof}

Next, to assess the convergence of the aforementioned gradient descent algorithm in this nonconvex landscape, we will demonstrate that the cost functions $J_1$ and $J_2$ satisfy a Polyak-Lojasiewicz (PL) inequality on $\mathcal{S}$ and $\hat{\mathcal{S}}$, respectively, also known as the gradient domination condition.

\begin{Proposition}[Gradient domination] \label{GradDomCond}
There exist two positive constants $\kappa_1,\kappa_2>0$ such that for any $\theta\in\Sc,\zeta\in\hat\Sc$, 
\begin{align}
J_1(\theta)-J_1(\theta^*) \; \leq \;  \kappa_1\lVert\nabla J_1(\theta)\rVert^2_F, & \quad \mbox{ and } \quad  
J_2(\zeta)-J_2(\zeta^*) \; \leq \;  \kappa_2\lVert\nabla J_2(\zeta)\rVert^2_F. 
\end{align}
More precisely, the constants $\kappa_1$ and $\kappa_2$ are given by   
\begin{align} 
\kappa_1 \;= \;  \frac{\lVert \Sigma_{\theta^*}\rVert_F  }{4\sigma_{\text{min}}(R)\sigma_{\text{min}}^2(M)},& \qquad 
\kappa_2 \; = \; \frac{\lVert \hat\Sigma_{\zeta^*}\rVert_F  }{4\sigma_{\text{min}}(R)\sigma_{\text{min}}^2(\hat M)}. 
\end{align} 
\end{Proposition}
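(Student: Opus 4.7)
The plan is to prove both inequalities in parallel: by the decoupling $J(\theta,\zeta;\lambda) = J_1(\theta) + J_2(\zeta) + \upsilon(\lambda)$ and the exact symmetry between the tuples $(B, Q, M, \Sigma_\theta, K_\theta, \theta^*)$ and $(\hat B, \hat Q, \hat M, \hat\Sigma_\zeta, \Lambda_\zeta, \zeta^*)$, the bound for $J_2$ will be a verbatim transcription of the bound for $J_1$. I therefore focus on $J_1$.

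The heart of the proof is a cost-difference identity obtained by completing the square at the level of the closed-loop Lyapunov equations. Setting $A_\theta := B + D\theta - \frac{\beta}{2}I$, I rewrite ALE \eqref{ALEForKtheta} as $A_\theta^\top K_\theta + K_\theta A_\theta + Q + \theta^\top R\theta = 0$, and observe that the Riccati equation \eqref{AREForK} is equivalent to the same closed-loop ALE at $\theta=\theta^*$ after using $\theta^* = -R^{-1}D^\top K$. Subtracting the two, expanding $A_\theta = A_{\theta^*} + D(\theta-\theta^*)$, and invoking the crucial identity $E_{\theta^*} = R\theta^* + D^\top K = 0$ leads, after a direct algebraic simplification, to
\begin{equation*}
A_\theta^\top (K_\theta - K_{\theta^*}) + (K_\theta - K_{\theta^*}) A_\theta \;=\; -(\theta-\theta^*)^\top R (\theta-\theta^*).
\end{equation*}
Taking the Frobenius inner product of this identity with $\Sigma_{\theta^*}$, using ALE \eqref{ALEForSigmaTheta} for $\Sigma_{\theta^*}$ together with $J_1(\theta) - J_1(\theta^*) = (K_\theta - K_{\theta^*}):M$ from Proposition \ref{PropValueFuncForKtheta}, and performing a cyclic trace manipulation (writing $A_{\theta^*} = A_\theta - DU$ with $U := \theta-\theta^*$ and using $(K_\theta - K_{\theta^*})D = E_\theta^\top - U^\top R$), yields the quadratic identity
\begin{equation*}
J_1(\theta) - J_1(\theta^*) \;=\; 2\,\text{tr}\!\big[\Sigma_{\theta^*}\, U^\top E_\theta\big] \,-\, \text{tr}\!\big[\Sigma_{\theta^*}\, U^\top R U\big].
\end{equation*}

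This right-hand side is a concave quadratic in the matrix variable $U$ (concavity from $R \succ 0$ and $\Sigma_{\theta^*} \succ 0$), whose unconstrained maximizer is $U = R^{-1}E_\theta$ with maximal value $\text{tr}[\Sigma_{\theta^*}\,E_\theta^\top R^{-1}E_\theta]$; thus
\begin{equation*}
J_1(\theta) - J_1(\theta^*) \;\leq\; \text{tr}\!\big[\Sigma_{\theta^*}\,E_\theta^\top R^{-1}E_\theta\big] \;\leq\; \frac{\|\Sigma_{\theta^*}\|_F}{\sigma_{\min}(R)}\,\|E_\theta\|_F^2,
\end{equation*}
where the second bound uses $\sigma_{\max}(R^{-1}) = 1/\sigma_{\min}(R)$ and $\text{tr}[\Sigma_{\theta^*}\,E_\theta^\top E_\theta] \leq \|\Sigma_{\theta^*}\|_F\,\|E_\theta\|_F^2$. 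On the other hand, from $\nabla J_1(\theta) = 2 E_\theta \Sigma_\theta$ (Proposition \ref{GradJ}),
\begin{equation*}
\|\nabla J_1(\theta)\|_F^2 \;=\; 4\,\text{tr}\!\big[E_\theta^\top E_\theta\,\Sigma_\theta^2\big] \;\geq\; 4\,\sigma_{\min}^2(\Sigma_\theta)\,\|E_\theta\|_F^2.
\end{equation*}

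The main obstacle is to convert this last chain into the explicit constant $\kappa_1$ claimed, which requires the uniform spectral lower bound $\sigma_{\min}(\Sigma_\theta) \geq \sigma_{\min}(M)$ for every $\theta \in \Sc$. I expect this to follow from the ALE \eqref{ALEForSigmaTheta} and Assumption \ref{AssumForM}: testing the ALE against a unit eigenvector of $\Sigma_\theta$ attaining $\sigma_{\min}(\Sigma_\theta)$, exploiting closed-loop stability of $A_\theta$ (guaranteed by $\theta \in \Sc$) together with $M \succeq \sigma_{\min}(M)\,I > 0$, should yield the estimate through a Lyapunov/spectral argument. Granting this bound, chaining the two displays gives $J_1(\theta) - J_1(\theta^*) \leq \frac{\|\Sigma_{\theta^*}\|_F}{4\,\sigma_{\min}(R)\,\sigma_{\min}^2(M)}\,\|\nabla J_1(\theta)\|_F^2$, which is precisely $\kappa_1\,\|\nabla J_1(\theta)\|_F^2$. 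The same argument, with the substitutions $(B,Q,M,\Sigma_\theta,K_\theta,\theta^*) \mapsto (\hat B, \hat Q, \hat M, \hat\Sigma_\zeta, \Lambda_\zeta, \zeta^*)$, produces the twin inequality for $J_2$ with the stated constant $\kappa_2$.
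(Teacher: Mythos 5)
Your argument tracks the paper's proof essentially step for step. The quadratic identity $J_1(\theta)-J_1(\theta^*)=2\,\mathrm{tr}\big[\Sigma_{\theta^*}U^\top E_\theta\big]-\mathrm{tr}\big[\Sigma_{\theta^*}U^\top RU\big]$ is exactly the paper's perturbation formula (Lemma \ref{PerturbJ}) evaluated at $\theta'=\theta^*$; your completion of the square reproduces the first inequality in the paper's chain, and the conversion of $\lVert E_\theta\rVert_F^2$ into $\lVert\nabla J_1(\theta)\rVert_F^2$ via $4\sigma_{\min}^2(\Sigma_\theta)$ is identical. The only cosmetic difference is that you reach the identity by subtracting the two closed-loop Lyapunov equations and pairing with $\Sigma_{\theta^*}$, whereas the paper first writes $K_{\theta'}-K_\theta$ as the explicit integral of Lemma \ref{DiffKtheta} and then traces against $M$; the two computations are algebraically equivalent.

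The one place where your proposal would fail as written is the uniform bound $\sigma_{\min}(\Sigma_\theta)\geq\sigma_{\min}(M)$, which you defer to a ``Lyapunov/spectral argument'' obtained by testing the ALE against a minimizing eigenvector. That test does not deliver the bound: if $v$ is a unit eigenvector of $\Sigma_\theta$ for $\sigma_{\min}(\Sigma_\theta)$ and $A_\theta=B-\frac{\beta}{2}I_d+D\theta$, the ALE gives $2\sigma_{\min}(\Sigma_\theta)\,v^\top A_\theta v=-v^\top Mv$, hence only $\sigma_{\min}(\Sigma_\theta)\geq\sigma_{\min}(M)/\big(2\lvert v^\top A_\theta v\rvert\big)$, and nothing in the hypotheses controls the denominator (for $A_\theta=-aI_d$ with $a$ large one gets $\Sigma_\theta=M/(2a)\prec M$, so the inequality cannot come from stability alone). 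What the paper actually invokes at this point is the closed-form representation $\Sigma_\theta=\int_0^\infty e^{A_\theta t}\,M\,e^{A_\theta^\top t}\,\mathrm{d}t$ from Lemma \ref{SolALE}, from which it asserts $\Sigma_\theta\succeq M$ and hence $\sigma_{\min}(\Sigma_\theta)\geq\sigma_{\min}(M)$. You should therefore replace your sketched spectral argument by a reference to that representation (the same ingredient is needed, and used in the same way, in your bound $\mathrm{tr}\big[\Sigma_{\theta^*}E_\theta^\top R^{-1}E_\theta\big]$ step where the paper also uses $\Sigma_{\theta'}\succeq M$ in Lemma \ref{LowerBoundJ}). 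Everything else, including the verbatim symmetric treatment of $J_2$, matches the paper.
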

\begin{proof}
    Cf. Appendix \ref{ProofPL}
\end{proof}


\subsection{Global convergence of model-based PG algorithm}

 For fixed $\ell,  \hat \ell \in \R_+$, we define the level subsets of $\Sc$ and $\hat \Sc$ as follows:
\begin{align} \label{defSa}
\Sc(\ell) \; = \; \Big\{\theta\in\Sc: \;  J_1(\theta)\leq \ell\Big\} & \quad \mbox{ and } \quad  
\hat \Sc(\hat \ell) \; = \;  \Big\{\zeta\in\Sc: \;  J_2(\zeta)\leq \hat \ell\Big\}. 
\end{align}
In the sequel, we shall naturally restrict to  $\ell>J_1(\theta^*)$ and $\hat \ell>J_2(\zeta^*)$ to avoid empty level sets.

\vspace{2mm}

We now show that both gradient maps $\theta\mapsto\nabla J_1(\theta)$ and $\zeta\mapsto\nabla J_2(\zeta)$ are Lipschitz-continuous on $\Sc(a)$ and $\hat\Sc(\hat a)$ respectively. 

\begin{Proposition}\label{LJ}
There exist explicit positive constants 
\begin{align} \label{La}
L( \ell) &= \; L\big(\ell;\sigma_{\min}(Q),\sigma_{\min}(M),\lVert R\rVert_F,\lVert D\rVert_F,J_1(\theta^*)\big), \\
\hat L(\hat \ell) &= \; \hat L\big(\hat \ell;\sigma_{\min}(\hat Q),\sigma_{\min}(\hat M),\lVert R\rVert_F,\lVert D\rVert_F,J_2(\zeta^*)\big),
\end{align}
such that $\theta\mapsto\nabla J_1(\theta)$ is $L(\ell)$-Lipschitz continuous on $\Sc(\ell)$ 
and $\zeta\mapsto\nabla J_2(\zeta)$ is $\hat L(\hat \ell)$-Lipschitz continuous on $\hat\Sc(\hat \ell)$, that is, for all $\theta,\theta'\in\Sc(\ell)$ and all $\zeta,\zeta'\in\hat\Sc(\hat \ell)$
\begin{align}
\lVert \nabla J_1(\theta')-\nabla J_1(\theta)\rVert_F \; \leq \;  L(\ell)\lVert \theta'-\theta\rVert_F, & \qquad 
\lVert \nabla J_2(\zeta')-\nabla J_2(\zeta)\rVert_F \;  \leq \;  \hat L(\hat \ell)\lVert \zeta'-\zeta\rVert_F.          
\end{align}
\end{Proposition}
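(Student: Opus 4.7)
The plan is to derive the Lipschitz property from perturbation estimates of the Lyapunov solutions $K_\theta,\Sigma_\theta$ (respectively $\Lambda_\zeta,\hat\Sigma_\zeta$) defined by \eqref{ALEForKtheta}, \eqref{ALEForSigmaTheta}, that are uniform over the sublevel set $\Sc(\ell)$, and then combine them via the product formula $\nabla J_1(\theta)=2E_\theta\Sigma_\theta$ from Proposition \ref{GradJ}. Writing $\delta:=\theta'-\theta$, $\Delta_K:=K_{\theta'}-K_\theta$, $\Delta_\Sigma:=\Sigma_{\theta'}-\Sigma_\theta$, the identity
$$
\nabla J_1(\theta')-\nabla J_1(\theta) \; = \; 2\bigl(R\delta+D\trans\Delta_K\bigr)\Sigma_{\theta'}+2E_\theta\Delta_\Sigma
$$
splits the task into (i) uniform a priori bounds on $\|K_\theta\|_F$, $\|\Sigma_\theta\|_F$ and $\|\theta\|_F$ on $\Sc(\ell)$, and (ii) linear (in $\|\delta\|_F$) control of $\Delta_K$ and $\Delta_\Sigma$.

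For (i), Proposition \ref{PropValueFuncForKtheta} combined with the dual identity $J_1(\theta)=K_\theta:M=(Q+\theta\trans R\theta):\Sigma_\theta\le\ell$ and the positivity of $K_\theta,\Sigma_\theta,M,Q,R$ directly yields $\|K_\theta\|_F\le \ell/\sigma_{\min}(M)$ and $\|\Sigma_\theta\|_F\le \ell/\sigma_{\min}(Q)$. A matching lower bound on $\sigma_{\min}(\Sigma_\theta)$, leading to $\|\theta\|_F^2\le \ell/(\sigma_{\min}(R)\sigma_{\min}(\Sigma_\theta))$ via $\theta\trans R\theta:\Sigma_\theta\le \ell$, follows from the Lyapunov identity \eqref{ALEForSigmaTheta} and the coercivity $M\succeq\sigma_{\min}(M)I_d$.

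For (ii), subtracting \eqref{ALEForSigmaTheta} and \eqref{ALEForKtheta} at $\theta$ and $\theta'$, with $A_\theta:=B+D\theta-\tfrac{\beta}{2}I_d$, produces the perturbation Lyapunov equations
\begin{align*}
A_{\theta'}\Delta_\Sigma+\Delta_\Sigma A_{\theta'}\trans & = \; -D\delta\,\Sigma_\theta-\Sigma_\theta\,\delta\trans D\trans,\\
A_{\theta'}\trans\Delta_K+\Delta_K A_{\theta'} & = \; -\bigl(K_\theta D\delta+\delta\trans D\trans K_\theta\bigr)-\bigl(\theta'\trans R\theta'-\theta\trans R\theta\bigr).
\end{align*}
The crucial lemma is the norm bound for the inverse Lyapunov operator $\mathcal{L}_{A_{\theta'}}\colon X\mapsto A_{\theta'}X+XA_{\theta'}\trans$: for every symmetric matrix $N$,
$$
\bigl\|\mathcal{L}_{A_{\theta'}}^{-1}(N)\bigr\|_F \; \le\; 2\,\|N\|_{\rm op}\,\frac{\|\Sigma_{\theta'}\|_F}{\sigma_{\min}(M)},
$$
with an analogous inequality for the adjoint operator featuring $\|K_{\theta'}\|_F/\sigma_{\min}(Q)$. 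Both follow from the comparison $-\mathcal{L}_{A_{\theta'}}^{-1}(I_d)\preceq \Sigma_{\theta'}/\sigma_{\min}(M)$, the monotonicity of the positive map $-\mathcal{L}_{A_{\theta'}}^{-1}$ on PSD sources, and a decomposition $N=N_+-N_-$ with $\|N_\pm\|_{\rm op}\le \|N\|_{\rm op}$. Combined with the bounds of step (i), this gives $\|\Delta_\Sigma\|_F,\|\Delta_K\|_F\le C(\ell)\|\delta\|_F$ with $C(\ell)$ polynomial in $\ell$, $\|D\|_F$, $\|R\|_F$, $1/\sigma_{\min}(M)$, $1/\sigma_{\min}(Q)$ and $J_1(\theta^*)$.

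Plugging these estimates into the decomposition of $\nabla J_1(\theta')-\nabla J_1(\theta)$ and using $\|E_\theta\|_F\le \|R\|_F\|\theta\|_F+\|D\|_F\|K_\theta\|_F$ yields the Lipschitz bound with the announced constant $L(\ell)$; the argument for $\nabla J_2$ is identical, with $(Q,B,M)$ replaced by $(\hat Q,\hat B,\hat M)$. The main technical hurdle is the Lyapunov-operator norm bound above: this is the only place where the coercivity of the driving noise matrix $M$ (respectively $\hat M$) is used, and the passage from PSD sources to general symmetric ones via the $\pm$-decomposition must be done with the operator norm $\|N\|_{\rm op}$ on the right-hand side so that the resulting constants remain dimension-free and of the precise form asserted in the statement.
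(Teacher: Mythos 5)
Your proof follows essentially the same route as the paper's: the same splitting $\nabla J_1(\theta')-\nabla J_1(\theta)=2(R\delta+D\trans\Delta_K)\Sigma_{\theta'}+2E_\theta\Delta_\Sigma$, the same a priori bounds $\lVert K_\theta\rVert_F\le \ell/\sigma_{\min}(M)$, $\lVert\Sigma_\theta\rVert_F\le \ell/\sigma_{\min}(Q)$ on the sublevel set, and the same key ingredient, namely a norm bound on the inverse Lyapunov operator calibrated against the known solution with coercive source (the paper's Lemma \ref{GE}, which it applies to $\mathcal{T}^{\Xi_\theta}$ to control $\Delta_\Sigma$). The one place you genuinely diverge is the perturbation of $K_\theta$: the paper writes $K_{\theta'}-K_\theta$ via the explicit integral representation of Lemma \ref{DiffKtheta} and then invokes compactness of $\Sc(\ell)$ to bound $\sup_{\theta\in\Sc(\ell)}\lVert\int_0^\infty e^{\Xi_\theta t}e^{\Xi_\theta\trans t}\,\d t\rVert_F$, so its constant $Lip_K(\ell)$ is finite but not in closed form; you instead apply the adjoint Lyapunov-operator bound calibrated against $Q+(\theta')\trans R\theta'\succeq\sigma_{\min}(Q)I_d$, which yields $\lVert\Delta_K\rVert_F\lesssim \lVert K_{\theta'}\rVert_F/\sigma_{\min}(Q)\cdot\lVert\text{RHS}\rVert\,$ and hence a fully explicit constant. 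That variant is correct (your monotonicity argument $0\preceq A\preceq B\Rightarrow\lVert A\rVert_F\le\lVert B\rVert_F$ for PSD matrices, plus the $N=N_+-N_-$ splitting, is sound) and is arguably sharper than the paper's compactness step; your alternative bound on $\lVert\theta\rVert_F$ via $\theta\trans R\theta:\Sigma_\theta\le\ell$ and $\Sigma_\theta\succeq M$ is likewise a harmless substitute for the paper's bound through $E_\theta$ and $K_\theta$. No gap.
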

\begin{proof}
Cf. Appendix \ref{ProofLip}
\end{proof}

\vspace{2mm}

The following theorem is the main result of this section and states the linear convergence rate of the gradient descent method. A key point is to prove that the sequence $(\theta_{[k]}, \zeta_{[k]})_{k\geq 0}$ always lies in $\Sc(\ell) \times \hat\Sc(\hat \ell)$ provided that one starts from an initial point $(\theta_{[0]},\zeta_{[0]}) \in\Sc(\ell)\times \hat\Sc(\hat \ell)$ with a suitably chosen common step size $\rho_\theta=\rho_\zeta:=\rho>0$.

\begin{Theorem}[Global convergence of the exact gradient descent method] \label{ConvExGradDes}
Let $(\theta_{[0]},\zeta_{[0]})$ $\in$ $\Sc(\ell)\times\hat\Sc(\hat \ell)$ and select a constant step size $ \rho \in\big(0,\frac{2}{\check{L}(\ell,\hat \ell)}\big)$ where 
\begin{align}\label{checkLa}
\check{L}(\ell,\hat \ell)& = \; \max\big(L(\ell),\hat L(\hat \ell)\big).
\end{align}
Then, the sequence $(\theta_{[k]},\zeta_{[k]})_{k\geq 0}$ generated by the exact GD algorithm \eqref{exact:GD:algorithm} stays in $ \Sc(\ell)\times\hat\Sc(\hat \ell)$. Moreover, for any fixed accuracy $\varepsilon>0$, we achieve  
\begin{align}  
J(\theta_{[k]},\zeta_{[k]};\lambda)-  J(\theta^*,\zeta^*;\lambda) &\leq \;  \varepsilon
\end{align} 
with a number of iterations satisfying
\begin{align} 
k & \geq \;  \frac{\log\big(\frac{\varepsilon}{ J(\theta_{[0]},\zeta_{[0]})-  J(\theta^*,\zeta^*)}\big)}{\log\big(1-\frac{\rho(2-\rho \check{L}(\ell,\hat \ell))}{ 2\kappa(\ell,\hat \ell)}\big)},
\end{align} 
where 
\begin{align}
 \kappa(\ell,\hat \ell) &= \; \max\Big(\kappa_1,\kappa_2,\frac{2}{\check{L}(\ell,\hat \ell)}\Big)+\frac{1}{2}.       
\end{align}
\end{Theorem}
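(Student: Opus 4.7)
The plan is to run the classical descent-lemma plus Polyak--Lojasiewicz argument, exploiting the decomposition $J(\theta,\zeta;\lambda) = J_1(\theta) + J_2(\zeta) + \upsilon(\lambda)$ from \eqref{defJ12} so that the $\theta$- and $\zeta$-updates decouple and the linear-rate bounds add. Fixing $k$, I would assume inductively that $\theta_{[k]} \in \Sc(\ell)$ and $\zeta_{[k]} \in \hat\Sc(\hat\ell)$, and split the induction step into (i) checking that the next iterate remains in the sublevel set, and (ii) establishing the per-step contraction on $J_1$ and $J_2$ separately.

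For (i), the delicate point is that the Lipschitz bound in Proposition \ref{LJ} is only local to $\Sc(\ell)$, so the standard descent inequality
\[
J_1\big(\theta_{[k]} - \rho \nabla J_1(\theta_{[k]})\big) \;\leq\; J_1(\theta_{[k]}) - \rho\Big(1 - \tfrac{\rho L(\ell)}{2}\Big)\lVert \nabla J_1(\theta_{[k]})\rVert_F^2
\]
requires the whole segment $\{\theta_{[k]} - t\nabla J_1(\theta_{[k]}) : t \in [0,\rho]\}$ to lie in $\Sc(\ell)$. I plan to justify this by a boundary-crossing argument: set
\[
t^\star \;:=\; \sup\big\{t \in [0,\rho] \,:\, \theta_{[k]} - s \nabla J_1(\theta_{[k]}) \in \Sc(\ell) \text{ for all } s \in [0,t]\big\},
\]
which is strictly positive by continuity of $J_1$ on $\Sc$. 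On $[0,t^\star)$ the descent lemma applies, and with $\rho < 2/\check L(\ell,\hat\ell) \leq 2/L(\ell)$ the coefficient $1 - \tfrac{\rho L(\ell)}{2}$ is positive, so $J_1$ is nonincreasing along the segment, forcing $J_1$ to remain at most $\ell$ up to $t^\star$. Moreover the endpoint cannot touch $\partial \Sc$: Proposition \ref{PropSigmaTheta} combined with $M > 0$ makes $\Sigma_\theta$, and therefore $J_1(\theta) \geq \sigma_{\min}(Q)\,{\rm tr}(\Sigma_\theta)$, blow up as $\theta$ approaches $\partial\Sc$. Thus $t^\star = \rho$ and $\theta_{[k+1]} \in \Sc(\ell)$; the argument for $\zeta_{[k+1]} \in \hat \Sc(\hat\ell)$ is identical.

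With invariance secured, the descent inequality applies at every iteration. Chaining it with the gradient domination $\lVert\nabla J_1(\theta_{[k]})\rVert_F^2 \geq \kappa_1^{-1}(J_1(\theta_{[k]}) - J_1(\theta^*))$ of Proposition \ref{GradDomCond} yields
\[
J_1(\theta_{[k+1]}) - J_1(\theta^*) \;\leq\; \Big(1 - \frac{\rho(2 - \rho L(\ell))}{2\kappa_1}\Big)\big(J_1(\theta_{[k]}) - J_1(\theta^*)\big),
\]
and the analogous bound with $\hat L(\hat\ell)$ and $\kappa_2$ holds for $J_2$. Since $L(\ell), \hat L(\hat\ell) \leq \check L(\ell,\hat\ell)$ and $\kappa_1,\kappa_2 \leq \kappa(\ell,\hat\ell)$, both contraction factors are bounded above by $r := 1 - \rho(2 - \rho \check L(\ell,\hat\ell))/(2\kappa(\ell,\hat\ell))$; the inflations $2/\check L$ and $1/2$ baked into $\kappa(\ell,\hat\ell)$ are exactly what guarantee $r \in (0,1)$ for every admissible $\rho$. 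Summing the two bounds and using $J(\theta_{[k]},\zeta_{[k]};\lambda) - J(\theta^*,\zeta^*;\lambda) = (J_1(\theta_{[k]}) - J_1(\theta^*)) + (J_2(\zeta_{[k]}) - J_2(\zeta^*))$ gives a single-step contraction with factor $r$; iterating $k$ times and inverting $r^k (J(\theta_{[0]},\zeta_{[0]};\lambda) - J(\theta^*,\zeta^*;\lambda)) \leq \varepsilon$ delivers the stated iteration count.

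The main obstacle is step (i): the sets $\Sc(\ell)$, $\hat\Sc(\hat\ell)$ are not convex and the Lipschitz constant of $\nabla J_1$ is only local to them, so the usual one-line justification of the descent lemma has to be replaced by the continuity/blow-up argument sketched above, which in turn relies crucially on the blow-up of $\Sigma_\theta$ at $\partial\Sc$ afforded by Assumption \ref{AssumForM}. Everything downstream is routine bookkeeping once the descent lemma and Proposition \ref{GradDomCond} can be chained legitimately along the entire trajectory.
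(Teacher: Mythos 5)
Your proposal is correct and follows essentially the same route as the paper: the invariance step is the paper's Lemma \ref{PerbEGD}, proved there by the same boundary-crossing argument (a maximal admissible step size $\rho_{\max}$, the local descent inequality \eqref{interJ11}, and a contradiction with $J_1(\theta-\rho_{\max}\nabla J_1(\theta))=\ell$), and the contraction step is the same chaining of the descent lemma with Proposition \ref{GradDomCond} via the unified constants $\check L(\ell,\hat\ell)$ and $\kappa(\ell,\hat\ell)$, followed by summing the $J_1$ and $J_2$ bounds. Your explicit appeal to the blow-up of $\Sigma_\theta$ (hence of $J_1$) at $\partial\Sc$ is a slightly more careful justification of why the exiting trajectory must hit the level $\{J_1=\ell\}$ rather than leave $\Sc$ itself, a point the paper handles only implicitly through the compactness of $\Sc(\ell)$.
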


 \begin{Remark}
In the model-based case, since we can compute $J_1$ and $J_2$ explicitly, one can choose $a$ $=$ $J_1(\theta_{[0]})$ and 
$\hat a$ $=$ $J_2(\zeta_{[0]})$ for any $(\theta_{[0]}, \zeta_{[0]})\in \Sc \times \hat\Sc$. 
 \end{Remark}

\vspace{2mm}

The proof of the above convergence result is based on the following stability result which shows that both $\Sc$ and $\hat \Sc$ are stable by the one step transition of the GD algorithm.

\begin{Lemma}\label{PerbEGD}
If $\theta\in \Sc(\ell),\theta\neq\theta^*$ and $\zeta\in\hat\Sc(\hat \ell),\zeta\neq\zeta^*$,  
then  for all $\rho\in(0,\frac{2}{\check L(\ell,\hat \ell)})=(0,\min(\frac{2}{L(\ell)},\frac{2}{\hat L(\hat \ell)}))$, it holds 
\begin{align}
\theta_{\rho} \; := \; \theta-\rho\nabla  J_1(\theta) \; \in \;  \Sc(\ell), & \qquad     
\zeta_{\rho} \; := \; \zeta- \rho\nabla J_2(\zeta) \; \in \; \hat \Sc(\hat \ell).
\end{align}
\end{Lemma}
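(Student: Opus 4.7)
The plan is to prove both inclusions simultaneously since they are symmetric: I focus on $\theta_\rho \in \Sc(\ell)$ as the argument for $\zeta_\rho \in \hat\Sc(\hat\ell)$ is identical in structure. The core tool will be the descent lemma applied to the $L(\ell)$-Lipschitz gradient $\nabla J_1$ on the sublevel set $\Sc(\ell)$, combined with a continuity/maximality argument to ensure the gradient step does not exit $\Sc$ (where stability fails and $J_1$ is no longer well-defined).

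First, I would use Proposition \ref{LJ}. Whenever the whole segment $\{\theta - s\nabla J_1(\theta): s\in[0,\rho]\}$ lies in $\Sc(\ell)$, the standard integration of $\nabla J_1$ along the segment yields the descent inequality
\begin{align}
J_1(\theta_\rho) \;\leq\; J_1(\theta) - \rho\Big(1 - \frac{\rho L(\ell)}{2}\Big)\lVert \nabla J_1(\theta)\rVert_F^2,
\end{align}
and the assumption $\rho < 2/L(\ell)$ makes the parenthesis strictly positive, so $J_1(\theta_\rho) \leq J_1(\theta) \leq \ell$, confirming $\theta_\rho$ stays in the sublevel set as long as it stays in $\Sc$.

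The technical subtlety, which I anticipate as the main obstacle, is that $\Sc$ is open (defined by a spectral stability condition) and not convex in general, so the straight-line segment from $\theta$ to $\theta_\rho$ may a priori exit $\Sc$. To handle this, I would introduce the maximal step
\begin{align}
\rho^* \;=\; \sup\Big\{\bar\rho \geq 0 : \theta - s\nabla J_1(\theta) \in \Sc(\ell)\ \text{for all}\ s\in[0,\bar\rho]\Big\}.
\end{align}
Since $\theta\in\Sc(\ell)$ with $J_1(\theta) \leq \ell$, and $\Sc$ is open while $s\mapsto\theta-s\nabla J_1(\theta)$ is continuous with $J_1$ continuous on $\Sc$ (cf. Proposition \ref{PropValueFuncForKtheta}), we have $\rho^*>0$. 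I would then argue by contradiction: suppose $\rho^* < 2/L(\ell)$. The descent inequality above, valid on $[0,\rho^*)$, gives $J_1(\theta_{s}) \leq J_1(\theta)-s(1-sL(\ell)/2)\lVert\nabla J_1(\theta)\rVert_F^2$; since $\theta\neq\theta^*$ implies $\nabla J_1(\theta)\neq 0$ (by the gradient domination Proposition \ref{GradDomCond}, $\nabla J_1(\theta) = 0$ would force $J_1(\theta) = J_1(\theta^*)$ and then $E_\theta = 0$, i.e.\ $\theta = -R^{-1}D\trans K_\theta = \theta^*$, contradicting $\theta\neq\theta^*$), this strict decrease together with continuity of $J_1$ along the segment yields $J_1(\theta_{\rho^*}) < \ell$ at the boundary point, and $\theta_{\rho^*}\in\Sc$ by closedness of the limiting spectrum combined with the strict upper bound on $J_1$ (a blow-up of $\theta_{\rho^*}$ at the boundary of $\Sc$ would make $K_{\theta_{\rho^*}}$, hence $J_1(\theta_{\rho^*})$, diverge). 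Hence by openness of $\Sc$, one can extend the segment slightly beyond $\rho^*$ while remaining in $\Sc(\ell)$, contradicting maximality. Therefore $\rho^* \geq 2/L(\ell)$, which gives $\theta_\rho\in\Sc(\ell)$ for all $\rho\in(0, 2/L(\ell))$, and in particular for all $\rho\in(0, 2/\check L(\ell,\hat\ell))$. The same argument with $J_2$, $\hat L(\hat\ell)$, and $\hat\Sc(\hat\ell)$ yields $\zeta_\rho\in\hat\Sc(\hat\ell)$.
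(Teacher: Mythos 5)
Your proof is correct and follows essentially the same route as the paper: define the maximal admissible step $\rho^*$ along the ray $\theta-s\nabla J_1(\theta)$, apply the descent inequality coming from the $L(\ell)$-Lipschitz continuity of $\nabla J_1$ on $\Sc(\ell)$ (Proposition \ref{LJ}), and derive a contradiction if $\rho^*<2/L(\ell)$. Your extra care in ruling out an exit through the boundary of $\Sc$ itself (via blow-up of $J_1$) and in justifying $\nabla J_1(\theta)\neq 0$ makes explicit two points the paper passes over quickly, but the argument is the same.
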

\begin{proof}
For a fixed $\theta\in\Sc(\ell)$, we let 
\begin{align} 
\rho_{\max} &= \; \sup\big\{\rho'\geq 0:  \; \theta-\rho\nabla J_1(\theta) \in\Sc(\ell),\forall \rho\in[0,\rho']\big\}. 
\end{align} 
From the first order Taylor expansion and the continuity of $\nabla J_1$, one has
$$
J_1(\theta_\rho)=J_1(\theta)-\rho\lVert \nabla J_1(\theta)\rVert_F^2+o(\rho).
$$
 Since $\theta$ $\neq$ $\theta^*$, we have $\lVert \nabla J_1(\theta)\rVert_F^2>0$ and then for all $\rho$ small enough, we have $J_1(\theta_\rho)\leq J_1(\theta)\leq \ell$ which implies that $\theta_\rho\in\Sc(\ell)$ and thus $\rho_{\max}>0$. Moreover, from Proposition \ref{Bds} where it is shown that $\Sc(\ell)$ is bounded, we deduce that $\rho_{\max}<+\infty$.

The definition of $\rho_{\max}$ implies that for any $\rho\in(0,\rho_{max}]$ and any $t\in [0,1]$, $t\theta_\rho+(1-t)\theta=\theta-(t\rho)\nabla J_1(\theta) = \theta_{t \rho} \in \Sc(\ell)$ since $t\rho\in (0,\rho_{\max}]$.

Now the second-order Taylor's expansion for $J_1(\theta)$ combined with the local Lipschitz continuity of $\nabla J_1$ stated in Proposition \ref{LJ} guarantees that
\begin{align} \label{interJ1}
J_1(\theta_\rho) & \leq \;  J_1(\theta)+\langle\nabla J_1(\theta),\theta_\rho-\theta\rangle+\frac{L(a)}{2} 
\lVert \theta_\rho-\theta\rVert_F^2. 
\end{align}
Suppose that $\rho_{\max}<\frac{2}{L(\ell)}$. From the continuity of $\rho\mapsto J_1(\theta-\rho \nabla J_1(\theta))$, and the definition of $\Sc(\ell)$, we have $ J_1\big(\theta-\rho_{\max}\nabla J_1(\theta)\big)=\ell$.  Then,    by \eqref{interJ1}, we get 
\begin{align} \label{interJ11}
J_1(\theta-\rho\nabla J_1(\theta)) &\leq \;   J_1(\theta)-\rho\lVert \nabla  J_1(\theta)\rVert_F^2+\frac{\rho^2 L(a)}{2}\lVert \nabla  J_1(\theta)\rVert_F^2, 
\end{align} 
which implies (as $2 -\rho_{max}L(\ell)$ $>$ $0$) that 
\begin{align}        
J_1\big(\theta-\rho_{\max}\nabla J_1(\theta)\big) &< \;  J_1(\theta) \; \leq \;  \ell. 
 \end{align} 
This contradicts the fact that $J_1\big(\theta-\rho_{\max}\nabla J_1(\theta)\big)=\ell$. Therefore, we conclude that $\rho_{\max}\geq\frac{2}{L(\ell)}$ and for all $\rho\in(0,\frac{2}{L(\ell)})$, $\theta_\rho=\theta-\rho\nabla J_1(\theta)\in\Sc(\ell)$. Similar arguments show the result for $\zeta_\rho$. In particular, one shows that for all $\rho\in(0,\frac{2}{\hat L(\hat \ell)})$, $\zeta_\rho\in\hat S(\hat \ell)$. In conclusion, for all $\rho\in (0,\min(\frac{2}{L(\ell)},\frac{2}{\hat L(\hat a)}))$,  it holds  at the same time
\begin{align}
\theta_{\rho} \; := \; \theta-\rho\nabla  J_1(\theta) \; \in \;  \Sc(\ell) & \quad \mbox{ and } \quad      
\zeta_{\rho} \; := \; \zeta- \rho\nabla J_2(\zeta) \; \in \; \hat \Sc(\hat \ell).
\end{align}
\end{proof}

\vspace{3mm}

\noindent {\bf  Proof of Theorem \ref{ConvExGradDes}.} 
Start from some fixed stabilizing initial parameter $(\theta_{[0]},\zeta_{[0]})\in \Sc(\ell) \times \hat{\Sc}(\hat \ell)$. 
Since $\check L(\ell,\hat \ell)=\max(L(\ell),\hat L(\hat \ell))$, by choosing a step size $\rho\in(0,\frac{2}{\check L(\ell,\hat \ell)})$, Lemma \ref{PerbEGD} guarantees that 
\begin{align} 
\theta_{[1]} \; = \; \theta_{[0]}-\rho\nabla J_1(\theta_{[0]}) \; \in \; \Sc(\ell), & \quad \mbox{ and } \quad 
\zeta_{[1]} \; = \; \zeta_{[0]}-\rho\nabla J_2(\zeta_{[0]}) \; \in \;  \hat\Sc(\hat \ell).
\end{align} 

 Moreover, by \eqref{interJ11}, the following inequalities hold
\begin{equation} \label{Jtheta}
\begin{cases}
J_1(\theta_{[1]})- J_1(\theta_{[0]})
& \leq \;  \frac{-\rho(2-\rho \check L(\ell,\hat \ell))}{2}\lVert \nabla  J_1(\theta_{[0]})\rVert_F^2 \\
J_2(\zeta_{[1]})- J_2(\zeta_{[0]})
& \leq \;  \frac{-\rho(2-\rho \check L(\ell,\hat \ell))}{2}\lVert \nabla  J_2(\zeta_{[0]})\rVert_F^2. 
\end{cases}
\end{equation} 
The definition of $\kappa(\ell,\hat \ell)$ together with the gradient domination condition of Proposition \ref{GradDomCond}, ensures that for all $\theta\in \Sc(\ell)$, $\zeta\in\hat{\Sc}(\hat \ell)$ 
\begin{align}
J_1(\theta)-J_1(\theta^*)
\; \leq \;  \kappa(\ell,\hat \ell)\lVert \nabla J_1(\theta)\rVert_F^2, & \qquad 
J_2(\zeta)-J_1(\zeta^*)
\; \leq \;  \kappa(\ell,\hat \ell)\lVert \nabla J_2(\zeta)\rVert_F^2, 
\end{align}
and that for all $\rho\in \big(0,\frac{2}{\check L (\ell,\hat \ell)}\big)$  
\begin{align} 
\frac{\rho(2-\rho \check L(\ell,\hat \ell))}{2 \kappa(\ell,\hat \ell)}\in(0,1).
\end{align} 

Hence, coming back to \eqref{Jtheta}, we deduce
\begin{align} 
J_1(\theta_{[1]})- J_1(\theta_{[0]}) 
\; \leq \;  \frac{-\rho(2-\rho \check L(\ell,\hat \ell))}{2\kappa(\ell,\hat \ell)} 
\big(J_1(\theta_{[0]})-J_1(\theta^*) \big), \\
J_2(\zeta_{[1]})- J_2(\zeta_{[0]}) 
\; \leq \;  \frac{-\rho(2-\rho \check L(\ell,\hat \ell))}{2\kappa(\ell,\hat \ell)}\big(J_2(\zeta_{[0]})-J_2(\zeta^*)\big). 
\end{align} 
Summing the two previous inequalities, we obtain
\begin{align*}
    J(\theta_{[1]},\zeta_{[1]};\lambda)-J(\theta_{[0]},\zeta_{[0]};\lambda)&= \; \big(J_1(\theta_{[1]})- J_1(\theta_{[0]})\big)+\big(J_2(\zeta_{[1]})- J_2(\zeta_{[0]})\big)\\
    & \leq \; \frac{-\rho(2-\rho \check L(\ell,\hat \ell))}{2\kappa(\ell,\hat \ell)}\big((J_1(\theta_{[0]})-J_1(\theta^*))+(J_2(\zeta_{[0]})-J_2(\zeta^*))\big)\\
    &= \; \frac{-\rho(2-\rho \check L(\ell,\hat \ell))}{2\kappa(\ell,\hat \ell)}(  J(\theta_{[0]},\zeta_{[0]};\lambda)-  J(\theta^*,\zeta^*;\lambda)). 
\end{align*}
By a direct induction argument, at each step $k\geq0$, it holds
\begin{align}  
J(\theta_{[k+1]},\zeta_{[k+1]};\lambda)- J(\theta_{[k]},\zeta_{[k]};\lambda) & \leq \; 
\frac{-\rho(2-\rho \check L(\ell,\hat \ell))}{2\kappa(\ell,\hat \ell)}
\big(J(\theta_{[k]},\zeta_{[k]};\lambda)-  J(\theta^*,\zeta^*;\lambda) \big), 
\end{align} 
\noindent so that
\begin{align}
J(\theta_{[k+1]},\zeta_{[k+1]};\lambda)-J(\theta^*,\zeta^*;\lambda) &\leq \;  
\Big(1-\frac{\rho(2-\rho \check L (\ell,\hat \ell))}{2 \kappa(\ell,\hat \ell)}\Big)
\big(J(\theta_{[k]},\zeta_{[k]};\lambda)-J(\theta^*,\zeta^*;\lambda)\big). 
\end{align} 
We thus conclude
\begin{align} 
 J(\theta_{[k]},\zeta_{[k]};\lambda)-J(\theta^*,\zeta^*;\lambda) &\leq \; 
 \Big(1-\frac{\rho(2-\rho \check L (\ell,\hat \ell))}{2 \kappa(\ell,\hat \ell)}\Big)^k
 \big(J(\theta_{[0]},\zeta_{[0]};\lambda)-J(\theta^*,\zeta^*;\lambda)\big). 
 \end{align} 
The above inequality readily leads to the rest of the assertions in the theorem.
\ep

 \vspace{1mm}

\begin{Remark} \label{remcomp} 
    The problem of the solvability of the Lyapunov equations \eqref{ALEForKtheta}-\eqref{ALEForLambdatheta} have been investigated in \cite{GCPFLQMF}. The main difference with our result is that the authors in \cite{GCPFLQMF} have shown that if $\theta\in\Sc$ and $\lVert \theta'-\theta\rVert\leq c(\theta)$ with the perturbation $c(\theta)$ depending on $\theta$ then $\theta'\in\Sc$. 
    Therefore, at each step  $\theta'=\theta-\rho\nabla J(\theta)$, they choose the step size $\rho>0$ according to $\nabla J(\theta)$ in order to make $\lVert \rho \nabla J(\theta)\rVert=\lVert \theta'-\theta\rVert$ the perturbation less than $c(\theta)$. 

    Here, we adopt a different approach: we truncate $\Sc$ and define a compact subset $\Sc(\ell)$. We demonstrate that $\nabla J$ is Lipschitz continuous on $\Sc(\ell)$ with a Lipschitz constant $L(\ell)$ and directly choose the step size $\rho > 0$ according to $L(\ell)$, ensuring that the sequence $(\theta_k)_{k \geq 0}$ remains in $\Sc(\ell)$. Thus, $\rho > 0$ remains constant throughout all iterations. We refer to \cite{ConvSampGradMethod} for a detailed description of the method we have extended to solve an LQ MFC problem. It is important to note that in \cite{ConvSampGradMethod}, the only source of randomness is the initial value.    
   \end{Remark}

\section{Model-free PG algorithm}\label{sec:model:free}

\subsection{Notations}
In the model-free setting, we do not have access to the values of the functions $J_1$ and $J_2$, or their gradients, as the model coefficients are unknown. Hence, we combine the two parameters $\Theta = (\theta, \zeta) \in H = \R^{m \times d} \times \R^{m \times d}$. The value function $J_1 + J_2$ is then regarded as a function defined on $H$. 

For all $\Theta=(\theta,\zeta)\in H$, we define the norm of $\Theta$ by 
\begin{align} 
\lVert\Theta\rVert_H &= \; \sqrt{\lVert \theta\rVert_F^2+\lVert\zeta\rVert_F^2}, 
\end{align} 
and for $\Theta_1=(\theta_1,\zeta_1),\Theta_2=(\theta_2,\zeta_2)\in H$, their inner product is defined by
\begin{align} 
\langle \Theta_1,\Theta_2\rangle_H & := \;  \theta_1 : \theta_2 \; + \;  \zeta_1 : \zeta_2. 
\end{align} 


We let
\begin{align} 
\Rc& := \; \Big\{\Theta=(\theta,\zeta)\in H\, : \, (\theta, \zeta) \in\Sc \times \hat\Sc\Big\} \subset H,
\end{align} 
and, for $b>0$, we define the level subset of $\Rc$ by
\begin{align}\label{Rcb}
\Rc(b) &:= \; \Big\{\Theta=(\theta,\zeta)\in \Rc \, : \, \check J(\Theta)\leq b\Big\},
\end{align}
where  
\begin{align}\label{checkJ}
\check J(\Theta) &:= \; J(\Theta;\lambda) - \upsilon(\lambda) \; = \; J_1(\theta)+J_2(\zeta).
\end{align}
Note that $\check{J}$ is obtained by removing the constant part that depends only on $\lambda$ from $J$.

\vspace{1mm}

Since $J_1$ and $J_2$ are non-negative functions, one has $\Rc(b)\subset\Sc(b)\times\hat \Sc(b)$. Having shown that both $\Sc(b)$ and $\hat \Sc(b)$ are compact subsets of $\R^{m\times d}$, we get that $\Rc(b)$ is bounded. The continuity of $\check J$ on $H$ also implies that $\Rc(b)$ is closed. Since $H$ is of finite dimension, we deduce that $\Rc(b)$ is compact.

\vspace{1mm}

Finally, one can define the total gradient of $J$ and $\check J$ as 
\begin{align} 
\nabla \check J(\Theta) = \nabla_\Theta J(\Theta;\lambda) \; := \; 
(\nabla_\theta J(\theta,\zeta;\lambda),\nabla_\zeta J(\theta,\zeta;\lambda)) \; = \; 
(\nabla J_1(\theta),\nabla J_2(\zeta)).
\end{align} 
According to Theorem \ref{GradDomCond}, the gradient domination inequality holds:
\begin{align}\label{GD}
J(\Theta;\lambda) - J(\Theta^*;\lambda) &= \;   J_1(\theta)-J_1(\theta^*) +    J_2(\zeta)-J_2(\zeta^*) 
\;    \leq \; \kappa\lVert\nabla J(\Theta)\rVert_H^2,
\end{align}
where $\kappa=\max\{\kappa_1,\kappa_2\}$. The same inequality also holds for  $\check J$.

\subsection{Model-free PG algorithm with population simulator}

In the current model-free setting, we do not explicitly know the gradient $\nabla \check{J}(\theta, \zeta)$. Therefore, the GD algorithm from the previous section cannot be directly applied. Instead, we rely on a stochastic PG algorithm based on a stochastic population simulator, which provides an approximation of the controlled MKV dynamics \eqref{DynXtheta} along with the associated cost $J$ \eqref{defJtheta}.

\vspace{1mm}

For  a finite terminal horizon $T>0$ and a positive integer $n$, we consider the uniform time grid of the interval $[0,T]$ given by $\Delta:=\Delta(T,n)=\{0=t_0<t_1<\dots<t_{n}=T\}$ where $t_l=l h$, $l=0,1,\dots n$, and $h:=\frac{T}{n}$. For a given positive integer $N$, the $N$ interacting agents system with states $(X^{\Theta,\Delta,(j)}_{t_l})_{1\leq j \leq N; 0\leq l \leq n}$ evolves according to the dynamics
\begin{equation}\label{XDeltThet}
\begin{aligned}
     X^{\Theta,\Delta,(j)}_{t_{l+1}}&=X^{\Theta,\Delta,(j)}_{t_l}+(BX^{\Theta,\Delta,(j)}_{t_l}+\bar B\hat\mu^{\Theta,\Delta,N}_{t_l}+D\alpha^{\Theta,\Delta,(j)}_{t_l})h\\
     & \quad + \; \gamma (W^{(j)}_{t_{l+1}}-W^{(j)}_{t_l})+\gamma_0 (W^0_{t_{l+1}}-W^0_{t_l}), \quad j=1,\dots,N, \, l=0, \cdots, n-1,
\end{aligned}
\end{equation}
where $X^{\Theta,\Delta,(j)}_{0}=X^{(j)}_0$, $\hat\mu^{\Theta,\Delta,N}_{t_l}:=\frac{1}{N}\sum_{j=1}^N X^{\Theta,\Delta,(j)}_{t_l}$,  
and the action that the $j$-th agent takes at time $t=t_l$ is drawn as $\alpha^{\Theta,\Delta,(j)}_{t_l}\sim\pi^\Theta(\cdot| X^{\Theta,\Delta,(i)}_{t_l}-\hat\mu^{\Theta,\Delta,N}_{t_l},\hat\mu^{\Theta,\Delta,N}_{t_l})$ independently of $((W^{(j)})_{j=1,\dots,N}, W^0)$. Here, $(W^{(j)})_{j=1,\dots,N}$ are $i.i.d.$ $d$-dimensional Brownian motions on $(\Omega^1,\Fc^1,\P^1)$ and $W^0$ is the common noise on $(\Omega^0,\Fc^0,\P^0)$, and $(X^{(j)}_0)_{1\leq j \leq N}$ is independent of 
$((W^{(j)})_{j=1,\dots,N}, W^0)$.

\vspace{1mm}

In the spirit of \cite{YZhangLSzpruch2023}, the population simulator samples the randomized actions $\alpha^{\Theta,\Delta,(j)}_{t_l}$ as follows
\begin{equation}\label{ExecActRandom}
\begin{aligned}
    \alpha^{\Theta,\Delta,(j)}_{t_l}& =\theta(X^{\Theta,\Delta,(j)}_{t_{l}}-\hat\mu^{\Theta,\Delta,N}_{t_l})\\
    & \quad +\zeta\hat\mu^{\Theta,\Delta,N}_{t_l}+\sqrt{\frac{\lambda}{2}R^{-1}}\xib^{(j)}_{t_l}, \quad j=1,\dots,N, \, l=0,\dots,n-1, 
\end{aligned}
\end{equation}
 where the \textit{action simulation noises} $(\xib^{(j)}_{t_l})_{1\leq j\leq N; 0\leq l\leq n-1}$ are \emph{i.i.d.} random variables independent of $((X_0^{(j)}, W^{(j)})_{j=1,\dots,N}, W^0)$ with law $\Nc(0,\mathbf{I}_m)$. In particular, one may assume that $\Fc_0$ is rich enough to support not only $(X_0^{(j)})_{1\leq j \leq N}$ but also the sequence $(\xib^{(j)}_{t_l})_{1\leq j \leq N; 0 \leq l\leq  n}$. 

 Introducing the notations $\bw^{(j)}_l=\frac{1}{\sqrt{h}}(W^{(j)}_{t_{l+1}}-W^{(j)}_{t_l}), \bw^{0}_l=\frac{1}{\sqrt{h}}(W^{0}_{t_{l+1}}-W^{0}_{t_l})$, the dynamics \eqref{XDeltThet} writes
\begin{equation}\label{XDeltThet2}
\begin{aligned}
      X^{\Theta,\Delta,(j)}_{t_{l+1}}&=X^{\Theta,\Delta,(j)}_{t_l}+\Big(BX^{\Theta,\Delta,(j)}_{t_l}+\bar B\hat\mu^{\Theta,\Delta,N}_{t_l}+D\big(\theta(X^{\Theta,\Delta,(j)}_{t_{l}}-\hat\mu^{\Theta,\Delta,N}_{t_l})+\zeta\hat\mu^{\Theta,\Delta,N}_{t_l}\\
      &\qquad +\sqrt{\frac{\lambda}{2}R^{-1}}\xib^{(j)}_{t_l} \big)\Big) h+\sqrt{h} \gamma\bw^{(j)}_l+\sqrt{h}\gamma_0 \bw^0_l, \quad j=1,\dots,N, \, l=0, \cdots, n-1, 
\end{aligned}
\end{equation}
and we notice that $(X^{\Theta,\Delta,(j)})_{j=1,\dots,N}$ are exchangeable in law.

The discounted running cost at time $t_l$ of the $j$-th agent is defined by
\begin{equation}\label{Runc}
    \begin{aligned}
         Runc^{\Theta,\Delta,(j)}_{t_l}&=e^{-\beta t_l}\big((X^{\Theta,\Delta,(j)}_{t_l}-\hat\mu^{\Theta,\Delta,N}_{t_l})\trans Q (X^{\Theta,\Delta,(j)}_{t_l}-\hat\mu^{\Theta,\Delta,N}_{t_l})+(\hat\mu^{\Theta,\Delta,N}_{t_l})\trans\hat Q\hat\mu^{\Theta,\Delta,N}_{t_l}\\
    & \qquad + \; (\alpha^{\Theta,\Delta,(j)}_{t_l})\trans R\alpha^{\Theta,\Delta,(j)}_{t_l}\big),    
    \end{aligned}
\end{equation}
and the average cost estimation of the population over the interval $[0,T]$ for a given $\Theta \in H$ is given by 
\begin{equation}\label{EstimpopDef}
    \begin{aligned}
         \Jc^{\Delta,N}_{pop}(\Theta) & := \frac{{h}}{N}\sum_{j=1}^N \sum_{l=0}^{n-1}\Bigg( Runc^{\Theta,\Delta,(j)}_{t_l}\\
         & +\lambda e^{-\beta t_l} \log p^\Theta( X^{\Theta,\Delta,(j)}_{t_l}-\hat\mu^{\Theta,\Delta,N}_{t_l}, \hat\mu^{\Theta,\Delta,N}_{t_l},\alpha^{\Theta,\Delta,(j)}_{t_l})\Bigg)\\
         &=\frac{{h}}{N}\sum_{j=1}^N \Bigg(\sum_{l=1}^{n} e^{-\beta t_l} \big((X^{\Theta,\Delta,(j)}_{t_l}-\hat\mu^{\Theta,\Delta,N}_{t_l})\trans Q(X^{\Theta,\Delta,(j)}_{t_l}-\hat\mu^{\Theta,\Delta,N}_{t_l})\\
        & +(\hat\mu^{\Theta,\Delta,N}_{t_l})\trans \hat Q\hat\mu^{\Theta,\Delta,N}_{t_l} + (\alpha^{\Theta,\Delta,(j)}_{t_l})\trans R\alpha^{\Theta,\Delta,(j)}_{t_l}\\
        & +\lambda\log p^\Theta( X^{\Theta,\Delta,(j)}_{t_l}-\hat\mu^{\Theta,\Delta,N}_{t_l}, \hat\mu^{\Theta,\Delta,N}_{t_l},\alpha^{\Theta,\Delta,(j)}_{t_l})\big)\Bigg).
    \end{aligned}
\end{equation}
Note that, since $\beta$, $\lambda>0$, $R$ and the density $p^\Theta$ are known, the last term in the above expression can be computed explicitly.

In the spirit of \cite{fazetal18}, see also \cite{carlautan19} and \cite{carlautan19a}, we compute in Algorithm \ref{gradient:estimation:algorithm}  a biased estimator of the true gradient $\nabla \check J$  $=$ 
$(\nabla_\theta \check J, \nabla_\zeta \check J)$ based solely on the average cost estimation \eqref{EstimpopDef}.
We let $\B_r\subset \R^{m\times d}$ be the ball of radius $r$ (with respect to the Frobenius norm) centered at the origin, and $\S_r=\partial \B_r$ be its boundary. The uniform distribution on $\S_r$ is denoted by $\mu_{\S_r}$.\\

\vspace{1mm}

\begin{algorithm2e}[H] \label{gradient:estimation:algorithm}
\DontPrintSemicolon 
\SetAlgoLined 
\vspace{1mm}
{\bf Input data}: Feedback parameters $(\theta, \zeta)\in\Sc \times \hat\Sc$ , number of agents $N$,  number of perturbations $\tilde N$, finite horizon $T>0$, number of period of discrete grid $n$, radius $r$. 

\For{ $i$ $=$ $1,\ldots,\tilde N$}
{\begin{itemize}
\item Sample $U_i$ and $V_i$ \, i.i.d. $\sim\mu_{\S_r}$
   \item  Define perturbed feedback parameters 
   $$
   \theta_{i}=\theta+ U_i\; ;\; \zeta_{i} = \zeta + V_i.
   $$
   \item Sample $ \Jc^{\Delta,N, i}_{pop} $ via the population simulator \eqref{EstimpopDef} for $\Theta= \Theta_i=(\theta_{i},\zeta_{i})$ .
\end{itemize}

}

{\bf Return: } The estimations of the gradient of the functional cost with respect to $\Theta =(\theta,\zeta)$:
\begin{align*}
    \tilde\nabla^{\Delta,N,pop}_{\theta} J(\Theta)&=\frac{d}{r^2}\frac{1}{\tilde N}\sum_{i=1}^{\tilde N} \Jc^{\Delta,N, i}_{pop} U_i, \\
     \tilde\nabla^{\Delta,N,pop}_{\zeta} J(\Theta)&=\frac{d}{r^2}\frac{1}{\tilde N}\sum_{i=1}^{\tilde N} \Jc^{\Delta,N, i}_{pop} V_i.
\end{align*}
\caption{Model Free Population-Based Gradient Estimation}
\label{Gradientestim}
\end{algorithm2e}


\vspace{7mm}

We are now in position to define our (stochastic) model-free PG algorithm. Given $(\theta_{[0]}, \zeta_{[0]}) \in \mathcal{R}(b)$, for some $b>0$, recalling \eqref{Rcb}, we update the parameters as follows
\begin{equation}\label{model:free:PG:algorithm}
\theta_{[k+1]}=\theta_{[k]} - \rho \tilde\nabla^{\Delta,N,pop}_{\theta} J(\theta_{[k]},\zeta_{[k]}), \quad \zeta_{[k+1]}=\zeta_{[k]} - \rho  \tilde\nabla^{\Delta,N,pop}_{\zeta} J(\theta_{[k]},\zeta_{[k]}) , \quad k\geq0,  
\end{equation}                     
for some well-chosen constant positive learning rate $\rho$. 

\vspace{7mm}

Our main contribution is the following convergence result of the model-free PG sequence $(\Theta_{[k]}:=(\theta_{[k]}, \zeta_{[k]}))_{k\geq0}$ as given by \eqref{model:free:PG:algorithm}.

\begin{Theorem}\label{ThmConvSGD}
    Let $\varepsilon>0$ be a prescribed accuracy and $\rho\in(0,\frac{4}{9\check L(b)})$ be the constant learning rate. Then, choosing $r$ small enough such that
    $$
    r \leq \breve{r}(\varepsilon) := \min\Big(\check r(b),\frac{1}{\h_r(b,1/(\frac{1}{10\sqrt{2}}\sqrt{\frac{\varepsilon}{\kappa}}))},\frac{1}{\h'_r(b,1/(\frac{1}{10\sqrt{2}}\sqrt{\frac{\varepsilon}{\kappa}}))}\Big),
    $$
    recalling that $\kappa$ is the coefficient in the gradient domination condition appearing in \eqref{GD}, choosing $T$ large enough such that 
    $$
    T \; \geq \;  { \breve{T}(\varepsilon) := }  \frac{1}{c_2(2b)}\log\big(\frac{10\sqrt{2}dc_1(2b)}{\breve{r}(\varepsilon)}\sqrt{\frac{\kappa}{\varepsilon}}),
    $$
    choosing the number of periods in the grid $n$ large enough such that 
    $$
    n\; \geq \;  {\breve{n}(\varepsilon) := } \frac{10\sqrt{2}d\breve{T}(\varepsilon) c_3(2b)}{\breve{r}(\varepsilon)}\sqrt{\frac{\kappa}{\varepsilon}},
    $$
    taking the number of particles $N$ large enough such that 
    $$
    N \; \geq \; { \breve{N}(\varepsilon) := } \frac{10\sqrt{2}d {c_4(2b,\breve{T}(\varepsilon),\breve{n}(\varepsilon))}}{\breve{r}(\varepsilon)}\sqrt{\frac{\kappa}{\varepsilon}},
    $$ 
    and finally taking the number of samples $\tilde N$ in Algorithm \ref{Gradientestim} large enough such that 
     $$
     \tilde N \; \geq \; {\breve{\tilde N}(\varepsilon) := } \max\left\{\h_{\tilde N}(b,\frac{1}{\breve{r}(\varepsilon)},1/(\frac{1}{10\sqrt{2}}\sqrt{\frac{\varepsilon}{\kappa}})), {\h'_{\tilde N}(b,\frac{1}{ \breve{r}(\varepsilon)},\breve{T}(\varepsilon),\breve{n}(\varepsilon),\breve{N}(\varepsilon),1/(\frac{1}{10\sqrt{2}}\sqrt{\frac{\varepsilon}{\kappa}}))}\right\},
     $$
     it holds
 $$
 J(\Theta_{[k]})-J(\Theta^*)\leq \varepsilon
 $$ 
 \noindent with probability at least $1-4k\big(\frac{10\sqrt{2}d\sqrt{\kappa}}{\sqrt{\varepsilon}}\big)^{-d}$ in at most $$k\geq {\log\big(\frac{\varepsilon}{J(\Theta_{[0]})-J(\Theta^*)}\big)}/{\log\big(1-\frac{\rho(4-9\rho \check L(b))}{8\bar\kappa(b)}\big)}$$ iterations.   
\end{Theorem}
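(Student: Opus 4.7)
The plan mirrors the proof of Theorem~\ref{ConvExGradDes} but must replace the exact gradient update by the stochastic estimator from Algorithm~\ref{Gradientestim} and carefully track how each of the five sources of error (perturbation/smoothing, horizon truncation, time discretization, particle discretization, statistical noise) propagates through the iterations. Denoting compactly $\tilde\nabla J(\Theta) = (\tilde\nabla^{\Delta,N,pop}_\theta J(\Theta), \tilde\nabla^{\Delta,N,pop}_\zeta J(\Theta))$ and $\nabla^r \check J$ the gradient of the spherical-smoothing surrogate of $\check J$ at radius $r$, the natural decomposition I would use is
\begin{align*}
\tilde\nabla J(\Theta) - \nabla \check J(\Theta)
&= \bigl(\tilde\nabla J(\Theta) - \mathbb{E}[\tilde\nabla J(\Theta) \mid \Theta]\bigr) \\
&\quad + \bigl(\mathbb{E}[\tilde\nabla J(\Theta) \mid \Theta] - \nabla^r \check J(\Theta)\bigr) + \bigl(\nabla^r \check J(\Theta) - \nabla \check J(\Theta)\bigr).
\end{align*}
The first summand is bounded in probability by Chebyshev's inequality applied to the empirical average over the $\tilde N$ perturbations, with variance controlled uniformly over $\Theta \in \Rc(b)$ by the compactness of this set (Proposition~\ref{LJ}) and the explicit moment estimates on $J^{\Delta,N}_{pop}$. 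The middle summand splits further into horizon-truncation (exponential decay in $T$ through the discount factor $e^{-\beta t}$), time-discretization (polynomial in $h=T/n$ from a Gr\"onwall argument on the Euler scheme~\eqref{XDeltThet2}), and particle-discretization (of order $N^{-1/2}$ by exchangeability and propagation of chaos). The third summand is bounded deterministically by the smoothing radius $r$ through local Lipschitz continuity of $\nabla \check J$ on $\Rc(b)$. The choices of $\breve r(\varepsilon), \breve T(\varepsilon), \breve n(\varepsilon), \breve N(\varepsilon), \breve{\tilde N}(\varepsilon)$ in the statement are then precisely calibrated so that each term is at most $\frac{1}{10\sqrt 2}\sqrt{\varepsilon/\kappa}$ with probability at least $1-(\cdot)^{-d}$ per source; the factor $4$ in the final probability bound comes from a union bound over the four non-deterministic sources.

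The second main step is a one-step stability lemma in the spirit of Lemma~\ref{PerbEGD}: on a high-probability event $E_k$, if $\Theta_{[k]}\in\Rc(b)$ then the gradient estimation error is dominated by, say, half of $\lVert\nabla\check J(\Theta_{[k]})\rVert_H$, so that the updated $\Theta_{[k+1]}$ stays in $\Rc(b)$. Combining this with the Lipschitz property (Proposition~\ref{LJ}) applied on the enlarged level set (hence the factor $2b$ in the constants $c_1,\ldots,c_4$ of the statement), a second-order expansion yields a descent inequality of the form
\begin{align*}
\check J(\Theta_{[k+1]}) - \check J(\Theta_{[k]}) \;\leq\; -\,\frac{\rho(4-9\rho\check L(b))}{8}\,\lVert\nabla\check J(\Theta_{[k]})\rVert_H^{2}
\end{align*}
on $E_k$, where the coefficients $(4,9,8)$ instead of $(2,1,2)$ reflect the extra slack needed to absorb the inexact-gradient term through Young's inequality. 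Applying the gradient domination condition~\eqref{GD} (valid on $\Rc(2b)$ with constant $\bar\kappa(b)$) gives the geometric contraction
\begin{align*}
\check J(\Theta_{[k+1]}) - \check J(\Theta^*) \;\leq\; \Big(1 - \frac{\rho(4-9\rho\check L(b))}{8\bar\kappa(b)}\Big)\bigl(\check J(\Theta_{[k]}) - \check J(\Theta^*)\bigr),
\end{align*}
and a direct induction together with a union bound over the $k$ bad events $E_1^c,\ldots,E_k^c$ produces the announced iteration count and probability $1-4k(10\sqrt 2 d\sqrt{\kappa/\varepsilon})^{-d}$.

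The main obstacle will be the middle summand of the error decomposition: obtaining sharp quantitative bounds on the bias $\mathbb{E}[\tilde\nabla J\mid\Theta] - \nabla^r \check J$ uniformly over $\Theta\in\Rc(b)$. This requires controlling the $L^2$ distance between the continuous mean-field dynamics~\eqref{DynXtheta} and its discrete $N$-particle counterpart~\eqref{XDeltThet2}, while tracking how all moments depend polynomially on $\lVert\Theta\rVert_H$ (hence on $b$) through discrete Gr\"onwall arguments, and then composing with the quadratic-plus-log running cost. This is precisely what determines the polynomial-in-$1/\varepsilon$ dependence encoded in $\h_r,\h'_r,c_1,\ldots,c_4,\h_{\tilde N},\h'_{\tilde N}$, and is the reason one must carry out the analysis on the slightly larger set $\Rc(2b)$ rather than $\Rc(b)$.
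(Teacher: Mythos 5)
Your overall architecture coincides with the paper's: calibrate each error source to $\frac{1}{10\sqrt 2}\sqrt{\varepsilon/\kappa}$, establish on a high-probability event the two conditions $\langle G,\nabla J(\Theta)\rangle_H\geq\frac{1}{2}\lVert\nabla J(\Theta)\rVert_H^2$ and $\lVert G\rVert_H^2\leq\frac{9}{4}\lVert\nabla J(\Theta)\rVert_H^2$, deduce one-step stability in $\Rc(b)$ together with the contraction factor $1-\frac{\rho(4-9\rho\check L(b))}{8\bar\kappa(b)}$, and conclude by induction and a union bound. Your three-term regrouping is a legitimate repackaging of the paper's five-term telescoping (statistical, particle, time-discretization, truncation, perturbation), and your identification of the deterministic versus probabilistic contributions is essentially correct. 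Two of your quantitative claims, however, would not deliver the theorem as stated.

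First, Chebyshev's inequality is too weak for the statistical term. The claimed failure probability per iteration is $4\big(\tfrac{10\sqrt 2 d\sqrt\kappa}{\sqrt\varepsilon}\big)^{-d}$ while $\tilde N$ is required to grow only polynomially (in fact, in the paper, essentially logarithmically in the inverse failure probability). Chebyshev gives a failure probability of order $\mathrm{Var}/(\tilde N\,\tilde\varepsilon^2)$, so reaching a target of $(d/\tilde\varepsilon)^{-d}$ would force $\tilde N\gtrsim (d/\tilde\varepsilon)^{d}\tilde\varepsilon^{-2}$, which is incompatible with the thresholds $\h_{\tilde N},\h'_{\tilde N}$ referenced in the statement. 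The paper instead proves that the relevant per-sample quantities have finite sub-exponential norm $\lVert\cdot\rVert_{\psi_1}$ (uniformly over $\Rc(2b)$) and applies vector Bernstein-type inequalities. Relatedly, within a single perturbation the $N$ particle trajectories are \emph{not} independent (they are coupled through the common noise and the empirical mean), so one cannot treat $\Jc^{\Delta,N,i}_{pop}$ as an average of independent contributions; the paper's argument fixes the particle index $j$ and the time index $l$, exploits independence only across the $\tilde N$ perturbations, and pays a union bound over $j$ and $l$ — which is precisely why $\log N$ and $\log n$ appear in $\h'_{\tilde N}$. Your sketch glosses over this correlation structure.

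Second, the particle-discretization error is $O(1/N)$, not $O(N^{-1/2})$. What enters the bias of the expected cost is not the pathwise propagation-of-chaos rate but the second moment of the fluctuation of the empirical mean, $\E\lvert\hat\mu^{N}_{t_l}-\E_0[\xb^{\Theta,\Delta}_{t_l}]\rvert^2=O(1/N)$, composed with the quadratic running cost; this is the content of Proposition \ref{PropN}, which yields a deterministic bound $\frac{d}{r}\frac{c_4(2b)}{N}$ on the corresponding gradient discrepancy. A rate $N^{-1/2}$ would force $N\gtrsim\kappa/\varepsilon$ rather than the stated $\breve N(\varepsilon)\propto\sqrt{\kappa/\varepsilon}$, so the theorem with the announced threshold on $N$ would not follow from your estimate.
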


The proof is deferred to Section \ref{proof:main:theorem:model:free} of the appendix. In the above convergence result, $\check r(b)$, $\h_r$, $\h_r'$, $c_1(2b)$, $ c_2(2b)$, $c_3(2b)$, $c_4(2b)$, $\h_{\tilde N}$ and $\h_{\tilde N}'$ are explicit functions whose definition is provided in the proof. In particular, $\check{r}(b)$ is defined in Lemma \ref{ra}, $\h_r,\h_r'$ are defined in Propositions \ref{PropPerb}, \ref{PropMCError}, $c_1(2b)$, $c_2(2b)$ are defined in Proposition \ref{PropTrunc}, $c_3(2b)$ is defined in Proposition \ref{PropDiscre}, $c_4(2b)$ is defined in Proposition \ref{PropN} and $\h_{\tilde N},\h_{\tilde N}'$ are defined in Propositions \ref{PropPerb}, \ref{PropMCError}. 

\vspace{3mm}
\begin{Remark}
     Regarding the choice of $b$, we recall the definition of the level set, in \eqref{Rcb}-\eqref{checkJ}- and $b>0$ is the superior limit of $\check J$, where $\check J$ is the expected function cost $J$ in which we removed the known constant $\upsilon(\lambda)$ (since $\beta>0,\lambda>0,R$ are known). Hence, a natural choice is to take $b\geq  \check J(\Theta_{[0]})=J(\Theta_{[0]})-\upsilon(\lambda)$ and one can replace $J(\Theta_{[0]})$ by its sample average approximation using the average cost estimation of the population \eqref{EstimpopDef}.
\end{Remark}

\section{Numerical example}\label{sec:numerical:example}
We demonstrate the convergence analysis of our algorithms using a one-dimensional example with the following model parameters:
\begin{table}[H]
\centering
\begin{tabular}{|c|c|c|c|c|c|c|c|c|c|}
\hline
  $Q$   & $\hat{Q}$ & $B$&$\hat B$&$\beta$&$\gamma$&$\gamma_0$& $D$ &$R$&$X_0$\\
\hline
  $0.1$   & $0.2$ & $0.1$ &  $0.2$& $20$& $0.05$ & $0.05$&$0.05$ & $0.2$ &$\Nc(1,1)$\\
  \hline
\end{tabular}
\caption{The parameters of the model
} \label{ModelPara} 
\end{table}
The optimal parameters \eqref{opttheta} are $(\theta^*,\zeta^*)=(-0.00126,-0.00255)$. Thus, the optimal cost is given by $J_1(\theta^*)=0.005051$, $J_2(\zeta^*)=0.010205$. Both model-based and model-free algorithms are initialized with $(\theta_{[0]},\zeta_{[0]})=(-2,-2)$. 
\subsection{Model-based algorithm}
In the model-based case, we can derive explicit formulas for $\theta \mapsto J_1(\theta)$ and $\zeta \mapsto J_2(\zeta)$, along with their derivatives $J_1'(\theta)$ and $J_2'(\zeta)$, using the results obtained in Sections \ref{sec:problem:formulation} and \ref{sec:model:based:pg}.

We evaluate the PG algorithm (Algorithm \ref{exact:GD:algorithm}) using three different step sizes: $\rho=0.5,0.9,1.2$. Figure \ref{fig:convergence:theta:zeta} illustrates the convergence of the sequence $(\theta_{[k]}, \zeta_{[k]})_{n \geq 0}$ towards $(\theta^{\star}, \zeta^{\star})$ in terms of the number of iterations.

\begin{figure}[H]
\centering 
   \begin{minipage}[b]{0.45\linewidth}
  \centering
 \includegraphics[width=\textwidth, height=4.5cm]{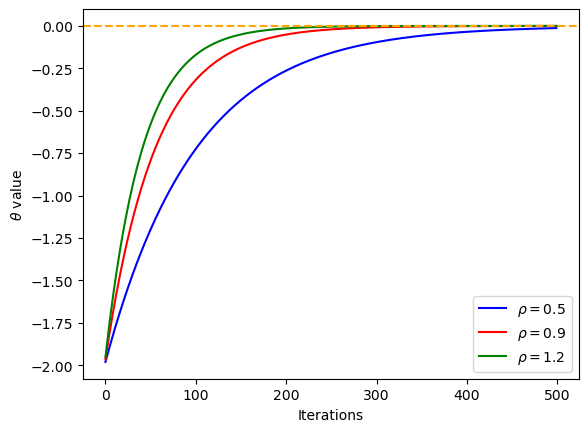}
 \end{minipage}
 \begin{minipage}[b]{0.45\linewidth}
  \centering
 \includegraphics[width=\textwidth, height=4.5cm]{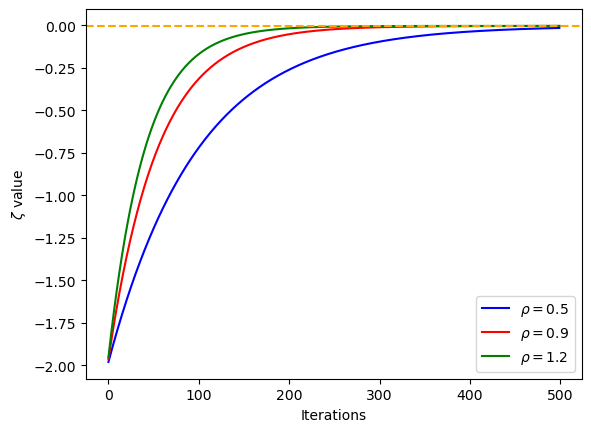}
 \end{minipage}
 \caption{Convergence of the sequence $(\theta_{[k]},\zeta_{[k]})_{k\geq0}$ towards $(\theta^{\star}, \, \zeta^{\star})$ (dashed lines).} \label{fig:convergence:theta:zeta} 
\end{figure}
Then, Figure 
\ref{fig:error:rate:cost:functions}
illustrates the convergence of the cost functions $J_1$ and $J_2$ by plotting the 
relative errors $k\mapsto (J_1(\theta_{[k]})-J_1(\theta^*))/J_1(\theta^*)$ and 
$k\mapsto (J_2(\zeta_{[k]})-J_2(\zeta^*))/J_2(\zeta^*)$. 

\begin{figure}[H] 
\centering 
   \begin{minipage}[b]{0.45\linewidth}
  \centering
 \includegraphics[width=\textwidth, height=4.5cm]{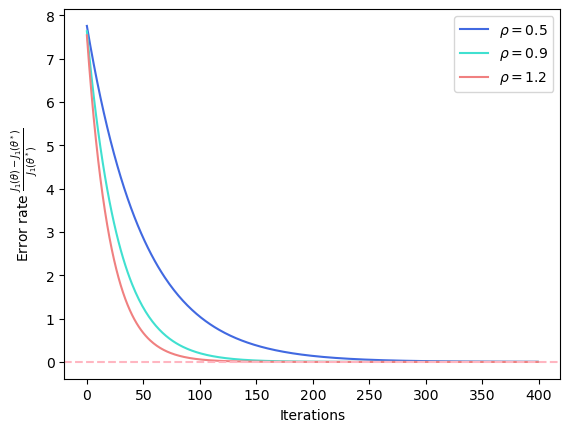}
 \end{minipage}
 \begin{minipage}[b]{0.45\linewidth}
  \centering
 \includegraphics[width=\textwidth, height=4.5cm]{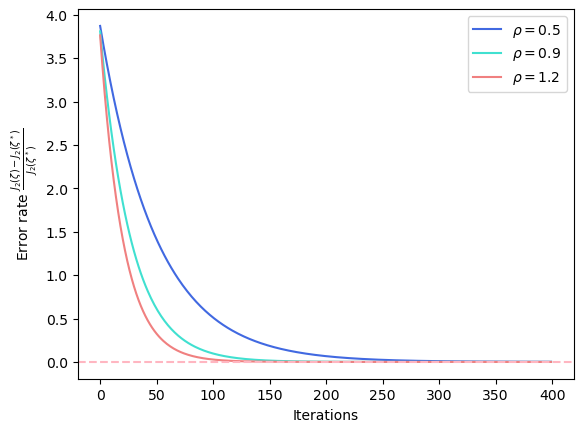}
 \end{minipage}
 \caption{Convergence of the error rates $(J_1(\theta_{[k]}))-J_1(\theta^{\star}))/J_1(\theta^{\star})$ and $(J_2(\zeta_{[k]})-J_2(\zeta^{\star}))/J_2(\theta^{\star})$.} \label{fig:error:rate:cost:functions} 
\end{figure}

 The PG algorithm \ref{exact:GD:algorithm} demonstrates very good performance. The error and the error rate of the value function converge to zero after roughly 200 iterations for different values of the learning rate.

\subsection{Model-free algorithm}
We use the following parameter values for the model-free PG algorithm:
\begin{table}[H]
\centering
\begin{tabular}{|c|c|c|c|c|c|}
\hline
  $T$   & $n$ & $N$&$\tilde N$&$r$&$\lambda$\\
\hline
  $1$   & $100$ & $100$ &  $100$& $0.05$& $0.001$ \\
  \hline
\end{tabular}
\caption{Parameters of the model-free PG algorithm.
} \label{ModelParaMF} 
\end{table}
The optimal cost corresponding to the parameters given in Table \ref{ModelPara} is $J(\theta^*,\zeta^*)=0.015360$.
First, we test whether the parameters in Table \ref{ModelParaMF} allow for accurate gradient estimations. In Figure \ref{fig:TestGradEstim}, we evaluate this for $(\theta_{[0]},\zeta_{[0]})=(-2,-2)$, by generating 100 gradient estimations using the gradient estimation algorithm (Algorithm \ref{Gradientestim}).
\begin{figure}[H] 
\centering 
   \begin{minipage}[b]{0.45\linewidth}
  \centering
 \includegraphics[width=\textwidth, height=4.5cm]{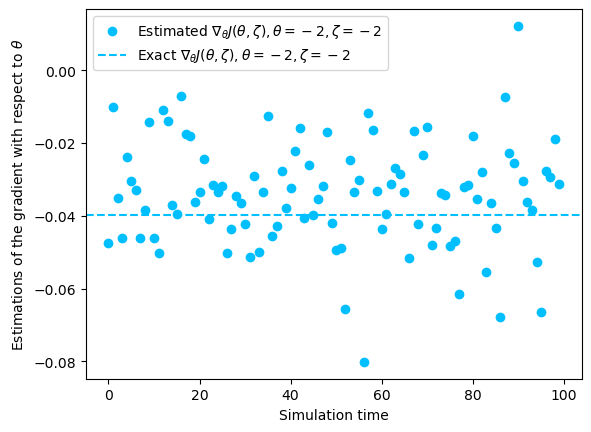}
 \end{minipage}
 \begin{minipage}[b]{0.45\linewidth}
  \centering
 \includegraphics[width=\textwidth, height=4.5cm]{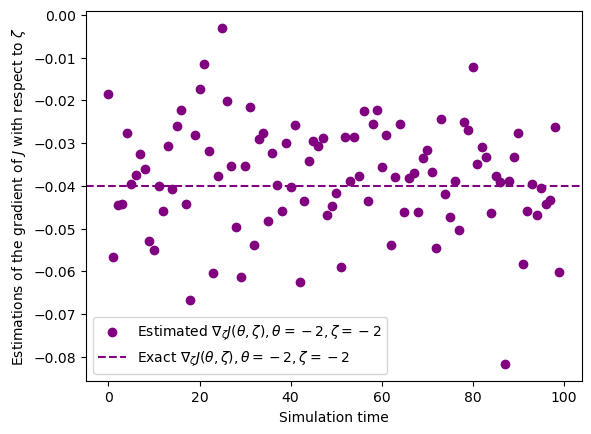}
 \end{minipage}
 \caption{100 gradient estimations vs exact values of $\nabla_\theta J(\theta, \zeta)$ and $\nabla_\zeta J(\theta, \zeta)$ (dashed lines).} \label{fig:TestGradEstim} 
\end{figure}
In the figures above, the vertical coordinates of the points represent the estimated gradients, with the dashed line indicating the exact gradients. We observe that most points cluster around the dashed line, remaining within $10^{-2}$ above and below it. This suggests that, with high probability, the error in gradient estimation is controlled within $10^{-2}$, indicating that the parameters in Table \ref{ModelParaMF} are well-chosen. 


Our main result (Theorem \ref{ThmConvSGD}) is established for a constant step size $\rho>0$. However, from a numerical standpoint, one might consider exploring adaptive selection of $\rho$ by adjusting it based on the observed behavior of the cost function $J$.
Specifically, if $\rho>0$ is in an acceptable interval (an open interval), the cost function $J$ should decrease. According to the findings of the model-based algorithm, increasing $\rho$ (while ensuring it remains within the acceptable interval) may accelerate convergence.

Therefore, we can initialize with a $\rho > 0$ and run the algorithm for several iterations (e.g., 100) initially. If we observe a clear downward trend in the (estimated) cost function $J$, it indicates that our chosen $\rho > 0$ falls within the acceptable interval. We can then consider increasing $\rho$ slightly to potentially accelerate convergence. Conversely, if we do not observe a decreasing trend, we should consider reducing $\rho > 0$.

The above results are based on the following choice of the step size as a function of the iteration index $k$:
$$
    \rho(k) = 
    \begin{cases}
        0.5 \hbox{ if }k \leq 100, \\
        0.9 \hbox{ if } 100< k \leq 200, \\
        1.2 \hbox { if } 200 < k   \leq 350
    \end{cases}
$$

In Figure \ref{ConvthetazetaMF}, we observe the convergence of both sequences $(\theta_{[k]})_{k\geq0}$ and $(\zeta_{[k]})_{k\geq0}$ towards $\theta^{\star}$ and $\zeta^{\star}$ respectively.
\begin{figure}[H] 
\centering 
   \begin{minipage}[b]{0.45\linewidth}
  \centering
 \includegraphics[width=\textwidth, height=4.5cm]{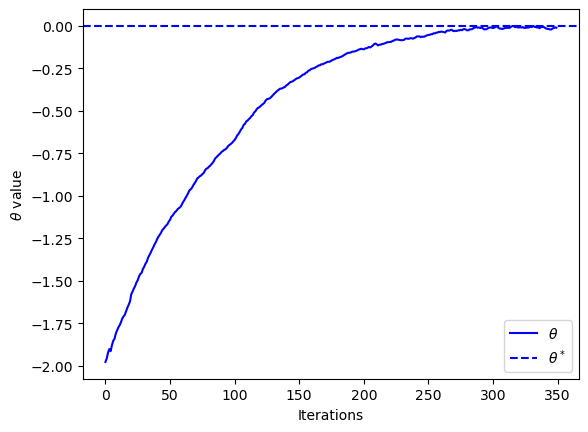}
 \end{minipage}
 \begin{minipage}[b]{0.45\linewidth}
  \centering
 \includegraphics[width=\textwidth, height=4.5cm]{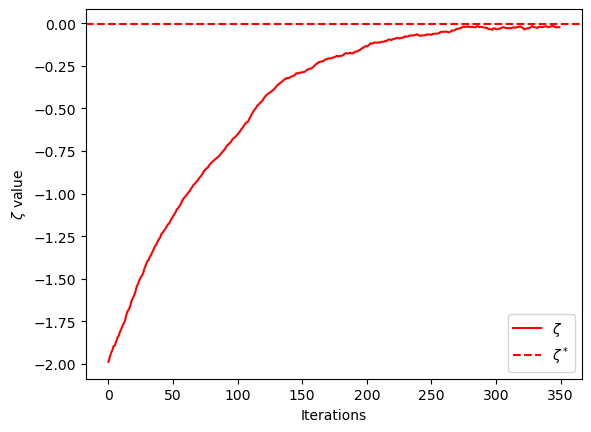}
 \end{minipage}
 \caption{Convergence of the sequences $(\theta_{[k]})_{k\geq0}$ and $(\zeta_{[k]})_{k\geq0}$ towards $\theta^{\star}$ and $\zeta^{\star}$ (dashed lines).} \label{ConvthetazetaMF} 
\end{figure}

In Figure \ref{ConvJMF}, we plot the error $(J(\theta_{[k]},\zeta_{[k]})-J(\theta^*,\zeta^*))_{k\geq0}$ (on the left side) and the error rate $(J(\theta_{[k]},\zeta_{[k]})-J(\theta^*,\zeta^*))/J(\theta^*,\zeta^*))_{k\geq0}$ (on the right side) as a function of the number of iterations. As in the model-based case, we observe excellent performance of the policy gradient algorithm. Both the error and the error rate on the value function vanish after approximately $200$ iterations.  
\begin{figure}[H] 
\centering 
   \begin{minipage}[b]{0.45\linewidth}
  \centering
 \includegraphics[width=\textwidth, height=4.5cm]{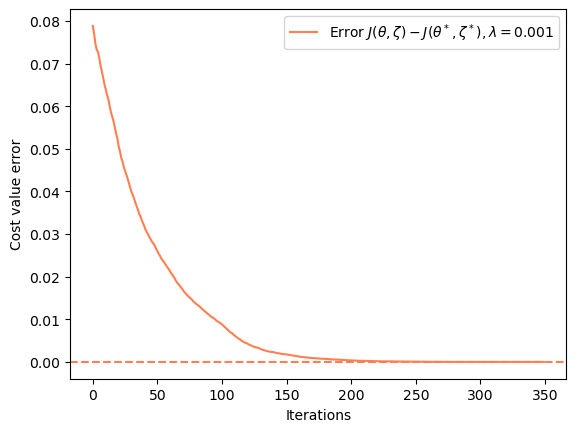}
 \end{minipage}
 \begin{minipage}[b]{0.45\linewidth}
  \centering
 \includegraphics[width=\textwidth, height=4.5cm]{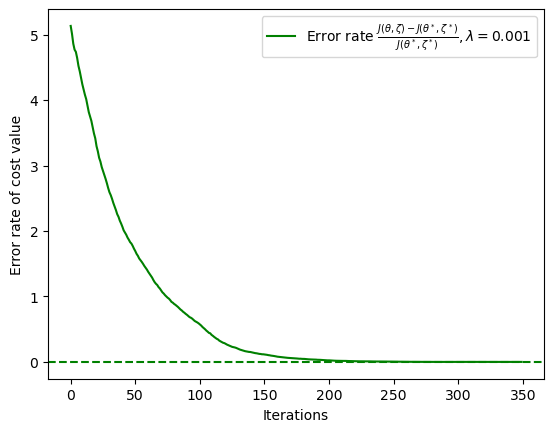}
 \end{minipage}
 \caption{\footnotesize{Convergence of the cost error $J(\theta_{[k]}, \zeta_{[k]})-J(\theta^\star, \zeta^\star)$ and error rate $(J(\theta_{[k]}, \zeta_{[n]})-J(\theta^\star, \zeta^\star))/J(\theta^\star, \zeta^\star)$.}} \label{ConvJMF} 
\end{figure}


\appendix
\section{Explicit solution to LQ MFC}\label{ProofOptSol}
Let us consider the more general linear mean-field dynamics 
\begin{equation}\label{Xpi}
\begin{cases}
\d X_t &= \; \big(B X_t + \bar B \E_0[X_t] + D \int a \bpi_t(\d a)  \big) \d t  \\
& \quad + \; \big( \gamma + FX_t + \bar F \E_0[X_t] \big) \d W_t + \; \big( \gamma_0 + F_0 X_t + \bar F_0 \E_0[X_t]  \big) \d W^0_t, \\
X_0 & \sim \; \mu, 
\end{cases}    
\end{equation}
and a quadratic cost with entropy regularizer 
\begin{align}
J(\bpi) &= \;  \E \Big[ \int_0^\infty e^{-\beta t} \Big( X_t\trans Q X_t + 
\E_0[X_t]\trans\bar Q \E_0[X_t]  \\
& \qquad \qquad + \;  \int a\trans R a \; \bpi_t(\d a) + \lambda \int \log \bop_t(a) \bpi_t(\d a) \Big) \d t    \Big]    
\end{align}
to be minimized over randomized controls $\bpi$ $=$ $(\bpi_t)_t$ with density $a$ $\in$ $\R^m$  $\mapsto$ $(\bop_t(a))_t$ of feedback form:  $\bpi_t$ $=$ $\pi(.|t,X_t,\E_0[X_t])$ for some randomized policy $(t,x,\mu)$ $\mapsto$ $\pi(.|t,x,\mu)$  $\in$ $\Pc_2(\R^m)$. 
For simplicity of presentation, we assume here that $W$ and $W^0$ are one-dimensional Brownian motions, see 
Remark \ref{remmulti} for the extension to the multi-dimensional case.

\begin{Theorem}\label{OptRelaxedControlPbl}
    Assume that the two following coupled Riccati equations for $K\in\S^d$ and $\Lambda\in\S^d$ have a unique positive definite solutions
      \begin{equation}\label{eqs:riccati:thm}
    \begin{cases}
         -\beta K+Q+KB+B\trans K+F\trans K F+F_0\trans KF_0-K\trans DR^{-1}D\trans K=0\\
         -\beta\Lambda+(Q+\bar Q)+\Lambda(B+\bar B)+(B+\bar B)\trans\Lambda+(F+\bar F)\trans K(F+\bar F)+(F_0+\bar F_0)\trans \Lambda(F_0+\bar F_0)\\
         -\Lambda\trans D R^{-1} D\trans \Lambda=0,
    \end{cases}
    \end{equation}
    Let  $(K,\Lambda)\in (\Sc^d_{>+})^2$ be the unique positive definite solutions to \eqref{eqs:riccati:thm} and  $Y\in\R^d$ be the solution to
    \begin{equation}\label{notations:vector:eq:thm}
           -\beta Y+(B+\bar B)\trans Y+(F+\bar F)\trans K\gamma\trans+(F_0+\bar F_0)\trans \Lambda\gamma_0\trans-\Lambda\trans DR^{-1}D\trans Y=0
    \end{equation}    
    Then the optimal randomised control is of feedback form with Gaussian distribution, namely for all 
    $t\geq 0$, $\pi^*_t(\cdot)=\pi^*(\cdot|X^{*}_t,\E_0[X^{*}_t])$ with
    $$
    \pi^*(\cdot|x,\bar x)=\mathcal{N}\Bigg(-R^{-1}D\trans K(x-\bar x)-R^{-1}D\trans \Lambda\bar x-R^{-1} D\trans Y;\frac{\lambda}{2}R^{-1}\Bigg)
    $$
    \noindent where $X^{{\star}}$ is the unique solution to \eqref{Xpi} with $\bpi=\bpi^*$. Moreover, the optimal functional cost satisfies
    \begin{align*}
        &J(\bpi^*)=\E\big[(X_0-\E[X_0])\trans K(X_0-\E[X_0])+\E[X_0]\trans \Lambda\E[X_0]+2Y\trans X_0\big]\\
        &+\frac{1}{\beta}\Bigg(\gamma\trans K\gamma+\gamma_0\trans\Lambda\gamma_0-Y\trans DR^{-1}D\trans Y-\frac{\lambda m}{2}\log(2\pi)-\frac{\lambda}{2}\log\Bigg| \frac{\lambda}{2\det(R)}\Bigg|\Bigg).
    \end{align*}
\end{Theorem}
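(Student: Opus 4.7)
The strategy is a verification theorem with a quadratic-linear ansatz for the infinite-horizon discounted value function.

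\textbf{Step 1 (state and cost decomposition).} Set $\mrz_t := \E_0[X_t]$ and $\mry_t := X_t - \mrz_t$. Conditioning \eqref{Xpi} on $\F^0$ yields a closed SDE for $\mrz$ driven only by $W^0$ with drift $\hat B \mrz_t + D\,\E_0[\int a\,\bpi_t(\d a)]$ and diffusion $\gamma_0 + (F_0+\bar F_0)\mrz_t$; subtracting gives the SDE for $\mry$ driven by $W$ (with a $\mrz$-dependent drift). Using $\E[X_t\trans Q X_t + \mrz_t\trans \bar Q \mrz_t] = \E[\mry_t\trans Q \mry_t] + \E[\mrz_t\trans \hat Q \mrz_t]$, the running cost rewrites cleanly in $(\mry,\mrz)$, and I further split the action as $a = (a - \bar a_t) + \bar a_t$ with $\bar a_t := \E_0[\int a\,\bpi_t(\d a)]$.

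\textbf{Step 2 (ansatz).} Seek a value function of the form
$$
V(x,z) \;=\; (x-z)\trans K(x-z) + z\trans \Lambda z + 2 Y\trans z + c,
$$
with $K,\Lambda \in \S^d_{>+}$, $Y \in \R^d$, $c\in\R$ to be identified. Apply It\^o's formula to $e^{-\beta t} V(X_t,\mrz_t)$ and combine with $\int_0^t e^{-\beta s} \ell^{\bpi_s}_s\,\d s$, where $\ell^{\bpi_s}_s$ denotes the integrand of $J(\bpi)$ at time $s$. Up to local martingales, one gets the drift
$$
\E\!\left[e^{-\beta t}\big(-\beta V + \Lc^{\bpi_t} V + \ell^{\bpi_t}_t\big)(X_t,\mrz_t)\right].
$$

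\textbf{Step 3 (Hamiltonian minimization).} Minimize the integrand pointwise in $\bpi_t$. The objective splits, thanks to Step 1, into three contributions: (i) a quadratic form in $a-\bar a_t$ coupled to $\mry_t$ through $2(a-\bar a_t)\trans D\trans K\mry_t$; (ii) a quadratic form in $\bar a_t$ coupled to $\mrz_t$ through $2\bar a_t\trans D\trans \Lambda \mrz_t$ and linearly to $Y$ through $2\bar a_t\trans D\trans Y$; (iii) the entropy $\lambda\int \log \bop_t(a)\bpi_t(\d a)$. The elementary fact that the minimizer of $\int[a\trans S a + 2 a\cdot b]\pi(\d a) + \lambda\int\log p\,\pi(\d a)$ over densities $p$ is $\mathcal{N}(-S^{-1}b,\tfrac{\lambda}{2}S^{-1})$, applied jointly to the law of $(a-\bar a_t,\bar a_t)$, yields the unique minimizer
$$
\bpi_t^*(\cdot) \;=\; \mathcal{N}\!\left(-R^{-1}D\trans K(X_t-\mrz_t) - R^{-1}D\trans \Lambda \mrz_t - R^{-1}D\trans Y,\; \tfrac{\lambda}{2} R^{-1}\right).
$$

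\textbf{Step 4 (coefficient matching).} Substitute $\bpi_t^*$ back into the Hamiltonian and regroup terms homogeneous in $(\mry,\mrz)$. The $\mry$-quadratic part (including the Hessian contributions from $FX+\bar F \mrz$ in the $W$-diffusion) yields the first Riccati equation in \eqref{eqs:riccati:thm} for $K$; the $\mrz$-quadratic part (with contributions from $(F+\bar F)$ and $(F_0+\bar F_0)$) yields the second Riccati equation for $\Lambda$; the terms linear in $\mrz$ produce the linear equation \eqref{notations:vector:eq:thm} for $Y$; finally, the constant piece, including the Gaussian normalizing constant $-\tfrac{\lambda m}{2}\log(2\pi) - \tfrac{\lambda}{2}\log|\lambda/(2\det R)|$ coming from the entropy evaluated on $\bpi^*$, gives after division by $\beta$ the announced value of $c$.

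\textbf{Step 5 (verification).} Positive definiteness of $K,\Lambda$ yields stability of the closed-loop matrices $B-\tfrac{\beta}{2}I_d - DR^{-1}D\trans K$ and $\hat B-\tfrac{\beta}{2}I_d - DR^{-1}D\trans \Lambda$, arguing as in Lemma \ref{OptInS}. This entails $\E[|X_t^*|^2] \lesssim e^{2\omega t}$ for some $\omega < \beta/2$, so the transversality $\E[e^{-\beta T} V(X_T^*,\mrz_T^*)]\to 0$ as $T\to\infty$ holds. A localization-Fatou argument then gives $J(\bpi) \geq \E[V(X_0,\mrz_0)]$ for every admissible $\bpi$, with equality for $\bpi^*$ by Step 3, thus identifying both the optimal policy and $J(\bpi^*)$.

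The main obstacle is Step 3: because the cost mixes $x$, $\E_0[x]$, and $a$, the Hamiltonian does not a priori factor into independent blocks; the key is to split $a$ into its $\F^0$-conditional mean and the fluctuation around it, viewing the optimization jointly on the law of this pair. Only then do the two Riccati structures decouple, producing \eqref{eqs:riccati:thm} and the linear equation \eqref{notations:vector:eq:thm} in one shot. A secondary technical point is the rigorous transversality in Step 5, which requires square-integrability of $\int a\,\bpi^*_t(\d a)$ under the optimal Gaussian law; this is granted by stability of the closed-loop matrices combined with the $\lambda$-dependent variance.
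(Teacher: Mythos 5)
Your proposal is correct and follows essentially the same route as the paper: a verification/martingale-optimality argument with the quadratic ansatz $(x-\bar x)\trans K(x-\bar x)+\bar x\trans\Lambda\bar x+2Y\trans x+c$, It\^o's formula, pointwise Gibbs-type minimization of the Hamiltonian over randomized policies yielding the Gaussian feedback law, and coefficient matching with \eqref{eqs:riccati:thm}--\eqref{notations:vector:eq:thm}. One remark: the splitting $a=(a-\bar a_t)+\bar a_t$ in your Step 3 is unnecessary here, since the control does not enter the diffusion coefficients in \eqref{Xpi}, so $\E_0[\int a\,\bpi_t(\d a)]$ appears only linearly (in the drift of $\E_0[X_t]$) and the tower property reduces the whole minimization to the elementary pointwise problem $\int\big(a\trans Ra+2a\cdot b\big)\bpi_t(\d a)+\lambda\int\log\bop_t(a)\,\bpi_t(\d a)$, which is exactly the map $\mathbb{I}$ the paper minimizes in its Step 4.
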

\begin{proof}
We employ the same Martingale approach as in \cite{frietal23} and \cite{MartingApproLQ} to prove that $\bpi^{\star}$ is the optimal policy, noting that the value function is time-independent in the infinite horizon case. In what follows, we omit the dependence of the process w.r.t the control.

\vspace{1mm}

   \noindent \textit{Step 1.} Let us consider the function defined on $\R^d \times\Pc_2(\R^d)$ by $w(x,\mu)=\bar w(x,\bar \mu)$, recalling that $\bar \mu$ stands for the mean of $\mu$, where $\bar w$ is defined by
   \begin{align} 
   \bar w(x,\bar x) &= \; (x-\bar x)\trans K(x-\bar x)+\bar x\trans\Lambda\bar x+2Y\trans x+r, \quad  \R^d\times\R^d 
   \end{align} 
for some matrices/vectors $K,\Lambda\in\S^d_+$, $Y\in\R^d$ and $r\in\R$. Then, given $\bpi\in\Pi$ with density $\bop$ and $X:=X^{\bpi}$ the unique solution to \eqref{Xpi}, we introduce the following process
   \begin{equation*}
   \begin{aligned}
        \Sc^\bpi_t &:= e^{-\beta t}\bar w( X_t,\E_0[X_t])+ \int_0^t e^{-\beta  s} \Big(  (X_s)\trans Q X_s+\E_0[X_s]\trans\bar Q \E_0[X_s]\\ & \qquad + \; \int a\trans R a\bpi_s(\d a)  + \lambda \int \log \bop_s(a)\bpi_s(\d a) \Big)  \d s, \quad t\geq 0,
   \end{aligned}
   \end{equation*}
   where  we set $\bop_t(a)=\bpi_t(a)$ the density function of $\bm\pi$ for $t\geq 0$. Note that $\E_0[X_t]$ satisfies
   \begin{equation*}
       \d \E_0[X_t]=\big((B+\bar B)\E_0[X_t]+D\E_0[\int a\bpi_t(\d a)]\big)\d t+(\gamma_0+(F_0+\bar F_0)\E_0[X_t])\d W^0_t. 
   \end{equation*}
   
   \noindent\textit{Step 2.} The derivatives of $w$ are given by
   $$
   \partial_x \bar w(x,\bar x)=2K(x-\bar x)+2Y, \quad \partial_{\bar x}\bar w(x,\bar x)=-2K(x-\bar x)+2\Lambda\bar x,
   $$
   $$
   \partial^2_{xx}\bar w(x,\bar x)=2K, \quad \partial^2_{\bar x\bar x}\bar w(x,\bar x)=2(K+\Lambda), \quad 
   \partial^2_{x\bar x}\bar w(x,\bar x)=\partial^2_{\bar x  x}\bar w(x,\bar x)=-2K.
   $$
   Moreover, using the dynamics of $X_t$ and $\E_0[X_t]$, we obtain
\begin{equation*}
    \begin{aligned}
        \d \langle X,X\rangle_t&=\big((\gamma+F X_t+\bar F\E_0[X_t])(\gamma+F X_t+\bar F\E_0[X_t])\trans\\
       & +(\gamma_0+F_0 X_t+\bar F_0\E_0[X_t])(\gamma_0+F_0 X_t+\bar F_0\E_0[X_t])\trans\big) \, \d t,\\
        \d \langle \E_0[X],\E_0[X]\rangle_t&= (\gamma_0+(F_0+\bar F_0)\E_0[X_t])(\gamma_0+(F_0+\bar F_0)\E_0[X_t])\trans \,  \d t,\\
         \d \langle  X ,\E_0[X]\rangle_t&= (\gamma_0+(F_0+\bar F_0)\E_0[X_t])(\gamma_0+F_0 X_t+\bar F_0\E_0[X_t])\trans \, \d t.\\
    \end{aligned}
\end{equation*}
Applying Ito's rule to $(w(X_t,\E_0[X_t]))_{t\geq0}$, we obtain
   \begin{equation}\label{ItoForbarwEq1}
       \begin{aligned}
                & \d\bar w(X_t,\E_0[X_t])=\partial_x \bar w(X_t,\E_0[X_t])\cdot \d X_t+\partial_{\bar x } \bar w(X_t,\E_0[X_t])\cdot \d \E_0[X_t]+\frac{1}{2}\partial^2_{xx}  \bar w(X_t,\E_0[X_t]):\d \langle X,X\rangle_t\\
       &+\frac{1}{2}\partial^2_{\bar x\bar x} \bar w(X_t,\E_0[X_t]):\d \langle \E_0[X],\E_0[X]\rangle_t+\partial^2_{x\bar x }\bar w(X_t,\E_0[X_t]):\d \langle X,\E_0[X]\rangle_t\\
       &=\Bigg(2\big((X_t-\E_0[X_t])\trans K+Y\trans\big)(BX_t+\bar B\E_0[X_t]+D\int a\bpi_t(\d a))\\
       &+2\big(-(X_t-\E_0[X_t])\trans K+\E_0[X_t]\trans\Lambda\big)\big((B+\bar B)\E_0[X_t]+D\E_0[\int a\bpi_t(\d a)]\big)\\
       &+\langle K,\big((\gamma+F X_t+\bar F\E_0[X_t])(\gamma+F X_t+\bar F\E_0[X_t])\trans+(\gamma_0+F_0 X_t+\bar F_0\E_0[X_t])(\gamma_0+F_0 X_t+\bar F_0\E_0[X_t])\trans\big)\rangle\\
       &+\langle K+\Lambda,(\gamma_0+(F_0+\bar F_0)\E_0[X_t])(\gamma_0+(F_0+\bar F_0)\E_0[X_t])\trans \rangle\\
       &+\langle -2K,(\gamma_0+(F_0+\bar F_0)\E_0[X_t])(\gamma_0+F_0 X_t+\bar F_0\E_0[X_t])\trans\rangle\Bigg)\d t+\Bigg( \dots\Bigg)\d W_t+\Bigg( \dots\Bigg)\d W^0_t\\
        \end{aligned}
   \end{equation}
   and taking conditional expectation $\E_0$ on both sides of the preceding equality
    \begin{equation}\label{ItoForbarwEq2}
       \begin{aligned}
       &\d\E_0[\bar w(X_t,\E_0[X_t])]=\Bigg(\E_0\Bigg[2\big((X_t-\E_0[X_t])\trans K+Y\trans\big)(B(X_t-E_0[X_t])+(B+\bar B)\E_0[X_t]+D\int a\bpi_t(\d a))\\
       &+2\big(-(X_t-\E_0[X_t])\trans K+\E_0[X_t]\trans\Lambda\big)\big((B+\bar B)\E_0[X_t]+D\E_0[\int a\bpi_t(\d a)]\big)\\
       &+\langle K,\big((\gamma+F X_t+\bar F\E_0[X_t])(\gamma+F X_t+\bar F\E_0[X_t])\trans+(\gamma_0+F_0 X_t+\bar F_0\E_0[X_t])(\gamma_0+F_0 X_t+\bar F_0\E_0[X_t])\trans\big)\rangle\\
       &+\langle -K+\Lambda,(\gamma_0+(F_0+\bar F_0)\E_0[X_t])(\gamma_0+(F_0+\bar F_0)\E_0[X_t])\trans \rangle\Bigg]\Bigg)\d t +\Bigg( \dots\Bigg)\d W^0_t.
       \end{aligned}
   \end{equation}

   \noindent\textit{Step 3.} Referring to Lemma 6.3 in \cite{MartingApproLQ}, we try to show that $t\in\R_+\mapsto\E[\Sc^{\bpi}_t]$ is non-decreasing for all $\bpi\in\Pi$ and $t\in\R_+\mapsto\E[\Sc^{\bpi^*}_t]$ is constant for $\bpi^\star$. Applying Ito's formula to $\Sc^\bpi_t$ and taking expectation we get
   \begin{equation}
       \d \E[\Sc^\bpi_t]=e^{-\beta t}\E[\Dc^\bpi_t]\d t.
   \end{equation}
   with
   \begin{align*}
       \Dc^\bpi_t&=-\beta \bar w( X_t,\E_0[X_t])+\frac{\d}{\d t}\E[\bar w( X_t,\E_0[X_t])]+ X_t\trans Q X_t+\E_0[X_t]\trans\bar Q \E_0[X_t]\\
       & \qquad + \; \int a\trans R a\bpi_t(\d a)  + \lambda \int(\log \bop_t(a))\bpi_t(\d a). 
   \end{align*}
   Then, using \eqref{ItoForbarwEq1} and \eqref{ItoForbarwEq2}, we obtain
   \begin{equation}
       \begin{aligned}
         & \frac{\d}{\d t}\E[\bar w( X_t,\E_0[X_t])]= \frac{\d}{\d t}\E[\E_0[\bar w( X_t,\E_0[X_t])]]\\
         &=\E\Bigg[(X_t-\E_0[X_t])\trans(KB+B\trans K+F\trans K F+F_0\trans KF_0)(X_t-\E_0[X_t])\\
         &+\E_0[X_t]\trans (\Lambda(B+\bar B)+(B+\bar B)\trans\Lambda+(F+\bar F)\trans K(F+\bar F)+(F_0+\bar F_0)\trans\Lambda(F_0+\bar F_0))\E_0[Xt]\\
         &+2(Y\trans(B+\bar B)+\gamma\trans K (F+\bar F)+\gamma_0\trans \Lambda(F_0+\bar F_0))X_t\\
         &+2\big((X_t-\E_0[X_t])\trans KD+\E_0[X_t]\trans\Lambda D+Y\trans D\big)\int_A a\bpi_t(\d a)\Bigg]+\gamma\trans K\gamma+\gamma_0\trans\Lambda\gamma_0
       \end{aligned}
   \end{equation}
   \noindent so that
   \begin{equation}
       \begin{aligned}
          & \E[\Dc^\bpi_t]=\E\Bigg[(X_t-\E_0[X_t])\trans(-\beta K+KB+B\trans K+F\trans K F+F_0\trans KF_0+Q)(X_t-\E_0[X_t])\\
         &+\E_0[X_t]\trans (-\beta\Lambda+\Lambda(B+\bar B)+(B+\bar B)\trans\Lambda+(F+\bar F)\trans K(F+\bar F)+(F_0+\bar F_0)\trans \Lambda(F_0+\bar F_0)+Q+\bar Q)\E_0[X_t]\\
         &+2(-\beta Y\trans+Y\trans(B+\bar B)+\gamma\trans K(F+\bar F)+\gamma_0\trans \Lambda(F_0+\bar F_0))X_t\\
         &+\int a\trans R a\bpi_t(\d a)+2\big((X_t-\E_0[X_t])\trans KD+\E_0[X_t]\trans\Lambda D+Y\trans D\big)\int a\bpi_t(\d a)+\lambda \int(\log \bop_t(a))\bpi_t(\d a) \\
         &+\gamma\trans K\gamma+\gamma_0\trans\Lambda\gamma_0-\beta r\Bigg].
             \end{aligned}
   \end{equation}

   \noindent \textit{Step 4.} 
   We introduce the map
   \begin{equation}\label{DefIbpit}
       \begin{aligned} 
   \mathbb{I}(\bpi_t)& =\E_{0}\Bigg[\int a\trans R a\bpi_t(\d a)+2\big((X_t-\E_0[X_t])\trans KD+\E_0[X_t]\trans\Lambda D+Y\trans D\big)\int a\bpi_t(\d a)+\lambda \int(\log \bop_t(u))\bpi_t(\d u) \Bigg].
   \end{aligned}
   \end{equation}
   Following the approach outlined in Step 3 of the proof of Theorem B.1 in \cite{frietal23}, the optimal policy that minimizes $\mathbb{I}$ is given by:
   $$
   \bpi_t^*(\cdot|X_t,\E_0[X_t])=\mathcal{N}(-R^{-1}D\trans K(X_t-\E_0[X_t])-R^{-1}D\trans \Lambda\E_0[X_t]-R^{-1}D\trans Y;\frac{\lambda}{2}R^{-1})
   $$
and the infimum writes
\begin{equation}
\begin{aligned}
    \mathbb{I}(\bpi_t^*)&=\E_{0}\Bigg[-(X_t-\E_0[X_t])\trans KD R^{-1}D\trans K(X_t-\E_0[X_t])-\E_0[X_t]\trans \Lambda D R^{-1}D\trans\Lambda\E_0[X_t]\\
    &-2Y\trans DR^{-1} D\trans\Lambda X_t-Y\trans DR^{-1}D\trans Y\Bigg]-\frac{\lambda m}{2}\log(2\pi)-\frac{\lambda}{2}\log\Bigg|\frac{\lambda}{2\det(R)}\Bigg|.
\end{aligned}
\end{equation}

\noindent\textit{Step 5.} Noting that $R$ is strictly positive-definite, for all $\bpi\in\Pi$, we have
\small
\begin{equation}\label{Step5}
       \begin{aligned}
          & \E[\Dc^\bpi_t]=\E\Bigg[(X_t-\E_0[X_t])\trans(-\beta K+KB+B\trans K+F\trans K F+F_0\trans KF_0+Q)(X_t-\E_0[X_t])\\
         &+\E_0[X_t]\trans (-\beta\Lambda+\Lambda(B+\bar B)+(B+\bar B)\trans\Lambda+(F+\bar F)\trans K(F+\bar F)+(F_0+\bar F_0)\trans K(F_0+\bar F_0)+Q+\bar Q)\E_0[X_t]\\
         &+2(-\beta Y\trans+Y\trans(B+\bar B)+\gamma\trans K(F+\bar F)+\gamma_0\trans \Lambda(F_0+\bar F_0))X_t+\langle K,\gamma\gamma\trans\rangle+\langle\Lambda,\gamma_0\gamma_0\trans\rangle-\beta r+\mathbb{I}(\bpi_t) \Bigg]\\
         &\geq \E\Bigg[(X_t-\E_0[X_t])\trans(-\beta K+KB+B\trans K+F\trans K F+F_0\trans KF_0+Q- KDR^{-1}D\trans K) (X_t-\E_0[X_t])\\
         &+\E_0[X_t]\trans (-\beta\Lambda+\Lambda(B+\bar B)+(B+\bar B)\trans\Lambda+(F+\bar F)\trans K(F+\bar F)+(F_0+\bar F_0)\trans \Lambda(F_0+\bar F_0)\\
         &+Q+\bar Q-\Lambda DR^{-1}D\trans\Lambda )\E_0[X_t]+2(-\beta Y\trans+Y\trans(B+\bar B)+\gamma\trans K(F+\bar F)+\gamma_0\trans \Lambda(F_0+\bar F_0)\\
         &-Y\trans DR^{-1}D\trans \Lambda)X_t+\gamma\trans K\gamma+\gamma_0\trans\Lambda\gamma_0-\beta r-Y\trans DR^{-1}D\trans Y-\frac{\lambda m}{2}\log(2\pi)-\frac{\lambda}{2}\log\Bigg| \frac{\lambda}{2\det(R)}\Bigg|.
       \end{aligned}
   \end{equation}
\normalsize
   \noindent Now, taking $(K,\Lambda, Y)$ as the solutions to \eqref{eqs:riccati:thm}-\eqref{notations:vector:eq:thm} and letting 
   \begin{align*}
       r&=\frac{1}{\beta}\Big(\gamma\trans K\gamma+\gamma_0\trans\Lambda\gamma_0-Y\trans DR^{-1}D\trans Y-\frac{\lambda m}{2}\log(2\pi)-\frac{\lambda}{2}\log\Bigg| \frac{\lambda}{2\det(R)}\Bigg|\Big),
   \end{align*}
we observe that the right hand side of \eqref{Step5} vanishes, which means that for all $\bpi\in\Pi$, $\E[\Dc^\bpi_t]\geq 0$. Moreover, equality in \eqref{Step5} holds true for $\bpi=\bpi^*$ so that
\begin{align} 
   \inf_{\bpi\in\Pi}\E[\Dc^\bpi_t] &= \; \E[\Dc^{\bpi^*}_t]=0.
\end{align} 
We thus conclude that for any $\bpi \in \Pi$, $t\mapsto\E[\Sc^{\bpi}_t]$ is non-decreasing on $[0,+\infty)$ and $t\mapsto\E[\Sc^{\bpi^*}_t]$ is constant on $[0,+\infty)$ which eventually implies 
\begin{align} 
J(\bpi^*) &= \; \E[\Sc^{\bpi^*}_\infty]=\E[\Sc^{\bpi^*}_0]=\E[\bar w( X_0,\E_0[X_0])].
\end{align} 
The proof is now complete.
\end{proof}

\begin{Remark}
According to \textit{Section 6} \cite{MartingApproLQ}, if $R>0$, $Q>0$ and $\hat Q=Q+\bar Q>0$, the existence of a positive definite solution to the coupled Riccati equations \eqref{eqs:riccati:thm} is guaranteed and the assumption is specifically for the uniqueness part.
\end{Remark}

\begin{Remark} \label{remmulti} 
     As noted in Remark 5.2 in \cite{MartingApproLQ}, one may extend Theorem \ref{OptRelaxedControlPbl} to the case where $W$ and $W^0$ are multi-dimensional Brownian motions. One may consider $W=(W^{\{1\}}_t, \dots, W^{\{q\}}_t)_{t\geq 0} $ (resp. $W^0=(W^{0,\{1\}}_t,\dots,W^{0,\{q_0\}}_t)_{t\geq 0} $) is a $q$-dimensional (resp. $q_0$-dimensional) standard Brownian motion and the dynamics of the controlled state process writes as
\begin{equation*}
    \begin{cases}
        \d X^{\bpi}_t=(BX^{\bpi}_t+\bar B\E_0[X^{\bpi}_t]+D\int_A a\bpi_t(\d a))\d t+\sum_{i=1}^{q}(\gamma^{\{i\}}+F^{\{i\}} X^{\bpi}_t+\bar F^{\{i\}}\E_0[X^{\bpi}_t])\d W^{\{i\}}_t\\
       +\sum_{i=1}^{q_0} (\gamma^{\{i\}}_0+F^{\{i\}}_0 X^{\bpi}_t+\bar F^{\{i\}}_0\E_0[X^{\bpi}_t])\d W^{0,\{i\}}_t,\\
        X^{\bpi}_0\sim\mu.
    \end{cases}
\end{equation*}
\end{Remark}

\section{Proof of Propositions \ref{PropSigmaTheta} and \ref{PropValueFuncForKtheta}}
\subsection{Proof of Proposition \ref{PropSigmaTheta}}\label{ProofSigmaTheta}

We first provide two auxiliary technical results.
\begin{Lemma}\label{SoltoODEForMatrix}({Solution to the ODE for symmetric matrix-valued function})\\
Let $\Gamma:[0,T]\mapsto \S^d$ be the solution the following ODE
   \begin{equation}\label{ode:symmetric:matrix}
   \frac{\d \Gamma_t}{\d t}=\Gamma_t A\trans+A\Gamma_t+ O
   \end{equation}
   \noindent for some $A\in\R^{d\times d}$ and $O\in\S^d$. Then, it holds
   $$
   \Gamma_t=\exp(tA)\Gamma_0\exp(tA\trans)+\int_0^t \exp(-(s-t)A)O\exp(-(s-t)A\trans)\d s,
   $$
   where
   $\exp(A):=\sum_{n=0}^{+\infty}\frac{A^n}{n!}$.
\end{Lemma}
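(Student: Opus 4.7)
The plan is to solve the ODE by the method of variation of parameters (integrating factor), which is the standard technique for matrix Lyapunov-type equations of this form. The key observation is that the two commuting linear operators $\Gamma \mapsto A\Gamma$ and $\Gamma \mapsto \Gamma A\trans$ act from the left and right respectively, so we can ``undo'' them symmetrically by multiplication by matrix exponentials on both sides.

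First, I would introduce the auxiliary function $\Psi_t := \exp(-tA)\,\Gamma_t\,\exp(-tA\trans)$. Using the identity $\frac{d}{dt}\exp(tA) = A \exp(tA) = \exp(tA) A$, together with the product rule and the ODE \eqref{ode:symmetric:matrix}, one computes directly
\begin{align}
\frac{d\Psi_t}{dt} &= -\exp(-tA)A\,\Gamma_t\,\exp(-tA\trans) + \exp(-tA)\,\frac{d\Gamma_t}{dt}\,\exp(-tA\trans) - \exp(-tA)\,\Gamma_t A\trans\,\exp(-tA\trans) \\
&= \exp(-tA)\,\big(A\Gamma_t + \Gamma_t A\trans + O - A\Gamma_t - \Gamma_t A\trans\big)\,\exp(-tA\trans) \\
&= \exp(-tA)\,O\,\exp(-tA\trans).
\end{align}
All the terms involving $A$ and $A\trans$ cancel, which is precisely the point of introducing $\Psi_t$.

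Next, since $\Psi_0 = \Gamma_0$, integrating the above identity from $0$ to $t$ yields
\begin{align}
\Psi_t \; = \; \Gamma_0 + \int_0^t \exp(-sA)\,O\,\exp(-sA\trans)\,ds.
\end{align}
Finally, multiplying on the left by $\exp(tA)$ and on the right by $\exp(tA\trans)$, and using the semigroup property $\exp(tA)\exp(-sA) = \exp((t-s)A) = \exp(-(s-t)A)$ (and similarly for $A\trans$), we recover the claimed formula. Symmetry of $\Gamma_t$ is preserved throughout since $O \in \S^d$ and $\Gamma_0 \in \S^d$, so each term on the right-hand side is manifestly symmetric.

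There is no real obstacle here; the argument is essentially bookkeeping with matrix exponentials and commutation with $A$. The only point requiring minor care is ensuring that the matrix exponentials commute with $A$ in the differentiation step (which they do by definition of the exponential series), and checking that symmetry is preserved. The result is standard and uniqueness of solutions to the linear ODE guarantees that this explicit formula is the solution.
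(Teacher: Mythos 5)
Your proof is correct: the substitution $\Psi_t=\exp(-tA)\Gamma_t\exp(-tA\trans)$ cancels the linear terms exactly as you compute, and integrating and undoing the conjugation gives the stated formula. The paper states this lemma without proof (treating it as the standard variation-of-constants result for Lyapunov differential equations), and your argument is precisely the standard one that justifies it.
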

\begin{Lemma}\label{LemmaB2}
    If $(\theta,\zeta)\in\Sc \times \hat\Sc$, then it holds 
    \begin{equation}
            \lim_{t\rightarrow+\infty} e^{-\beta t}\E[ Y^\theta_t( Y^\theta_t)\trans]=0
           \; , \; \lim_{t\rightarrow+\infty}  e^{-\beta t} \E[ Z^\zeta_t(Z^\zeta_t)\trans] = 0.
    \end{equation}
\end{Lemma}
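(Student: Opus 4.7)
The plan is to work only with the second-moment matrix of each process, derive the ODE it satisfies, solve it explicitly via Lemma \ref{SoltoODEForMatrix}, and then exploit the stability assumption to conclude exponential decay. I will detail the argument for $Y^\theta$; the argument for $Z^\zeta$ is strictly analogous with $(B,\gamma)$ replaced by $(\hat B,\gamma_0)$ and $\theta\in\Sc$ replaced by $\zeta\in\hat\Sc$.

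\emph{Step 1 (ODE for the second moment).} Set $A_\theta:=B+D\theta$ and $\Sigma^Y_t:=\E\big[Y^\theta_t(Y^\theta_t)\trans\big]$. Using the linear SDE $\d Y^\theta_t=A_\theta Y^\theta_t\,\dt+\gamma \d W_t$ from \eqref{dynamics:ytheta:ztheta}, an application of It\^o's formula to the product $Y^\theta_t (Y^\theta_t)\trans$, followed by taking expectations (the martingale part has zero mean since $A_\theta Y^\theta$ has linear growth, giving standard finite moments), yields that $\Sigma^Y_t$ is $\Sc^d$-valued and satisfies
\begin{equation}
\frac{\d \Sigma^Y_t}{\d t} \; = \; A_\theta \Sigma^Y_t+\Sigma^Y_t A_\theta\trans+\gamma\gamma\trans, \qquad \Sigma^Y_0 \; = \; \mathrm{Var}(X_0).
\end{equation}
This is exactly the ODE \eqref{ode:symmetric:matrix} with $A=A_\theta$ and $O=\gamma\gamma\trans\in\S^d_+$.

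\emph{Step 2 (closed-form representation).} By Lemma \ref{SoltoODEForMatrix},
\begin{equation}
\Sigma^Y_t \; = \; e^{tA_\theta}\,\Sigma^Y_0\,e^{tA_\theta\trans} \; + \; \int_0^t e^{(t-s)A_\theta}\,\gamma\gamma\trans\, e^{(t-s)A_\theta\trans}\,\d s .
\end{equation}

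\emph{Step 3 (stability-based decay).} Since $\theta\in\Sc$, the matrix $A_\theta-\tfrac{\beta}{2}I_d$ is stable, so there exist constants $C>0$ and $\alpha>0$ such that $\lVert e^{u(A_\theta-\frac{\beta}{2}I_d)}\rVert_F\leq C e^{-\alpha u}$ for every $u\geq 0$, equivalently $\lVert e^{uA_\theta}\rVert_F\leq C e^{(\frac{\beta}{2}-\alpha) u}$. Consequently,
\begin{equation}
e^{-\beta t}\,\lVert e^{tA_\theta}\Sigma^Y_0 e^{tA_\theta\trans}\rVert_F \; \leq \; C^2 e^{-2\alpha t}\,\lVert\Sigma^Y_0\rVert_F \; \xrightarrow[t\to\infty]{} \; 0,
\end{equation}
and, with the change of variable $u=t-s$,
\begin{equation}
e^{-\beta t}\Big\lVert\int_0^t e^{(t-s)A_\theta}\gamma\gamma\trans e^{(t-s)A_\theta\trans}\d s\Big\rVert_F \; \leq \; C^2\lVert\gamma\gamma\trans\rVert_F\, e^{-\beta t}\int_0^t e^{(\beta-2\alpha)u}\,\d u,
\end{equation}
which is $O(e^{-2\alpha t}+e^{-\beta t})$ and therefore also tends to $0$ as $t\to\infty$. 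Combining both contributions gives $\lim_{t\to\infty} e^{-\beta t}\Sigma^Y_t=0$.

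\emph{Step 4 (the $Z^\zeta$ case).} Repeating Steps 1--3 with $A_\zeta:=\hat B+D\zeta$ and $O=\gamma_0\gamma_0\trans$, and using that $\zeta\in\hat\Sc$ guarantees the stability of $A_\zeta-\tfrac{\beta}{2}I_d$, yields the second claimed limit. The only (mild) subtlety throughout is the matrix-exponential bound under stability in Step 3, which is a classical consequence of the Jordan decomposition and suffices here because the exponent $\tfrac{\beta}{2}$ matches exactly the truncation imposed in the definition of $\Sc,\hat\Sc$.
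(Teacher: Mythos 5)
Your proof is correct and follows essentially the same route as the paper: It\^o's formula gives the linear matrix ODE for the second-moment matrix, Lemma \ref{SoltoODEForMatrix} gives the closed form, and the stability of $B-\frac{\beta}{2}I_d+D\theta$ (resp. $\hat B-\frac{\beta}{2}I_d+D\zeta$) yields the vanishing limit. Your Step 3 is in fact slightly more careful than the paper's, which absorbs the factor $e^{-\beta t}$ into the convolution integral as though it were $e^{-\beta(t-s)}$; your explicit estimate $e^{-\beta t}\int_0^t e^{(\beta-2\alpha)u}\,\d u=O(e^{-2\alpha t}+e^{-\beta t})$ (with the obvious modification when $\beta=2\alpha$) handles that term cleanly.
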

\begin{proof}
\noindent From the dynamics \eqref{dynamics:ytheta:ztheta} of $( Y^\theta_t, Z^\zeta_t)_{t\geq 0}$ and It\^o's rule for $t\mapsto\E[ Y^\theta_t( Y^\theta_t)\trans]$ and $t\mapsto \E[ Z^\zeta_t( Z^\zeta_t)\trans]$, we get
\begin{equation}\label{dynamics:expected:value:ytildey:ztildez}
\begin{aligned}
    \d\big(\E[ Y^\theta_t( Y^\theta_t)\trans]\big)&=\big(\E[ Y^\theta_t( Y^\theta_t)\trans](B+D\theta)\trans+(B+D\theta)\E[ Y^\theta_t( Y^\theta_t)\trans]+\gamma\gamma\trans\big)\d t\\
    \d\big(\E[ Z^\zeta_t( Z^\zeta_t)\trans]\big)&= \big(\E[ Z^\zeta_t( Z^\zeta_t)\trans](\hat B+D\zeta)\trans + (\hat B+D\zeta)\E[ Z^\zeta_t( Z^\zeta_t)\trans]+\gamma_0\gamma_0\trans\big)\d t.
\end{aligned}
\end{equation}

 Noting that both $t\mapsto\E[ Y^\theta_t( Y^\theta_t)\trans]$ and $t\mapsto \E[ Z^\zeta_t( Z^\zeta_t)\trans]$ are solution of an ODE of the form \eqref{ode:symmetric:matrix}, Lemma \ref{SoltoODEForMatrix} guarantees that 
\begin{align*}
   \E[ Y^\theta_t( Y^\theta_t)\trans]&=\exp(t(B+D\theta))\E[Y_0(Y_0)\trans]\exp(t(B+D\theta)\trans)\\
   &+\int_0^t \exp(-(s-t)(B+D\theta))\gamma\gamma\trans\exp(-(s-t)(B+D\theta)\trans)\d s,\\
   \E[ Z^\zeta_t( Z^\zeta_t)\trans]&=\exp(t(\hat B+D\zeta))\E[Z_0(Z_0)\trans]\exp(t(\hat B+D\zeta)\trans)\\
   &+\int_0^t \exp(-(s-t)(\hat B+D\zeta))\gamma_0\gamma_0\trans\exp(-(s-t)(\hat B+D\zeta)\trans)\d s
\end{align*}

\noindent so that 
\begin{align*}
    e^{-\beta t} \E[ Y^\theta_t( Y^\theta_t)\trans] &=\exp(t(B-\frac{\beta}{2}I_d+D\theta ))\E[Y_0(Y_0)\trans]\exp(t(B-\frac{\beta}{2}I_d+D\theta)\trans)\\
    &+\int_0^t \exp(-(s-t)(B-\frac{\beta}{2}I_d+D\theta))\gamma\gamma\trans\exp(-(s-t)(B-\frac{\beta}{2}I_d+D\theta)\trans) \, \d s,\\
    e^{-\beta t}\E[ Z^\zeta_t( Z^\zeta_t)\trans]&=\exp(t(\hat B-\frac{\beta}{2}I_d+D\zeta))\E[Z_0(Z_0)\trans]\exp(t(\hat B-\frac{\beta}{2}I_d+D\zeta)\trans)\\
    &+\int_0^t \exp(-(s-t)(\hat B-\frac{\beta}{2}I_d+D\zeta))\gamma_0\gamma_0\trans\exp(-(s-t)(\hat B-\frac{\beta}{2}I_d+D\zeta)\trans) \, \d s.
\end{align*}

\noindent Now, if $(\theta,\zeta)\in\Sc \times \hat\Sc$, then both  $B-\frac{\beta}{2}I_d+D\theta$ and $\hat B-\frac{\beta}{2}I_d+D\zeta$ are stable which allows to conclude.
\end{proof}

\noindent \textit{Proof of Proposition \ref{PropSigmaTheta}}:
Using the integration by parts, we have
$$
\int_0^\infty e^{-\beta t}\d \big(\E[ Y^\theta_t( Y^\theta_t)\trans]\big)+(-\beta)\int_0^\infty e^{-\beta t} \E[ Y^\theta_t( Y^\theta_t)\trans] \, \d t=e^{-\beta t} \E[ Y^\theta_t( Y^\theta_t)\trans]\bigg|^{t=+\infty}_{t = 0}=-\E[Y_0(Y_0)\trans].
$$
Using \eqref{dynamics:expected:value:ytildey:ztildez} together with the previous identity, we get
\begin{align*}
    \int_0^\infty e^{-\beta t}\d\big( \E[ Y^\theta_t( Y^\theta_t)\trans]\big) & = \Sigma_{\theta}(B+D\theta)\trans+(B+D\theta)\Sigma_{\theta}+\frac{1}{\beta}\gamma\gamma\trans \\
    &=-\E[Y_0(Y_0)\trans]+\beta\Sigma_{\theta}
\end{align*}

\noindent so that $\Sigma_\theta$ defined by \eqref{defSig} satisfies
$$
-\beta\Sigma_{\theta}+\Sigma_{\theta}(B+D\theta)\trans+(B+D\theta)\Sigma_{\theta}+\E[Y_0(Y_0)\trans]+\frac{1}{\beta}\gamma\gamma\trans = 0
$$

\noindent which in turn, recalling that $M= \E[Y_0(Y_0)\trans]+\frac{1}{\beta}\gamma\gamma\trans$, allows to conclude. The proof for $\hat\Sigma_\zeta$ is similar. We thus omit the remaining technical details.

 Expressing \eqref{ALEForSigmaTheta} in the standard form of the Lyapounov equation
$$
(B-\frac{\beta}{2}I_d+D\theta)\Sigma_\theta+\Sigma_\theta(B-\frac{\beta}{2}I_d+D\theta)\trans+M=0,
$$
$$
(\hat B-\frac{\beta}{2}I_d+D\zeta)\hat\Sigma_\zeta+\hat\Sigma_\zeta(\hat B-\frac{\beta}{2}I_d+D\zeta)\trans+\hat M=0
$$ 
\noindent and, recalling that $(\theta, \zeta)\in\Sc \times \hat\Sc$, we conclude that they admit a unique positive definite solution.
\ep 

\subsection{Proof of the Proposition \ref{PropValueFuncForKtheta}}\label{ProofKTheta}

We write \eqref{ALEForKtheta} and \eqref{ALEForLambdatheta} as
$$
(B-\frac{\beta}{2}I_d+D\theta)\trans K_\theta+K_\theta(B-\frac{\beta}{2}I_d+D\theta)+Q+\theta\trans R\theta=0,
$$
$$
(\hat B-\frac{\beta}{2}I_d+D\zeta)\trans \Lambda_\zeta+\Lambda_\zeta(\hat B-\frac{\beta}{2}I_d+D\zeta)+\hat Q+\zeta\trans R\zeta=0.
$$ 
Since $(\theta, \zeta)\in\Sc \times \hat\Sc$, the above equations admit a unique positive definite solutions.

From the dynamics \eqref{dynamics:ytheta:ztheta} of $( Y^\theta_t, Z^\zeta_t)_{t\geq 0}$ and It\^o's rule, for all positive definite symmetric matrix $\Gamma$, one has
\begin{equation}
    \begin{aligned}
        \d \big(e^{-\beta t} ({Y}^\theta_t )\trans \Gamma {Y}^\theta_t\big)&=e^{-\beta t}\Bigg(({Y}^\theta_t )\trans \big( -\beta \Gamma +(B+D\theta)\trans \Gamma+\Gamma(B+D\theta)\big){Y}^\theta_t+\gamma\trans \Gamma\gamma\Bigg)\d t\\
        &\qquad + \; e^{-\beta t}\big(({Y}^\theta_t )\trans \Gamma\gamma+\gamma\trans \Gamma {Y}^\theta_t\big)\d W_t, \\
                \d \big(e^{-\beta t}( ({Z}^\zeta_t)\trans \Gamma  {Z}^\zeta_t)\big) &=e^{-\beta t}\Bigg(({Z}^\zeta_t)\trans \big( -\beta \Gamma +(\hat B+D\zeta)\trans \Gamma+\Gamma(\hat B+D\zeta)\big){Z}^\zeta_t+\gamma_0\trans \Gamma\gamma_0\Bigg)\d t\\
        & \qquad + \; e^{-\beta t}\big(({Z}^\zeta_t)\trans \Gamma\gamma_0+\gamma_0\trans \Gamma  {Z}^\zeta_t\big)\d W^0_t.
    \end{aligned}
\end{equation}
Then, from the very definition \eqref{defJ12} of $J_1$ and $J_2$, denoting by $K_\theta$ and $\Lambda_\zeta$ the respective unique solutions to \eqref{ALEForKtheta} and \eqref{ALEForLambdatheta}, we get
\begin{equation*}
    \begin{aligned}
        J_1(\theta) &=  (Q+\theta\trans R\theta) :\Sigma_\theta\\
        &= \E\Big[ \int_0^\infty e^{-\beta t} \big(  ({Y}^\theta_t )\trans (Q+\theta\trans R\theta)){Y}^\theta_t \big)\d t\Big]\\
        &=\E\Big[ \int_0^\infty e^{-\beta t} \big(  ({Y}^\theta_t )\trans \big(\beta K_\theta-(B+D\theta)\trans K_\theta- K_\theta(B+D\theta)\big){Y}^\theta_t \d t\Big] \\
        &=\E\Big[ \int_0^\infty e^{-\beta t} \big(  ({Y}^\theta_t )\trans \big(\beta K_\theta-(B+D\theta)\trans K_\theta- K_\theta(B+D\theta)\big){Y}^\theta_t \d t +\big(\dots\big)\d W_t\Big] \\
        &=-\E\Bigg[\int_0^\infty \d \Bigg(e^{-\beta t}({Y}^\theta_t )\trans K_\theta {Y}^\theta_t \Bigg)\Bigg]+\int_0^\infty e^{-\beta t}(\gamma\trans K_\theta\gamma)\d t=\E [(Y_0)\trans K_\theta Y_0]+\frac{1}{\beta} \gamma\trans K_\theta\gamma,
    \end{aligned}
\end{equation*}
\noindent and    
\begin{equation*}
\begin{aligned}
J_2(\zeta)& = (\hat Q+\zeta\trans R\zeta) : \hat\Sigma_\zeta\\
&= \E\Big[ \int_0^\infty e^{-\beta t} \big( ({Z}^\zeta_t)\trans (\hat Q+\zeta\trans R\zeta){Z}^\zeta_t\big)  \d t \Big]\\
&=\E\Big[ \int_0^\infty e^{-\beta t} \big(  ({Z}^\zeta_t)\trans \big(\beta \Lambda_\zeta-(\hat B+D\zeta)\trans \Lambda_\zeta- \Lambda_\zeta(\hat B+D\zeta)\big){Z}^\zeta_t\big)  \d t \Big]\\
&=\E\Big[ \int_0^\infty e^{-\beta t} \big(  ({Z}^\zeta_t)\trans \big(\beta \Lambda_\zeta-(\hat B+D\zeta)\trans \Lambda_\zeta- \Lambda_\zeta(\hat B+D\zeta)\big){Z}^\zeta_t\big)  \d t +(\dots)\d W^0_t\Big]\\
&=-\E\Bigg[\int_0^\infty \d \Bigg(e^{-\beta t} ({Z}^\zeta_t)\trans \Lambda_\zeta  {Z}^\zeta_t\Bigg) \Bigg]+\int_0^\infty e^{-\beta t}\gamma_0\trans \Lambda_\zeta\gamma_0\d t =\E[(Z_0)\trans \Lambda_\zeta Z_0]+\frac{1}{\beta} \gamma_0\trans \Lambda_\zeta\gamma_0.
\end{aligned}
\end{equation*}
The proof of Proposition \ref{PropValueFuncForKtheta} is now complete.
\ep

\section{Proofs of results for model-based algorithms}
\subsection{Proof of Proposition \ref{GradJ}}\label{ProofGradE}

We prove Proposition \ref{GradJ} only for $J_1$, as the part concerning $J_2$ follows similar reasoning. First, we recall an auxiliary result regarding the exponential form of the solution to continuous Lyapunov equations.
\begin{Lemma}\label{SolALE}
(Solution of continuous Lyapunov equation). Let $W$ be a stable matrix and $Q$ a symmetric matrix. The following continuous Lyapunov equation 
$$
WY+YW\trans+Q=0
$$ 
\noindent admits a unique solution $Y$ which satisfies 
$$
Y=\int_0^\infty e^{Wt}Qe^{W\trans t}\d t.
$$
\end{Lemma}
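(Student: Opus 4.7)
The plan is to establish the result by three standard steps: well-posedness of the integral, verification that it solves the equation, and a uniqueness argument via an invariance identity. Since $W$ is stable, all its eigenvalues have strictly negative real part, so there exist constants $C\geq 1$ and $\alpha>0$ such that $\lVert e^{Wt}\rVert_F\leq C e^{-\alpha t}$ for all $t\geq 0$ (and similarly for $e^{W\trans t}$). The integrand $e^{Wt}Q e^{W\trans t}$ is therefore bounded in Frobenius norm by $C^2\lVert Q\rVert_F e^{-2\alpha t}$, which is integrable on $[0,\infty)$. This shows $Y:=\int_0^\infty e^{Wt}Qe^{W\trans t}\,\d t$ is a well-defined element of $\S^d$ (symmetry being inherited from $Q$).

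Next, to check that $Y$ satisfies the Lyapunov equation, I would observe that
\begin{align}
\frac{\d}{\d t}\bigl(e^{Wt}Q e^{W\trans t}\bigr) &= W e^{Wt} Q e^{W\trans t} + e^{Wt} Q e^{W\trans t} W\trans.
\end{align}
Integrating from $0$ to $T$ and exchanging the order between the integral and the constant matrices $W$, $W\trans$ yields
\begin{align}
e^{WT} Q e^{W\trans T} - Q \;=\; W\Bigl(\int_0^T e^{Wt} Q e^{W\trans t}\d t\Bigr) + \Bigl(\int_0^T e^{Wt} Q e^{W\trans t}\d t\Bigr) W\trans.
\end{align}
Letting $T\to\infty$ and using the exponential decay bound above to conclude $e^{WT}Q e^{W\trans T}\to 0$, we obtain $-Q = WY + YW\trans$, i.e.\ $WY + YW\trans + Q = 0$.

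For uniqueness, suppose $Y_1,Y_2\in\S^d$ are both solutions and let $Z := Y_1-Y_2$, which satisfies $WZ + ZW\trans = 0$. The key observation is then
\begin{align}
\frac{\d}{\d t}\bigl(e^{Wt} Z e^{W\trans t}\bigr) \;=\; e^{Wt}\bigl(WZ + Z W\trans\bigr) e^{W\trans t} \;=\; 0,
\end{align}
so $t\mapsto e^{Wt}Ze^{W\trans t}$ is constant on $[0,\infty)$. Evaluating at $t=0$ gives the constant value $Z$, while the exponential decay estimate forces the limit as $t\to\infty$ to be $0$. Hence $Z=0$, which proves $Y_1=Y_2$.

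The only mild obstacle is the justification of $\lVert e^{Wt}\rVert_F\leq C e^{-\alpha t}$ for a stable (not necessarily diagonalizable) $W$; this is standard and follows from the Jordan form, taking any $\alpha$ strictly smaller than the modulus of the largest real part of the eigenvalues of $W$. Everything else is a straightforward manipulation of the matrix exponential.
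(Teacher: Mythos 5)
The paper states this lemma as a recalled standard fact and provides no proof of its own, so there is nothing to compare against; your argument is correct and is the standard one. All three steps are sound: integrability follows from the exponential decay of $e^{Wt}$ for stable $W$, the verification by integrating $\frac{\d}{\d t}\bigl(e^{Wt}Qe^{W\trans t}\bigr)$ over $[0,T]$ and letting $T\to\infty$ is valid, and the uniqueness argument via the constancy of $t\mapsto e^{Wt}Ze^{W\trans t}$ is correct (and in fact works for arbitrary, not only symmetric, solutions, which is what the uniqueness claim requires).
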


\noindent Hence, recalling the algebraic Riccati equation \eqref{ALEForLambdatheta} for $K_{\theta}$, we deduce that
$$
K_{\theta}=\int_0^\infty e^{(B-\frac{\beta}{2}I_{d}+D\theta)\trans t}(Q+\theta\trans R\theta)e^{(B-\frac{\beta}{2}I_{d}+D\theta) t} \, \d t.
$$

In order to differentiate $J_1$, we will rely on the following definition of the derivatives of a matrix-valued map.

\begin{Definition}(Differentiability of the matrix applications)The map $\mathbf{M}:\R^{m_1\times n_1}\mapsto \R^{m_2\times n_2}$ is differentiable at $\mathbf{X}$ if there exists a matrix mapping: $\mathbf{Z}:\R^{m_1\times n_1}\mapsto\R^{m_2n_2\times m_1n_1}$ such that for any $\Delta \mathbf{X} \in \R^{m_1\times n_1}$
$$
\text{vec}(\mathbf{M}(\mathbf{X}+\Delta \mathbf{X})- \mathbf{M}(\mathbf{X}))=\mathbf{Z}(\mathbf{X})\text{vec}(\Delta\mathbf{X})+O(\textnormal{tr}((\Delta \mathbf{X})\trans(\Delta \mathbf{X})))
$$

\noindent where '$\text{vec}$' stands for the vectorization operator defined for $Y=(y_{ij})_{1\leq i\leq n,1\leq j\leq m}$ by
$$
\text{vec}(Y)=(y_{11},\dots,y_{n1},y_{12},\dots, y_{n2},\dots, y_{1m},\dots ,y_{nm})\trans.
$$
\end{Definition}
\begin{Lemma}\label{DiffKtheta}
 (Diffentiability of $K_{\theta}$ with respect to $\theta$) Denoting $E_{\theta}=R\theta+D\trans K_{\theta}$, for all $\theta$, $\theta'\in\Sc$, we have
 \begin{align*}
     K_{\theta'}-K_{\theta}&=\int_0^\infty e^{(B-\frac{\beta}{2}I_d+D\theta')\trans t}[(E_{\theta})\trans (\theta'-\theta)+(\theta'-\theta)\trans E_{\theta}+(\theta'-\theta)\trans R(\theta'-\theta)]e^{(B-\frac{\beta}{2}I_d+D\theta') t} \, \dt
 \end{align*}
 
 \noindent from which it readily follows that there exists a matrix $Z_{\theta'}$ depending only on $\theta'$ such that
 $$
 \text{vec}(K_{\theta}-K_{\theta'})=Z_{\theta'}\text{vec}(\theta-\theta')+O(\lVert \theta'-\theta\rVert_F^2).
 $$
\end{Lemma}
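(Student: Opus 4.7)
The plan is to exploit the Lyapunov structure directly. First, I would write the ALE \eqref{ALEForKtheta} for both $K_\theta$ and $K_{\theta'}$ and subtract them. After grouping the terms involving $K_{\theta'} - K_\theta$ on the left, the identity becomes a Lyapunov equation
\[
(B-\tfrac{\beta}{2}I_d + D\theta')^\intercal (K_{\theta'} - K_\theta) + (K_{\theta'} - K_\theta)(B-\tfrac{\beta}{2}I_d + D\theta') + \mathcal{C}(\theta,\theta') = 0,
\]
where $\mathcal{C}(\theta,\theta')$ collects all terms not involving $K_{\theta'} - K_\theta$. Since $\theta' \in \mathcal{S}$, the matrix $B-\tfrac{\beta}{2}I_d + D\theta'$ is stable by definition, so Lemma \ref{SolALE} applies and immediately yields an integral representation of $K_{\theta'} - K_\theta$ in terms of $\mathcal{C}(\theta,\theta')$ and the matrix exponentials appearing in the statement of the lemma.

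Second, I would reorganize $\mathcal{C}(\theta,\theta')$ to recognize the coefficient $E_\theta = R\theta + D^\intercal K_\theta$. The drift mismatch contributes $(\theta'-\theta)^\intercal D^\intercal K_\theta + K_\theta D(\theta'-\theta)$, while writing $(\theta')^\intercal R \theta' - \theta^\intercal R \theta = (\theta'-\theta)^\intercal R \theta + \theta^\intercal R(\theta'-\theta) + (\theta'-\theta)^\intercal R (\theta'-\theta)$ and using the symmetry of $R$ and $K_\theta$ groups the linear pieces into $E_\theta^\intercal(\theta'-\theta) + (\theta'-\theta)^\intercal E_\theta$, leaving precisely the quadratic residual $(\theta'-\theta)^\intercal R (\theta'-\theta)$. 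Plugging this into the integral representation produces the claimed identity.

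Third, to extract the vectorization statement, I would linearize. Applying the identity $\mathrm{vec}(AXB) = (B^\intercal \otimes A)\mathrm{vec}(X)$ to the linear-in-$(\theta'-\theta)$ part of the bracket, and using the uniform integrability of $t \mapsto \|e^{(B-\frac{\beta}{2}I_d + D\theta')t}\|_F^2$ which follows from stability, produces a matrix $\widetilde Z_{\theta'}$ such that this contribution equals $\widetilde Z_{\theta'}\,\mathrm{vec}(\theta'-\theta)$; notice that this matrix still depends on $\theta$ through $E_\theta$. To get a $Z_{\theta'}$ depending only on $\theta'$, I would replace $E_\theta$ by $E_{\theta'}$, for which the error is controlled by $\|E_\theta - E_{\theta'}\|_F \lesssim \|\theta-\theta'\|_F + \|K_\theta - K_{\theta'}\|_F = O(\|\theta - \theta'\|_F)$; multiplied by another factor of $(\theta'-\theta)$ it contributes $O(\|\theta'-\theta\|_F^2)$. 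Combined with the explicit quadratic residual $(\theta'-\theta)^\intercal R(\theta'-\theta)$, this gives the claimed remainder.

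The main obstacle is to rule out circularity in the continuity argument $\|K_\theta - K_{\theta'}\|_F = O(\|\theta-\theta'\|_F)$ used to replace $E_\theta$ by $E_{\theta'}$. This follows independently by taking Frobenius norms in the integral representation already obtained in the first two steps and bounding crudely, which gives the required local Lipschitz continuity of $\theta \mapsto K_\theta$ on $\mathcal{S}$ without invoking differentiability; alternatively, it follows from the implicit function theorem applied to the Lyapunov map $(K,\theta) \mapsto (B - \tfrac{\beta}{2}I_d + D\theta)^\intercal K + K(B - \tfrac{\beta}{2}I_d + D\theta) + Q + \theta^\intercal R\theta$, whose linearization in $K$ is invertible at every $\theta \in \mathcal{S}$ by stability. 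Beyond this, the remaining work is careful algebraic bookkeeping, relying essentially on the symmetry of $R$ and $K_\theta$.
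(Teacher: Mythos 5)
Your proposal is correct and follows essentially the same route as the paper: subtract the two ALEs, regroup into a Lyapunov equation in $K_{\theta'}-K_\theta$ with stable matrix $B-\frac{\beta}{2}I_d+D\theta'$ and source $(\theta'-\theta)\trans E_\theta+E_\theta\trans(\theta'-\theta)+(\theta'-\theta)\trans R(\theta'-\theta)$, apply Lemma \ref{SolALE}, then vectorize via the Kronecker identity and swap $E_\theta$ for $E_{\theta'}$ at quadratic cost. The paper handles the circularity point exactly as you do, by reading the local Lipschitz bound $\lVert K_{\theta'}-K_\theta\rVert_F=O(\lVert\theta'-\theta\rVert_F)$ off the already-established integral representation.
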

\begin{proof}
    Taking the difference between the two equations \eqref{ALEForKtheta} solved by $K_{\theta}$ and $K_{\theta'}$, we get
     \begin{align*}  0&=(B-\frac{\beta}{2}I_d+D\theta')\trans K_{\theta'}+K_{\theta'}(B-\frac{\beta}{2}I_d+D\theta')-((B-\frac{\beta}{2}I_d+D\theta)\trans K_{\theta}+K_{\theta}(B-\frac{\beta}{2}I_d+D\theta))\\
     &+(\theta')\trans R\theta'-\theta\trans R \theta\\
&=(B-\frac{\beta}{2}I_d+D\theta')\trans K_{\theta'}+K_{\theta'}(B-\frac{\beta}{2}I_d+D\theta')\\
&-(((B-\frac{\beta}{2}I_d+D\theta')-D(\theta'-\theta))\trans K_{\theta}+K_{\theta}((B-\frac{\beta}{2}I_d+D\theta')-D(\theta'-\theta))\\
&+(\theta'-\theta+\theta)\trans R(\theta'-\theta+\theta)-\theta\trans R\theta\\
&=(B-\frac{\beta}{2}I_d+D\theta')\trans(K_{\theta'}-K_\theta)+(K_{\theta'}-K_\theta)(B-\frac{\beta}{2}I_d+D\theta')\\
&+(\theta'-\theta)\trans D\trans K_\theta +K_\theta D(\theta'-\theta)+(\theta'-\theta+\theta)\trans R(\theta'-\theta+\theta)-\theta\trans R\theta\\
&=(B-\frac{\beta}{2}I_d+D\theta')\trans(K_{\theta'}-K_\theta)+(K_{\theta'}-K_\theta)(B-\frac{\beta}{2}I_d+D\theta')\\
&+(\theta'-\theta)\trans (R\theta+ D\trans K_\theta) + (R\theta+ D\trans K_\theta)\trans(\theta'-\theta)+(\theta'-\theta)\trans R(\theta'-\theta)\\
&=(B-\frac{\beta}{2}I_d+D\theta')\trans(K_{\theta'}-K_\theta)+(K_{\theta'}-K_\theta)(B-\frac{\beta}{2}I_d+D\theta')\\
&+(\theta'-\theta)\trans E_\theta + E_\theta\trans(\theta'-\theta)+(\theta'-\theta)\trans R(\theta'-\theta)
\end{align*}
     
\noindent so that $K_{\theta'}-K_\theta$ is the unique solution to the following Algebraic Lyapounov equation for $Y$
$$
(B-\frac{\beta}{2}I_d+D\theta')\trans Y+Y(B-\frac{\beta}{2}I_d+D\theta')+(\theta'-\theta)\trans E_\theta+E_{\theta}\trans (\theta'-\theta)+ (\theta'-\theta)\trans R(\theta'-\theta)=0.
$$

From Lemma \ref{SolALE}, we thus obtain
$$
     K_{\theta'}-K_{\theta}=\int_0^\infty e^{(B-\frac{\beta}{2}I_d+D\theta')\trans t}[(\theta'-\theta)\trans E_\theta+E_{\theta}\trans (\theta'-\theta)+ (\theta'-\theta)\trans R(\theta'-\theta)]e^{(B-\frac{\beta}{2}I_d+D\theta')t} \, \d t.
$$

The previous identity directly gives 
     \begin{align*}
        & vec(K_{\theta'}-K_{\theta})=\int_0^\infty vec(e^{(B-\frac{\beta}{2}I_d+D\theta')\trans t}[(\theta'-\theta)\trans E_\theta+E_{\theta}\trans (\theta'-\theta)+ (\theta'-\theta)\trans R(\theta'-\theta)]e^{(B-\frac{\beta}{2}I_d+D\theta')t})\d t\\
         &=\big(\int_0^\infty  e^{(B-\frac{\beta}{2}I_d+D\theta')\trans t}\otimes e^{(B-\frac{\beta}{2}I_d+D\theta')\trans t}\d t\big)vec\big((\theta'-\theta)\trans E_\theta+E_{\theta}\trans (\theta'-\theta)+ (\theta'-\theta)\trans R(\theta'-\theta)\big)\\
         &=\big(\int_0^\infty  e^{(B-\frac{\beta}{2}I_d+D\theta')\trans t}\otimes e^{(B-\frac{\beta}{2}I_d+D\theta')\trans t}\d t\big)vec\big((\theta'-\theta)\trans E_{\theta'}+(E_{\theta'})\trans (\theta'-\theta)+U\big)
     \end{align*}

    \noindent where
     \begin{align*}
         U&=(\theta'-\theta)\trans R(\theta'-\theta)+(E_{\theta}-E_{\theta'})\trans (\theta'-\theta)+(\theta'-\theta)\trans (E_{\theta}-E_{\theta'})\\
         &=-(\theta'-\theta)\trans R(\theta'-\theta)+(K_{\theta'}-K_\theta))D(\theta'-\theta)+(\theta'-\theta)\trans D\trans(K_{\theta'}-K_\theta)\\
         &={O}(\lVert\theta'-\theta\rVert_F^2).
     \end{align*}
     Note that in the last line we again used the expression of $K_{\theta'}-K_\theta$. Thus, there exists $Z_{\theta'}$ that depends on $B-\frac{\beta}{2}I_d+D\theta'$ and $E_{\theta'}$ such that 
$$
     vec(K_{\theta'}-K_\theta)=Z_{\theta'}(\theta-\theta')+O(\lVert\theta'-\theta\rVert_F^2).
$$ 
     
We conclude that $K_\theta$ is indeed differentiable with respect to $\theta$. 
\end{proof}
We recall the rules of the total differentiation of the matrices:
\begin{itemize}
    \item[(1)]$\mathrm{d}(\mathbf{X}\pm \mathbf{Y})=\mathrm{d}\mathbf{X}\pm\mathrm{d}\mathbf{Y}$
    \item[(2)]$\mathrm{d}(\mathbf{X} \mathbf{Y})=(\mathrm{d}\mathbf{X})\mathbf{Y}+\mathbf{X}(\mathrm{d}\mathbf{Y})$
    \item[(3)]$\mathrm{d}(\mathbf{X}\trans)=(\mathrm{d}\mathbf{X})\trans$
    \item[(4)]$\mathrm{d}(\mathbf{X}^{-1})=-\mathbf{X}^{-1}(\mathrm{d}\mathbf{X})\mathbf{X}^{-1}$
\end{itemize}

We are now in position to prove Proposition \ref{GradJ}. We only prove the identity for the gradient of $J_1$, as the gradient of $J_2$ can be  treated similarly.

By the definition of the differentiation
$$
\d J_1(\theta)=\tr(\nabla J_1(\theta)\trans \d \theta)= \nabla   J_1(\theta):\d\theta. 
$$
Since $J_1(\theta)= K_\theta:M $, we have $\mathrm{d} {J}_1(\theta)=\mathrm{d} K_{\theta}:M$ so that
$$
 \nabla   J_1(\theta):\d\theta = \mathrm{d} K_{\theta}:M.
$$
\noindent
Then, we differentiate totally the Lyapunov equation of $K_{\theta}$ and obtain
$$-\beta \mathrm{d}K_{\theta}+\mathrm{d}K_{\theta}(B+D\theta)+(B+D\theta)\trans \mathrm{d}K_{\theta}+K_{\theta}D\mathrm{d}\theta+\mathrm{d}\theta\trans D\trans K_{\theta}+\theta\trans R \mathrm{d}\theta+\mathrm{d}\theta\trans R\theta=0$$
\noindent which writes
$$
-\beta \mathrm{d}K_{\theta}+\mathrm{d}K_{\theta}(B+D\theta)+(B+D\theta)\trans \mathrm{d}K_{\theta}+E_{\theta}\trans \mathrm{d}\theta+\mathrm{d}\theta\trans E_{\theta} = 0.
$$

We then multiply from the right the above equation by $\Sigma_{\theta}$. We obtain
$$
-\beta \mathrm{d}K_{\theta}\Sigma_{\theta}+\mathrm{d}K_{\theta}(B+D\theta)\Sigma_{\theta}+(B+D\theta)\trans \mathrm{d}K_{\theta}\Sigma_{\theta}+(E_{\theta}\trans \mathrm{d}\theta+\mathrm{d}\theta\trans E_{\theta})\Sigma_{\theta} = 0.
$$

Using the fact that $\Sigma_{\theta}$ satisfies the equation
$$
-\beta\Sigma_{\theta}+(B+D\theta)\Sigma_{\theta}+\Sigma_{\theta}(B+D\theta)\trans+M=0
$$

\noindent
we get
\begin{align*}
\text{tr}\big((\mathrm{d}K_{\theta})M\big)&=\text{tr}\big((\mathrm{d}K_{\theta})(\beta\Sigma_{\theta}-(B+D\theta)\Sigma_{\theta}-\Sigma_{\theta}(B+D\theta)\trans)\big)\\
    &=\text{tr}\big(\beta(\mathrm{d}K_{\theta})\Sigma_{\theta}-\mathrm{d}K_{\theta}(B+D\theta)\Sigma_{\theta}-(B+D\theta)\trans \mathrm{d}K_{\theta}\Sigma_{\theta}\big)\\
    &=\text{tr}((E_{\theta} \mathrm{d}\theta+\mathrm{d}\theta\trans E_{\theta})\Sigma_{\theta})\\
    &=2\text{tr}((E_{\theta}\Sigma_{\theta})\trans \mathrm{d}\theta)
\end{align*}
where in the second and the fourth lines we used the commutative property of the trace operator. Comparing the left and the right sides of the previous identity, we eventually get
$$
\nabla J_1(\theta)=2E_{\theta}\Sigma_{\theta}.
$$
The proof of Proposition \ref{GradJ} is now complete.
\ep

\subsection{Proof of Theorem \ref{GradDomCond}}\label{ProofPL}
We here prove the Polyak-Lojasiewocz inequality stated in Theorem \ref{GradDomCond}. We will only prove it for $J_1$, as $J_2$ can be treated in a completely similar manner. We first need the following auxiliary result.

\begin{Lemma}\label{PerturbJ}
(Perturbation analysis of $ J_1$) For all $\theta,\theta'\in\Sc$, it holds
\begin{align*}
J_1(\theta')-J_1(\theta)& =\textnormal{tr}[\Sigma_{\theta'}((\theta'-\theta+R^{-1}E_{\theta})\trans R(\theta'-\theta+R^{-1}E_{\theta})-E_{\theta}\trans R^{-1}E_{\theta})].
\end{align*}
\end{Lemma}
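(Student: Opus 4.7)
The plan is to establish a \emph{cost difference lemma} in the spirit of the LQR policy gradient literature (see Fazel et al., Bu et al.~\cite{ConvSampGradMethod}), exploiting the two representations of $J_1$ available from Proposition \ref{PropValueFuncForKtheta}. On the one hand, $J_1(\theta) = K_\theta : M$ with $K_\theta$ solving the ALE \eqref{ALEForKtheta}; on the other hand, $J_1(\theta') = (Q + (\theta')\trans R\theta') : \Sigma_{\theta'}$ with $\Sigma_{\theta'}$ solving the ALE \eqref{ALEForSigmaTheta} at $\theta'$. The strategy is to rewrite $J_1(\theta)$ also as a trace against the \emph{same} occupation matrix $\Sigma_{\theta'}$, so that the difference $J_1(\theta')-J_1(\theta)$ reduces to an algebraic identity over $\Sc$.

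To do so, I would start from the ALE for $K_\theta$ and inject the new drift $B+D\theta'$ by adding and subtracting $D\theta'$, which yields
\begin{align}
-\beta K_\theta + (B+D\theta')\trans K_\theta + K_\theta(B+D\theta') + Q + \theta\trans R\theta \notag \\
\; - \; (\theta'-\theta)\trans D\trans K_\theta - K_\theta D(\theta'-\theta) &= 0.
\end{align}
Multiplying on the right by $\Sigma_{\theta'}$, taking the trace, and using the cyclicity of the trace together with the ALE \eqref{ALEForSigmaTheta} at $\theta'$ (rewritten as $(B+D\theta')\Sigma_{\theta'}+\Sigma_{\theta'}(B+D\theta')\trans-\beta\Sigma_{\theta'} = -M$), the drift terms collapse into $-K_\theta : M = -J_1(\theta)$. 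Using that $\Sigma_{\theta'}$ and $K_\theta$ are symmetric, $\text{tr}(A\Sigma_{\theta'}) = \text{tr}(A\trans \Sigma_{\theta'})$ lets me symmetrize the cross terms, giving
\begin{align}
J_1(\theta) &= \text{tr}\Bigl[\bigl(Q + \theta\trans R\theta - (\theta'-\theta)\trans D\trans K_\theta - K_\theta D(\theta'-\theta)\bigr)\Sigma_{\theta'}\Bigr].
\end{align}

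Subtracting this from $J_1(\theta') = \text{tr}[(Q + (\theta')\trans R\theta')\Sigma_{\theta'}]$ and setting $\Delta := \theta'-\theta$, the terms in $Q$ cancel and the remaining quadratic can be rearranged via the identity $D\trans K_\theta = E_\theta - R\theta$. A direct expansion shows that the contributions involving $R\theta$ cancel, leaving
\begin{align}
J_1(\theta')-J_1(\theta) &= \text{tr}\Bigl[\Sigma_{\theta'}\bigl(\Delta\trans R\Delta + \Delta\trans E_\theta + E_\theta\trans \Delta\bigr)\Bigr].
\end{align}
Completing the square then yields $\Delta\trans R\Delta + \Delta\trans E_\theta + E_\theta\trans \Delta = (\Delta + R^{-1}E_\theta)\trans R(\Delta + R^{-1}E_\theta) - E_\theta\trans R^{-1}E_\theta$, which is the announced formula. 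The only delicate bookkeeping step, and the one I would execute carefully, is the symmetrization of $\text{tr}[\Delta\trans D\trans K_\theta \Sigma_{\theta'}]$ and its conjugate term into the single expression $\text{tr}[(\Delta\trans E_\theta + E_\theta\trans \Delta)\Sigma_{\theta'}]$; everything else is routine linear algebra.
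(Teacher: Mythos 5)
Your proof is correct, and it takes a genuinely different (though closely related) route from the paper's. The paper first derives an explicit integral representation of $K_{\theta'}-K_{\theta}$ (its Lemma on the differentiability of $K_\theta$): it subtracts the two ALEs, solves the resulting Lyapunov equation in closed form as $\int_0^\infty e^{(B-\frac{\beta}{2}I_d+D\theta')\trans t}[\,(\theta'-\theta)\trans E_\theta+E_\theta\trans(\theta'-\theta)+(\theta'-\theta)\trans R(\theta'-\theta)\,]e^{(B-\frac{\beta}{2}I_d+D\theta')t}\,\d t$, traces against $M$, and then recognizes $\Sigma_{\theta'}=\int_0^\infty e^{(B-\frac{\beta}{2}I_d+D\theta')t}Me^{(B-\frac{\beta}{2}I_d+D\theta')\trans t}\d t$ after cyclically swapping the exponential factors; the same completion of the square finishes the argument. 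You instead bypass the integral representation entirely: you pair the ALE for $K_\theta$, rewritten in the $\theta'$-drift, directly against $\Sigma_{\theta'}$ and invoke the ALE for $\Sigma_{\theta'}$ to identify $\text{tr}[K_\theta(-\beta\Sigma_{\theta'}+(B+D\theta')\Sigma_{\theta'}+\Sigma_{\theta'}(B+D\theta')\trans)]=-K_\theta:M=-J_1(\theta)$. This is the adjoint/duality relation between the two Lyapunov operators, and it makes the proof self-contained and purely algebraic, needing only that $K_\theta$ and $\Sigma_{\theta'}$ exist as solutions of their ALEs on $\Sc$ (Propositions \ref{PropSigmaTheta} and \ref{PropValueFuncForKtheta}) rather than the exponential solution formula. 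The trade-off is that the paper's detour is not wasted work in context: the explicit formula for $K_{\theta'}-K_\theta$ is reused for the differentiability of $\theta\mapsto K_\theta$ and for the Lipschitz estimates in Appendix \ref{ProofLip}, whereas your argument yields only the cost-difference identity. Your bookkeeping is sound: the cross terms $\text{tr}[(\theta'-\theta)\trans D\trans K_\theta\Sigma_{\theta'}]$ and $\text{tr}[K_\theta D(\theta'-\theta)\Sigma_{\theta'}]$ coincide by symmetry of $K_\theta$ and $\Sigma_{\theta'}$, the $R\theta$ contributions cancel exactly as you claim, and the final completion of the square is the identity used in the paper.
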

\begin{proof}
We have
\begin{align*}
   & J_1(\theta')-J_1(\theta)=\text{tr}((K_{\theta'}-K_{\theta})M)\\
    &=\text{tr}\Big(\int_0^\infty e^{(B-\frac{\beta}{2}I_d+D\theta')\trans t}[(E_{\theta})\trans (\theta'-\theta)+(\theta'-\theta)\trans E_{\theta}+(\theta'-\theta)\trans R(\theta'-\theta)]e^{(B-\frac{\beta}{2}I_d+D\theta') t}M\d t\Big)\\
    &=\text{tr}\Big(\big(\int_0^\infty e^{(B-\frac{\beta}{2}I_d+D\theta')t}Me^{(B-\frac{\beta}{2}I_d+D\theta')\trans t}\d t\big)[(E_{\theta})\trans (\theta'-\theta)+(\theta'-\theta)\trans E_{\theta}\\
    &+(\theta'-\theta)\trans R(\theta'-\theta)]\Big)\\
    &=\text{tr}\Big(\Sigma_{\theta'}((\theta'-\theta+R^{-1}E_{\theta})\trans R(\theta'-\theta+R^{-1}E_{\theta})-E_{\theta}\trans R^{-1}E_{\theta})\Big),
\end{align*}

\noindent where for the last equality we used the algebraic Riccati equation of $\Sigma_{\theta}$.
\end{proof}

\noindent \textit{Proof of Theorem \ref{GradDomCond}.}
From Lemma \ref{PerturbJ}, we deduce
\begin{align*}
    J_1(\theta)-J_1(\theta^*)&=\text{tr}[\Sigma_{\theta^*}(E_{\theta}\trans R^{-1}E_{\theta}-(\theta^*-\theta+R^{-1}E_{\theta})\trans R(\theta^*-\theta+R^{-1}E_{\theta})]\\
    &\leq \text{tr}[\Sigma_{\theta^*} (E_{\theta}\trans R^{-1}E_{\theta})]\\
    &\leq \frac{\lVert\Sigma_{\theta^*}\rVert_F}{\sigma_{\min}(R)}\text{tr}(E_{\theta}\trans E_{\theta})\\
    &\overset{(\lozenge)}{\leq} \frac{\lVert\Sigma_{\theta^*}\rVert_F}{4\sigma_{\min}(R)\sigma^2_{\min}(\Sigma_{\theta})}\text{tr}(\nabla J_1(\theta)\trans \nabla J_1(\theta))\\
    &\overset{(\triangledown)}{\leq} \frac{\lVert \Sigma_{\theta^*}\rVert_F \text{tr}(\nabla J_1(\theta)\trans\nabla J_1(\theta)) }{4\sigma_{\text{min}}(R)\sigma_{\text{min}}^2(M)}.
\end{align*}
The inequality ($\lozenge$) is a consequence of the inequality
$$
\text{tr}(\nabla J_1(\theta)\trans \nabla J_1(\theta))=4\text{tr}(\Sigma_{\theta}E_{\theta}\trans E_{\theta}\Sigma_{\theta})\geq 4\sigma^2_{\min}(\Sigma_{\theta})\text{tr}(E_{\theta} E_{\theta})
$$

\noindent which stems from Proposition \ref{GradJ}. The inequality ($\triangledown$) follows from the fact that $B-\frac{\beta}{2}I_d+D\theta$ is stable which combined with Lemma \ref{SolALE} guarantees that
$$
\Sigma_{\theta}=\int_0^\infty e^{(B-\frac{\beta}{2}I_d+D\theta)t}Me^{(B-\frac{\beta}{2}I_d+D\theta)\trans t}\d t\succeq M.
$$
\ep 

\noindent
\subsection{Proof of Proposition \ref{LJ}}\label{ProofLip}

\noindent Let us recall that $\nabla J_1(\theta)$ and $\nabla J_2(\zeta)$ are given by
$$\nabla J_1(\theta)=2(R\theta+D\trans K_\theta)\Sigma_\theta=2E_\theta\Sigma_\theta,
$$
$$
\nabla J_2(\zeta)=2(R\zeta+D\trans\Lambda_\zeta)\hat{\Sigma}_\zeta=2\hat{E}_\zeta\hat{\Sigma}_\zeta,
$$

\noindent with $E_\theta=R\theta+D\trans K_\theta,\hat{E}_\zeta=R\zeta+D\trans\Lambda_\zeta$ and where $K_\theta,\Lambda_\zeta$ are the unique solutions to
$$
-\beta K_\theta+(B+D\theta)\trans K_\theta+K_\theta(B+D\theta)+Q+\theta\trans R\theta=0,
$$
$$-\beta \Lambda_\zeta+(\hat B+D\zeta)\trans \Lambda_\zeta+\Lambda_\zeta(\hat B+D\zeta)+\hat Q+\zeta\trans R\zeta=0,
$$

\noindent and $\Sigma_\theta,\hat\Sigma_\zeta$ are the unique solutions to 
$$
-\beta\Sigma_\theta+(B+D\theta)\Sigma_\theta+\Sigma_\theta(B+D\theta)\trans+M=0,
$$
$$
-\beta\hat\Sigma_\zeta+(\hat B+D\zeta)\hat\Sigma_\zeta+\hat\Sigma_\zeta(\hat B+D\zeta)\trans+\hat M=0.
$$


Here again, we will only prove the Lipschitz continuity of $\nabla J_1$, as $\nabla J_2$ can be treated in a similar manner.

\subsubsection{Auxiliary results}

We start by proving some useful bounds on $\lVert \theta\rVert_F,\lVert K_\theta\rVert_F,\lVert \Sigma_\theta\rVert_F,\lVert E_\theta\rVert_F$ when $\theta \in \Sc (\ell)$, and similar bounds for $\lVert \zeta\rVert_F,\lVert \Lambda_\zeta\rVert_F,\lVert \hat\Sigma_\zeta\rVert_F,\lVert \hat E_\zeta\rVert_F$ when $\zeta \in \hat\Sc (\hat \ell)$,

\begin{Proposition}\label{Bds}
For all $\theta \in \Sc (\ell )$, it holds
$$
\lVert K_\theta\rVert_F\leq \Bd_{K}(\ell)=\frac{\ell}{\sigma_{\min}(M)},
$$
$$
\lVert \Sigma_{\theta}\rVert_F\leq\Bd_{\Sigma}(\ell)=\frac{\ell}{\sigma_{\min}(Q)},
$$
$$
\lVert E_{\theta}\rVert_F\leq\Bd_{E}(\ell)= \sqrt{\frac{\lVert R\rVert_F(\ell-J_1(\theta^*))}{\sigma_{\min}(M)}},
$$
$$\lVert \theta\rVert_F\leq \Bd_\theta(\ell)=\frac{1}{\sigma_{\min}(R)}\Bigg(\sqrt{\frac{\lVert R\rVert_F(\ell-J_1(\theta^*))}{\sigma_{\min}(M)}}+\frac{\lVert D\rVert_F\ell}{\sigma_{\min}(M)}\Bigg).
$$
Similarly, for all $\zeta \in \hat\Sc (\hat \ell )$, it holds
$$
\lVert \Lambda_\zeta\rVert_F\leq \Bd_{\Lambda}(\hat \ell)=\frac{\hat \ell}{\sigma_{\min}(\hat M)},
$$
$$
\lVert \hat\Sigma_{\zeta}\rVert_F\leq\Bd_{\hat\Sigma}(\hat \ell)=\frac{\hat \ell}{\sigma_{\min}(\hat Q)},
$$
$$
\lVert \hat E_{\zeta}\rVert_F\leq\Bd_{\hat E}(\hat \ell)= \sqrt{\frac{\lVert R\rVert_F(\hat \ell-J_2(\zeta^*))}{\sigma_{\min}(\hat M)}},
$$

$$\lVert \zeta\rVert_F\leq \Bd_\zeta(\hat \ell)=\frac{1}{\sigma_{\min}(R)}\Bigg(\sqrt{\frac{\lVert R\rVert_F(\hat \ell-J_2(\zeta^*))}{\sigma_{\min}(\hat M)}}+\frac{\lVert D\rVert_F \hat \ell}{\sigma_{\min}(\hat M)}\Bigg).
$$

\end{Proposition}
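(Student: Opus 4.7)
The bounds for $\zeta\in\hat\Sc(\hat\ell)$ are proved by exactly the same argument applied to $\Lambda_\zeta$, $\hat\Sigma_\zeta$, $\hat E_\zeta$, and their governing equations, so I would restrict attention to $\theta\in\Sc(\ell)$. The proof splits into four steps following the order in which the bounds are stated.

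For $\|K_\theta\|_F$ and $\|\Sigma_\theta\|_F$, I would exploit the two alternative formulas for $J_1$. Proposition~\ref{PropValueFuncForKtheta} gives $J_1(\theta)=\mathrm{tr}(K_\theta M)$ with $K_\theta\succeq 0$ and $M\succ 0$, hence $J_1(\theta)\geq \sigma_{\min}(M)\,\mathrm{tr}(K_\theta)\geq \sigma_{\min}(M)\|K_\theta\|_F$, using the elementary fact that $\mathrm{tr}(A)\geq \|A\|_F$ whenever $A\in\S^d_+$ (because $(\sum\lambda_i)^2\geq \sum \lambda_i^2$ for nonnegative eigenvalues). Dually, \eqref{defJ12} yields $J_1(\theta)=(Q+\theta\trans R\theta):\Sigma_\theta\geq Q:\Sigma_\theta\geq \sigma_{\min}(Q)\|\Sigma_\theta\|_F$, since $\theta\trans R\theta\succeq 0$. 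Dividing by the relevant eigenvalue and bounding $J_1(\theta)\leq \ell$ produces $\Bd_K(\ell)$ and $\Bd_\Sigma(\ell)$.

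The key step is the bound on $\|E_\theta\|_F$, where I would use the one-step ``greedy'' perturbation $\theta^{(1)}:=-R^{-1}D\trans K_\theta$, which satisfies $\theta^{(1)}-\theta=-R^{-1}E_\theta$. First I need to verify $\theta^{(1)}\in\Sc$: completing the square in $\theta$ in the Lyapunov equation \eqref{ALEForKtheta} (exactly as in Appendix~\ref{ProofGradE}) rewrites it as
\begin{equation*}
(B-\tfrac{\beta}{2}I_d-DR^{-1}D\trans K_\theta)\trans K_\theta+K_\theta(B-\tfrac{\beta}{2}I_d-DR^{-1}D\trans K_\theta)=-(Q+K_\theta DR^{-1}D\trans K_\theta+E_\theta\trans R^{-1}E_\theta),
\end{equation*}
whose right-hand side is negative definite (since $Q>0$) while $K_\theta>0$; the stability argument of Lemma~\ref{OptInS} then forces $B-\tfrac{\beta}{2}I_d+D\theta^{(1)}$ to be stable, i.e.\ $\theta^{(1)}\in\Sc$. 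I would then apply Lemma~\ref{PerturbJ} with $\theta'=\theta^{(1)}$: the quadratic term vanishes because $\theta^{(1)}-\theta+R^{-1}E_\theta=0$, leaving $J_1(\theta)-J_1(\theta^{(1)})=\mathrm{tr}[\Sigma_{\theta^{(1)}}E_\theta\trans R^{-1}E_\theta]$. Combining $J_1(\theta)-J_1(\theta^{(1)})\leq \ell-J_1(\theta^*)$ with $\Sigma_{\theta^{(1)}}\succeq M$ (as established in the proof of Theorem~\ref{GradDomCond}) and $R^{-1}\succeq I/\|R\|_F$ (since $\sigma_{\max}(R)\leq \|R\|_F$) yields
\begin{equation*}
\ell-J_1(\theta^*)\;\geq\; \sigma_{\min}(M)\,\mathrm{tr}(E_\theta\trans R^{-1}E_\theta)\;\geq\; \tfrac{\sigma_{\min}(M)}{\|R\|_F}\|E_\theta\|_F^2,
\end{equation*}
which is $\Bd_E(\ell)^2$ after rearrangement.

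Finally, the bound on $\|\theta\|_F$ follows by rewriting the identity $E_\theta=R\theta+D\trans K_\theta$ as $\theta=R^{-1}(E_\theta-D\trans K_\theta)$ and applying the triangle inequality together with $\|R^{-1}\|_{\mathrm{op}}\leq 1/\sigma_{\min}(R)$, yielding $\|\theta\|_F\leq \frac{1}{\sigma_{\min}(R)}(\|E_\theta\|_F+\|D\|_F\|K_\theta\|_F)$. Substituting $\Bd_E(\ell)$ and $\Bd_K(\ell)$ from Steps 1 and 3 gives $\Bd_\theta(\ell)$. The main obstacle is Step 3: identifying the correct auxiliary parameter $\theta^{(1)}$ and proving its admissibility via the completed-square Lyapunov identity; once this is available, the bound on $\|E_\theta\|_F$ falls out cleanly from the perturbation formula and standard matrix-inequality manipulations.
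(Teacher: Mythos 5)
Your proof is correct and follows essentially the same route as the paper: the trace identities $J_1(\theta)=K_\theta:M=(Q+\theta\trans R\theta):\Sigma_\theta$ for the first two bounds, the greedy perturbation $\theta^{(1)}=-R^{-1}D\trans K_\theta$ combined with the perturbation formula of Lemma \ref{PerturbJ}, the optimality of $\theta^*$ and $\Sigma_{\theta^{(1)}}\succeq M$ for the bound on $E_\theta$, and the triangle inequality with $\lVert R^{-1}\rVert_{\mathrm{op}}\leq 1/\sigma_{\min}(R)$ for the bound on $\theta$. Your explicit verification that $\theta^{(1)}\in\Sc$ via the completed-square Lyapunov identity is a step the paper's Lemma \ref{LowerBoundJ} leaves implicit, and it is a worthwhile addition since applying Lemma \ref{PerturbJ} at $\theta'=\theta^{(1)}$ and invoking $\Sigma_{\theta^{(1)}}\succeq M$ both require it.
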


Before proving the above proposition, we will need the following technical result.
\begin{Lemma}\label{LowerBoundJ}
    For all $\theta \in \Sc$, the following lower bound holds 
    $$
    J_1(\theta)- J_1(\theta^*)\geq\frac{\sigma_{\min}(M)}{\lVert R\rVert_F}\lVert E_\theta\rVert_F^2.
    $$
\end{Lemma}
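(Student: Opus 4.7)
The plan is to exploit the perturbation identity of Lemma \ref{PerturbJ} with a cleverly chosen comparison point $\theta'$ that cancels the positive squared term, and then invoke the global optimality of $\theta^*$ to conclude. Concretely, I would set
\[
\theta' \; := \; \theta - R^{-1} E_\theta \; = \; -R^{-1} D\trans K_\theta,
\]
so that $\theta' - \theta + R^{-1} E_\theta = 0$. Substituting into Lemma \ref{PerturbJ} instantly gives
\[
J_1(\theta) - J_1(\theta') \; = \; \text{tr}\bigl[\Sigma_{\theta'} E_\theta\trans R^{-1} E_\theta\bigr].
\]
Since $\theta^*$ is a global minimizer of $J_1$, we have $J_1(\theta^*) \le J_1(\theta')$ (provided $\theta'\in\Sc$), hence
$J_1(\theta)-J_1(\theta^*) \ge J_1(\theta)-J_1(\theta')$.

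The main obstacle is verifying that $\theta' = -R^{-1} D\trans K_\theta$ actually lies in $\Sc$, so that $\Sigma_{\theta'}$ is well-defined through Proposition \ref{PropSigmaTheta}. This is done by rewriting the ARE \eqref{ALEForKtheta} for $K_\theta$ through a completion-of-the-square, exactly mirroring the argument in the proof of Lemma \ref{OptInS}. Using the identity $\theta\trans R\theta + \theta\trans D\trans K_\theta + K_\theta D\theta = E_\theta\trans R^{-1} E_\theta - K_\theta D R^{-1} D\trans K_\theta$, one recasts \eqref{ALEForKtheta} as the Lyapunov equation
\[
\bigl(B-\tfrac{\beta}{2} I_d - D R^{-1} D\trans K_\theta\bigr)\trans K_\theta + K_\theta \bigl(B-\tfrac{\beta}{2} I_d - D R^{-1} D\trans K_\theta\bigr) \; = \; -Q - E_\theta\trans R^{-1} E_\theta.
\]
The right-hand side is strictly negative definite (because $Q>0$) and $K_\theta>0$, so the usual Lyapunov argument used in the proof of Lemma \ref{OptInS} forces $B-\tfrac{\beta}{2} I_d + D\theta'$ to be stable, i.e. $\theta' \in \Sc$.

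Once this is established, it remains a routine trace estimate. Applying the basic inequality $\text{tr}(AB)\ge \sigma_{\min}(A)\text{tr}(B)$ for $A,B\in\S^d_+$ twice, I would write
\[
\text{tr}\bigl[\Sigma_{\theta'} E_\theta\trans R^{-1} E_\theta\bigr] \; \ge \; \sigma_{\min}(\Sigma_{\theta'}) \, \text{tr}\bigl(E_\theta\trans R^{-1} E_\theta\bigr) \; \ge \; \sigma_{\min}(\Sigma_{\theta'}) \, \frac{\lVert E_\theta\rVert_F^2}{\sigma_{\max}(R)}.
\]
Then $\sigma_{\max}(R) \le \lVert R\rVert_F$, and the estimate $\Sigma_{\theta'} \succeq M$ (which follows from the integral representation $\Sigma_{\theta'} = \int_0^\infty e^{(B-\beta/2 I_d + D\theta')t} M \, e^{(B-\beta/2 I_d+D\theta')\trans t}\,\d t$ proved in the proof of Theorem \ref{GradDomCond}) yields $\sigma_{\min}(\Sigma_{\theta'}) \ge \sigma_{\min}(M)$. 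Chaining these bounds delivers exactly
\[
J_1(\theta) - J_1(\theta^*) \; \ge \; \frac{\sigma_{\min}(M)}{\lVert R\rVert_F} \lVert E_\theta\rVert_F^2,
\]
as claimed.
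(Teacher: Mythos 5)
Your proof is correct and follows essentially the same route as the paper: choose $\theta'=\theta-R^{-1}E_\theta$ so that Lemma \ref{PerturbJ} collapses to $J_1(\theta)-J_1(\theta')=\mathrm{tr}(\Sigma_{\theta'}E_\theta\trans R^{-1}E_\theta)$, invoke optimality of $\theta^*$, and bound the trace via $\Sigma_{\theta'}\succeq M$ and $\sigma_{\min}(R^{-1})\geq 1/\lVert R\rVert_F$; your explicit verification that $\theta'\in\Sc$ is a welcome addition that the paper leaves implicit. One small algebra slip: the right-hand side of your Lyapunov equation should read $-Q-E_\theta\trans R^{-1}E_\theta-K_\theta DR^{-1}D\trans K_\theta$ (the cross terms contribute an extra $-K_\theta DR^{-1}D\trans K_\theta$), but since this only makes the right-hand side more negative, the stability conclusion is unaffected.
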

\begin{proof}
    Applying Lemma \ref{PerturbJ} for ${\theta'}=\theta-R^{-1}E_\theta=-R^{-1}D\trans K_\theta$, we get
    $$ 
    J_1(\theta)- J_1({\theta'})=\textnormal{tr}(\Sigma_{{\theta'}} (E_\theta)\trans R^{-1}E_{\theta})
    $$
    so that
    \begin{equation*}
        \begin{split}
             J_1(\theta)- J_1(\theta^*)&\geq  J_1(\theta)- J_1({\theta'})\\&= \textnormal{tr}(\Sigma_{{\theta'}} E_\theta\trans R^{-1}E_{\theta})\\
            &\geq \frac{\sigma_{\min}(M)}{\lVert R\rVert_F}\textnormal{tr}(E_\theta\trans E_\theta)=\frac{\sigma_{\min}(M)}{\lVert R\rVert_F}\lVert E_\theta\rVert_F^2
        \end{split}
    \end{equation*}
    \noindent where for the last inequality we used the fact that $\Sigma_\theta\succeq M$. 
\end{proof}

\noindent  \textit{Proof of Proposition \ref{Bds}}

\noindent \emph{Step 1 (Bound for $K_\theta$).} Recalling that $J_1(\theta)=\Tr(K_\theta M)$, we directly deduce
    $$ J_1(\theta)\geq \Tr(K_\theta)\sigma_{\min}(M)\geq \lVert K_\theta\rVert_F \sigma_{\min}(M)
$$
\noindent so that
$$
    \lVert K_\theta\rVert_F\leq\frac{ J_1(\theta)}{\sigma_{\min}(M)}\leq \frac{\ell}{\sigma_{\min}(M)}.
$$

\noindent \emph{Step 2 (Bound for $\Sigma_\theta$).} We use the fact that $J_1(\theta)=\textnormal{tr}(\Sigma_\theta(Q+\theta\trans R\theta))$ to deduce
$$ 
    J_1(\theta)\geq \Tr(\Sigma_\theta)\sigma_{\min}(Q+\theta\trans R\theta)\geq \lVert \Sigma_\theta\rVert_F\sigma_{\min}(Q)
$$
\noindent which in turn implies
$$
\lVert \Sigma_\theta\rVert_F\leq\frac{ J_1(\theta)}{\sigma_{\min}(Q)}\leq \frac{\ell}{\sigma_{\min}(Q)}.
$$

\noindent \emph{Step 3 (Bound for $E_\theta$).} From Lemma \ref{LowerBoundJ}, we get
$$
    \lVert E_\theta\rVert_F\leq \sqrt{\frac{\lVert R\rVert_F ( J_1(\theta)- J_1(\theta^*))}{\sigma_{\min}(M)}}\leq \sqrt{\frac{\lVert R\rVert_F(\ell- J_1(\theta^*))}{\sigma_{\min}(M)}}.
$$

\noindent \emph{Step 4 (Bound for $\lVert \theta\rVert_F$).} We first write 
    \begin{equation*}
        \begin{split}
            \lVert\theta\rVert_F&\leq\lVert R\theta\rVert_F\lVert R^{-1}\rVert_F\\
            &\leq\frac{\lVert R\theta+D\trans K_\theta\rVert_F+\lVert -D\trans K_\theta\rVert_F}{\sigma_{\min}(R)}\\
            &\leq \frac{\lVert E_\theta\rVert_F+\lVert D\rVert_F\lVert K_\theta\rVert_F}{\sigma_{\min}(R)}\\
        \end{split}
    \end{equation*}

 \noindent which combined with Lemma \ref{LowerBoundJ} and the bound on $\lVert K_\theta\rVert_F$ clearly yields
    \begin{align*}
        \lVert\theta\rVert_F&\leq \frac{1}{\sigma_{\min}(R)}\Bigg(\sqrt{\frac{\lVert R\rVert_F(J_1(\theta)- J_1(\theta^*))}{\sigma_{\min}(M)}}+\frac{\lVert D\rVert_F J_1(\theta)}{\sigma_{\min}(M)}\Bigg)\\
        &\leq \frac{1}{\sigma_{\min}(R)}\Bigg(\sqrt{\frac{\lVert R\rVert_F(\ell- J_1(\theta^*))}{\sigma_{\min}(M)}}+\frac{\lVert D\rVert_F\ell}{\sigma_{\min}(M)}\Bigg).
    \end{align*}
\ep

\vspace{1mm}

\noindent\textbf{New operators.}
 \text{Taking a given $d\times d$ stable matrix $S$ and a symmetric matrix $X$,} we define the two operators $\mathcal{T}^\cdot$ and $\mathcal{F}^\cdot$ by
\begin{align} 
\mathcal{T}^S(X) \; = \; \int_0^\infty e^{Sr} X e^{S\trans r}\d r, & \qquad 
\mathcal{F}^S(X) \; = \; SX+XS\trans. 
\end{align} 

\noindent It is known that 
$$
\mathcal{F}^S \circ\mathcal{T}^S+I=0
$$ 
\noindent which means that $\mathcal{T}^S=-(\mathcal{F}^S)^{-1}$.
\begin{Lemma}\label{GE}
    For all $H \in \S^d_{>+}$, it holds 
    $$
    \interleave \mathcal{T}^S\interleave_F\leq \frac{\lVert \mathcal{T}^S(H)\rVert_F}{\sigma_{\min}(H)}.
    $$
\end{Lemma}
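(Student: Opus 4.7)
The plan is to exploit the fact that $\mathcal{T}^S$ is a \emph{positive} (monotone) linear operator on $\mathbb{S}^d$ and then reduce the norm estimate for $\mathcal{T}^S(X)$ to a norm estimate for $\mathcal{T}^S(H)$ via order sandwiching. First I would record two facts: (i) monotonicity of $\mathcal{T}^S$: if $A\preceq B$ in $\mathbb{S}^d$, then $\mathcal{T}^S(B-A)=\int_0^\infty e^{Sr}(B-A)e^{S^\intercal r}\,\d r\succeq 0$, so $\mathcal{T}^S(A)\preceq \mathcal{T}^S(B)$; (ii) for any symmetric $X$ with $\lVert X\rVert_F\le 1$, we have $\lVert X\rVert_{\mathrm{op}}\le\lVert X\rVert_F\le 1$, hence $-I\preceq X\preceq I$.

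Using $H\succeq\sigma_{\min}(H)\,I$ (since $H\in\mathbb{S}^d_{>+}$), fact (ii) upgrades to
\[
-\tfrac{1}{\sigma_{\min}(H)}\,H\;\preceq\; X\;\preceq\;\tfrac{1}{\sigma_{\min}(H)}\,H.
\]
Applying the monotone operator $\mathcal{T}^S$ yields
\[
-\tfrac{1}{\sigma_{\min}(H)}\,\mathcal{T}^S(H)\;\preceq\;\mathcal{T}^S(X)\;\preceq\;\tfrac{1}{\sigma_{\min}(H)}\,\mathcal{T}^S(H),
\]
with the right-hand side positive semidefinite. The conclusion $\interleave\mathcal{T}^S\interleave_F\le \lVert\mathcal{T}^S(H)\rVert_F/\sigma_{\min}(H)$ will then follow from the auxiliary claim: \emph{if $A\in\mathbb{S}^d$, $M\in\mathbb{S}^d_+$, and $-M\preceq A\preceq M$, then $\lVert A\rVert_F\le\lVert M\rVert_F$.}

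To prove this claim, which is the main obstacle (a naive application of the sandwich only yields an operator-norm bound), I would first treat the case $M\succ 0$. Set $C=M^{-1/2}AM^{-1/2}\in\mathbb{S}^d$; then $-I\preceq C\preceq I$, i.e.\ $\lVert C\rVert_{\mathrm{op}}\le 1$, and $A=M^{1/2}CM^{1/2}$. Computing,
\[
\lVert A\rVert_F^2=\mathrm{tr}(A^2)=\mathrm{tr}\bigl(M^{1/2}CMCM^{1/2}\bigr)=\mathrm{tr}\bigl((CM)^2\bigr)=\sum_i\lambda_i(CM)^2.
\]
Because $CM=M^{-1/2}AM^{1/2}$ is similar to the symmetric matrix $A$, the eigenvalues $\lambda_i(CM)$ are real and coincide with those of $A$. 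By the Schur/Weyl majorization of eigenvalues by singular values, $\sum_i\lambda_i(CM)^2\le\sum_i\sigma_i(CM)^2=\lVert CM\rVert_F^2\le\lVert C\rVert_{\mathrm{op}}^2\lVert M\rVert_F^2\le\lVert M\rVert_F^2$, giving $\lVert A\rVert_F\le\lVert M\rVert_F$. The degenerate case $M\succeq 0$ is handled by perturbation: apply the inequality to $M_\varepsilon:=M+\varepsilon I\succ 0$ (noting $-M_\varepsilon\preceq A\preceq M_\varepsilon$) and let $\varepsilon\downarrow 0$, using continuity of the Frobenius norm.

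Finally, instantiating the claim with $A=\mathcal{T}^S(X)$ and $M=\mathcal{T}^S(H)/\sigma_{\min}(H)$ yields $\lVert\mathcal{T}^S(X)\rVert_F\le \lVert\mathcal{T}^S(H)\rVert_F/\sigma_{\min}(H)$ for every symmetric $X$ with $\lVert X\rVert_F\le 1$, and taking the supremum over such $X$ gives the desired operator-norm estimate.
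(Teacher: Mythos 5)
Your proof is correct, but it follows a genuinely different route from the paper's. The paper fixes a unit vector $v$ and a test matrix $X$, writes $v\trans\mathcal{T}^S(X)v=\int_0^\infty \mathrm{tr}\bigl(Xe^{S\trans r}vv\trans e^{Sr}\bigr)\,\d r$, and bounds each integrand by $\mathrm{tr}\bigl(He^{S\trans r}vv\trans e^{Sr}\bigr)\,\lVert H^{-1/2}XH^{-1/2}\rVert_F$, so the whole estimate reduces to a weighted trace inequality plus $\lVert H^{-1/2}XH^{-1/2}\rVert_F\le\sigma_{\min}(H)^{-1}\lVert X\rVert_F$. You instead use the operator monotonicity of $\mathcal{T}^S$ to sandwich $\mathcal{T}^S(X)$ between $\pm\,\mathcal{T}^S(H)/\sigma_{\min}(H)$ in the Loewner order, and then invoke the auxiliary fact that $-M\preceq A\preceq M$ implies $\lVert A\rVert_F\le\lVert M\rVert_F$, which you prove via the conjugation $C=M^{-1/2}AM^{-1/2}$, Schur's inequality $\sum_i\lambda_i(CM)^2\le\lVert CM\rVert_F^2$, and $\lVert CM\rVert_F\le\lVert C\rVert_{\mathrm{op}}\lVert M\rVert_F$; all of these steps check out. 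A point in favour of your argument: controlling quadratic forms $v\trans\mathcal{T}^S(X)v$, as the paper does, strictly speaking only bounds the spectral norm of $\mathcal{T}^S(X)$, and passing from there to the Frobenius norm of $\mathcal{T}^S(X)$ would cost a dimensional factor $\sqrt{d}$; your order-sandwich lemma delivers the Frobenius-to-Frobenius bound directly, with no such loss. The one implicit assumption you make (consistent with the paper's setting, where $\mathcal{T}^S$ acts on $\S^d$) is that the induced norm $\interleave\mathcal{T}^S\interleave_F$ is taken over symmetric test matrices $X$, since the reduction $\lVert X\rVert_F\le 1\Rightarrow -I\preceq X\preceq I$ uses symmetry of $X$.
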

\begin{proof}
    For any unit vector $v\in\mathbb{R}^d$ and any unit spectral norm matrix $X$, it holds
\begin{equation*}
    \begin{split}
        v\trans \mathcal{T}^S(X) v&=\int_0^\infty \textnormal{tr}(Xe^{S\trans r}vv\trans e^{S r})\d r\\
        &\leq \int_0^\infty \textnormal{tr}(He^{S\trans r}vv\trans e^{Sr})\d r \lVert H^{-1/2}XH^{-1/2}\rVert_F\\
        &=v\trans \mathcal{T}^S(H) v\cdot\lVert H^{-1/2}XH^{-1/2}\rVert_F\\
        &\leq\lVert \mathcal{T}^S(H)\rVert_F \sigma_{\min}^{-1}(H)
    \end{split}
\end{equation*}

\noindent which clearly yields the conclusion.
\end{proof}
\begin{Lemma}
    It holds 
    $$
    \interleave \mathcal{T}^{S}\interleave_F=\interleave\mathcal{T}^{S\trans}\interleave_F.
    $$
\end{Lemma}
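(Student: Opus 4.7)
The plan is to identify $\mathcal{T}^{S\trans}$ as the adjoint of $\mathcal{T}^S$ with respect to the Frobenius inner product on $\mathbb{S}^d$, and then invoke the standard fact that a linear operator on a finite dimensional inner product space has the same operator norm as its adjoint.

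First, I would verify the adjoint identity. For arbitrary $X, Y \in \mathbb{S}^d$, using Fubini's theorem (justified by absolute convergence since $S$ is stable so $\lVert e^{Sr}\rVert_F$ decays exponentially) together with the cyclic property of the trace:
\begin{align*}
\langle \mathcal{T}^S(X), Y\rangle_F
&= \mathrm{tr}\!\left(\int_0^\infty e^{Sr} X e^{S\trans r}\,\d r \cdot Y\right)
= \int_0^\infty \mathrm{tr}\!\left(e^{Sr} X e^{S\trans r} Y\right)\d r \\
&= \int_0^\infty \mathrm{tr}\!\left(X \cdot e^{S\trans r} Y e^{Sr}\right)\d r
= \mathrm{tr}\!\left(X \cdot \mathcal{T}^{S\trans}(Y)\right)
= \langle X, \mathcal{T}^{S\trans}(Y)\rangle_F.
\end{align*}
Hence $(\mathcal{T}^{S})^* = \mathcal{T}^{S\trans}$ with respect to $\langle \cdot,\cdot\rangle_F$.

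Second, I would recall that for any linear operator $T$ between finite-dimensional Hilbert spaces, $\lVert T\rVert_{\mathrm{op}} = \lVert T^*\rVert_{\mathrm{op}}$, since
$$
\lVert T\rVert_{\mathrm{op}} \; = \; \sup_{\lVert X\rVert_F = \lVert Y\rVert_F = 1} \lvert \langle T(X), Y\rangle_F\rvert \; = \; \sup_{\lVert X\rVert_F = \lVert Y\rVert_F = 1} \lvert \langle X, T^*(Y)\rangle_F\rvert \; = \; \lVert T^*\rVert_{\mathrm{op}}.
$$
Applying this to $T = \mathcal{T}^S$ yields $\interleave \mathcal{T}^S\interleave_F = \interleave \mathcal{T}^{S\trans}\interleave_F$, which is the claim.

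The only mild subtlety is that the operators act on the space of symmetric matrices (rather than on all $d \times d$ matrices), but $\mathcal{T}^S$ indeed maps $\mathbb{S}^d$ into $\mathbb{S}^d$, and the Frobenius inner product restricts to a genuine inner product on $\mathbb{S}^d$, so the adjoint argument remains valid. No further computation is needed.
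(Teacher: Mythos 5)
Your proof is correct and rests on the same core identity as the paper's: the trace-cyclicity computation $\mathrm{tr}(X e^{Sr} Y e^{S\trans r}) = \mathrm{tr}(Y e^{S\trans r} X e^{Sr})$ under the integral, which the paper applies only to the rank-one test matrix $Y = vv\trans$ to get one inequality and then swaps $S \leftrightarrow S\trans$, and which you package more cleanly as the full adjoint relation $(\mathcal{T}^S)^* = \mathcal{T}^{S\trans}$ followed by $\lVert T\rVert_{\mathrm{op}} = \lVert T^*\rVert_{\mathrm{op}}$. The only caveat is that the paper never formally defines $\interleave\cdot\interleave_F$ (its own computations test against unit spectral-norm matrices rather than unit Frobenius-norm ones); your argument is airtight for the Frobenius-induced operator norm, which is the natural reading of the notation, and you rightly note that $\mathcal{T}^S$ preserves $\S^d$ so the restriction to symmetric matrices causes no trouble.
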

\begin{proof}
From the commutative property of the trace operator, for any unit vector $v\in\mathbb{R}^d$ and any unit spectral norm matrix $X$, one has
\begin{equation*}
    \begin{split}
        v\trans\mathcal{T}^{S\trans}(X)v&=\int_0^\infty \textnormal{tr}(Xe^{Sr}vv\trans e^{S\trans r})\d r\\
        &=\textnormal{tr}(X\mathcal{T}^S(vv\trans))\\
        &\leq \lVert X\rVert_F\lVert \mathcal{T}^S(vv\trans) \rVert_F \\
        &\leq \lVert X\rVert_F\interleave \mathcal{T}^{S\trans}\interleave_F\lVert vv\trans \rVert_F=\interleave \mathcal{T}^{S\trans}\interleave_F
    \end{split}
\end{equation*}

\noindent which yields 
 $$
 \interleave \mathcal{T}^{S\trans}\interleave_F\leq \interleave\mathcal{T}^{S}\interleave_F.
 $$
 Noting that $S=(S\trans)\trans$, the reverse inequality directly follows.
\end{proof}

\noindent In what follows, we introduce the notations 
$$
\Xi_\theta =B-\frac{\beta}{2}I_d+D\theta \quad \mbox{ and } \quad \hat\Xi_\zeta=\hat B-\frac{\beta}{2}I_d+D\zeta.
$$

\subsubsection{Lipschitz regularity of $\Sigma_\theta$, $\hat\Sigma_\zeta$, $K_\theta$ and $\Lambda_\zeta$.}

\noindent\textbf{Perturbation analysis for $\Sigma_\theta$ and $\hat\Sigma_\zeta$}\\

Recalling that $\Sigma_\theta$ and $\hat\Sigma_\zeta$ are the unique solution to the ALE \eqref{ALEForSigmaTheta}, from Lemma \ref{SolALE}, it holds 
$$
\Sigma_\theta=\mathcal{T}^{\Xi_\theta}(M),  \quad \hat\Sigma_\zeta=\mathcal{T}^{\hat{\Xi}_\zeta}(\hat M).
$$

Hence, from Lemma \ref{GE}, we directly get
\begin{equation}\label{IL}
\interleave\mathcal{T}^{\Xi_\theta}\interleave_F\leq\frac{\lVert \Sigma_\theta\rVert_F}{\sigma_{\min}(M)}, \quad \interleave\mathcal{T}^{\hat\Xi_\zeta}\interleave_F\leq\frac{\lVert \hat\Sigma_\zeta\rVert_F}{\sigma_{\min}(\hat M)}.
\end{equation}

\begin{Proposition}\label{PerturbationSigmaTheta}
For all $\theta , \, \theta' \in \Sc (\ell)$, it holds
$$
\lVert \Sigma_{\theta'}-\Sigma_{\theta}\rVert_F\leq Lip_{\Sigma}(\ell)\lVert\theta'-\theta\rVert_F,
$$ 
\noindent where $Lip_{\Sigma}(\ell)= \frac{2\lVert D\rVert_F(\Bd_{\Sigma}(\ell))^2}{\sigma_{\min}(M)}$.
Similarly, for all $\zeta ,\zeta' \in \hat\Sc (\hat \ell)$, it holds
$$
\lVert \hat\Sigma_{\zeta'}-\hat\Sigma_{\zeta}\rVert_F\leq Lip_{\hat\Sigma}(\hat \ell)\lVert\zeta'-\zeta\rVert_F,
$$ 
\noindent where $Lip_{\hat\Sigma}(\hat \ell)= \frac{2\lVert D\rVert_F(\Bd_{\hat\Sigma}(\hat \ell))^2}{\sigma_{\min}(\hat M)}$.
\end{Proposition}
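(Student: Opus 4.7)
The plan is to derive the Lipschitz estimate by a direct perturbation argument on the Algebraic Lyapunov Equation \eqref{ALEForSigmaTheta}, combined with the operator bound \eqref{IL} and the a priori bounds on $\Sigma_\theta$ and $\hat\Sigma_\zeta$ from Proposition \ref{Bds}. I will only write the argument for $\Sigma_\theta$ since the treatment of $\hat\Sigma_\zeta$ is strictly parallel, replacing $B, M, \Sc(\ell)$ by $\hat B, \hat M, \hat\Sc(\hat\ell)$.

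First I would rewrite \eqref{ALEForSigmaTheta} in the compact form $\mathcal{F}^{\Xi_\theta}(\Sigma_\theta) + M = 0$ with $\Xi_\theta = B - \tfrac{\beta}{2}I_d + D\theta$, so that $\Sigma_\theta = \mathcal{T}^{\Xi_\theta}(M)$. Then for $\theta,\theta' \in \Sc(\ell)$ I would subtract the two equations corresponding to $\theta$ and $\theta'$, and use the elementary identity $\Xi_{\theta'} - \Xi_\theta = D(\theta'-\theta)$, to obtain a new Lyapunov equation solved by the difference:
\begin{equation}
\Xi_{\theta'}(\Sigma_{\theta'} - \Sigma_\theta) + (\Sigma_{\theta'} - \Sigma_\theta)\Xi_{\theta'}^\trans + D(\theta'-\theta)\Sigma_\theta + \Sigma_\theta (\theta'-\theta)^\trans D^\trans = 0.
\end{equation}
Since $\theta' \in \Sc(\ell) \subset \Sc$, the matrix $\Xi_{\theta'}$ is stable, so I can invert $\mathcal{F}^{\Xi_{\theta'}}$ to express
\begin{equation}
\Sigma_{\theta'} - \Sigma_\theta = \mathcal{T}^{\Xi_{\theta'}}\!\big(D(\theta'-\theta)\Sigma_\theta + \Sigma_\theta(\theta'-\theta)^\trans D^\trans\big).
\end{equation}

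Taking the Frobenius norm, using submultiplicativity and the operator bound \eqref{IL} $\interleave\mathcal{T}^{\Xi_{\theta'}}\interleave_F \leq \lVert \Sigma_{\theta'}\rVert_F/\sigma_{\min}(M)$, I obtain
\begin{equation}
\lVert \Sigma_{\theta'} - \Sigma_\theta\rVert_F \;\leq\; \frac{2\lVert D\rVert_F \, \lVert \Sigma_\theta\rVert_F \, \lVert \Sigma_{\theta'}\rVert_F}{\sigma_{\min}(M)}\,\lVert \theta'-\theta\rVert_F.
\end{equation}
Finally, invoking the a priori bound $\lVert \Sigma_\theta\rVert_F \leq \Bd_\Sigma(\ell)$ (and the same bound for $\theta'$) from Proposition \ref{Bds}, the right-hand side is controlled by $Lip_\Sigma(\ell)\lVert\theta'-\theta\rVert_F$ with $Lip_\Sigma(\ell) = 2\lVert D\rVert_F (\Bd_\Sigma(\ell))^2/\sigma_{\min}(M)$, as claimed. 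The argument for $\hat\Sigma_\zeta$ is identical verbatim, using $\hat\Xi_\zeta$, $\hat M$, and the second inequality in \eqref{IL}. There is no serious obstacle here: the only delicate point is keeping track of the constants and ensuring that $\mathcal{T}^{\Xi_{\theta'}}$ is well-defined, which is guaranteed by $\theta' \in \Sc(\ell) \subset \Sc$.
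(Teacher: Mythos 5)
Your proof is correct and follows essentially the same route as the paper: both arguments reduce the difference $\Sigma_{\theta'}-\Sigma_\theta$ to an application of $\mathcal{T}^{\Xi_{\cdot}}$ to the symmetric matrix $D(\theta'-\theta)\Sigma_{\cdot}+\Sigma_{\cdot}(\theta'-\theta)\trans D\trans$, then invoke the operator bound \eqref{IL} together with the a priori bound $\lVert\Sigma_\theta\rVert_F\leq \Bd_\Sigma(\ell)$ from Proposition \ref{Bds}. The only cosmetic difference is that you write out the Lyapunov equation satisfied by the difference and invert $\mathcal{F}^{\Xi_{\theta'}}$, whereas the paper manipulates the identity $\mathcal{T}^{\Xi_\theta}\circ\mathcal{F}^{\Xi_\theta}=-I$ directly with the roles of $\theta$ and $\theta'$ interchanged; the resulting constant is identical.
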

\begin{proof}

From 
\begin{align*}
    \lVert\mathcal{F}^{\Xi_{\theta'}}(X)-\mathcal{F}^{\Xi_{\theta}}(X)\rVert_F&=\lVert D(\theta'-\theta)X+X(\theta'-\theta)\trans D\trans\rVert\\
    &\leq 2\lVert D\rVert_F\lVert X\rVert_F\lVert \theta'-\theta\rVert_F
\end{align*}
we get 
$$
\interleave\mathcal{F}^{\Xi_{\theta'}}-\mathcal{F}^{\Xi_{\theta}}\interleave_F \leq 2\lVert D\rVert_F \lVert \theta'-\theta\rVert_F.
$$

Then, using Proposition \ref{Bds} and the inequality \eqref{IL}, we get
$$
\interleave\mathcal{T}^{\Xi_\theta}\interleave_F\leq\frac{\lVert \Sigma_\theta\rVert_F}{\sigma_{\min}(M)}\leq\frac{\Bd_{\Sigma}(\ell)}{\sigma_{\min}(M)}.
$$

\noindent
Since $\mathcal{F}^{\Xi_\theta}(\Sigma_{\theta})=\mathcal{F}^{\Xi_{\theta'}}(\Sigma_{\theta'})=-M$
we have
\begin{align*}
    \lVert \Sigma_{\theta'}-\Sigma_{\theta}\rVert_F&=\lVert\mathcal{T}^{\Xi_\theta}(\mathcal{F}^{\Xi_\theta}(\Sigma_{\theta'}-\Sigma_{\theta}))\rVert_F\\
    &=\lVert\mathcal{T}^{\Xi_\theta}(\mathcal{F}^{\Xi_\theta}(\Sigma_{\theta'})-\mathcal{F}^{\Xi_\theta}(\Sigma_{\theta}))\rVert_F\\
    &=\lVert\mathcal{T}^{\Xi_\theta}(\mathcal{F}^{\Xi_\theta}(\Sigma_{\theta'})-\mathcal{F}^{\Xi_{\theta'}}(\Sigma_{\theta'}))\rVert_F\\
&\leq\interleave\mathcal{T}^{\Xi_\theta}\interleave_F\interleave \mathcal{F}^{\Xi_{\theta'}}-\mathcal{F}^{\Xi_{\theta}}\interleave_F\lVert\Sigma_{\theta'}\rVert_F\\
&\leq \frac{2\lVert D\rVert_F(\Bd_{\Sigma}(\ell))^2}{\sigma_{\min}(M)}\lVert \theta'-\theta\rVert_F.
\end{align*}

The Lipschitz regularity of $\hat\Sigma_{\zeta}$ is handled by similar arguments. The remaining technical details are omitted. 
\end{proof}

\noindent\textbf{Perturbation analysis of $K_\theta$ and $\Lambda_\zeta$.}
\begin{Proposition}\label{AnaPertKtheta}
    For all $\theta,\theta'\in\Sc(\ell)$, one has
    $$\lVert K_{\theta'}-K_\theta\rVert_F\leq Lip_{K}(\ell)\lVert \theta'-\theta\rVert_F$$ where 
    $$
    Lip_{K}(\ell)=2\big(\Bd_{E}(\ell)+\lVert R\rVert_F \Bd_\theta(\ell)\big) \sup_{\theta\in\Sc(\ell)} \lVert \int_0^\infty e^{\Xi_{\theta}t} e^{(\Xi_{\theta})\trans t}\d t\rVert_F<+\infty.
    $$
    Similarly, one can show that for all $\zeta,\zeta'\in\hat\Sc(\hat a)$, it holds
$$
\lVert \Lambda_{\zeta'}-\Lambda_\zeta\rVert_F\leq Lip_{\Lambda}(\hat \ell)\lVert \zeta'-\zeta\rVert_F$$ where 
    $$Lip_{\Lambda}(\hat \ell)=2\big(\Bd_{\hat E}(\hat \ell)+\lVert R\rVert_F \Bd_\zeta(\hat \ell)\big) \sup_{\zeta\in\hat\Sc(\hat \ell)} \lVert \int_0^\infty e^{\hat\Xi_{\zeta}t} e^{(\hat\Xi_{\zeta})\trans t}\d t\rVert_F<+\infty.
$$
   
\end{Proposition}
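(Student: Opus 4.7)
\textbf{Proof proposal for Proposition \ref{AnaPertKtheta}.}

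My plan is to start from the explicit integral formula for $K_{\theta'}-K_\theta$ obtained in Lemma \ref{DiffKtheta}, namely
\begin{equation*}
K_{\theta'}-K_\theta \;=\; \int_0^\infty e^{\Xi_{\theta'}^\top t}\,P_{\theta,\theta'}\,e^{\Xi_{\theta'} t}\,\d t,\quad P_{\theta,\theta'}=(\theta'-\theta)^\top E_\theta+E_\theta^\top(\theta'-\theta)+(\theta'-\theta)^\top R(\theta'-\theta),
\end{equation*}
recalling the notation $\Xi_{\theta'}=B-\tfrac{\beta}{2}I_d+D\theta'$. Recognizing the right-hand side as $\mathcal{T}^{\Xi_{\theta'}^\top}(P_{\theta,\theta'})$, the strategy is to bound $\|P_{\theta,\theta'}\|_F$ linearly in $\|\theta'-\theta\|_F$ and to control $\interleave\mathcal{T}^{\Xi_{\theta'}^\top}\interleave_F$ via Lemma \ref{GE}.

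For the first step, the cross terms are harmless: using $\theta\in\Sc(\ell)$ and Proposition \ref{Bds},
\begin{equation*}
\|(\theta'-\theta)^\top E_\theta+E_\theta^\top(\theta'-\theta)\|_F \;\leq\; 2\,\Bd_E(\ell)\,\|\theta'-\theta\|_F.
\end{equation*}
The quadratic term $(\theta'-\theta)^\top R(\theta'-\theta)$ seems at first glance to spoil a global Lipschitz bound, and this is the only subtle point of the proof. The trick is to split it as $(\theta'-\theta)^\top R\theta'-(\theta'-\theta)^\top R\theta$ and to bound each piece by absorbing one factor into the uniform bound on $\|\theta\|_F,\|\theta'\|_F$ on $\Sc(\ell)$, yielding
\begin{equation*}
\|(\theta'-\theta)^\top R(\theta'-\theta)\|_F \;\leq\; 2\,\|R\|_F\,\Bd_\theta(\ell)\,\|\theta'-\theta\|_F.
\end{equation*}
Adding the two contributions gives $\|P_{\theta,\theta'}\|_F\leq 2\bigl(\Bd_E(\ell)+\|R\|_F\Bd_\theta(\ell)\bigr)\|\theta'-\theta\|_F$.

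For the second step, since $\Xi_{\theta'}$ is stable for $\theta'\in\Sc$, Lemma \ref{GE} applied with $H=I_d$ gives
\begin{equation*}
\interleave \mathcal{T}^{\Xi_{\theta'}^\top}\interleave_F \;\leq\; \|\mathcal{T}^{\Xi_{\theta'}^\top}(I_d)\|_F \;=\; \bigl\|\textstyle\int_0^\infty e^{\Xi_{\theta'}^\top t} e^{\Xi_{\theta'} t}\,\d t\bigr\|_F \;=\; \bigl\|\textstyle\int_0^\infty e^{\Xi_{\theta'} t} e^{\Xi_{\theta'}^\top t}\,\d t\bigr\|_F,
\end{equation*}
where the last equality uses $\|A\|_F=\|A^\top\|_F$. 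Combining with the bound on $\|P_{\theta,\theta'}\|_F$ and taking the supremum over $\theta'\in\Sc(\ell)$ yields the announced constant $Lip_K(\ell)$. To confirm that this supremum is finite, I would invoke compactness of $\Sc(\ell)$ (established in Proposition \ref{Bds}, combined with continuity of $J_1$), together with the fact that $\theta\mapsto\int_0^\infty e^{\Xi_\theta t} e^{\Xi_\theta^\top t}\d t$ is continuous on $\Sc$ since $\Xi_\theta$ has a uniformly negative spectral abscissa on the compact set $\Sc(\ell)$, ensuring dominated convergence for the integral. The proof for $\Lambda_\zeta$ is completely parallel: one derives an analogue of Lemma \ref{DiffKtheta} by subtracting the ALE \eqref{ALEForLambdatheta} at $\zeta$ and $\zeta'$, applies the same splitting for the quadratic term, and concludes via Lemma \ref{GE} applied to $\mathcal{T}^{\hat\Xi_{\zeta'}^\top}$, so no new ideas are needed.
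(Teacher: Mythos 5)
Your proposal is correct and follows essentially the same route as the paper's proof: start from the integral representation of $K_{\theta'}-K_\theta$ in Lemma \ref{DiffKtheta}, bound the inner matrix by $2\big(\Bd_E(\ell)+\lVert R\rVert_F\Bd_\theta(\ell)\big)\lVert\theta'-\theta\rVert_F$ using Proposition \ref{Bds} (your splitting of the quadratic term is just a cosmetic variant of the paper's use of $\lVert\theta'-\theta\rVert_F\leq 2\Bd_\theta(\ell)$), and control the remaining integral by compactness of $\Sc(\ell)$ and continuity of $\theta\mapsto\int_0^\infty e^{\Xi_\theta t}e^{\Xi_\theta\trans t}\,\d t$. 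One small caveat: the identity $\lVert\int_0^\infty e^{\Xi_{\theta'}\trans t}e^{\Xi_{\theta'}t}\,\d t\rVert_F=\lVert\int_0^\infty e^{\Xi_{\theta'}t}e^{\Xi_{\theta'}\trans t}\,\d t\rVert_F$ does not follow from $\lVert A\rVert_F=\lVert A\trans\rVert_F$ (both matrices are symmetric but generally distinct); the correct way to pass between the two orderings is the paper's auxiliary lemma $\interleave\mathcal{T}^{S}\interleave_F=\interleave\mathcal{T}^{S\trans}\interleave_F$, and since either ordering yields a finite supremum on $\Sc(\ell)$, this does not affect the validity of the Lipschitz bound.
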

\begin{proof}

From Lemma \ref{DiffKtheta}, we get
$$
K_{\theta'}-K_{\theta}=\int_0^\infty e^{\Xi_{\theta'}\trans t}[(E_{\theta})\trans (\theta'-\theta)+(\theta'-\theta)\trans E_{\theta}+(\theta'-\theta)\trans R(\theta'-\theta)]e^{\Xi_{\theta'}t} \, \dt
$$

\noindent so that
$$
\lVert K_{\theta'}-K_{\theta}\rVert_F\leq \lVert (E_{\theta})\trans (\theta'-\theta)+(\theta'-\theta)\trans E_{\theta}+(\theta'-\theta)\trans R(\theta'-\theta)\rVert_F \lVert \int_0^\infty e^{\Xi_{\theta'}t} e^{\Xi_{\theta'}\trans t}\d t\rVert_F.
$$
\normalsize
The compactness of $\Sc(\ell)$ together with the continuity of $\theta\in \Sc(\ell)\mapsto \lVert \int_0^\infty e^{\Xi_{\theta}t} e^{\Xi_{\theta}\trans t}\d t\rVert_F$ yields $\sup_{\theta\in \Sc(\ell)} \lVert \int_0^\infty e^{\Xi_{\theta}t} e^{\Xi_{\theta}\trans t}\d t\rVert_F<+\infty$. The triangle inequality together with Proposition \ref{Bds} gives 
$$
\lVert \theta'-\theta\rVert_F\leq \lVert \theta'\rVert_F+\lVert \theta\rVert_F\leq 2\Bd_\theta(\ell)
$$
so that
$$
\lVert E_{\theta}\trans (\theta'-\theta)+(\theta'-\theta)\trans E_{\theta}+(\theta'-\theta)\trans R(\theta'-\theta)\rVert_F \leq 2\big(\Bd_{E}(\ell)+\lVert R\rVert_F \Bd_\theta(\ell)\big)\lVert \theta'-\theta\rVert_F.
$$

Gathering the previous bounds allows to conclude that $\theta\mapsto K_\theta$ is $Lip_K(\ell)$-Lipschitz continuous on $\Sc(\ell)$. Similar arguments yield the Lipschitz regularity of $\Lambda$. We omit the remaining technical details.

\end{proof}

\subsubsection{Proof of Proposition \ref{LJ}}
In order to establish the Lipschitz regularity of $\nabla   J_1$ and $\nabla J_2$ on $\Sc(\ell)$, we start from the representation provided by Proposition \ref{GradJ}. Namely, one has
$$
\nabla    J_1(\theta)=2E_\theta\Sigma_\theta
$$ 

\noindent where $E_\theta=R\theta+D\trans K_\theta$. Hence,
$$
\nabla J_1(\theta')-\nabla  J_1(\theta)=2(E_{\theta'}-E_\theta)\Sigma_{\theta'}+2E_{\theta}(\Sigma_{\theta'}-\Sigma_\theta).
$$

 Noting that
$$
E_{\theta'}-E_\theta=R(\theta'-\theta)+D\trans (K_{\theta'}-K_\theta)
$$
\noindent and using Proposition \ref{AnaPertKtheta}, we obtain 
\begin{align*}
    \lVert E_{\theta'}-E_\theta\rVert_F&\leq \lVert R\rVert_F\lVert\theta'-\theta\rVert_F+\lVert D\rVert_F \lVert K_{\theta'}-K_\theta\rVert_F\\
    &\leq (\lVert R\rVert_F+\lVert D\rVert_F Lip_{K}(\ell))\lVert \theta'-\theta\rVert_F.
\end{align*}

The previous bound, along again with Proposition \ref{AnaPertKtheta}, provides
\begin{align*}
    \lVert\nabla  J_1(\theta')-\nabla  J_1(\theta)\rVert_F&\leq 2\lVert E_{\theta'}-E_\theta\rVert_F\lVert\Sigma_{\theta'}\rVert_F+2\lVert E_{\theta}\rVert_F\lVert\Sigma_{\theta'}-\Sigma_\theta\rVert_F\\
    &\leq 2 ( (\lVert R\rVert_F+\lVert D\rVert_F Lip_{K}(\ell)) \Bd_{\Sigma}(\ell)+\Bd_{E}(\ell) Lip_{\Sigma}(\ell))\lVert \theta'-\theta\rVert_F.
\end{align*}


\noindent Using similar arguments, we find that for all $\zeta,\zeta'\in\hat\Sc(\hat \ell)$, 
$$ \lVert\nabla  J_2(\zeta')-\nabla  J_2(\zeta)\rVert_F\leq \hat L(\hat \ell)\lVert \zeta'-\zeta\rVert_F$$ 

\noindent where 
$$
\hat L(\hat \ell)=2 ( (\lVert R\rVert_F+\lVert D\rVert_F Lip_{\Lambda}(\hat \ell)) \Bd_{\hat \Sigma}(\hat \ell)+\Bd_{\hat E}(\hat \ell) Lip_{\hat \Sigma}(\hat \ell)).
$$


\section{Proofs of the results for the model-free algorithm}\label{proof:main:theorem:model:free}

In the first subsection, we describe the strategy of the proof of Theorem \ref{ThmConvSGD}. In Subsection \ref{main:proof:theorem:conv:sgd}, we provide its proof. Several auxiliary results are postponed to the next subsections.

\subsection{Strategy of proof of Theorem \ref{ThmConvSGD}}\label{strategy:proof:subsection}
We first solve the well-posedness problem of the perturbation of the parameters. The following lemma is directly taken from Lemma 4 of \cite{ConvSampGradMethod}.

\begin{Lemma}\label{ra}
    There exists $\check{r}(b)$ depending only upon $b\in\R_+$, $m$, $d$ and the model parameters such that for all $0<r\leq \check{r}(b)$, for any $\mathbf{U}=(U,V)\in H,$ such that $\lVert U\rVert_F\leq r,\lVert V\rVert_F\leq r$ and for all $\Theta=(\theta,\zeta)\in\Rc(b)$, we have 
    \begin{align} 
    \Theta+\mathbf U &= \; (\theta+  U,\zeta+ V) \; \in \;  \Rc(2b).
    \end{align} 
\end{Lemma}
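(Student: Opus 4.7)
The plan is to prove Lemma \ref{ra} by a two-step compactness and continuity argument, not by explicit matrix calculation.

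First I would handle the stability condition $\Theta + \mathbf U \in \Rc$. The set of stable matrices is open in $\R^{d \times d}$ (eigenvalues depend continuously on matrix entries), so $\Sc$ and $\hat\Sc$ are open in $\R^{m\times d}$ and hence $\Rc = \Sc \times \hat\Sc$ is open in $H$. Combining this with the fact that $\Rc(b)$ is compact (as already observed just after \eqref{Rcb}), a standard separation argument furnishes $r_1 = r_1(b) > 0$ such that the closed $(\sqrt{2}\, r_1)$-neighborhood
$$K(b) := \{\Theta' \in H \, : \, d_H(\Theta', \Rc(b)) \leq \sqrt{2}\, r_1\}$$
is contained in $\Rc$ and is itself compact. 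In particular, for every $\Theta \in \Rc(b)$ and every $\mathbf U = (U,V)$ with $\|U\|_F, \|V\|_F \leq r_1$, the perturbation satisfies $\|\mathbf U\|_H \leq \sqrt{2} r_1$ and the entire segment $[\Theta, \Theta + \mathbf U]$ lies in $K(b) \subset \Rc$, so in particular $\Theta + \mathbf U \in \Sc \times \hat\Sc$.

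Second, I would control the cost. By Proposition \ref{GradJ} the map $\Theta \mapsto \check J(\Theta) = J_1(\theta) + J_2(\zeta)$ is continuously differentiable on $\Rc$, and on the compact set $K(b) \subset \Rc$ the continuous map $\Theta' \mapsto \|\nabla \check J(\Theta')\|_H$ attains a finite maximum $C = C(b)$. By the mean value inequality applied along the segment $[\Theta, \Theta + \mathbf U] \subset K(b)$, for any $\Theta \in \Rc(b)$ and any $\mathbf U$ with $\|U\|_F, \|V\|_F \leq r \leq r_1$ one has
$$|\check J(\Theta + \mathbf U) - \check J(\Theta)| \; \leq \; C(b)\, \|\mathbf U\|_H \; \leq \; \sqrt{2}\, C(b)\, r.$$
Setting $\check r(b) := \min\bigl(r_1(b),\, b / (\sqrt{2}\, C(b))\bigr)$, any $r \leq \check r(b)$ then yields simultaneously $\Theta + \mathbf U \in \Rc$ and $\check J(\Theta + \mathbf U) \leq \check J(\Theta) + b \leq 2b$, which is exactly $\Theta + \mathbf U \in \Rc(2b)$.

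The only real obstacle is that the statement demands that $\check r(b)$ depend only on $b$, the dimensions $m,d$, and the model parameters. The abstract argument above gives existence but not an explicit expression. To make the constants quantitative one can, for the stability step, use the Lyapunov characterization from Proposition \ref{PropSigmaTheta} together with the explicit bounds in Proposition \ref{Bds} to lower-bound the distance from $\Sc(b)$ (resp. $\hat\Sc(b)$) to the boundary of $\Sc$ (resp. $\hat\Sc$) via the smallest real part of the eigenvalues of $B - (\beta/2) I_d + D \theta$ (resp. $\hat B - (\beta/2) I_d + D \zeta$); and for the cost step, one uses the explicit Lipschitz constant $L(2b), \hat L(2b)$ for $\nabla J_1, \nabla J_2$ on $\Sc(2b), \hat\Sc(2b)$ provided by Proposition \ref{LJ}, combined with $\|\nabla \check J(\Theta^*)\|_H = 0$ and another application of the mean value inequality, to bound $C(b)$ in terms of these quantities. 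This yields a closed-form $\check r(b)$ with the stated dependence and completes the proof.
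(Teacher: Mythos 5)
Your argument is correct, but it follows a genuinely different route from the paper: the paper does not prove Lemma \ref{ra} at all, stating instead that it is ``directly taken from Lemma 4 of \cite{ConvSampGradMethod}'', where the analogous statement for the standard LQR setting is obtained by an explicit perturbation analysis of the Lyapunov equation, yielding a closed-form radius. Your proof is a soft compactness argument: openness of the set of Hurwitz matrices gives openness of $\Rc$, compactness of $\Rc(b)$ (already established in the paper) gives a positive separation $r_1(b)$ from $H\setminus\Rc$, and the mean value inequality on the compact tube $K(b)$ with $C(b)=\max_{K(b)}\lVert\nabla\check J\rVert_H<\infty$ controls the increment of $\check J$. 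All the ingredients you invoke are available (continuity of $\Theta\mapsto\nabla\check J(\Theta)$ on $\Rc$ follows from Proposition \ref{GradJ} together with the perturbation results for $K_\theta$ and $\Sigma_\theta$), and the resulting $\check r(b)$ does depend only on $b$ and the model parameters, even if it is not explicit; so the statement as written is proved. What your approach gives up is quantitativeness, which matters downstream since $\check r(b)$ enters the sample-complexity thresholds of Theorem \ref{ThmConvSGD}; the cited perturbation-based proof produces an explicit radius. Your concluding sketch of how to make the constants explicit is plausible in spirit, but note that bounding $C(b)$ via the Lipschitz constant $L(2b)$ and $\nabla\check J(\Theta^*)=0$ requires knowing that the relevant points lie in $\Sc(2b)\times\hat\Sc(2b)$ (or that $K(b)$ is contained in some level set), which is essentially the content of the lemma itself; the cleaner quantitative path is the direct Lyapunov perturbation bound of the cited reference.
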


From now on, we assume that $r>0$ is small enough ($r<\check{r}(b)$, $\check{r}(b)$ defined in Lemma \ref{ra}), so that for all $\Theta=(\theta,\zeta)\in\Rc(b)$, for all $U,V$ with $\lVert U\rVert_F=\lVert V\rVert_F=r, (\theta+U,\zeta+V)\in\Rc(2b)$. 

\vspace{1mm}

The main ingredients of the proof can be described as follows.  Denoting by $\tilde \nabla^{\Delta,N,pop} J(\Theta)=(\tilde \nabla_\theta^{\Delta,N,pop} J(\Theta),\tilde \nabla_\zeta^{\Delta,N,pop} J(\Theta))\in H$ where $\tilde \nabla_\theta^{\Delta,N,pop} J(\Theta)$ and $\tilde \nabla_\zeta^{\Delta,N,pop} J(\Theta)$ are the results of Algorithm \ref{Gradientestim}, the key idea is to show that the two following events 
\begin{equation}\label{EventsM1M2}
    \begin{aligned}
            \M_1&=\Bigg\{\langle \tilde \nabla^{\Delta,N,pop} J(\Theta),\nabla  J(\Theta)\rangle_H\geq \nu_1\lVert \nabla J(\Theta)\rVert_H^2\Bigg\}\\
  \M_2&=\Bigg\{\lVert \tilde \nabla^{\Delta,N,pop} J(\Theta)\rVert_H^2\leq \nu_2\lVert\nabla J(\Theta)\rVert_H^2\Bigg\}
    \end{aligned}
\end{equation}
occur together with high probability, for some $\nu_1,\nu_2>0$. More precisely, by writing
\begin{align*}
   \langle \tilde \nabla^{\Delta,N,pop} J(\Theta),\nabla  J(\Theta)\rangle_H&=\lVert \nabla J(\Theta)\rVert_H^2+\langle  (\tilde \nabla^{\Delta,N,pop}-\nabla)  J(\Theta),\nabla  J(\Theta)\rangle_H\\
    &{\geq}\lVert \nabla J(\Theta)\rVert_H^2 -\lVert ( \tilde \nabla^{\Delta,N,pop} - \nabla )  J(\Theta)\rVert_H\lVert \nabla  J(\Theta)\rVert_H
\end{align*}
\noindent and
\begin{align*}
    \lVert  \tilde \nabla^{\Delta,N,pop} J(\Theta)\rVert_H&\leq \lVert \nabla J(\Theta)\rVert_H+ \lVert  (\tilde \nabla^{\Delta,N,pop} -\nabla ) J(\Theta)\rVert_H\\
\end{align*}
we notice that if the event
\begin{equation}\label{DesiredErrorGradientE}
    \lVert  (\tilde \nabla^{\Delta,N,pop} -\nabla ) J(\Theta)\rVert_H\leq\frac{1}{2} \lVert \nabla  J(\Theta)\rVert_H
\end{equation}

\noindent occurs with high probability, then so does $\M_1 \cap \M_2$ with $\nu_1=\frac{1}{2}$ and $\nu_2=\frac{9}{4}$.

 Next, for any fixed $\varepsilon>0$, the P.-L. inequality \eqref{GD} guarantees that if $\Theta\in \Rc$ is such that $J(\Theta)-J(\Theta^*)>\varepsilon$, then \eqref{DesiredErrorGradientE} is satisfied as soon as 
 $$ 
 \lVert  (\tilde \nabla^{\Delta,N,pop} -\nabla)  J(\Theta)\rVert_H\leq\varepsilon'
 $$ 
 
 \noindent with $\varepsilon'=\frac{1}{2}\sqrt{\frac{\varepsilon}{\kappa}}$. Indeed, it suffices to notice that
$$
   \lVert  (\tilde \nabla^{\Delta,N,pop} -\nabla ) J(\Theta)\rVert_H\leq\varepsilon'=\frac{1}{2}\sqrt{\frac{\varepsilon}{\kappa}}\leq\frac{1}{2}\sqrt{\frac{J(\Theta)-J(\Theta^*)}{\kappa} }\leq\frac{1}{2}\lVert\nabla J(\Theta)\rVert_H.
$$

Noting that
$$
    \lVert  (\tilde \nabla^{\Delta,N,pop} -\nabla)  J(\Theta)\rVert_H=\sqrt{\lVert (\tilde \nabla^{\Delta,N,pop}_\theta -\nabla_\theta)  J(\Theta)\rVert_F^2+\lVert (\tilde \nabla^{\Delta,N,pop}_\zeta -\nabla_\zeta)  J(\Theta)\rVert_F^2 }
$$

\noindent we will show that for all $\varepsilon>0$, the event
$$
\lVert (\tilde\nabla^{\Delta,N,pop}_\theta -\nabla_\theta)  J(\Theta)\rVert_F\leq\varepsilon \quad \mbox{and} \quad\lVert (\tilde \nabla^{\Delta,N,pop}_\zeta -\nabla_\zeta)  J(\Theta)\rVert_F\leq \varepsilon
$$
\noindent holds with high probability as soon as the parameters $T,n,N,\tilde N$ are large enough and $r$ is small enough.\\

\subsection{Proof of Theorem \ref{ThmConvSGD}}\label{main:proof:theorem:conv:sgd}

\noindent \emph{Step 1: } The central idea is to use the following decomposition
    \begin{align*}
        (\tilde  \nabla^{\Delta,N,pop}_\theta -\nabla_\theta)  J(\Theta) & =(\tilde \nabla^{\Delta,N,pop}_\theta -\hat\nabla^{\Delta,N,pop}_\theta)  J(\Theta)+ (\hat\nabla^{\Delta,N,pop}_\theta -\hat\nabla^{(T),\Delta}_\theta)  J(\Theta)\\
        &+ (\hat\nabla^{(T),\Delta}_\theta -\hat\nabla^{(T)}_\theta)  J(\Theta)+(\hat\nabla^{(T)}_\theta -\hat\nabla_\theta) J(\Theta)\\
        &+ (\hat\nabla_\theta -\nabla_\theta)  J(\Theta)\\
        & =: \rm{E}^{(1)}_\theta+ \rm{E}^{(2)}_\theta+\rm{E}^{(3)}_\theta+\rm{E}^{(4)}_\theta+\rm{E}^{(5)}_\theta,
    \end{align*}
\noindent and similarly
\begin{align*}
           \lVert (\tilde \nabla^{\Delta,N,pop}_\zeta -\nabla_\zeta)  J(\Theta)\rVert_F& =  (\tilde \nabla^{\Delta,N,pop}_\zeta -\hat\nabla^{\Delta,N,pop}_\zeta)  J(\Theta)+ (\hat\nabla^{\Delta,N,pop}_\zeta -\hat\nabla^{(T),\Delta}_\zeta)  J(\Theta)\\
        & + (\hat\nabla^{(T),\Delta}_\zeta  -\hat\nabla^{(T)}_\zeta)  J(\Theta)+ (\hat\nabla^{(T)}_\zeta  -\hat\nabla_\zeta) J(\Theta)\\
        & +  (\hat\nabla_\zeta-\nabla_\zeta)  J(\Theta)\\
        & =: \rm{E}^{(1)}_\zeta + \rm{E}^{(2)}_\zeta+\rm{E}^{(3)}_\zeta+\rm{E}^{(4)}_\zeta+\rm{E}^{(5)}_\zeta,
    \end{align*}

\noindent where we introduced the following notations which will be useful for our convergence analysis:

\noindent $\bullet$ The perturbed policy gradients:
    $$
    \hat \nabla_\theta J(\Theta)=\frac{d}{r^2}\frac{1}{\tilde N}\sum_{i=1}^{\tilde N} J(\Theta_i) U_i \quad \mbox{ and } \quad \hat \nabla_\zeta J(\Theta)=\frac{d}{r^2}\frac{1}{\tilde N}\sum_{i=1}^{\tilde N} J(\Theta_i) V_i.
    $$
\noindent$\bullet$ The truncated policy gradients:
     $$
     \hat \nabla^{(T)}_\theta J(\Theta)=\frac{d}{r^2}\frac{1}{\tilde N}\sum_{i=1}^{\tilde N} J^{(T)}(\Theta_i) U_i \quad \mbox{ and }\quad \hat \nabla^{(T)}_\zeta J(\Theta)=\frac{d}{r^2}\frac{1}{\tilde N}\sum_{i=1}^{\tilde N} J^{(T)}(\Theta_i) V_i.
     $$
\noindent$\bullet$ The  policy gradients with time discretization:
     $$
     \hat \nabla^{(T),\Delta}_\theta J(\Theta)=\frac{d}{r^2}\frac{1}{\tilde N}\sum_{i=1}^{\tilde N} J^{(T),\Delta}(\Theta_i) U_i \quad \mbox{ and } \quad \hat \nabla^{(T)}_\zeta J(\Theta)=\frac{d}{r^2}\frac{1}{\tilde N}\sum_{i=1}^{\tilde N} J^{(T),\Delta}(\Theta_i) V_i.
     $$
\noindent $\bullet$ The output of Algorithm \ref{Gradientestim} averaged over the randomness of the state trajectories:
\begin{equation}\label{grad:hat:pop:expression}
\begin{aligned}
      \hat \nabla^{\Delta,N,pop}_\theta J(\Theta) & =\E[\tilde  \nabla^{\Delta,N,pop}_\theta J(\Theta)| \underline{U},\underline{V}], \\ 
      \hat \nabla^{\Delta,N,pop}_\zeta J(\Theta) & =\E[\tilde  \nabla^{\Delta,N,pop}_\zeta J(\Theta)| \underline{U},\underline{V}],
\end{aligned}
\end{equation}
      
\noindent where $\underline{U}=(U_1,\dots,U_{\tilde N})$ and $\underline{V}=(V_1,\dots,V_{\tilde N})$.

In the above notations, given a finite horizon $T>0$ (to be chosen later on) and $\Theta=(\theta,\zeta)\in H$, we introduced the following time-truncated expected cost functional
\begin{equation}\label{TruncFC}
    \begin{aligned}
         J^{(T)}(\Theta)
    & = \E\Bigg[\int_0^T e^{-\beta t} \big(  (Y^\Theta_t )\trans QY^\Theta_t + (Z^\Theta_t)\trans \hat Q Z^\Theta_t\\
    & +\int \big(a\trans Ra + \lambda \log p^\Theta(Y^\Theta_t, Z^\Theta_t,a)\big) p^\Theta(Y^\Theta_t, Z^\Theta_t,a)\d a\big)  \d t \Bigg],
    \end{aligned}
\end{equation}

\noindent and also introduced the corresponding time discretized cost functional, with action execution noise
\begin{equation}\label{DiscApproFC}
    \begin{aligned}
        J^{(T),\Delta}(\Theta)&={h}\E\Bigg[\sum_{l=0}^{n-1} e^{-\beta t_l}\Big((\xb^{\Theta,\Delta}_{t_l}-\E_0[\xb^{\Theta,\Delta}_{t_l}])\trans Q (\xb^{\Theta,\Delta}_{t_l}-\E_0[\xb^{\Theta,\Delta}_{t_l}])+\E_0[\xb^{\Theta,\Delta}_{t_l}]\trans\hat Q\E_0[\xb^{\Theta,\Delta}_{t_l}]\\
        &+(\alpha^{\Theta,\Delta}_{t_l})\trans R \alpha^{\Theta,\Delta}_{t_l}+\lambda\log p^\Theta(\xb^{\Theta,\Delta}_{t_l}-\E_0[\xb^{\Theta,\Delta}_{t_l}],\E_0[\xb^{\Theta,\Delta}_{t_l}],\alpha^{\Theta,\Delta}_{t_l})\Big)\Bigg],
    \end{aligned}
\end{equation}

\noindent where $(\xb^{\Theta,\Delta}_{t_l})_{0\leq l\leq n}$ is the time-discretization scheme of $(X^\Theta_t)_{t\in[0,T]}$, over the same time grid $\Delta$ as the interacting agents dynamics \eqref{XDeltThet}, with dynamics
\begin{equation}\label{xDeltThet}
\begin{cases} 
    \xb^{\Theta,\Delta}_{t_{l+1}}  & = \; \xb^{\Theta,\Delta}_{t_l}+(B\xb^{\Theta,\Delta}_{t_l}+\bar B \E_0[\xb^{\Theta,\Delta}_{t_l}]+D\alpha^{\Theta,\Delta}_{t_l})) {{h}} \\
    & \qquad \qquad + \; \gamma\sqrt{{h}} \bw_l+\gamma_0 \sqrt{{h}}\bw^0_l\; , \\
    \xb^{\Theta,\Delta}_{0} & = \; X_0
\end{cases}     
\end{equation}
with $\alpha^{\Theta,\Delta}_{t_l}=\theta(\xb^{\Theta,\Delta}_{t_l}-\E_0[\xb^{\Theta,\Delta}_{t_l}])+ \zeta \E_0[\xb^{\Theta,\Delta}_{t_l}]+\sqrt{\frac{\lambda}{2}R^{-1}}\xib_{t_l}$, $(\xib_{t_l})_{0\leq l \leq n-1}$ being \emph{i.i.d.} random variables with law $\mathcal{N}(0,\I_m)$.



\bigskip

\noindent \emph{Step 2: }  Denoting by $\tilde\varepsilon=\frac{\varepsilon'}{5\sqrt{2}}=\frac{1}{10\sqrt{2}}\sqrt{\frac{\varepsilon}{\kappa}}$, choosing $r<\min(\check r(b),\frac{1}{h_r(b,1/\tilde\varepsilon)},  \frac{1}{h'_r(b,1/\tilde\varepsilon)})$ and $\tilde N>h_{\tilde N}(b,\frac{1}{\varepsilon})$, Proposition \ref{PropPerb} guarantees that with probability $1-4(\frac{d}{\tilde\varepsilon})^{-d}$ 
    $$
   \lVert \rm{E}^{5}_\theta \rVert_F \leq \tilde{\varepsilon} \quad \mbox{ and } \quad \lVert \rm{E}^{5}_\zeta \rVert_F\leq \tilde{\varepsilon}. 
    $$

Choosing $T=\breve{T}(\varepsilon)$ as in the statement of Theorem \ref{ThmConvSGD}, according to Proposition \ref{PropTrunc}, it holds
$$
 \lVert \rm{E}^{4}_\theta \rVert_F \leq \tilde{\varepsilon} \quad \mbox{ and } \quad \lVert \rm{E}^{4}_\zeta \rVert_F \leq \tilde{\varepsilon}, \quad \mathbb{P}-a.s.
$$

Next choosing $n=\breve{n}(\varepsilon)$ as in the statement of Theorem \ref{ThmConvSGD}, it follows from Proposition \ref{PropDiscre} that 
$$
 \lVert \rm{E}^{3}_\theta \rVert_F\leq \tilde{\varepsilon} \quad \mbox{ and } \quad \lVert \rm{E}^{3}_\zeta \rVert_F\leq \tilde{\varepsilon}, \quad \mathbb{P}-a.s.
$$
    
Choosing $N=\breve{N}(\varepsilon)$ large enough (again as in the statement of Theorem \ref{ThmConvSGD}), according to estimates \eqref{estimD6} in Proposition \ref{PropN}, it holds
$$
 \lVert \rm{E}^{2}_\theta \rVert_F \leq \tilde{\varepsilon} \quad \mbox{ and } \quad \lVert \rm{E}^{2}_\zeta \rVert_F \leq \tilde{\varepsilon}, \quad \mathbb{P}-a.s.
$$

Finally, according to Proposition \ref{PropMCError}, up to a modification of $\tilde N$, namely taking $\tilde N$ large enough such that 
$$
\tilde N > \max\{h_{\tilde N}(b,\frac{1}{\breve{r}(\varepsilon)},\frac{1}{\varepsilon}), {h'_{\tilde N}(b,\frac{1}{\breve{r}(\varepsilon)}, \breve{T}(\varepsilon),\breve{n}(\varepsilon),\breve{N}(\varepsilon),1/\tilde\varepsilon)}\}
$$  
\noindent one has
$$
 \lVert \rm{E}^{1}_\theta \rVert_F\leq \tilde \varepsilon \quad \mbox{ and } \quad \lVert \rm{E}^{1}_\zeta \rVert_F\leq \tilde \varepsilon
$$ 

\noindent with probability  $1-(\frac{d}{\tilde\varepsilon})^{-d}$.
    
 Putting the above estimates together yields
     \begin{align*}
        \lVert (\tilde \nabla^{\Delta,N,pop}_\theta -\nabla_\theta)  J(\Theta)\rVert_F \leq \frac{\varepsilon'}{\sqrt{2}} \quad \mbox{ and } \quad
           \lVert (\tilde \nabla^{\Delta,N,pop}_\zeta -\nabla_\zeta)  J(\Theta)\rVert_F\leq \frac{\varepsilon'}{\sqrt{2}}
    \end{align*}
    so that 
    $$
    \lVert (\tilde \nabla^{\Delta,N,pop} - \nabla)  J(\Theta)\rVert_H\leq\varepsilon'
    $$
\noindent with probability at least $1-4(\frac{d}{\tilde\varepsilon})^{-d}$.

\noindent \emph{Step 3: } As discussed in Subsection \ref{strategy:proof:subsection}, the previous inequality implies that the probability of $\M_1 \cap \M_2$ defined in \eqref{EventsM1M2} with $\nu_1=\frac{1}{2}$ and $\nu_2=\frac{9}{4}$ is not smaller than $1-4(\frac{d}{\tilde\varepsilon})^{-d}$.

\noindent It then follows from Lemma \ref{ConvStoGD} that for any $\rho\in (0,\frac{4}{9 \check L(b)})$, still with the probability not smaller than $1-4(\frac{d}{\tilde\varepsilon})^{-d}$, one has
$$
\Theta_{[1]}= \Theta_{[0]}-\rho  \tilde \nabla^{\Delta,N,pop} J(\Theta_{[0]})\in\Rc(b)
$$
\noindent and
$$
J(\Theta_{[1]})-J(\Theta^*)\leq \Big( 1-\frac{\rho(4-9\rho \check L(b))}{8\bar\kappa(b)}\Big)(J(\Theta_{[0]})-J(\Theta^*)).
$$

\noindent By induction, we eventually deduce that
$$J(\Theta_{[k]})-J(\Theta^*)\leq \big( 1-\frac{\rho(4-9\rho \check L(b))}{8\bar\kappa(b)}\big)^k(J(\Theta_{[0]})-J(\Theta^*))$$

\noindent with probability at least $1-4k(\frac{d}{\tilde\varepsilon})^{-d}$. The conclusion of Theorem \ref{ThmConvSGD} easily follows from the previous inequality.

\subsection{Analysis of the single step GD algorithm}\label{ConvSGD}
In this section, we analyze the convergence of a general single-step GD algorithm. This part serves as an important building block in the proof of Theorem \ref{ThmConvSGD}. Our main result is Proposition \ref{ConvStoGD}. We first prove some technical results.

\begin{Lemma}
    The map $(\theta,\zeta)\mapsto \nabla J(\Theta)$ is $\check{L}(b)-$Lipschitz continuous on $\Rc(b)$, that is, for all $\Theta=(\theta,\zeta),\Theta'=(\theta',\zeta')\in \Rc(b)$,
    $$
    \lVert \nabla J(\Theta)-\nabla J(\Theta')\rVert_H \leq \check{L}(b)\lVert \Theta-\Theta'\rVert_H,
    $$
\noindent recalling that $\check L(b)=\max\{L(b),\hat L(b)\}$,  where $L(b)$ and $\hat L(b)$ are the Lipschitz constants of $\nabla J_1(\theta)$ and $\nabla J_2(\zeta)$ on $\Sc(b)$ and $\hat\Sc(b)$ respectively, defined in \eqref{La} of Proposition \ref{LJ}.
\end{Lemma}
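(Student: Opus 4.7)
The plan is to reduce this statement directly to Proposition \ref{LJ} by exploiting the separable structure of $J$. First, I would recall that $J(\Theta;\lambda) = J_1(\theta) + J_2(\zeta) + \upsilon(\lambda)$, so that $\nabla J(\Theta) = \bigl(\nabla J_1(\theta),\nabla J_2(\zeta)\bigr)$ by definition of the total gradient on $H$. This separability is the essential feature: there is no cross term coupling $\theta$ and $\zeta$ in $\nabla J$.

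Next, I would argue that $\Rc(b) \subset \Sc(b) \times \hat\Sc(b)$. Indeed, for any $\Theta = (\theta,\zeta) \in \Rc(b)$, the definition \eqref{Rcb} gives $\check J(\Theta) = J_1(\theta) + J_2(\zeta) \leq b$, and since $J_1, J_2 \geq 0$ (as they are quadratic forms, cf.\ \eqref{defJ12}, against positive semidefinite matrices $\Sigma_\theta$ and $\hat\Sigma_\zeta$), this yields both $J_1(\theta) \leq b$ and $J_2(\zeta) \leq b$, i.e.\ $\theta \in \Sc(b)$ and $\zeta \in \hat\Sc(b)$. Consequently, for any $\Theta, \Theta' \in \Rc(b)$, Proposition \ref{LJ} applies on each coordinate and yields
\begin{align}
\lVert \nabla J_1(\theta) - \nabla J_1(\theta')\rVert_F &\leq L(b) \lVert \theta - \theta'\rVert_F, \\
\lVert \nabla J_2(\zeta) - \nabla J_2(\zeta')\rVert_F &\leq \hat L(b) \lVert \zeta - \zeta'\rVert_F.
\end{align}

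Finally, using the definition of the norm on $H$, I would conclude by the bound
\begin{align}
\lVert \nabla J(\Theta) - \nabla J(\Theta')\rVert_H^2
&= \lVert \nabla J_1(\theta) - \nabla J_1(\theta')\rVert_F^2 + \lVert \nabla J_2(\zeta) - \nabla J_2(\zeta')\rVert_F^2 \\
&\leq L(b)^2 \lVert \theta - \theta'\rVert_F^2 + \hat L(b)^2 \lVert \zeta - \zeta'\rVert_F^2 \\
&\leq \check L(b)^2 \bigl(\lVert \theta - \theta'\rVert_F^2 + \lVert \zeta - \zeta'\rVert_F^2\bigr)
= \check L(b)^2 \lVert \Theta - \Theta'\rVert_H^2,
\end{align}
and taking square roots delivers the claim. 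There is no serious obstacle here: the only subtlety worth mentioning explicitly is the inclusion $\Rc(b) \subset \Sc(b) \times \hat\Sc(b)$, which requires the nonnegativity of $J_1$ and $J_2$ so that a joint sublevel bound on $\check J$ transfers to separate sublevel bounds needed to invoke Proposition \ref{LJ}.
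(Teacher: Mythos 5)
Your proposal is correct and follows essentially the same route as the paper's own proof: reduce to the coordinatewise Lipschitz bounds of Proposition \ref{LJ} via the inclusion $\Rc(b)\subset\Sc(b)\times\hat\Sc(b)$ (which the paper also justifies by the nonnegativity of $J_1$ and $J_2$), then combine using the definition of $\lVert\cdot\rVert_H$ and $\check L(b)=\max\{L(b),\hat L(b)\}$. Your write-up is, if anything, slightly more explicit about the inclusion step than the paper's.
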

\begin{proof}
    It is clear that $\Theta\in\Rc(b)$ implies $(\theta, \zeta) \in\Sc(b) \times \hat\Sc(b)$ so that
    $$\lVert \nabla J_1(\theta)-\nabla J_1(\theta')\rVert_F\leq L(b)\lVert \theta-\theta'\rVert_F\leq \check L(b)\lVert \theta-\theta'\rVert_F$$
     $$\lVert \nabla J_2(\zeta)-\nabla J_2(\zeta')\rVert_F\leq \hat L(b)\lVert \zeta-\zeta'\rVert_F\leq \check L(b)\lVert \zeta-\zeta'\rVert_F$$
     which in turn clearly yields
     \begin{align*}
         \lVert \nabla J(\Theta)-\nabla J(\Theta')\rVert^2_H&=  \lVert \nabla_\theta J(\theta,\zeta)-\nabla_\theta J(\theta',\zeta')\rVert^2_F+ \lVert \nabla_\zeta J(\theta,\zeta)-\nabla_\zeta J(\theta',\zeta')\rVert^2_F\\
         &=\lVert \nabla J_1(\theta)-\nabla J_1(\theta')\rVert^2_F+\lVert \nabla J_2(\zeta)-\nabla J_2(\zeta')\rVert^2_F\\
         &\leq  \check{L}^2(b) (\lVert \theta-\theta'\rVert^2_F+\lVert \zeta-\zeta'\rVert^2_F) \; = \; 
          \check{L}^2(b)  \lVert \Theta-\Theta'\rVert^2_H.
     \end{align*}
The proof is now complete.
\end{proof}

For $\Theta=(\theta,\zeta)\in\Rc(b)$ and $\Gb=(G_1,G_2)\in H$ satisfying 
\begin{equation}\label{condition:perturbation}
            \langle \Gb,\nabla J(\Theta)\rangle_H \; \geq \;  \nu_1\lVert \nabla   J(\Theta)\rVert_H^2, \qquad 
          \lVert \Gb\rVert_H^2 \; \leq \;  \nu_2\lVert \nabla  J(\Theta)\rVert_H^2, 
\end{equation}
we set 
\begin{equation}\label{def:theta:rho}
\Theta_{\rho}:=\Theta-\rho \Gb=(\theta-\rho G_1,\zeta-\rho G_2).
\end{equation}

\begin{Lemma}
For any $\Theta=(\theta,\zeta)\in\Rc(b)$, any $\Gb\in H$ satisfying \eqref{condition:perturbation} and any $\rho\in(0,\frac{2\nu_1}{\nu_2 \check L(b)})$, $\Theta_\rho$ defined by \eqref{def:theta:rho} satisfies
$$
\Theta_{\rho}\in \Rc(b).
$$

\end{Lemma}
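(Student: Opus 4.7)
The plan is to adapt the strategy of Lemma \ref{PerbEGD} from the exact gradient descent setting to an arbitrary descent direction $\Gb$ satisfying \eqref{condition:perturbation}. First I would dispose of the degenerate case $\nabla J(\Theta) = 0$: the second inequality in \eqref{condition:perturbation} then forces $\Gb = 0$, so $\Theta_\rho = \Theta \in \Rc(b)$ trivially. I henceforth assume $\nabla J(\Theta) \neq 0$, which by the first inequality of \eqref{condition:perturbation} also gives $\langle \Gb, \nabla J(\Theta)\rangle_H > 0$.

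Next I would introduce
\[
\rho_{\max} := \sup\bigl\{\rho' \geq 0 \, : \, \Theta - \rho \Gb \in \Rc(b) \text{ for all } \rho \in [0,\rho']\bigr\}
\]
and show it is strictly positive and finite. A first-order Taylor expansion of $\check J$ at $\Theta$, together with the continuity of $\nabla J = \nabla \check J$ and the first inequality of \eqref{condition:perturbation}, gives
\[
\check J(\Theta_\rho) = \check J(\Theta) - \rho \langle \Gb, \nabla J(\Theta)\rangle_H + o(\rho) \leq \check J(\Theta) - \rho \nu_1 \lVert \nabla J(\Theta)\rVert_H^2 + o(\rho),
\]
which is strictly less than $\check J(\Theta) \leq b$ for $\rho > 0$ small enough, so $\rho_{\max} > 0$. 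Since $\Rc(b) \subset \Sc(b) \times \hat\Sc(b)$ is bounded by Proposition \ref{Bds}, we also have $\rho_{\max} < +\infty$.

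The key step is to show by contradiction that $\rho_{\max} \geq \frac{2\nu_1}{\nu_2 \check L(b)}$. Assume for contradiction $\rho_{\max} < \frac{2\nu_1}{\nu_2 \check L(b)}$. Since $\Theta_\rho \in \Rc(b)$ for all $\rho \in [0,\rho_{\max}]$ by definition, I apply the second-order Taylor bound using the $\check L(b)$-Lipschitz continuity of $\nabla J$ on $\Rc(b)$ established in the lemma above, and plug in both inequalities of \eqref{condition:perturbation}:
\begin{align*}
\check J(\Theta_\rho) &\leq \check J(\Theta) - \rho \langle \Gb, \nabla J(\Theta)\rangle_H + \frac{\rho^2 \check L(b)}{2}\lVert \Gb\rVert_H^2 \\
&\leq \check J(\Theta) + \rho\Bigl(-\nu_1 + \frac{\rho\, \check L(b)\, \nu_2}{2}\Bigr)\lVert \nabla J(\Theta)\rVert_H^2.
\end{align*}
Evaluating at $\rho = \rho_{\max}$, the parenthesised coefficient is strictly negative, which yields $\check J(\Theta_{\rho_{\max}}) < \check J(\Theta) \leq b$. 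On the other hand, by continuity of $\rho \mapsto \check J(\Theta_\rho)$ and the very definition of $\rho_{\max}$, one must have $\check J(\Theta_{\rho_{\max}}) = b$. This is the desired contradiction, so $\rho_{\max} \geq \frac{2\nu_1}{\nu_2 \check L(b)}$, and in particular $\Theta_\rho \in \Rc(b)$ for every $\rho \in (0, \frac{2\nu_1}{\nu_2 \check L(b)})$.

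The main subtlety I anticipate—mirroring the silent argument in Lemma \ref{PerbEGD}—is to ensure the continuous curve $\rho \mapsto \Theta_\rho$ does not leave $\Rc$ itself (and only the sublevel part) before $\check J(\Theta_\rho)$ reaches $b$. This is resolved by the coercive behaviour of $\check J$ near $\partial \Rc$ (since $K_\theta$ and $\Lambda_\zeta$ blow up as $\theta$, $\zeta$ approach the stability boundary of $\Sc$, $\hat\Sc$), so that trajectories cannot exit $\Rc$ while $\check J$ remains bounded by $b$; combined with the Taylor estimates above, this lets the definition of $\rho_{\max}$ via the value $b$ be well posed.
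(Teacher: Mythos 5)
Your proof is correct and follows essentially the same route as the paper: define $\rho_{\max}$ as the supremum of admissible step sizes, show it is positive and finite, and derive a contradiction from the second-order Taylor bound combined with the two inequalities in \eqref{condition:perturbation} under the assumption $\rho_{\max}<\tfrac{2\nu_1}{\nu_2\check L(b)}$. Your two additions — the explicit treatment of the degenerate case $\nabla J(\Theta)=0$ (where the paper's claims $\rho_{\max}>0$ and $\rho_{\max}<\infty$ would otherwise need $\Gb\neq0$) and the justification that the curve cannot exit $\Rc$ before $\check J$ reaches the level $b$ (which follows most cleanly from the compactness of $\Rc(b)$ inside the open set $\Rc$) — are legitimate refinements of points the paper leaves implicit, not a different argument.
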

\begin{proof}
    We let 
    $$
    \rho_{\max}=\sup\{\rho'\geq 0|\Theta_{\rho}:=\Theta-\rho \Gb=(\theta-\rho G_1,\zeta-\rho G_2)\in\Rc (b ),\forall \rho \in[0,\rho']\}.
    $$

 \noindent Firstly, from the first condition of $\Gb$,  $\Gb$ is also a descent direction of the function $ J$, we have $\rho_{\max}>0$.  Next, $\Rc(b)$ is compact thus bounded, we have $\rho_{\max}<+\infty$. 

Assume that $\rho_{\max}<\frac{2\nu_1}{\nu_2 \check L(b)}$. By the continuity of $ \rho\mapsto J\big(\theta-\rho\Gb\big)$ and the definition of $\Rc(b)$, we have $ J\big(\Theta-\rho_{\max}\Gb\big)=\check J\big(\Theta-\rho_{\max}\Gb\big)+\upsilon(\lambda)=b+\upsilon(\lambda)$.  Therefore, for any $\rho \in(0,\rho_{\max}]$, we have $\Theta-\rho\Gb\in \Rc (b )$. A second order Taylor's expansion together with the Lipschitz continuity of $\nabla J$ and \eqref{condition:perturbation} gives
    \begin{align*}
       J\big(\Theta-\rho\Gb\big)&\leq J(\Theta)-\rho \langle\nabla J(\Theta),\Gb\rangle_H+\frac{ \rho^2 \check L (b )}{2}\lVert \Gb\rVert_H^2\\
       &\leq  J(\Theta )-\rho \nu_1\lVert \nabla  J(\Theta )\rVert_H^2+\frac{ \rho ^2\nu_2 \check L (b )}{2}\lVert \nabla  J(\Theta )\rVert_H^2 
    \end{align*}
    so that
    $$ 
    J\big(\Theta-\rho\Gb\big)- J(\Theta)\leq \frac{-\rho (2\nu_1-\rho  \nu_2 \check L (b ))}{2}\lVert \nabla  J(\Theta)\rVert_H^2<0
    $$
    \noindent which in turn clearly gives
    $$
    J\big(\Theta-\rho\Gb\big)< J(\Theta)=\check J(\Theta)+\upsilon(\lambda)\leq b+\upsilon(\lambda).
    $$ 
    This last inequality contradicts the fact that $ J\big(\Theta-\rho_{\max}\Gb\big)=b+\upsilon(\lambda)$. We thus conclude that $\rho_{\max}\geq \frac{2\nu_1}{\nu_2 \check L(b)}$ and $ \Theta_\rho=\Theta-\rho\Gb\in\Rc(b)$ for all $\rho\in(0,\frac{2\nu_1}{\nu_2 \check L(b)})$.    
\end{proof}

\begin{Proposition}\label{ConvStoGD}
For any $\Theta=(\theta,\zeta)\in\Rc(b)$, any $G \in H$ satisfying \eqref{condition:perturbation} and any $\rho\in(0,\frac{2\nu_1}{\nu_2 \check L(b)}) $, $\Theta_\rho$ defined by \eqref{def:theta:rho} satisfies 
   $$
   J\big(\Theta_\rho)-J(\Theta^* )\leq\big(1-\frac{\rho(2\nu_1-\rho \check L(b)\nu_2)}{2\bar{\kappa}(b)}\big)(J(\Theta)-J(\Theta^*)),
   $$
   where $\bar{\kappa}(b):=\max\Big(\kappa_1,\kappa_2,\frac{\nu_1^2}{2\nu_2 \check L(b)}\Big)+\frac{1}{2}$.

\end{Proposition}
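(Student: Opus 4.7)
The proof is a standard descent-lemma-plus-P.-L.-inequality argument, now adapted to the assumption \eqref{condition:perturbation} on $G$ instead of $G = \nabla J(\Theta)$. The key point is that we already know from the previous lemma that $\Theta_\rho \in \Rc(b)$ for every $\rho\in(0,\frac{2\nu_1}{\nu_2 \check L(b)})$, so the whole segment from $\Theta$ to $\Theta_\rho$ lies in the sublevel set $\Rc(b)$ on which $\nabla J$ is $\check L(b)$-Lipschitz (by the previous lemma of this subsection). This immediately gives the quadratic upper bound (descent lemma)
\begin{equation*}
J(\Theta_\rho) \;\leq\; J(\Theta) \;-\; \rho\,\langle \nabla J(\Theta), G\rangle_H \;+\; \frac{\rho^2 \check L(b)}{2}\,\lVert G\rVert_H^2 .
\end{equation*}

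Next, plugging in the two inequalities from \eqref{condition:perturbation} (lower bound on the inner product and upper bound on $\lVert G\rVert_H^2$) yields
\begin{equation*}
J(\Theta_\rho) \;\leq\; J(\Theta) \;-\; \frac{\rho\bigl(2\nu_1 - \rho\,\check L(b)\,\nu_2\bigr)}{2}\,\lVert \nabla J(\Theta)\rVert_H^2.
\end{equation*}
Note that for $\rho \in (0, \frac{2\nu_1}{\nu_2 \check L(b)})$ the coefficient $\rho(2\nu_1 - \rho\,\check L(b)\,\nu_2)$ is strictly positive, so this is a genuine decrease.

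Now I apply the global gradient domination inequality \eqref{GD}, which holds for $\check J$ and hence for $J$ (they differ by the constant $\upsilon(\lambda)$). It gives $J(\Theta)-J(\Theta^\star)\leq \kappa\,\lVert \nabla J(\Theta)\rVert_H^2$ with $\kappa=\max(\kappa_1,\kappa_2)$. Since by construction $\bar\kappa(b)\geq \kappa$, we also have $J(\Theta)-J(\Theta^\star)\leq \bar\kappa(b)\,\lVert \nabla J(\Theta)\rVert_H^2$, so $\lVert \nabla J(\Theta)\rVert_H^2 \geq (J(\Theta)-J(\Theta^\star))/\bar\kappa(b)$. Subtracting $J(\Theta^\star)$ on both sides of the previous inequality and substituting this lower bound yields
\begin{equation*}
J(\Theta_\rho)-J(\Theta^\star) \;\leq\; \Bigl(1 - \frac{\rho(2\nu_1 - \rho\,\check L(b)\,\nu_2)}{2\bar\kappa(b)}\Bigr)\bigl(J(\Theta)-J(\Theta^\star)\bigr),
\end{equation*}
which is the claimed contraction.

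The only subtle point (and the real reason for the definition of $\bar\kappa(b)$) is to make sure the contraction factor lies in $[0,1)$: maximizing $\rho\mapsto \rho(2\nu_1-\rho \check L(b)\nu_2)$ over $\rho\in(0,\tfrac{2\nu_1}{\nu_2\check L(b)})$ gives a maximum of $\nu_1^2/(\nu_2\check L(b))$ attained at $\rho^\star=\nu_1/(\nu_2\check L(b))$; so by including $\nu_1^2/(2\nu_2\check L(b))$ in the max defining $\bar\kappa(b)$ and adding the extra $\tfrac12$, we have $\rho(2\nu_1-\rho\check L(b)\nu_2)/(2\bar\kappa(b))<1$ uniformly in $\rho$. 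I expect no serious obstacle: everything reduces to Taylor expansion, the Cauchy--Schwarz-type hypothesis on $G$, and the previously established P.-L. inequality; the bookkeeping on $\bar\kappa(b)$ is the only piece that needs a careful sentence.
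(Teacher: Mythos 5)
Your proposal is correct and follows essentially the same route as the paper: the descent lemma on $\Rc(b)$ (justified by the preceding stability lemma ensuring the whole segment $\{\Theta-t\rho G\}_{t\in[0,1]}$ stays in $\Rc(b)$), the two inequalities of \eqref{condition:perturbation}, and the gradient domination inequality with $\kappa$ replaced by the larger $\bar\kappa(b)$. Your closing remark explaining why the $\frac{\nu_1^2}{2\nu_2\check L(b)}$ term and the extra $\frac{1}{2}$ in $\bar\kappa(b)$ force the contraction factor into $(0,1)$ is a useful elaboration of a step the paper states without justification, but it is the same argument.
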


\begin{proof}

Writing again a second order Taylor expansion and using \eqref{condition:perturbation}, we get 
    \begin{align*}
       J\big(\Theta-\rho\Gb\big)&\leq J(\Theta)-\rho \langle\nabla J(\Theta),\Gb\rangle_H+\frac{ \rho^2 \check L (b )}{2}\lVert \Gb\rVert_H^2\\
       &\leq  J(\Theta )-\rho \nu_1\lVert \nabla  J(\Theta )\rVert_H^2+\frac{ \rho ^2\nu_2 \check L (b )}{2}\lVert \nabla  J(\Theta )\rVert_H^2 \\
       &\leq J(\Theta)+\big(\frac{\rho^2\nu_2\check L(b)}{2}-\rho\nu_1\big)\lVert \nabla  J(\Theta )\rVert_H^2. 
    \end{align*}

  \noindent  Note that the gradient domination inequality \eqref{GD} is satisfied with $\bar\kappa(b)$ instead of $\kappa$ and since
    $$
    \frac{\rho(2\nu_1-\rho \check L(b)\nu_2)}{2\bar\kappa(b)}\in(0,1),
    $$  
    \noindent we get 
    \begin{align*}
        J(\Theta-\rho\Gb)-J(\Theta)&\leq \;  \big( \frac{ \rho ^2\nu_2 \check L (b)}{2}-\rho\nu_1\big) \lVert \nabla J(\Theta)\rVert_F^2
        \; \leq \; \frac{1}{\bar\kappa(b)}\big(\frac{\rho^2 \nu_2\check L(b)}{2}-\rho\nu_1\big)(J(\Theta)-J(\Theta^*))
    \end{align*}
    so that
$$  
J(\Theta-\rho\Gb)-J(\Theta^*)\leq\big(1-\frac{\rho(2\nu_1-\rho \nu_2\check L(b))}{2\bar\kappa(b)}\big)(J(\Theta)-J(\Theta^*)).
$$

\end{proof}

\subsection{Error analysis of $J$ and its gradient}\label{SecErrGE}

We here study the five terms appearing in the decomposition of the error of $(\tilde{\nabla}_\theta^{\Delta, N, pop}  -\nabla_\theta) J$ and $(\tilde{\nabla}_\zeta^{\Delta, N, pop}  -\nabla_\zeta) J$ introduced in Step 1 of Section \ref{main:proof:theorem:conv:sgd}.\\

\noindent $\bullet$ 
\textbf{Bounding $\lVert\hat\nabla-\nabla\rVert_F$ (Error of perturbation with the exact expected functional cost).} 

\vspace{1mm}
 
The proof of the following result is postponed to Section \ref{proof:prop:PropPerb}.

\begin{Proposition}\label{PropPerb}
   For all $\varepsilon>0$, there exist $\h_r(b,\frac{1}{\varepsilon})$  with at most polynomial growth in $b$ and $\frac{1}{\varepsilon}$  and $\h_{\tilde N}(b,\frac{1}{r},\frac{1}{\varepsilon})$ with at most polynomial growth in $b$, $\frac{1}{r}$ and $\frac{1}{\varepsilon}$ such that for all $r<\min(\check{r}(b),\frac{1}{\h_r(b,\frac{1}{\varepsilon})})$, all $\tilde N>\h_{\tilde N}(b,\frac{1}{r},\frac{1}{\varepsilon})$ and all $\Theta=(\theta,\zeta)\in\Rc(b)$,  
   \begin{equation}\label{E1}
       \lVert (\hat\nabla_\theta -\nabla_\theta) J(\Theta)\rVert_F\leq\varepsilon \quad \mbox{ and } \quad \lVert (\hat\nabla_\zeta -\nabla_\zeta) J(\Theta)\rVert_F\leq\varepsilon,
   \end{equation}
\noindent with probability at least $1-(d/\varepsilon)^{-d}$.
\end{Proposition}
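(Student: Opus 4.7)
The plan is to follow the perturbation analysis of \cite{fazetal18} and \cite{ConvSampGradMethod}, identifying $\hat\nabla_\theta J(\Theta)$ as an empirical average whose mean equals the gradient of a smoothed version of $J$, and then handling the smoothing bias and Monte Carlo variance separately. First, I would exploit the separable structure $J(\Theta)=J_1(\theta)+J_2(\zeta)+\upsilon(\lambda)$ established in Section \ref{sec:problem:formulation}. Writing $J(\Theta_i)=J_1(\theta+U_i)+J_2(\zeta+V_i)+\upsilon(\lambda)$ inside the estimator and using the independence of $U_i$ and $V_i$ together with $\mathbb{E}[U_i]=0$, the contributions of $J_2(\zeta+V_i)$ and $\upsilon(\lambda)$ vanish in conditional expectation. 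Combining this with the Stokes--Flaxman--Kalai--McMahan identity $\nabla J_1^{(r)}(\theta):=\nabla\,\mathbb{E}_{B\sim \text{Unif}(\B_r)}[J_1(\theta+B)] = (d/r^{2})\,\mathbb{E}_{U\sim\mu_{\S_r}}[J_1(\theta+U)\,U]$ (consistent with the normalization convention of Algorithm \ref{Gradientestim}) yields $\mathbb{E}_{\underline U,\underline V}[\hat\nabla_\theta J(\Theta)]=\nabla J_1^{(r)}(\theta)$, and analogously for the $\zeta$ component.

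I would then decompose $\hat\nabla_\theta J(\Theta)-\nabla J_1(\theta)=\bigl(\hat\nabla_\theta J(\Theta)-\nabla J_1^{(r)}(\theta)\bigr)+\bigl(\nabla J_1^{(r)}(\theta)-\nabla J_1(\theta)\bigr)$ into a variance piece and a bias piece. For the bias piece, Lemma \ref{ra} guarantees that any $\theta+B$ with $\lVert B\rVert_F\leq r\leq \check r(b)$ lies in $\Sc(2b)$, so $\nabla J_1^{(r)}(\theta)=\mathbb{E}_{B}[\nabla J_1(\theta+B)]$ is well-defined and the Lipschitz continuity of $\nabla J_1$ on $\Sc(2b)$ from Proposition \ref{LJ} yields $\lVert\nabla J_1^{(r)}(\theta)-\nabla J_1(\theta)\rVert_F\leq L(2b)\,r$. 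Imposing $r\leq \varepsilon/(2L(2b))$ therefore dominates this piece by $\varepsilon/2$ and defines the threshold $\h_r(b,1/\varepsilon)$, which inherits polynomial growth in $b$ and $1/\varepsilon$ through the explicit formula for $L(2b)$ tracked in Proposition \ref{LJ}.

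Third, the variance piece $\hat\nabla_\theta J(\Theta)-\nabla J_1^{(r)}(\theta)$ is a $\tilde N^{-1}$-scaled sum of i.i.d.\ centered random matrices in $\R^{m\times d}$, each bounded in Frobenius norm by $C\,d\,\bar J(2b)/r$, where $\bar J(2b):=\sup_{\Rc(2b)}|J|<+\infty$ by continuity of $J$ on the compact set $\Rc(2b)$. A vector Hoeffding inequality in the finite-dimensional space $\R^{m\times d}$ yields, with probability at least $1-\delta$,
\begin{equation*}
\lVert\hat\nabla_\theta J(\Theta)-\nabla J_1^{(r)}(\theta)\rVert_F \;\leq\; C\,\frac{d\,\bar J(2b)}{r}\,\sqrt{\frac{md+\log(1/\delta)}{\tilde N}}.
\end{equation*}
Setting $\delta=(d/\varepsilon)^{-d}$ gives $\log(1/\delta)=d\log(d/\varepsilon)$; requiring the right-hand side to be below $\varepsilon/2$ yields a threshold $\h_{\tilde N}(b,1/r,1/\varepsilon)$ polynomial in $b$, $1/r$, $1/\varepsilon$. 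A union bound over the $\theta$ and $\zeta$ events gives the joint conclusion \eqref{E1} (the $\zeta$-case being identical by symmetry).

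The main obstacle will be verifying that $\bar J(2b)$ and $L(2b)$ grow only polynomially in $b$, which is what ensures that both $\h_r$ and $\h_{\tilde N}$ have the claimed polynomial structure; this should follow by chaining the explicit bounds from Propositions \ref{Bds} and \ref{LJ} with the definition \eqref{Rcb}--\eqref{checkJ} of $\Rc(2b)$. A secondary technicality is that each summand in the variance piece contains the random $V_i$ inside $J_2(\zeta+V_i)$: this does not bias the estimator (the expectation is still $\nabla J_1^{(r)}(\theta)$) but inflates the almost-sure bound on each summand by the factor $\bar J_2(2b)$, which is already absorbed into $\bar J(2b)$.
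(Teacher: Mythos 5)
Your proposal is correct and follows essentially the same route as the paper: the same bias--variance decomposition around the smoothed gradient $\nabla_\theta J_r$ via the Flaxman--Kalai--McMahan identity, the bias controlled by the Lipschitz continuity of $\nabla J_1$ on the enlarged level set $\Sc(2b)$ (Proposition \ref{LJ} combined with Lemma \ref{ra}), and the Monte Carlo term controlled by a concentration inequality for matrices almost surely bounded by $\tfrac{d}{r}(2b+\upsilon(\lambda))$. The only cosmetic differences are your use of a vector Hoeffding bound where the paper invokes vector Bernstein, and your explicit observation that the $J_2(\zeta+V_i)$ and $\upsilon(\lambda)$ contributions vanish in conditional mean, which the paper absorbs into the identity \eqref{LemmaGradJr}.
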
 

\bigskip

\noindent $\bullet$ 
\textbf{Bounding $\lVert \hat\nabla^{(T)}-\hat\nabla\rVert_F$ (Horizon truncation error).}

\vspace{1mm}

The proof of the following result is postponed to Section \ref{proof:prop:PropTrunc}. 

\begin{Proposition}\label{PropTrunc}
    $\P-$a.s., for all $\Theta\in\Rc(b)$, one has
    $$
    \lVert (\hat\nabla_\theta -\hat\nabla_\theta^{(T)}) J(\Theta)\rVert_F\leq \frac{d}{r}c_1(2b)e^{-c_2(2b) T}, 
    $$
     $$\lVert (\hat\nabla_\zeta -\hat\nabla_\zeta^{(T)}) J(\Theta)\rVert_F\leq \frac{d}{r}c_1(2b)e^{-c_2(2b) T}. 
     $$
\end{Proposition}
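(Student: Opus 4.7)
The plan is to reduce the truncation error on the gradient estimator to a uniform-in-$\Theta$ bound on $|J(\Theta)-J^{(T)}(\Theta)|$ over the enlarged level set $\Rc(2b)$. First I would apply the triangle inequality to
\begin{align*}
(\hat\nabla_\theta-\hat\nabla_\theta^{(T)})J(\Theta) \;=\; \frac{d}{r^2\tilde N}\sum_{i=1}^{\tilde N}\bigl(J(\Theta_i)-J^{(T)}(\Theta_i)\bigr)U_i,
\end{align*}
and use $\lVert U_i\rVert_F=r$ (and likewise $\lVert V_i\rVert_F=r$) to reduce the problem to
$$
\bigl\lVert(\hat\nabla_\theta-\hat\nabla_\theta^{(T)})J(\Theta)\bigr\rVert_F \;\leq\; \frac{d}{r}\,\max_{1\leq i\leq \tilde N}\bigl|J(\Theta_i)-J^{(T)}(\Theta_i)\bigr|,
$$
and similarly for the $\zeta$-component. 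Since $\Theta\in\Rc(b)$ and $r\leq\check r(b)$, Lemma \ref{ra} ensures each perturbed parameter $\Theta_i$ lies in $\Rc(2b)$, so it suffices to obtain a uniform bound $|J(\Theta')-J^{(T)}(\Theta')|\leq c_1(2b)e^{-c_2(2b)T}$ for every $\Theta'=(\theta',\zeta')\in\Rc(2b)$.

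For the second step, I would exploit the decomposition $J(\Theta')=J_1(\theta')+J_2(\zeta')+\upsilon(\lambda)$ and introduce the truncated analogues of the Lyapunov-type matrices,
$$
\Sigma_{\theta'}^{(T)}:=\int_0^T e^{-\beta t}\E[Y^{\theta'}_t(Y^{\theta'}_t)\trans]\,\d t,\qquad \hat\Sigma_{\zeta'}^{(T)}:=\int_0^T e^{-\beta t}\E[Z^{\zeta'}_t(Z^{\zeta'}_t)\trans]\,\d t,
$$
so that
$$
J(\Theta')-J^{(T)}(\Theta')\;=\;(Q+(\theta')\trans R\theta'):\bigl(\Sigma_{\theta'}-\Sigma_{\theta'}^{(T)}\bigr)+(\hat Q+(\zeta')\trans R\zeta'):\bigl(\hat\Sigma_{\zeta'}-\hat\Sigma_{\zeta'}^{(T)}\bigr)+\mathcal{E}_{\mathrm{ent}}(T),
$$
where $\mathcal{E}_{\mathrm{ent}}(T)$ is the tail contribution from the (time-independent) Gaussian entropy and equals a known constant in $\lambda,R,m$ times $\beta^{-1}e^{-\beta T}$. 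By Proposition \ref{Bds} the coefficients $\lVert Q+(\theta')\trans R\theta'\rVert_F$ and $\lVert \hat Q+(\zeta')\trans R\zeta'\rVert_F$ are uniformly bounded on $\Rc(2b)$, so everything hinges on exponential tail bounds for $\Sigma_{\theta'}-\Sigma_{\theta'}^{(T)}$ and $\hat\Sigma_{\zeta'}-\hat\Sigma_{\zeta'}^{(T)}$.

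The technical core is the uniform exponential decay of $e^{-\beta t}\E[Y^{\theta'}_t(Y^{\theta'}_t)\trans]$ on $\Sc(2b)$. Using the explicit representation from Lemma \ref{LemmaB2} together with the rescaling $\exp(t(B+D\theta'))=e^{\beta t/2}\exp(t\Xi_{\theta'})$, where $\Xi_{\theta'}=B-\tfrac{\beta}{2}I_d+D\theta'$ is stable by definition of $\Sc(2b)$, one obtains after a change of variable $u=t-s$,
$$
e^{-\beta t}\E[Y^{\theta'}_t(Y^{\theta'}_t)\trans]\;=\;\exp(t\Xi_{\theta'})\,\E[Y_0Y_0\trans]\,\exp(t\Xi_{\theta'}\trans)\;+\;e^{-\beta t}\int_0^t e^{\beta u}\exp(u\Xi_{\theta'})\gamma\gamma\trans\exp(u\Xi_{\theta'}\trans)\,\d u.
$$
The key observation is that the map $\theta'\mapsto\Xi_{\theta'}$ is continuous on the compact set $\Sc(2b)$, so its spectral abscissa is bounded away from zero uniformly, yielding constants $\alpha_1(2b),\alpha_2(2b)>0$ with $\lVert\exp(u\Xi_{\theta'})\rVert\leq \alpha_1(2b)e^{-\alpha_2(2b)u}$ for all $\theta'\in\Sc(2b)$ and $u\geq 0$. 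Integrating the above identity from $T$ to $\infty$ and switching Fubini's order, one obtains (after a routine case-splitting depending on the sign of $\beta-2\alpha_2(2b)$) a bound of the form $\lVert\Sigma_{\theta'}-\Sigma_{\theta'}^{(T)}\rVert_F\leq c'(2b)e^{-c''(2b)T}$ for every $\theta'\in\Sc(2b)$; a symmetric argument handles $\hat\Sigma_{\zeta'}-\hat\Sigma_{\zeta'}^{(T)}$ uniformly on $\hat\Sc(2b)$.

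Assembling these ingredients, I obtain $|J(\Theta')-J^{(T)}(\Theta')|\leq c_1(2b)e^{-c_2(2b)T}$ for every $\Theta'\in\Rc(2b)$, and substitution into the triangle-inequality step of the first paragraph yields the claimed bound $\tfrac{d}{r}c_1(2b)e^{-c_2(2b)T}$ simultaneously for both $\hat\nabla_\theta$ and $\hat\nabla_\zeta$, with no randomness left to handle (hence the $\P$-a.s.\ conclusion). The main obstacle I foresee is keeping track of the correct uniform constants $c_1(2b),c_2(2b)$ across the Fubini exchange and the case distinction $\beta\gtrless 2\alpha_2(2b)$, and verifying that the compactness of $\Sc(2b)$ and $\hat\Sc(2b)$ indeed delivers a uniform spectral gap for $\Xi_{\theta'}$ and $\hat\Xi_{\zeta'}$; everything else is routine once that uniformity is established.
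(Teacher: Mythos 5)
Your proof follows essentially the same route as the paper's: reduce via the triangle inequality and $\lVert U_i\rVert_F=\lVert V_i\rVert_F=r$ to a uniform bound on $\lvert J-J^{(T)}\rvert$ over $\Rc(2b)$ (using Lemma \ref{ra} to place each $\Theta_i$ in $\Rc(2b)$), then write the tail as $(Q+(\theta')\trans R\theta'):(\Sigma_{\theta'}-\Sigma^{(T)}_{\theta'})+(\hat Q+(\zeta')\trans R\zeta'):(\hat\Sigma_{\zeta'}-\hat\Sigma^{(T)}_{\zeta'})+\upsilon^{(T)}(\lambda)$ and control it by exponential decay of $e^{-\beta t}\E[Y^{\theta'}_t(Y^{\theta'}_t)\trans]$; this is exactly the paper's Lemma \ref{DiffJJT}. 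The one point where you diverge is the mechanism for the uniform bound $\lVert e^{u\Xi_{\theta'}}\rVert\leq\alpha_1(2b)e^{-\alpha_2(2b)u}$: you invoke continuity of the spectral abscissa on the compact set $\Sc(2b)$. That does give a uniform decay \emph{rate}, but the \emph{prefactor} $\alpha_1(2b)$ is not controlled by the spectral abscissa alone (it depends on the non-normality of $\Xi_{\theta'}$), so an additional argument is needed to make it uniform — you correctly flag this as the main obstacle. The paper closes exactly this gap by applying the Lyapunov-equation bound of Lemma \ref{LemmaCited} to $\Xi_{\theta'}\Sigma_{\theta'}+\Sigma_{\theta'}\Xi_{\theta'}\trans+M=0$, which yields the explicit estimate $\lVert e^{\Xi_{\theta'}t}\rVert_2^2\leq\frac{\lVert\Sigma_{\theta'}\rVert_2}{\sigma_{\min}(\Sigma_{\theta'})}e^{-\frac{\sigma_{\min}(M)}{\lVert\Sigma_{\theta'}\rVert_2}t}$, and then uses compactness (via Proposition \ref{Bds} and $\Sigma_{\theta'}\succeq M$) only to bound $\lVert\Sigma_{\theta'}\rVert_2$ above and below uniformly on $\Rc(2b)$. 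If you substitute that lemma for your spectral-abscissa step, your argument is complete and coincides with the paper's.
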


\bigskip

\noindent $\bullet$ 
\textbf{Bounding $\lVert \hat\nabla^{(T)}-\hat\nabla^{(T),\Delta}\rVert_F$\, (Time discretization error).}

\vspace{1mm}

The proof of the following result is postponed to Section \ref{proof:prop:PropDiscre}.

\begin{Proposition}\label{PropDiscre}
    There exists a constant $c_3=c_3(2b)>0$ (non-decreasing with respect to $b$) such that $\P-$a.s., for all $\Theta\in\Rc(b)$, it holds 
    $$
    \lVert (\hat\nabla^{(T)}_\theta - \hat\nabla_\theta^{(T),\Delta}) J(\Theta)\rVert_F\leq \frac{d}{r}c_3(2b)\frac{T}{n}, 
    $$
     $$
     \lVert (\hat\nabla^{(T)}_\zeta - \hat\nabla_\zeta^{(T),\Delta}) J(\Theta)\rVert_F\leq \frac{d}{r}c_3(2b)\frac{T}{n}.
     $$
\end{Proposition}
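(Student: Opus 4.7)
The plan is to reduce the gradient-error bound to a pathwise cost-error bound and then control the latter by a standard weak-error analysis for a linear SDE with quadratic test functions. First, using that $\|U_i\|_F=r$ for every $i$ and the triangle inequality, I would write
\begin{align}
\bigl\|(\hat\nabla^{(T)}_\theta-\hat\nabla^{(T),\Delta}_\theta) J(\Theta)\bigr\|_F
&\;\leq\; \frac{d}{r^2}\frac{1}{\tilde N}\sum_{i=1}^{\tilde N}\bigl|J^{(T)}(\Theta_i)-J^{(T),\Delta}(\Theta_i)\bigr|\,\|U_i\|_F \\
&\;\leq\; \frac{d}{r}\max_{1\leq i\leq \tilde N}\bigl|J^{(T)}(\Theta_i)-J^{(T),\Delta}(\Theta_i)\bigr|,
\end{align}
and similarly for $\zeta$. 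Since $r<\check r(b)$, Lemma~\ref{ra} gives $\Theta_i\in\Rc(2b)$ for every $i$ almost surely, so it suffices to produce a bound of the form $|J^{(T)}(\Theta)-J^{(T),\Delta}(\Theta)|\leq c_3(2b)\,T/n$ that is uniform over $\Theta\in\Rc(2b)$.

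Next, I would integrate out the Gaussian action noise in both $J^{(T)}$ and $J^{(T),\Delta}$. Exactly as in the derivation of \eqref{defJ12}, this reduces each cost functional to a quadratic functional of the decoupled pair $(Y^\theta,Z^\zeta)$ (continuous case) or $(Y^{\Theta,\Delta},Z^{\Theta,\Delta}):=(\xb^{\Theta,\Delta}-\E_0[\xb^{\Theta,\Delta}],\E_0[\xb^{\Theta,\Delta}])$ (discrete case) plus a constant term depending only on $\lambda,R,m$ that appears, in the discrete case, integrated against a Riemann sum of $e^{-\beta t}$ rather than $\int_0^T e^{-\beta t}dt$. Because $Y^{\Theta,\Delta}$ and $Z^{\Theta,\Delta}$ decouple (by taking $\E_0[\,\cdot\,]$ of \eqref{xDeltThet} and subtracting), each is the Euler scheme of a linear SDE with constant coefficients driven respectively by $W$ and $W^0$. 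The error then splits into three pieces: (a) the Riemann-sum error $\int_0^T e^{-\beta t}dt-h\sum_l e^{-\beta t_l}$, which is $O(h)$ with constant depending only on $\beta$; (b) the error $\int_0^T |e^{-\beta t}-e^{-\beta \eta(t)}|\,\mathbb{E}[|Y^\theta_t|^2+|Z^\zeta_t|^2]dt$, bounded by $\beta h\cdot T\cdot\sup_{t\leq T}\mathbb{E}[|Y^\theta_t|^2+|Z^\zeta_t|^2]$; and (c) the weak error $\int_0^T e^{-\beta\eta(t)}\bigl|\E[(Y^\theta_t)^{\top}\cdot Y^\theta_t]-\E[(Y^{\Theta,\Delta}_{\eta(t)})^{\top}\cdot Y^{\Theta,\Delta}_{\eta(t)}]\bigr|dt$ and its $Z$ analogue.

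For piece (c), I would exploit that $t\mapsto \E[Y^\theta_t (Y^\theta_t)^{\top}]$ solves a linear ODE of Lyapunov type with coefficient $(B+D\theta)$ while $l\mapsto \E[Y^{\Theta,\Delta}_{t_l}(Y^{\Theta,\Delta}_{t_l})^{\top}]$ satisfies the corresponding Euler recursion; a direct induction on $l$ together with Gr\"onwall's lemma gives $\|\E[Y^\theta_{t_l} (Y^\theta_{t_l})^{\top}]-\E[Y^{\Theta,\Delta}_{t_l}(Y^{\Theta,\Delta}_{t_l})^{\top}]\|_F\leq C(\theta)\,h$ with $C(\theta)$ depending only on $\|B+D\theta\|_F$, $\|\gamma\|_F$, $\|\E[Y_0 Y_0^{\top}]\|_F$ and $T$, and similarly for $Z^\zeta$ with $(\hat B+D\zeta,\gamma_0)$. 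The same calculation already controls $\sup_{t\leq T}\mathbb{E}[|Y^\theta_t|^2+|Z^\zeta_t|^2]$ needed in (b). Gathering (a)--(c), one obtains $|J^{(T)}(\Theta)-J^{(T),\Delta}(\Theta)|\leq C(\Theta)\,T/n$ with $C(\Theta)$ polynomial in $\|\theta\|_F,\|\zeta\|_F,\|Q\|_F,\|\hat Q\|_F,\|R\|_F$ and the SDE coefficients.

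The main technical obstacle is producing a constant $c_3(2b)$ that depends only on $b$ (and the model parameters), not on $\Theta$ itself. This is where Proposition~\ref{Bds} enters: for $\Theta\in\Rc(2b)$ it gives $\|\theta\|_F\leq\Bd_\theta(2b)$ and $\|\zeta\|_F\leq\Bd_\zeta(2b)$, hence $\|B+D\theta\|_F$ and $\|\hat B+D\zeta\|_F$ are uniformly bounded on $\Rc(2b)$; this turns $C(\Theta)$ into a uniform $c_3(2b)$ that is monotone in $b$. Plugging back into the initial inequality yields the announced bound $\frac{d}{r}\,c_3(2b)\,T/n$ for both $\theta$- and $\zeta$-components.
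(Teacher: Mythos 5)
Your outer reduction (pulling out $\|U_i\|_F=r$, invoking Lemma \ref{ra} to place $\Theta_i\in\Rc(2b)$, and reducing to a uniform bound on $|J^{(T)}(\Theta)-J^{(T),\Delta}(\Theta)|$ over $\Rc(2b)$) and your uniformization of the constant via Proposition \ref{Bds} and compactness are exactly what the paper does. The gap is in the middle: you assert that after integrating out the action noise, $Y^{\Theta,\Delta}:=\xb^{\Theta,\Delta}-\E_0[\xb^{\Theta,\Delta}]$ ``is the Euler scheme of a linear SDE with constant coefficients driven by $W$.'' It is not. The population simulator injects the realized action noise into the \emph{state} dynamics \eqref{xDeltThet}, so the recursion for the centered process (cf.\ \eqref{yzDeltTheta}) reads
\begin{equation}
\yb^{\Theta,\Delta}_{t_{l+1}}=\yb^{\Theta,\Delta}_{t_l}+\Big((B+D\theta)\yb^{\Theta,\Delta}_{t_l}+D\sqrt{\tfrac{\lambda}{2}R^{-1}}\,\xib_{t_l}\Big)h+\sqrt{h}\,\gamma\bw_l,
\end{equation}
which carries an extra $O(h)$ noise term per step beyond the plain Euler scheme of $\d Y^\theta_t=(B+D\theta)Y^\theta_t\,\dt+\gamma\,\d W_t$. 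Consequently the second-moment recursion you propose to compare with the Lyapunov ODE acquires an additional source term $h^2 D\tfrac{\lambda}{2}R^{-1}D\trans$ at each step, and your stated constant $C(\theta)$ (depending only on $\|B+D\theta\|_F$, $\|\gamma\|_F$, $\|\E[Y_0Y_0\trans]\|_F$, $T$) confirms this contribution has been dropped: as written, your induction analyzes a different process from the one defining $J^{(T),\Delta}$ in \eqref{DiscApproFC}.

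The gap is repairable and does not change the rate: the omitted source term is $O(h^2)$ per step, hence $O(nh^2)=O(Th)$ in aggregate, so including it in your Gr\"onwall induction still yields $|J^{(T)}-J^{(T),\Delta}|\leq c_3(2b)\,T/n$ (with $c_3$ now also depending on $\lambda$, $D$, $R$). The paper makes this step explicit by introducing the intermediate ``averaged-action'' scheme \eqref{tildxDeltThet} and its cost $\tilde J^{(T),\Delta}$, showing $|J^{(T),\Delta}-\tilde J^{(T),\Delta}|=O(h)$ through a direct computation of the injected-noise variance (the cross term vanishes by independence of $(\xib_{t_l})$ from the Brownian increments), and only then applying the standard weak-error estimate for the Euler scheme to $|J^{(T)}-\tilde J^{(T),\Delta}|$. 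You should either adopt that two-stage decomposition or correct your moment recursion to include the action-noise covariance before the comparison with the continuous Lyapunov ODE.
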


\bigskip

\noindent $\bullet$ 
\textbf{Bounding $\lVert \tilde \nabla^{\Delta, N,pop}-\hat\nabla^{\Delta, N,pop}\rVert_F$\, (Statistical error).}

\vspace{1mm}

The proof of the following result is postponed to Section \ref{secMcerror}.

\begin{Proposition}\label{PropMCError}
    (Statistical error on gradient estimators) For all $\varepsilon>0$, there exist $ \h'_r(b,\frac{1}{\varepsilon})$ and $\h'_{\tilde N}(b,\frac{1}{r}, T , n , N , \frac{1}{\varepsilon})$ with at most polynomial growth in $\frac{1}{r}$, $b$, $\frac{1}{\varepsilon}$, $\log N$, $\log n$, $\log T$ such that for all $r<\min\{\check{ r}(b),1/\h'_r(b,\frac{1}{\varepsilon})\}$ and all $\tilde N\geq \h'_{\tilde N}(b,\frac{1}{r},T,n,N,\frac{1}{\varepsilon})$, with probability at least $1-(d/\varepsilon)^{-d}$, for all $\Theta\in\Rc(b)$, it holds
    \begin{equation}\label{ErrorMCE}
        \begin{aligned}
               \lVert (\tilde\nabla^{\Delta,N,pop}_{\theta} -\hat\nabla^{\Delta,N,pop}_{\theta}) J(\Theta)\rVert_F&\leq\varepsilon,\\
        \lVert (\tilde\nabla^{\Delta,N,pop}_{\zeta} -\hat\nabla^{\Delta,N,pop}_{\zeta}) J(\Theta)\rVert_F&\leq\varepsilon.
        \end{aligned}
    \end{equation}
\end{Proposition}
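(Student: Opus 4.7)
The plan is to exploit the fact that, conditionally on the perturbations $\underline U=(U_i)_i$ and $\underline V=(V_i)_i$, the random variable $\hat\nabla^{\Delta,N,pop}_\theta J(\Theta)$ defined in \eqref{grad:hat:pop:expression} is exactly the conditional mean of $\tilde\nabla^{\Delta,N,pop}_\theta J(\Theta)$, so the difference of interest is a sum of conditionally centred, conditionally independent random matrices. Specifically, set
\begin{align}
\xi_i \; := \; \Jc^{\Delta,N,i}_{pop} - \E\big[\Jc^{\Delta,N,i}_{pop}\,\big|\,U_i,V_i\big], \qquad i=1,\dots,\tilde N,
\end{align}
so that
\begin{align}
\tilde\nabla^{\Delta,N,pop}_\theta J(\Theta) - \hat\nabla^{\Delta,N,pop}_\theta J(\Theta) \; = \; \frac{d}{r^2\tilde N}\sum_{i=1}^{\tilde N}\xi_i\,U_i,
\end{align}
and, across different indices $i$, the population simulator in Algorithm~\ref{Gradientestim} is driven by fresh and mutually independent copies of $(X_0^{(j)},W^{(j)},W^0,\xib^{(j)})$, whence the $\xi_i$'s are independent given $(\underline U,\underline V)$. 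An analogous decomposition holds with $V_i$ in place of $U_i$ for the $\zeta$-component.

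The second step is a uniform moment bound on $\xi_i$. Lemma~\ref{ra} and the choice $r<\check r(b)$ guarantee that every perturbed parameter $\Theta_i=(\theta+U_i,\zeta+V_i)$ lies in $\Rc(2b)$ almost surely; combined with Proposition~\ref{Bds}, this provides uniform bounds on $\theta+U_i$ and $\zeta+V_i$. From there I would obtain polynomial-in-$(b,T,n,N,p)$ control of $\E\big[|X^{\Theta_i,\Delta,(j)}_{t_l}|^p\,\big|\,U_i,V_i\big]$ by a standard discrete Gr\"onwall argument on the Euler scheme \eqref{XDeltThet2} (the exchangeability of the particles lets us replace $\hat\mu^{\Theta_i,\Delta,N}$ by $\frac{1}{N}\sum_j X^{\Theta_i,\Delta,(j)}$ and bound it via Jensen's inequality), noting that the Gaussian action-noise $\xib^{(j)}_{t_l}$ has all moments finite. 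Taking into account the extra explicit terms coming from $\log p^{\Theta_i}(\cdot)$ (which, up to an additive constant depending on the known $R,\lambda$, is quadratic in the normalised residual $\sqrt{2R/\lambda}\,(\alpha-\theta_i y-\zeta_i z)$ and therefore has uniformly bounded moments on $\Rc(2b)$), this yields a uniform bound of the form $\E[|\Jc^{\Delta,N,i}_{pop}|^p\mid U_i,V_i]\leq \Psi_p(b,T,n,N)$ where $\Psi_p$ is polynomial in all of its arguments and in $p$.

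The third step is concentration. Conditionally on $(\underline U,\underline V)$, $(\xi_i U_i)_{i=1}^{\tilde N}$ is a sum of independent, centred matrix-valued random variables whose entries all satisfy $\|U_i\|_F=r$; a Bernstein- or Rosenthal-type $p$-th moment estimate gives
\begin{align}
\E\Big[\big\|\tfrac{d}{r^2\tilde N}\sum_{i=1}^{\tilde N}\xi_i U_i\big\|_F^{\,p}\,\Big|\,\underline U,\underline V\Big] \; \leq \; \Big(\frac{C_p\,d\,\sqrt{\Psi_p(b,T,n,N)}}{r\sqrt{\tilde N}}\Big)^{p},
\end{align}
and integrating this bound, combined with a Markov inequality applied at $p\asymp d\log(d/\varepsilon)$, turns it into the desired deviation bound $(d/\varepsilon)^{-d}$ provided $\tilde N$ is at least of order $\frac{d^{\,2}\Psi_p(b,T,n,N)\log(d/\varepsilon)}{r^2\varepsilon^2}$; this is precisely the polynomial (in $b,\tfrac1r,\tfrac1\varepsilon$ and logarithmic in $T,n,N$) threshold $\h_{\tilde N}'$ stated in the proposition. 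The same argument, with $V_i$ in place of $U_i$, handles the $\zeta$-component, and a final union bound over the two events concludes.

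The main technical obstacle I anticipate is the uniform moment bound on $\Jc^{\Delta,N,i}_{pop}$ in the presence of common noise: the $N$ particles are only exchangeable (not independent), so one has to work conditionally on $W^0$, control the common-noise contribution by absolute moment bounds of $Z^{\Theta_i}$-type processes uniformly in $\Theta_i\in\Rc(2b)$, and then transfer the conditional bounds back by integrating over $W^0$, all while keeping track of explicit polynomial dependence of the constants on the discretisation parameters $T,n,N$ in order to guarantee that the resulting threshold $\h_{\tilde N}'$ grows only polynomially in $1/r,1/\varepsilon$ and logarithmically in $T,n,N$, as stated.
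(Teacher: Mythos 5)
Your overall architecture is sound and is in fact a legitimate variant of the paper's argument: you exploit the conditional independence of the $\tilde N$ perturbation rounds given $(\underline U,\underline V)$ and concentrate the single scalar $\xi_i=\Jc^{\Delta,N,i}_{pop}-\E[\Jc^{\Delta,N,i}_{pop}\mid U_i,V_i]$ directly, whereas the paper first bounds the average over agents $j$ and time steps $l$ by a supremum and then union-bounds over the $Nn$ pairs $(j,l)$, applying a vector Bernstein inequality (Lemma 37 of \cite{carlautan19a}) to each fixed $(j,l)$; this is where the $\log N+\log n$ in $\h'_{\tilde N}$ comes from. Your route, if completed, would avoid that union bound entirely. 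Your use of Rosenthal plus Markov at $p\asymp d\log(d/\varepsilon)$ in place of Bernstein is also fine in principle.

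There is, however, a genuine gap in your second step. A plain discrete Gr\"onwall argument on \eqref{XDeltThet2} gives $\E[|X^{\Theta_i,\Delta,(j)}_{t_l}|^p\mid U_i,V_i]\lesssim e^{CpT}$, \emph{not} a bound polynomial in $T$: the drift matrix $B+D\theta_i$ is not stable (only $B-\tfrac{\beta}{2}I_d+D\theta_i$ is, for $\Theta_i\in\Rc(2b)$), so the raw moments genuinely grow exponentially in time. Consequently your $\Psi_p(b,T,n,N)$, as derived, is exponential in $T$; and even if it were polynomial in $T,n,N$ as you assert, your threshold $\tilde N\gtrsim d^2\Psi_p\log(d/\varepsilon)/(r^2\varepsilon^2)$ would then be polynomial rather than logarithmic in $T,n,N$, contradicting both your own conclusion and the statement of the proposition. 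The missing idea is that one must bound the \emph{discounted} quantities: since $\frac{h}{N}\sum_{j}\sum_{l}e^{-\beta t_l}\leq 1/\beta$ and, on the compact set $\Rc(2b)$, the stability of $B-\tfrac{\beta}{2}I_d+D\theta_i$ holds with a uniform margin, one gets $\sup_{l}e^{-\beta t_l}\lVert Y^{\theta_i,\Delta,(j)}_{t_l}\rVert_{\psi_2}^2\leq C(b)$ uniformly in $T,n,N$, hence $\lVert\xi_i\rVert_{\psi_1}\leq C(b)$ uniformly. This is precisely what the paper establishes in Lemma \ref{AuxLemmaPropSampleError} via weighted sub-Gaussian norms. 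With that replacement your argument goes through; without it, the moment-bound step fails and the stated growth of $\h'_{\tilde N}$ cannot be obtained.
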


\bigskip

\noindent $\bullet$  \textbf{Bounding $\lVert  \hat \nabla^{\Delta, N,pop}-\hat\nabla^{(T),\Delta}\rVert_F$ \, 
(Particle discretization error).}

\vspace{1mm}

The proof of the following result is postponed to Section \ref{secparticle}.

\begin{Proposition}\label{PropN}
Let $\bar \Jc^{\Delta,N}_{pop}(\Theta):=\E[\Jc^{\Delta,N}_{pop}(\Theta)]$, $\Theta\in\Rc(b)$, recalling that $\Jc^{\Delta,N}_{pop}$ is defined by \eqref{EstimpopDef}. There exists $c_4=c_4(b)>0$ such that for all $\Theta\in\Rc(b)$, 
\begin{align} \label{estimlemD5} 
    \lvert (\bar \Jc^{\Delta,N}_{pop}-J^{(T),\Delta})(\Theta)\rvert &\leq \;  \frac{c_4}{N}.
\end{align} 
Moreover, $\P-$a.s., 
for all $\Theta\in\Rc(b)$, it holds
\begin{equation} \label{estimD6} 
\begin{aligned} 
    \lVert  (\hat \nabla^{\Delta, N,pop}_\theta -\hat\nabla^{(T),\Delta}_\theta) J(\Theta)\rVert_F &\leq \;  
    \frac{d}{r}\frac{c_4(2b)}{N}, \\
    \lVert  
     (\hat \nabla^{\Delta, N,pop}_\zeta -\hat\nabla^{(T),\Delta}_\zeta )J(\Theta)\rVert_F & \leq \;  
     \frac{d}{r}\frac{c_4(2b)}{N}. 
\end{aligned} 
\end{equation} 
\end{Proposition}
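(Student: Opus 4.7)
The key observation is that the second part, namely \eqref{estimD6}, is an immediate consequence of the first part \eqref{estimlemD5}. Indeed, recalling the expression \eqref{grad:hat:pop:expression} and the fact that, conditional on $(\underline U, \underline V)$, the samples $\Jc^{\Delta,N,i}_{pop}$ have conditional mean $\bar\Jc^{\Delta,N}_{pop}(\Theta_i)$, one writes
\begin{align*}
(\hat \nabla^{\Delta,N,pop}_\theta - \hat\nabla^{(T),\Delta}_\theta) J(\Theta) & = \; \frac{d}{r^2}\frac{1}{\tilde N}\sum_{i=1}^{\tilde N} \big(\bar\Jc^{\Delta,N}_{pop}(\Theta_i) - J^{(T),\Delta}(\Theta_i)\big) U_i,
\end{align*}
and, since $\lVert U_i \rVert_F = r$ and by Lemma \ref{ra} each $\Theta_i \in \Rc(2b)$, \eqref{estimlemD5} applied at level $2b$ yields $\lVert (\hat \nabla^{\Delta,N,pop}_\theta - \hat\nabla^{(T),\Delta}_\theta) J(\Theta)\rVert_F \leq \frac{d}{r} \frac{c_4(2b)}{N}$, and the same argument works for the $\zeta$-component. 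So the whole task reduces to proving \eqref{estimlemD5}.

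For \eqref{estimlemD5}, the plan is to leverage the linear structure of the dynamics and the quadratic structure of the costs. First I would decouple the $N$-particle system into $N$ i.i.d.\ copies of the discretized McKean--Vlasov dynamics $\xb^{\Theta,\Delta,(j)}$ driven by the same common noise $W^0$ and action noise/initial-state samples, but using $\E_0[\xb^{\Theta,\Delta}_{t_l}]$ rather than the empirical mean. Writing $m^{\Theta,\Delta}_{t_l} := \E_0[\xb^{\Theta,\Delta}_{t_l}]$ and projecting the discretized $N$-agent recursion \eqref{XDeltThet2} onto its average, one obtains the closed linear recursion
\begin{align*}
\hat\mu^{\Theta,\Delta,N}_{t_{l+1}} &= \; (I_d + (\hat B + D\zeta) h) \hat\mu^{\Theta,\Delta,N}_{t_l} + D h \sqrt{\tfrac{\lambda}{2}R^{-1}}\, \bar\xib_{t_l} + \sqrt{h}\, \gamma\, \bar\bw_l + \sqrt{h}\, \gamma_0 \bw^0_l,
\end{align*}
while $m^{\Theta,\Delta}_{t_l}$ satisfies the analogous recursion with the idiosyncratic terms $\bar\xib_{t_l}$ and $\bar\bw_l$ (averages of $N$ i.i.d.\ centered vectors) removed. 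A direct backward induction on $l$ then shows
$\E\big[\lVert \hat\mu^{\Theta,\Delta,N}_{t_l} - m^{\Theta,\Delta}_{t_l}\rVert^2\big] \leq C(\Theta,T)/N$ with $C(\Theta,T)$ uniformly bounded on $\Rc(b)$ and polynomial in $T$, and similarly $\E\big[\lVert X^{\Theta,\Delta,(j)}_{t_l} - \xb^{\Theta,\Delta,(j)}_{t_l}\rVert^2\big] \leq C(\Theta,T)/N$.

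Second, I would plug these $L^2$ estimates into the definitions \eqref{EstimpopDef} and \eqref{DiscApproFC}. Using exchangeability to replace the average over $j$ by the first agent, the difference $\bar\Jc^{\Delta,N}_{pop}(\Theta) - J^{(T),\Delta}(\Theta)$ decomposes into quadratic brackets of the form $\E[A^\top Q A] - \E[\bar A^\top Q \bar A]$, etc., where the differences $A - \bar A$ have $L^2$-norm $O(1/\sqrt{N})$ and both factors are uniformly bounded in $L^2$ on $\Rc(b)$. The standard polarization identity $\E[A^\top Q A - \bar A^\top Q \bar A] = \E[(A-\bar A)^\top Q (A+\bar A)]$ together with Cauchy--Schwarz would initially give $O(1/\sqrt{N})$; the $1/N$ rate comes from the crucial fact that the linear part of $A-\bar A$ (the averaged noise terms) is centered and independent of the common-noise factors appearing in $A+\bar A$, so that the cross terms vanish upon taking expectation and only the $O(1/N)$ variance term survives. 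The entropy contribution $\lambda \log p^\Theta$ is also quadratic in the centered state/mean once the Gaussian density is expanded, and is treated identically. Summing over the $n$ time steps with the discount $e^{-\beta t_l}$ produces a geometric series bounded by a constant independent of $T$, which yields \eqref{estimlemD5} with $c_4(b)$ explicit in $b$ via the bounds of Proposition \ref{Bds} applied at level $b$ on the matrix norms of $\Sigma_\theta$, $\hat\Sigma_\zeta$.

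The main obstacle is the identification of the $1/N$ rate (rather than the naive $1/\sqrt{N}$ propagation-of-chaos rate) for the \emph{expected} cost discrepancy. This requires carefully exploiting the LQ structure: the centering of the averaged noises $\bar\xib_{t_l}, \bar\bw_l$, their independence from the common-noise driven part of $m^{\Theta,\Delta}$, and the bilinear form of the quadratic costs, so that the leading $O(1/\sqrt{N})$ terms have zero mean after taking expectation. A secondary but manageable subtlety is keeping track of the uniformity of all constants over $\Theta \in \Rc(b)$, which follows from the bounds of Proposition \ref{Bds} together with a Gr\"onwall-type discrete argument for the linear recursions above.
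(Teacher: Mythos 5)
Your proposal is correct and follows essentially the same route as the paper: the gradient bounds \eqref{estimD6} are reduced to \eqref{estimlemD5} exactly as you describe (via $\Theta_i\in\Rc(2b)$, $\lVert U_i\rVert_F=r$ and the conditional-expectation representation \eqref{grad:hat:pop:expression}), and \eqref{estimlemD5} is proved by comparing the $N$-particle system to the discretized dynamics driven by the true conditional mean, establishing $O(1/N)$ bounds in $L^2$ on the discrepancies $\yb^{\theta,\Delta,(j)}-Y^{\theta,\Delta,(j)}$ and $\zb^{\zeta,\Delta}-\hat\mu^{\zeta,\Delta,N}$ through linear recursions and a discrete Gr\"onwall argument, then inserting these into the quadratic costs. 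Your explicit treatment of the cross terms in the polarization identity (centering and near-independence of the averaged noises, giving $O(1/N)$ rather than the naive $O(1/\sqrt{N})$) is in fact slightly more careful than the paper's write-up, which passes directly to the bound in terms of $\E[\lvert\Qc^{\theta,\Delta}_{t_l}\rvert^2]$.
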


\subsubsection{Proof of Proposition \ref{PropPerb}}\label{proof:prop:PropPerb}

The proof is reminiscent of Lemma 31 \cite{carlautan19a} or Lemma 30 \cite{fazetal18}. 
For any $\Theta=(\theta,\zeta)\in\Sc\times\hat \Sc$ and $r>0$, let us introduce the following smooth approximation of $J$ defined by 
\begin{equation}\label{DefJr}
    J_r(\theta,\zeta)=\E_{(P_r,\hat{P}_r)}[J(\theta+P_r,\zeta+\hat{P}_r)]=\E_{(P_r,\hat{P}_r)}[J_1(\theta+P_r)+J_2(\zeta+\hat{P}_r)]+\upsilon(\lambda)
\end{equation}
where $P_r$ and $\hat{P}_r$ are two independent random variables uniformly distributed in $\S_r$  and $\E_{(P_r,\hat{P}_r)}$ stands for the expectation taken with respect to the random variables $P_r,\hat{P}_r$.

\vspace{1mm}

The following result is directly taken from Lemma 26 in \cite{fazetal18}. 
For any $(\theta,\zeta)\in\Sc\times\hat \Sc$, it holds
    \begin{equation} \label{LemmaGradJr}
    \begin{aligned} 
        \nabla_\theta J_r(\theta,\zeta)=\frac{d}{r^2}\E_{P_r}[J_1(\theta+P_r)P_r]=\frac{d}{r^2}\E_{(P_r,\hat P_r)}[J(\theta+P_r,\zeta+\hat{P}_r)P_r] \\
        \nabla_\zeta J_r(\theta,\zeta)=\frac{d}{r^2}\E_{\hat{P}_r}[J_2(\zeta+\hat{P}_r)\hat{P}_r]=\frac{d}{r^2}\E_{(P_r, \hat{P}_r)}[J(\theta+P_r,\zeta+\hat{P}_r)\hat{P}_r].
        \end{aligned}
    \end{equation}



We then write 
\begin{align} 
\lVert (\hat\nabla_\theta-\nabla_\theta)J(\Theta)\rVert_F & \leq \;  \lVert \hat\nabla_\theta J(\Theta)-\nabla_\theta J_r(\Theta)\rVert_F+\lVert \nabla_\theta J_r(\Theta)-\nabla_\theta J(\Theta)\rVert_F. 
\end{align} 

For the second term $\lVert \nabla_\theta J_r(\Theta)-\nabla_\theta J(\Theta)\rVert_F$, using the Lipschitz property of the gradient $\nabla J(\Theta)$ shown in the Section \ref{ProofLip}, there exists $\tilde{\h}_r(b,\frac{1}{\varepsilon})$ with polynomial growth in $b$ and $\frac{1}{\varepsilon}$ such that for all $r<1/\tilde{\h}_r(b,\frac{1}{\varepsilon})$, for all $\Theta=(\theta,\zeta)\in\Rc(b)$ and $\Theta'=(\theta',\zeta')$ such that $\lVert\theta'-\theta\rVert_F\leq r$ and $\lVert\zeta'-\zeta\rVert_F\leq r$, we have $\lVert \nabla_\theta J(\Theta')-\nabla_\theta J(\Theta)\rVert_F\leq\frac{\varepsilon}{2}$. Then according  to \eqref{DefJr}, noticing that $\nabla_\theta J_r(\Theta)=\E_{(P_r,\hat P_r)}[\nabla_\theta J(\theta+P_r,\zeta+\hat P_r)]$, we have
\begin{align*}
    \lVert \nabla_\theta J_r(\Theta)-\nabla_\theta J(\Theta)\rVert_F   &\leq \E_{(P_r,\hat P_r)}[\lVert \nabla_\theta J(\theta+P_r,\zeta+\hat P_r)- \nabla_\theta J(\Theta)\rVert_F]\leq\frac{\varepsilon}{2}. 
\end{align*}

Then, for the first term $\lVert \hat\nabla_\theta J(\Theta)-\nabla_\theta J_r(\Theta)\rVert_F$, using  \eqref{LemmaGradJr}, we notice that  
$\hat\nabla_\theta J(\Theta)$ is exactly the Monte-Carlo approximation for the expectation $\nabla_\theta J_r(\Theta)$. Assuming that $r<\check{r}(b)$, according to Lemma \ref{ra}, for all $\Theta\in\Rc(b), i=1,\dots,\tilde N,(\theta+U_i,\zeta+V_i)\in \Rc(2b)$ and each  individual sample has the norm bounded by $\frac{d}{r}(2b+\upsilon(\lambda))$ according to \eqref{Rcb}-\eqref{checkJ}.  Thus we can apply vector Bernstein’s Inequality to deduce that there exists $\h_{sample}(b,\frac{1}{r},\frac{1}{\varepsilon})$ with polynomial growth in $b$, $\frac{1}{r}$ and $\frac{1}{\varepsilon}$ such that for all $\tilde N\geq \h_{sample}(b,\frac{1}{r},\frac{1}{\varepsilon})$, for all $\Theta\in\Rc(b)$, we have $\lVert \hat\nabla_\theta J(\Theta)-\nabla_\theta J_r(\Theta)\rVert_F\leq \frac{\varepsilon}{2}$ with probability at least $1-(d/\varepsilon)^{-d}$.

The bound for $\lVert (\hat\nabla_\zeta -\nabla_\zeta) J(\Theta)\rVert_F$ is derived in a similar manner, and thus its proof is omitted.

\subsubsection{Proof of Proposition \ref{PropTrunc}}\label{proof:prop:PropTrunc}

In order to establish our result on $\lVert (\hat\nabla_\theta -\hat\nabla_\theta^{(T)}) J(\Theta)\rVert_F$ and $\lVert (\hat\nabla_\zeta -\hat\nabla_\zeta^{(T)}) J(\Theta)\rVert_F$, we first need to derive an error bound on $\lvert (J^{(T)}-J)(\Theta)\rvert$. 

\begin{Lemma}\label{DiffJJT} 
\noindent There exist two positive constants $c_1=c_1(b)$ and $c_2=c_2(b)$ depending only upon $b\in\R_+$, the model parameters and $\lambda$ such that for all $\Theta\in\Rc(b)$
$$
\lvert  (J^{(T)}-J)(\Theta)\rvert\leq c_1(b)e^{-c_2(b)T}.
$$
Besides, $c_1(.)$ and $c_2(.)$ are non-decreasing functions.
\end{Lemma}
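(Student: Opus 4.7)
My plan is to show that the difference $J(\Theta)-J^{(T)}(\Theta)$ is exactly the tail from $T$ to $\infty$ of the expected running cost and entropy, and that this tail decays exponentially in $T$ uniformly over $\mathcal{R}(b)$. The anticipated challenge is extracting decay constants that depend on $b$ but not on the specific choice of $(\theta,\zeta)\in\mathcal{R}(b)$.

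First, I would use the quadratic decomposition derived just before \eqref{minJ12}, which reads
\begin{equation*}
J(\Theta)=\mathbb{E}\Big[\int_0^\infty e^{-\beta t}\big((Y_t^\theta)\trans(Q+\theta\trans R\theta)Y_t^\theta+(Z_t^\zeta)\trans(\hat Q+\zeta\trans R\zeta)Z_t^\zeta\big)\mathrm{d} t\Big]+\upsilon(\lambda),
\end{equation*}
together with the analogous rewriting of $J^{(T)}(\Theta)$ obtained by expanding the Gaussian integrals in \eqref{TruncFC}, which produces the same quadratic integrand on $[0,T]$ plus $\upsilon(\lambda)(1-e^{-\beta T})$. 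Subtracting yields
\begin{equation*}
J(\Theta)-J^{(T)}(\Theta)=\mathbb{E}\Big[\int_T^\infty e^{-\beta t}\big((Y_t^\theta)\trans(Q+\theta\trans R\theta)Y_t^\theta+(Z_t^\zeta)\trans(\hat Q+\zeta\trans R\zeta)Z_t^\zeta\big)\mathrm{d} t\Big]+\upsilon(\lambda)\,e^{-\beta T}.
\end{equation*}
The constant term already decays like $e^{-\beta T}$. Using Proposition \ref{Bds} to bound $\|Q+\theta\trans R\theta\|_F$ and $\|\hat Q+\zeta\trans R\zeta\|_F$ uniformly on $\mathcal{S}(b)\times\hat{\mathcal{S}}(b)$, it then suffices to control $\int_T^\infty e^{-\beta t}\mathbb{E}[|Y_t^\theta|^2]\mathrm{d} t$ and $\int_T^\infty e^{-\beta t}\mathbb{E}[|Z_t^\zeta|^2]\mathrm{d} t$.

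Second, I would use It\^o's rule (as in the proof of Lemma \ref{LemmaB2}) to write explicitly
\begin{equation*}
e^{-\beta t}\mathbb{E}[Y_t^\theta(Y_t^\theta)\trans]=e^{t\Xi_\theta}\mathbb{E}[Y_0Y_0\trans]e^{t\Xi_\theta\trans}+\int_0^t e^{-\beta(t-u)}e^{u\Xi_\theta}\gamma\gamma\trans e^{u\Xi_\theta\trans}\mathrm{d} u,
\end{equation*}
recalling $\Xi_\theta=B-\tfrac{\beta}{2}I_d+D\theta$. The heart of the argument is to show that there exist $C(b),c(b)>0$ such that $\|e^{t\Xi_\theta}\|_F\leq C(b)e^{-c(b)t}$ for all $t\geq0$ and all $\theta\in\mathcal{S}(b)$, and likewise for $\hat\Xi_\zeta$ on $\hat{\mathcal{S}}(b)$. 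This is the main obstacle. I would handle it via the Lyapunov matrix $P_\theta=\int_0^\infty e^{s\Xi_\theta\trans}e^{s\Xi_\theta}\mathrm{d} s$ which, by Lemma \ref{SolALE}, solves $\Xi_\theta\trans P_\theta+P_\theta\Xi_\theta=-I_d$. Because $\theta\mapsto P_\theta$ is continuous on the compact set $\mathcal{S}(b)$ and $P_\theta\in\mathbb{S}^d_{>+}$, the quantities $\sup_{\theta\in\mathcal{S}(b)}\|P_\theta\|_F$ and $\inf_{\theta\in\mathcal{S}(b)}\sigma_{\min}(P_\theta)$ are finite and positive, giving the standard Lyapunov-function bound $\|e^{t\Xi_\theta}x\|^2\leq (\|P_\theta\|_F/\sigma_{\min}(P_\theta))e^{-t/\|P_\theta\|_F}\|x\|^2$, uniformly on $\mathcal{S}(b)$.

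Third, once uniform exponential bounds on $\|e^{t\Xi_\theta}\|_F$ and $\|e^{t\hat\Xi_\zeta}\|_F$ are available, I would split $\int_T^\infty e^{-\beta t}\mathbb{E}[|Y_t^\theta|^2]\mathrm{d} t$ using the explicit representation above. The first term yields $\int_T^\infty\|e^{t\Xi_\theta}\|_F^2\|\mathbb{E}[Y_0Y_0\trans]\|_F\,\mathrm{d} t\lesssim e^{-2c(b)T}$. The second double integral, after a Fubini exchange, splits into an integral over $u\geq T$ (bounded by $\int_T^\infty e^{-2c(b)u}\mathrm{d} u\lesssim e^{-2c(b)T}$) and an integral over $u<T$ of $e^{-\beta(T-u)}e^{-2c(b)u}$, which equals $e^{-\min(\beta,2c(b))T}$ times a constant up to an elementary calculation depending on whether $\beta=2c(b)$. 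All intermediate constants depend only on $b$, $\beta$, $\|\gamma\|_F$, and $\|\mathbb{E}[Y_0Y_0\trans]\|_F$. Combining with the analogous estimate for $Z_t^\zeta$ and the $\upsilon(\lambda)e^{-\beta T}$ term, I obtain the announced bound with $c_2(b)=\min(\beta,2c(b),2\hat c(b))$ and $c_1(b)$ expressed explicitly in terms of the bounds from Proposition \ref{Bds}. Finally, since $\mathcal{S}(b_1)\subset\mathcal{S}(b_2)$ for $b_1\leq b_2$ and likewise for $\hat{\mathcal{S}}$, the constants $c_1$, $c_2$ may be chosen monotone in $b$ by taking worst-case values over $[0,b]$.
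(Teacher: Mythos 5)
Your proof is correct and follows essentially the same route as the paper: the same tail decomposition of $J-J^{(T)}$ into the quadratic integrand on $[T,\infty)$ plus the $e^{-\beta T}$ entropy remainder, the same It\^o representation of $e^{-\beta t}\E[Y_t^\theta(Y_t^\theta)\trans]$, and a uniform exponential bound on $\lVert e^{t\Xi_\theta}\rVert$ over the compact level set via a Lyapunov argument. The only (harmless) deviation is that you certify the decay of $e^{t\Xi_\theta}$ with a fresh Lyapunov matrix $P_\theta$ solving $\Xi_\theta\trans P_\theta+P_\theta\Xi_\theta=-I_d$ together with compactness and continuity of $\theta\mapsto P_\theta$, whereas the paper reuses the already-available $\Sigma_\theta$ (Lemma \ref{LemmaCited} applied to the ALE \eqref{ALEForSigmaTheta} with $\Omega=M$), which yields the decay rate explicitly as $\sigma_{\min}(M)/\lVert\Sigma_\theta\rVert_2$.
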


Before proving the above lemma, we will need the following technical result which is directly taken  from Lemma 12 \cite{ConvSampGradMethod}. Its proof is thus omitted.
\begin{Lemma}\label{LemmaCited}
    Let the matrices $F,X\succ 0$ and $\Omega\succ 0$ satisfy $$FX+XF\trans+\Omega=0$$ then for any $t\geq 0$,$$\lVert e^{Ft}\rVert_2^2\leq \frac{\lVert X\rVert_2}{\sigma_{\min}(X)}e^{-\frac{\sigma_{\min}(\Omega)}{\lVert X\rVert_2}t}$$
    where $\lVert\cdot\rVert_2$ denotes the largest singular value of the matrices.
\end{Lemma}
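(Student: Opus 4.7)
The plan is to construct a quadratic Lyapunov function from $X$ itself and establish exponential decay along trajectories of the linear system $\dot{x}(t) = Fx(t)$; since $\lVert e^{Ft}\rVert_2^2 = \sup_{\lVert x_0\rVert_2 = 1}\lVert e^{Ft} x_0\rVert_2^2$, a trajectory bound will translate directly into the required operator-norm bound.

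First, I would conjugate the Lyapunov identity $FX + XF\trans + \Omega = 0$ by $X^{-1}$, which is well defined since $X \succ 0$, to obtain the dual form $F\trans X^{-1} + X^{-1} F = -X^{-1}\Omega X^{-1}$. Fix an arbitrary initial state $x_0$ with $\lVert x_0\rVert_2 = 1$, set $x(t) := e^{Ft} x_0$, and define the candidate Lyapunov function $V(t) := x(t)\trans X^{-1} x(t)$. Differentiating along the flow yields $\dot V(t) = x(t)\trans(F\trans X^{-1} + X^{-1} F)x(t) = -\,x(t)\trans X^{-1}\Omega X^{-1} x(t)$.

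Next I would convert this identity into a differential inequality of the form $\dot V(t) \leq -c\,V(t)$. Via the substitution $y = X^{-1/2} x$ (using the symmetric positive-definite square root of $X$), one has $V = \lVert y\rVert_2^2$ and $x\trans X^{-1}\Omega X^{-1} x = y\trans X^{-1/2}\Omega X^{-1/2} y \geq \sigma_{\min}(X^{-1/2}\Omega X^{-1/2})\,\lVert y\rVert_2^2$. The spectral estimates $X^{-1/2}\Omega X^{-1/2} \succeq \sigma_{\min}(\Omega)\,X^{-1}$ (from $\Omega \succeq \sigma_{\min}(\Omega) I$) and $X^{-1} \succeq \lVert X\rVert_2^{-1}\,I$ (from $X \preceq \lVert X\rVert_2 I$) combine to give $\sigma_{\min}(X^{-1/2}\Omega X^{-1/2}) \geq \sigma_{\min}(\Omega)/\lVert X\rVert_2$, so that $\dot V(t) \leq -\frac{\sigma_{\min}(\Omega)}{\lVert X\rVert_2}\,V(t)$. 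Grönwall's inequality then yields $V(t) \leq V(0)\exp\bigl(-\tfrac{\sigma_{\min}(\Omega)}{\lVert X\rVert_2}\,t\bigr)$.

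Finally I would translate this decay back to $\lVert x(t)\rVert_2^2$ using two elementary spectral inequalities: $V(0) = x_0\trans X^{-1} x_0 \leq \lVert X^{-1}\rVert_2 = 1/\sigma_{\min}(X)$, and $V(t) \geq \sigma_{\min}(X^{-1})\lVert x(t)\rVert_2^2 = \lVert x(t)\rVert_2^2/\lVert X\rVert_2$. Chaining these gives $\lVert x(t)\rVert_2^2 \leq \tfrac{\lVert X\rVert_2}{\sigma_{\min}(X)}\exp\bigl(-\tfrac{\sigma_{\min}(\Omega)}{\lVert X\rVert_2}\,t\bigr)$; taking the supremum over unit $x_0$ yields the claimed bound on $\lVert e^{Ft}\rVert_2^2$. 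The only delicate ingredient is the spectral estimate on $X^{-1/2}\Omega X^{-1/2}$, which rests on the simultaneous positive-definiteness of $X$ and $\Omega$ together with $X \preceq \lVert X\rVert_2 I$; all remaining steps are standard Lyapunov analysis.
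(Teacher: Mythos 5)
Your proof is correct. Note that the paper itself does not prove this lemma at all: it is imported verbatim as ``Lemma 12 of \cite{ConvSampGradMethod}'' and its proof is explicitly omitted, so there is no internal argument to compare against. Your self-contained Lyapunov argument is the standard one and every step checks out: conjugating the identity $FX+XF\trans=-\Omega$ by $X^{-1}$ gives $F\trans X^{-1}+X^{-1}F=-X^{-1}\Omega X^{-1}$; the substitution $y=X^{-1/2}x$ together with $\Omega\succeq\sigma_{\min}(\Omega)I$ and $X^{-1}\succeq \lVert X\rVert_2^{-1}I$ correctly yields $\dot V\leq -\frac{\sigma_{\min}(\Omega)}{\lVert X\rVert_2}V$; and the final sandwiching $V(0)\leq 1/\sigma_{\min}(X)$, $V(t)\geq \lVert x(t)\rVert_2^2/\lVert X\rVert_2$ recovers exactly the stated constants. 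One cosmetic remark: the hypothesis ``$F\succ 0$'' in the statement is evidently a typo (a positive definite $F$ would make $e^{Ft}$ grow, contradicting the conclusion); the intended hypotheses are $X\succ 0$ and $\Omega\succ 0$, which already force $F$ to be Hurwitz, and your argument correctly uses nothing about $F$ beyond the Lyapunov identity itself.
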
 

\vspace{1mm}

\noindent \emph{ Proof of Lemma \ref{DiffJJT}}.  
From the definition of $J$ and $J^{(T)}$, we get
\begin{align*}
  &  (J-J^{(T)})(\Theta)\\
&=\E\Big[ \int_T^\infty e^{-\beta t} \big(  ( Y^\Theta_t)\trans Q Y^\Theta_t+(Z^\Theta_t)\trans \hat QZ^\Theta_t+ \int_{\R^m}(a\trans Ra  + \lambda \log p^\Theta( Y^\Theta_t, Z^\Theta_t,a))p^\Theta(  Y^\Theta_t,  Z^\Theta_t,a)\d a \big)  \d t \Big]\\
&=\E\Big[ \int_T^\infty e^{-\beta t} \big(  ( Y^\Theta_t)\trans (Q+\theta\trans R\theta) Y^\Theta_t+( Z^\Theta_t)\trans (\hat Q+\zeta\trans R\zeta) Z^\Theta_t\big)\d t\Big]+\upsilon^{(T)}(\lambda) \\
\end{align*}
where 
\begin{equation}\label{defupsilonlambdaT}
    \upsilon^{(T)}(\lambda)=\Big(   - \frac{\lambda m}{2} \log (\pi\lambda) + \frac{\lambda}{2} \log\big| \mathrm{det}(R) \big| \Big)\int_T^{\infty} e^{-\beta t}\d t= \frac{e^{-\beta T}}{\beta}\Big(   - \frac{\lambda m}{2} \log (\pi\lambda) + \frac{\lambda}{2} \log\big| \rm det(R) \big| \Big).
\end{equation}

  In particular, note that $\upsilon(\lambda)$ defined in \eqref{defupsilonlambda} is in fact $\upsilon^{(0)}(\lambda)$ for $T=0$.


\noindent Denoting for simplicity $\bC_t=\E[  Y^\theta_t ( Y^\theta_t)\trans]$ and $\hat\bC_t=\E[  Z^\zeta_t( Z^\zeta_t)\trans]$, one has
\begin{equation}\label{J-JT}
    (J-J^{(T)})(\Theta)=\langle Q+\theta\trans R\theta,\int_T^\infty e^{-\beta t}\bC_t\d t\rangle+ \langle \hat Q+\zeta\trans R\zeta,\int_T^\infty e^{-\beta t}\hat \bC_t\d t\rangle+\upsilon^{(T)}(\lambda)
\end{equation}

\noindent As proved in Lemma \ref{LemmaB2} (using It\^o's formula), 
\begin{align*}
    \bC_t&=\exp(t(B+D\theta))\bC_0\exp(t(B+D\theta)\trans)+\int_0^t \exp(-(s-t)(B+D\theta))\gamma\gamma\trans\exp(-(s-t)(B+D\theta)\trans)\d s,\\
   \hat\bC_t&=\exp(t(\hat B+D\zeta))\hat\bC_0\exp(t(\hat B+D\zeta)\trans)+\int_0^t \exp(-(s-t)(\hat B+D\zeta))\gamma_0\gamma_0\trans\exp(-(s-t)(\hat B+D\zeta)\trans)\d s.
\end{align*}

\noindent
The equations \eqref{PropSigmaTheta} satisfied by $\Sigma_\theta$ and $\hat\Sigma_\zeta$ together with Lemma \ref{LemmaCited} guarantee that $$
\lVert e^{(B-\frac{\beta}{2}I_d+D\theta) t}\rVert_2^2\leq\frac{\lVert \Sigma_\theta\rVert_2}{\sigma_{\min}(\Sigma_\theta)}e^{-\frac{\sigma_{\min}(M)}{\lVert \Sigma_\theta\rVert_2}t},
$$
$$
\lVert e^{(\hat B-\frac{\beta}{2}I_d+D\zeta) t}\rVert_2^2\leq\frac{\lVert \hat\Sigma_\zeta\rVert_2}{\sigma_{\min}(\hat\Sigma_\zeta)}e^{-\frac{\sigma_{\min}(\hat M)}{\lVert \hat\Sigma_\zeta\rVert_2}t},
$$

\noindent so that
\begin{align*}
    \lVert e^{(B+D\theta) t}\rVert_2^2&\leq \lVert e^{(B-\frac{\beta}{2}I_d+D\theta) t}\rVert_2^2\lVert e^{\frac{\beta}{2}I_d t}\rVert_2^2 \leq \frac{\lVert \Sigma_\theta\rVert_2}{\sigma_{\min}(\Sigma_\theta)}e^{-\frac{\sigma_{\min}(M)}{\lVert \Sigma_\theta\rVert_2} t} e^{2\lVert \frac{\beta t}{2}I_d\rVert_2}=\frac{\lVert \Sigma_\theta\rVert_2}{\sigma_{\min}(\Sigma_\theta)}e^{(\beta- \frac{\sigma_{\min}(M)}{\lVert \Sigma_\theta\rVert_2}) t},\\
 \lVert e^{(\hat B+D\zeta) t}\rVert_2^2 &\leq \lVert e^{(\hat B-\frac{\beta}{2}I_d+D\zeta) t}\rVert_2^2\lVert e^{\frac{\beta}{2}I_d t}\rVert_2^2  \leq \frac{\lVert \hat\Sigma_\zeta\rVert_2}{\sigma_{\min}(\hat\Sigma_\zeta)}e^{-\frac{\sigma_{\min}(\hat M)}{\lVert \hat\Sigma_\zeta\rVert_2} t} e^{2\lVert \frac{\beta t}{2}I_d\rVert_2}=\frac{\lVert \hat\Sigma_\zeta\rVert_2}{\sigma_{\min}(\hat\Sigma_\zeta)}e^{(\beta- \frac{\sigma_{\min}(\hat M)}{\lVert \hat\Sigma_\zeta\rVert_2}) t}.
\end{align*}

\noindent Hence,
\begin{align*}
    \lVert\bC_t\rVert_F&\leq \lVert \bC_0\rVert_F\lVert e^{(B+D\theta) t}\rVert_2^2+\lVert \gamma\rVert^2\int_0^t \lVert e^{(B+D\theta) v}\rVert_2^2\d v\\
    &\leq\frac{\lVert \Sigma_\theta\rVert_2}{\sigma_{\min}(\Sigma_\theta)}\Bigg( \big(\lVert\bC_0\rVert_F+\frac{\lVert\gamma\rVert_F^2}{\beta-\frac{\sigma_{\min}(M)}{\lVert\Sigma_\theta\rVert_2}}\big)e^{(\beta- \frac{\sigma_{\min}(M)}{\lVert \Sigma_\theta\rVert_2}) t}-\frac{\lVert\gamma\rVert_F^2}{\beta-\frac{\sigma_{\min}(M)}{\lVert\Sigma_\theta\rVert_2}}\Bigg)
\end{align*}
and similarly
$$\lVert\hat \bC_t\rVert_F\leq \frac{\lVert \hat\Sigma_\zeta\rVert_2}{\sigma_{\min}(\hat\Sigma_\zeta)}\Bigg( \big(\lVert\hat\bC_0\rVert_F+\frac{\lVert\gamma_0\rVert_F^2}{\beta-\frac{\sigma_{\min}(\hat M)}{\lVert\hat \Sigma_\zeta\rVert_2}}\big)e^{(\beta- \frac{\sigma_{\min}(\hat M)}{\lVert \hat\Sigma_\zeta\rVert_2}) t}-\frac{\lVert\gamma_0\rVert_F^2}{\beta-\frac{\sigma_{\min}(\hat M)}{\lVert\hat\Sigma_\zeta\rVert_2}}\Bigg).
$$
Since $\Theta=(\theta,\zeta)\in\Rc(b)\subset\Sc(b)\times\hat\Sc(b)$, recalling that $\Sc(b)$ as well as $\hat\Sc(b)$ are compact sets, using the continuity to $\theta\mapsto\lVert\Sigma_\theta\rVert_2$ and $\zeta\mapsto\lVert\hat\Sigma_\zeta\rVert_2$, there exist $\tilde{c}_1(b), \, \tilde{c}_2(b)>0$ such that for all $\Theta=(\theta,\zeta)\in\Rc(b)$
$$
\tilde{c}_2(b)\leq\lVert\Sigma_\theta\rVert_2\leq \tilde{c}_1(b), \qquad  \tilde{c}_2(b)\leq\lVert\hat\Sigma_\zeta\rVert_2\leq \tilde{c}_1(b)
$$

\noindent and using the fact that $\Sigma_\theta\succeq M,\hat\Sigma_\theta\succeq \hat M$, we get
\begin{align*}
    e^{-\beta t} \lVert\bC_t\rVert_F&\leq\frac{\lVert \Sigma_\theta\rVert_2}{\sigma_{\min}(\Sigma_\theta)}\Bigg( \big(\lVert\bC_0\rVert_F+\frac{\lVert\gamma\rVert_F^2}{\beta-\frac{\sigma_{\min}(M)}{\lVert\Sigma_\theta\rVert_2}}\big)e^{- \frac{\sigma_{\min}(M)}{\lVert \Sigma_\theta\rVert_2} t}-\frac{\lVert\gamma\rVert_F^2e^{-\beta t}}{\beta-\frac{\sigma_{\min}(M)}{\lVert\Sigma_\theta\rVert_2}}\Bigg)\\
    &\leq \frac{\tilde c_1(b)}{\sigma_{\min}(M)}\Bigg( \big(\lVert\bC_0\rVert_F+\frac{\lVert\gamma\rVert_F^2}{\beta-\frac{\sigma_{\min}(M)}{\tilde{c}_2(b)}}\big)e^{- \frac{\sigma_{\min}(M)}{\tilde{c}_2(b)} t}-\frac{\lVert\gamma\rVert_F^2e^{-\beta t}}{\beta-\frac{\sigma_{\min}(M)}{\tilde{c}_1(b)}}\Bigg),
\end{align*}
\noindent and
\begin{align*}
    e^{-\beta t}\lVert\hat \bC_t\rVert_F&\leq \frac{\lVert \hat\Sigma_\zeta\rVert_2}{\sigma_{\min}(\hat\Sigma_\zeta)}\Bigg( \big(\lVert\hat\bC_0\rVert_F+\frac{\lVert\gamma_0\rVert_F^2}{\beta-\frac{\sigma_{\min}(\hat M)}{\lVert\hat \Sigma_\zeta\rVert_2}}\big)e^{- \frac{\sigma_{\min}(\hat M)}{\lVert \hat\Sigma_\zeta\rVert_2} t}-\frac{\lVert\gamma_0\rVert_F^2e^{-\beta t}}{\beta-\frac{\sigma_{\min}(\hat M)}{\lVert\hat\Sigma_\zeta\rVert_2}}\Bigg)\\
    &\leq \frac{\tilde c_1(b)}{\sigma_{\min}(\hat M)}\Bigg( \big(\lVert\hat\bC_0\rVert_F+\frac{\lVert\gamma_0\rVert_F^2}{\beta-\frac{\sigma_{\min}(\hat M)}{\tilde{c}_2(b)}}\big)e^{- \frac{\sigma_{\min}(\hat M)}{\tilde{c}_1(b)} t}-\frac{\lVert\gamma_0\rVert_F^2e^{-\beta t}}{\beta-\frac{\sigma_{\min}(\hat M)}{\tilde{c}_1(b)}}\Bigg).
\end{align*}


By using again the fact that $\Rc(b)$ is compact, there exists a constant $\tilde c_3(b)$ such that for all $\Theta=(\theta,\zeta)\in\Rc(b)$:
$$
\max\{\lVert Q+\theta\trans R\theta\rVert_F,\lVert \hat Q+\zeta\trans R\zeta\rVert_F\}\leq \tilde c_3(b).
$$

Combing back to \eqref{J-JT} and plugging the above estimates, we obtain for all $\Theta\in\Rc(b)$
\begin{align*}
    \lvert (J^{(T)}-J)(\Theta)\rvert&\leq \lVert Q+\theta\trans R\theta\rVert_F\int_T^\infty e^{-\beta t} \lVert\bC_t\rVert_F\d t+\lVert \hat Q+\zeta\trans R\zeta\rVert_F\int_T^\infty e^{-\beta t} \lVert\hat \bC_t\rVert_F\d t+\lvert \upsilon^{(T)}(\lambda)\rvert\\
    &\leq \tilde c_3(b) C_1(b)\int_T^\infty e^{-C_2(b)t}\d t+\tilde c_3(b)  C_3(b) \int_T^\infty e^{-C_4(b)t}\d t +O_\lambda(e^{-\beta T})\\
   & \leq \;  c_1(b)e^{-c_2(b)T}, 
\end{align*}
for some $c_1(b)>0$ and $c_2(b)>0$ depending only upon $b$ and $\lambda$.

Finally, the monotonicity of $c_1$ and $c_2$ with respect to $b$ is a consequence of the fact that if $b_1\leq b_2$ then $\Rc(b_1)\subset\Rc(b_2)$.
\ep

\vspace{3mm}

\noindent \textit{Proof of Proposition \ref{PropTrunc}.} 
    Noticing that $\P$-$a.s.$ $(\theta+U_i ,\zeta+V_i)\in \Rc(2b)$, from Lemma \ref{DiffJJT}, 
    $$
    \lvert (J^{(T)}-J)(\Theta_i)\rvert\leq c_1(2b)e^{-c_2(2b) T}, \quad \P\mbox{-}a.s. 
    $$ 
    \noindent and, recalling that $\lVert U_i\rVert_F=\lVert V_i\rVert=r$, $\P$-\emph{a.s.} it holds
    \begin{align*}
        \lVert (\hat\nabla_\theta -\hat\nabla_\theta^{(T)}) J(\Theta_i)\rVert_F&\leq \frac{d}{r^2}\frac{1}{\tilde N}\sum_{i=1}^{\tilde N}\lvert (J^{(T)}-J)(\Theta_i)\rvert \lVert U_i\rVert_F\leq \frac{d}{r}c_1(2b)e^{-c_2(2b) T},\\
         \lVert (\hat\nabla_\zeta -\hat\nabla_\zeta^{(T)}) J(\Theta_i)\rVert_F&\leq \frac{d}{r^2}\frac{1}{\tilde N}\sum_{i=1}^{\tilde N}\lvert (J^{(T)}-J)(\Theta_i)\rvert \lVert V_i\rVert_F \leq \frac{d}{r}c_1(2b)e^{-c_2(2b) T}.
    \end{align*}
\ep

\subsubsection{Proof of Proposition \ref{PropDiscre}}\label{proof:prop:PropDiscre}

We start with the following technical result related to the weak discretization error on the cost value function.
\begin{Lemma}\label{ErrJTDelta-JT}
    There exists a constant $c_3=c_3(b)>0$ (non-decreasing with respect to $b$) such that for all $\Theta\in\Rc(b)$, it holds 
    $$
    \lvert (J^{(T),\Delta}-J^{(T)})(\Theta)\rvert\leq c_3(b){h}=c_3(b)\frac{T}{n}.
    $$
\end{Lemma}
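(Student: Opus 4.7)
The plan is to exploit the quadratic structure of the running cost and the Gaussian form of the parametrized policy, so that both $J^{(T)}$ and $J^{(T),\Delta}$ reduce to weighted integrals or sums of second-moment matrices, and then to compare these matrix recurrences directly. Integrating out the Gaussian action in \eqref{TruncFC} against $p^\Theta(Y^\Theta_t,Z^\Theta_t,\cdot)$, and setting $\bC^\Theta_t:=\E[Y^\Theta_t(Y^\Theta_t)\trans]$, $\hat\bC^\Theta_t:=\E[Z^\Theta_t(Z^\Theta_t)\trans]$ as in the proof of Lemma \ref{DiffJJT}, one obtains
\[
J^{(T)}(\Theta)=\int_0^T e^{-\beta t}\Big[(Q+\theta\trans R\theta):\bC^\Theta_t+(\hat Q+\zeta\trans R\zeta):\hat\bC^\Theta_t\Big]\d t+\upsilon^{(T)}(\lambda).
\]
Similarly, introducing $Y^{\Theta,\Delta}_{t_l}:=\xb^{\Theta,\Delta}_{t_l}-\E_0[\xb^{\Theta,\Delta}_{t_l}]$, $Z^{\Theta,\Delta}_{t_l}:=\E_0[\xb^{\Theta,\Delta}_{t_l}]$, $\bC^{\Theta,\Delta}_{t_l}:=\E[Y^{\Theta,\Delta}_{t_l}(Y^{\Theta,\Delta}_{t_l})\trans]$ and its hat analogue, and integrating the quadratic terms of \eqref{DiscApproFC} against the law of the injected action noise $\xib_{t_l}$, one gets
\[
J^{(T),\Delta}(\Theta)=h\sum_{l=0}^{n-1} e^{-\beta t_l}\Big[(Q+\theta\trans R\theta):\bC^{\Theta,\Delta}_{t_l}+(\hat Q+\zeta\trans R\zeta):\hat\bC^{\Theta,\Delta}_{t_l}\Big]+\upsilon^{(T),\Delta}(\lambda).
\]
The entropy constants $\upsilon^{(T)}(\lambda)$ and $\upsilon^{(T),\Delta}(\lambda)$ differ only through a Riemann-sum error on $\int_0^T e^{-\beta t}\d t$, which is $O(h)$ uniformly in $T$ thanks to $\beta>0$.

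Next, I would split the main quadratic part, on the $\bC^\Theta$-side, as a Riemann-sum discretization error
\[
(\mathrm{I}):=\int_0^T e^{-\beta t}\bC^\Theta_t\d t-h\sum_{l=0}^{n-1}e^{-\beta t_l}\bC^\Theta_{t_l},
\]
plus a weak Euler error
\[
(\mathrm{II}):=h\sum_{l=0}^{n-1}e^{-\beta t_l}\big(\bC^\Theta_{t_l}-\bC^{\Theta,\Delta}_{t_l}\big),
\]
and treat $\hat\bC^\Theta$ analogously. Lemma \ref{LemmaB2} identifies $\bC^\Theta_t$ as the solution of a linear ODE with right-hand side uniformly bounded on $\Rc(b)$ (by compactness and continuity in $\Theta$), hence $t\mapsto e^{-\beta t}\bC^\Theta_t$ is $C_1(b)$-Lipschitz, and the classical Riemann-sum estimate combined with the $e^{-\beta t}$ decay yields $\lVert (\mathrm{I})\rVert_F\leq C_1(b)h$, uniformly in $T$. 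For $(\mathrm{II})$, I would derive the discrete Lyapunov-type recurrence for $\bC^{\Theta,\Delta}_{t_l}$ directly from \eqref{xDeltThet}, observing that the injected action noise $\xib_{t_l}$ contributes only an $O(h^2)$ term $D(\lambda/2)R^{-1}D\trans$ to the one-step covariance increment, and compare it to its continuous analogue \eqref{ALEForSigmaTheta}. Exponential stability of $B+D\theta-(\beta/2)I_d$, guaranteed by $\theta\in\Sc(b)$ and uniform over the compact set $\Sc(b)$, delivers $\sup_{l\leq n}\lVert\bC^\Theta_{t_l}-\bC^{\Theta,\Delta}_{t_l}\rVert_F\leq C_2(b)h$, so that $\lVert(\mathrm{II})\rVert_F\leq C_2(b)h/\beta$. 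Multiplying by the uniform bounds on $\lVert Q+\theta\trans R\theta\rVert_F,\lVert\hat Q+\zeta\trans R\zeta\rVert_F$ on $\Rc(b)$ (Proposition \ref{Bds}) and combining with the analogous $\hat\bC$-estimate produces the claimed $c_3(b)h$ bound. Monotonicity of $c_3(b)$ in $b$ follows from $\Rc(b_1)\subset\Rc(b_2)$ whenever $b_1\le b_2$.

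The main obstacle I expect is the uniform-in-$l$ bound $\sup_{l\le n}\lVert\bC^\Theta_{t_l}-\bC^{\Theta,\Delta}_{t_l}\rVert_F\leq C_2(b)h$ with a constant depending only on $b$ and the model parameters (not on $T$). The key is that both covariance sequences obey affine recurrences governed by the continuous semigroup $e^{t(B+D\theta-(\beta/2)I_d)}$ and the discrete semigroup $(I_d+h(B+D\theta-(\beta/2)I_d))^l$ respectively; the uniform spectral gap provided by $\Sc(b)$ (exploited exactly as in Lemma \ref{LemmaCited}) makes the iterated error geometrically summable, absorbing the $T$-dependence. A subsidiary technicality is to track how the $O(h^2)$ injected-noise covariance $h^2 D(\lambda/2)R^{-1}D\trans$ propagates: after being discounted by $e^{-\beta t_l}$ and summed, it contributes $O(h)$, consistent with the final bound.
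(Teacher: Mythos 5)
Your overall architecture is sound and in fact takes a genuinely different route from the paper: the paper introduces an auxiliary \emph{noiseless} discretized cost $\tilde J^{(T),\Delta}$ built on the averaged dynamics \eqref{tildxDeltThet}, bounds $\lvert J^{(T),\Delta}-\tilde J^{(T),\Delta}\rvert$ by an explicit computation (the cross term in $\xib$ vanishes by independence and zero mean, the quadratic term is $O(h)$), and then disposes of $\lvert \tilde J^{(T),\Delta}-J^{(T)}\rvert$ by citing standard weak-error results for the Euler scheme of a linear SDE. Your reduction to second-moment ODEs and recurrences replaces that citation with an explicit and self-contained comparison, and your identification of the $h^2 D(\lambda/2)R^{-1}D\trans$ contribution of the action noise matches the paper's term $\rm{A}_2$. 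The reduction of $J^{(T),\Delta}$ to a quadratic form in $\bC^{\Theta,\Delta}_{t_l}$ is exactly the paper's identity \eqref{DiscApproFC2}, and your treatment of the Riemann-sum piece $(\mathrm{I})$ is fine provided you use (as you hint) that $\int_0^\infty \lVert \frac{\d}{\d t}(e^{-\beta t}\bC^\Theta_t)\rVert_F\,\d t$ is finite uniformly on $\Rc(b)$, not merely a global Lipschitz constant (which would only give $O(Th)$).

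There is, however, one step that fails as stated: the claimed bound $\sup_{l\le n}\lVert\bC^\Theta_{t_l}-\bC^{\Theta,\Delta}_{t_l}\rVert_F\leq C_2(b)h$ with $C_2(b)$ independent of $T$. Membership in $\Sc(b)$ only makes $B+D\theta-\frac{\beta}{2}I_d$ stable, not $B+D\theta$ itself; the estimate in the proof of Lemma \ref{DiffJJT} gives $\lVert e^{(B+D\theta)t}\rVert_2^2\leq C(b)\,e^{(\beta-\sigma_{\min}(M)/\lVert\Sigma_\theta\rVert_2)t}$, whose exponent may be positive. Consequently both $\bC^\Theta_{t_l}$ and the propagated one-step errors can grow exponentially in $t_l$, and the undiscounted supremum over $l\le n$ grows with $T$; your subsequent bound $\lVert(\mathrm{II})\rVert_F\leq C_2(b)h/\beta$ therefore does not follow as written. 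The conclusion is still reachable, but only by keeping the discount inside the Gr\"onwall-type summation: writing $E_l=\bC^\Theta_{t_l}-\bC^{\Theta,\Delta}_{t_l}$ one has $E_{l+1}=(I_d+hA)E_l(I_d+hA)\trans+R_l$ with $\lVert R_l\rVert_F\leq Ch^2(1+\lVert\bC^\Theta_{t_l}\rVert_F)$, and the quantity that is uniformly controlled is $h\sum_{l=0}^{n-1}e^{-\beta t_l}\lVert E_l\rVert_F$: the factor $e^{-\beta t_l}$ cancels the $e^{\beta t}$ part of the growth of $\lVert(I_d+hA)^{k}\rVert_2^2$ and of $\lVert\bC^\Theta_{t_{l'}}\rVert_F$, leaving a geometrically decaying sum that is $O(h)$ uniformly in $T$. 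With that repair (and the analogous one for the injected-noise contribution, which is likewise not uniformly $O(h)$ before discounting), your argument goes through and yields the stated $c_3(b)h$ bound.
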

\begin{proof}

\emph{Step 1:} 
We introduce the two processes $(\yb^{\Theta,\Delta}_{t_l})_{0 \leq l \leq  n}$ and $(\zb^{\Theta,\Delta}_{t_l})_{0\leq l \leq n}$ defined by 
$$
\yb^{\Theta,\Delta}_{t_l}=\xb^{\Theta,\Delta}_{t_l}-\E_0[\xb^{\Theta,\Delta}_{t_l}] \mbox{ and } \zb^{\Theta,\Delta}_{t_l}=\E_0[\xb^{\Theta,\Delta}_{t_l}],
$$ 

\noindent with dynamics
\begin{equation}\label{yzDeltTheta}
    \begin{aligned}
            \yb^{\Theta,\Delta}_{t_{l+1}}&= \yb^{\Theta,\Delta}_{t_l}+((B+D\theta) \yb^{\Theta,\Delta}_{t_l}+D\sqrt{\frac{\lambda}{2}R^{-1}}\xib_{t_l}){h}+\sqrt{{h}}\gamma\bw_l, \quad \yb^{\Theta,\Delta}_0=X_0-\E_0[X_0],\\
 \zb^{\Theta,\Delta}_{t_{l+1}}&=\zb^{\Theta,\Delta}_{t_l}+(\hat B+D\zeta) \zb^{\Theta,\Delta}_{t_l}{h}+\sqrt{{h}}\gamma_0\bw^0_l, \quad  \zb^{\Theta,\Delta}_0=\E_0[X_0].
    \end{aligned}
\end{equation}

Recalling \eqref{DiscApproFC} together with the dynamics defined in \eqref{xDeltThet}, one has 
\begin{equation}\label{DiscApproFC2}
    \begin{aligned}
        J^{(T),\Delta}(\Theta)&={h}\E\Bigg[\sum_{l=0}^{n-1} e^{-\beta t_l}\Big((\yb^{\Theta,\Delta}_{t_l})\trans (Q+\theta\trans R\theta) \yb^{\Theta,\Delta}_{t_l}+(\zb^{\Theta,\Delta}_{t_l})\trans(\hat Q+\zeta\trans R\zeta)\zb^{\Theta,\Delta}_{t_l}+\beta\upsilon(\lambda)\Big)\Bigg].
    \end{aligned}
\end{equation}

We also define another auxiliary functional cost:
\begin{equation}\label{tildDiscApproFC}
    \begin{aligned}
        \tilde J^{(T),\Delta}(\Theta)&={h}\E\Bigg[\sum_{l=0}^{n-1} e^{-\beta t_l}\Big(( \tilde\xb^{\Theta,\Delta}_{t_l}-\E_0[ \tilde\xb^{\Theta,\Delta}_{t_l}])\trans Q ( \tilde\xb^{\Theta,\Delta}_{t_l}-\E_0[ \tilde\xb^{\Theta,\Delta}_{t_l}])+\E_0[ \tilde\xb^{\Theta,\Delta}_{t_l}]\trans\hat Q\E_0[ \tilde\xb^{\Theta,\Delta}_{t_l}]\\
        &+\int \big(a\trans R a+\lambda\log p^\Theta( \tilde\xb^{\Theta,\Delta}_{t_l}-\E_0[ \tilde\xb^{\Theta,\Delta}_{t_l}],\E_0[ \tilde\xb^{\Theta,\Delta}_{t_l}],a)\big)p^{\Theta}( \tilde\xb^{\Theta,\Delta}_{t_l}-\E_0[ \tilde\xb^{\Theta,\Delta}_{t_l}],\E_0[ \tilde\xb^{\Theta,\Delta}_{t_l}],a)\d a\Big)\Bigg],
    \end{aligned}
\end{equation}

\noindent where $( \tilde\xb^{\Theta,\Delta}_{t_l})_{0\leq l \leq n}$ is the time-discretization scheme of $(X^\Theta_t)_{t\in[0,T]}$, over the same time grid $\Delta$ as the interacting agents dynamics \eqref{XDeltThet}, with dynamics
\begin{equation}\label{tildxDeltThet}
\begin{cases} 
     \tilde\xb^{\Theta,\Delta}_{t_{l+1}}  & = \;  \tilde\xb^{\Theta,\Delta}_{t_l}+(B \tilde\xb^{\Theta,\Delta}_{t_l}+\bar B \E_0[ \tilde\xb^{\Theta,\Delta}_{t_l}]+D\int a\pi^\Theta(\d a| \tilde\xb^{\Theta,\Delta}_{t_l}-\E_0[ \tilde\xb^{\Theta,\Delta}_{t_l}],\E_0[ \tilde\xb^{\Theta,\Delta}_{t_l}])) {{h}} \\
    & \qquad \qquad + \; \sqrt{{h}} \gamma\bw_l+ \sqrt{{h}}\gamma_0\bw^0_l\; ; \\
      &=\tilde\xb^{\Theta,\Delta}_{t_l}+((B+D\theta)\tilde\xb^{\Theta,\Delta}_{t_l}+(\bar B-D\theta+D\zeta)\E_0[\tilde\xb^{\Theta,\Delta}_{t_l}]){h}+\sqrt{{h}}\gamma \bw_l+ \sqrt{{h}}\gamma_0\bw^0_l\\
   \tilde{\xb}^{\Theta,\Delta}_{0} & = \; X_0
\end{cases}     
\end{equation}
\noindent with $\Theta=(\theta,\zeta)\in \mathcal{R}(b)$. 



 Let us  also introduce the two processes 
$( \tilde\yb^{\Theta,\Delta}_{t_l})_{l=0,\dots,n}$ and $( \tilde\zb^{\Theta,\Delta}_{t_l})_{l=0,\dots,n}$ defined by
\begin{equation}\label{tildeyzDeltThet}
    \begin{aligned}
        \tilde \yb^{\Theta,\Delta}_{t_l}= \tilde\xb^{\Theta,\Delta}_{t_l}-\E_0[\tilde\tilde \tilde\xb^{\Theta,\Delta}_{t_l}], \quad \tilde\zb^{\Theta,\Delta}_{t_l}=\E_0[\tilde\zb^{\Theta,\Delta}_{t_l}],
    \end{aligned}
\end{equation}
with dynamics
\begin{align*}
     \tilde\yb^{\Theta,\Delta}_{t_{l+1}}&= \tilde\yb^{\Theta,\Delta}_{t_l}+(B+D\theta) \tilde\yb^{\Theta,\Delta}_{t_l}{h}+\sqrt{{h}}\gamma\bw_l, \quad \tilde\yb^{\Theta,\Delta}_0=X_0-\E_0[X_0],\\
\tilde\zb^{\Theta,\Delta}_{t_{l+1}}&=\tilde\zb^{\Theta,\Delta}_{t_l}+(\hat B+D\zeta)\tilde\zb^{\Theta,\Delta}_{t_l}{h}+\sqrt{{h}}\gamma_0\bw^0_l, \quad  \tilde\zb^{\Theta,\Delta}_0=\E_0[X_0].
\end{align*}
Note that $\tilde J^{(T),\Delta}$ in  \eqref{tildDiscApproFC}, can be written using the dynamics of $(\tilde\yb^{\Theta,\Delta}_{t_l})_{l=0,\cdots, n}$ and $( \tilde\zb^{\Theta,\Delta}_{t_l})_{l=0, \cdots, n}$. Namely, one has 
\begin{equation}\label{tildeDiscApproFC2}
    \begin{aligned}
     \tilde   J^{(T),\Delta}(\Theta)&= \; {h}\E\Bigg[\sum_{l=0}^{n-1} e^{-\beta t_l}\Big((\tilde\yb^{\Theta,\Delta}_{t_l})\trans Q \tilde\yb^{\Theta,\Delta}_{t_l}+(\tilde\zb^{\Theta,\Delta}_{t_l})\trans\hat Q\tilde\zb^{\Theta,\Delta}_{t_l}\\
        &\qquad + \; \int_\Ac (a\trans R a+\lambda\log p^\Theta(\tilde\yb^{\Theta,\Delta}_{t_l},\tilde\zb^{\Theta,\Delta}_{t_l},z))p^\Theta(\tilde\yb^{\Theta,\Delta}_{t_l},\tilde\zb^{\Theta,\Delta}_{t_l},z)\d z\Big)\Bigg]\\
        &=\; {h}\E\Bigg[\sum_{l=0}^{n-1} e^{-\beta t_l}\Big((\tilde\yb^{\Theta,\Delta}_{t_l})\trans (Q+\theta\trans R\theta) \tilde\yb^{\Theta,\Delta}_{t_l}+(\tilde\zb^{\Theta,\Delta}_{t_l})\trans(\hat Q+\zeta\trans R\zeta)\tilde\zb^{\Theta,\Delta}_{t_l}\\
        &\qquad + \; (   - \frac{\lambda m}{2} \log (\pi\lambda) + \frac{\lambda}{2} \log\big| \mathrm{det}(R) \big| )\Big)\Bigg]\\
        &=\; {h}\E\Bigg[\sum_{l=0}^{n-1} e^{-\beta t_l}\big((\tilde\yb^{\Theta,\Delta}_{t_l})\trans (Q+\theta\trans R\theta) \tilde\yb^{\Theta,\Delta}_{t_l}+(\zb^{\Theta,\Delta}_{t_l})\trans(\hat Q+\zeta\trans R\zeta)\tilde\zb^{\Theta,\Delta}_{t_l}\big)\Bigg]\\
        &\qquad + \; \beta\upsilon(\lambda){h}\sum_{l=0}^{n-1} e^{-\beta t_l}.\\
    \end{aligned}
\end{equation}

\noindent \emph{Step 2: } We first prove an upper-bound on $\lvert   (\tilde J^{(T),\Delta}-J^{(T),\Delta})(\Theta)\rvert$. From \eqref{yzDeltTheta}-\eqref{tildeyzDeltThet}, we get that $\tilde \zb^{\Theta,\Delta}_{t_l}=\zb^{\Theta,\Delta}_{t_l}$, for all $l=0,\dots n$ and 
$$\big( \yb^{\Theta,\Delta}_{t_{l+1}}-\tilde \yb^{\Theta,\Delta}_{t_{l+1}}\big)=( \yb^{\Theta,\Delta}_{t_{l}}-\tilde \yb^{\Theta,\Delta}_{t_{l}})+\big((B+D\theta)( \yb^{\Theta,\Delta}_{t_{l}}-\tilde \yb^{\Theta,\Delta}_{t_{l}})+D\sqrt{\frac{\lambda}{2}R^{-1}}\xi_{t_l}\big)h,\quad  \yb^{\Theta,\Delta}_{0}-\tilde \yb^{\Theta,\Delta}_{0}=0, 
$$
so that
$$\yb^{\Theta,\Delta}_{t_{l}}-\tilde \yb^{\Theta,\Delta}_{t_{l}}=\sum_{l'=0}^{l-1}(I_d+h(B+D\theta))^{l-1-l'}hD\sqrt{\frac{\lambda}{2}R^{-1}}\xib_{t_{l'}}.
$$
Therefore,  
\begin{align*}
   & \E\big[(\yb^{\Theta,\Delta}_{t_l})\trans (Q+\theta\trans R\theta) \yb^{\Theta,\Delta}_{t_l}-(\tilde\yb^{\Theta,\Delta}_{t_l})\trans (Q+\theta\trans R\theta) \tilde\yb^{\Theta,\Delta}_{t_l}\big] \\
   &=2\E\big[(\yb^{\Theta,\Delta}_{t_l}-\tilde \yb^{\Theta,\Delta}_{t_{l}})\trans (Q+\theta\trans R\theta) \tilde\yb^{\Theta,\Delta}_{t_l}\big]+ \E\big[(\yb^{\Theta,\Delta}_{t_l}-\tilde \yb^{\Theta,\Delta}_{t_{l}})\trans (Q+\theta\trans R\theta) (\yb^{\Theta,\Delta}_{t_l}-\tilde \yb^{\Theta,\Delta}_{t_{l}})\big]\\
   &=2\E\big[\big(\sum_{l'=0}^{l-1}(I_d+h(B+D\theta))^{l-1-l'}hD\sqrt{\frac{\lambda}{2}R^{-1}}\xib_{t_{l'}}\big)\trans (Q+\theta\trans R\theta) \tilde\yb^{\Theta,\Delta}_{t_l}\big]\\
   &+\E\big[\big(\sum_{l'=0}^{l-1}(I_d+h(B+D\theta))^{l-1-l'}hD\sqrt{\frac{\lambda}{2}R^{-1}}\xib_{t_{l'}}\big)\trans (Q+\theta\trans R\theta) \big(\sum_{l'=0}^{l-1}(I_d+h(B+D\theta))^{l-1-l'}hD\sqrt{\frac{\lambda}{2}R^{-1}}\xib_{t_{l'}}\big)\big] \\
   &=:  \; \rm{A}_1+ \rm{A}_2. 
\end{align*}
Since $(\xib_{t_l})_{0 \leq l \leq n}$ is independent of $(\tilde\yb^{\Theta,\Delta}_{t_l})_{0\leq l \leq n}$ which has zero-mean, we have $\rm{A}_1=0$. As for $\rm{A}_2$, also using the independence of $(\xib_{t_l})_{l=0,\dots,n}$, we have for any $S\in\S^m_{\geq 0}$,  
$\E[\xib_{t_i}\trans S\xib_{t_j}]=\delta_{i,j}\tr(S),i,j=0,\dots,n$, $\delta_{i,j}$ $=$ $1_{i\neq j}$, so that 
\begin{align*}
    \rm{A}_2&=\sum_{i=0}^{l-1}\sum_{j=0}^{l-1}\E \big[\big((I_d+h(B+D\theta))^{l-1-i}hD\sqrt{\frac{\lambda}{2}R^{-1}}\xib_{t_{i}}\big)\trans\\
    &(Q+\theta\trans R\theta) \big((I_d+h(B+D\theta))^{l-1-j}hD\sqrt{\frac{\lambda}{2}R^{-1}}\xib_{t_{j}}\big)\big]\\
    &=h^2\sum_{l'=0}^{l-1}\tr\big(((I_d+h(B+D\theta))^{l-1-l'}D\sqrt{\frac{\lambda}{2}R^{-1}})\trans (Q+\theta\trans R\theta) ((I_d+h(B+D\theta))^{l-1-l'}D\sqrt{\frac{\lambda}{2}R^{-1}})\big).
\end{align*}

 Consequently, there exists a constant $C_1>0$ depending only upon $T$, $Q$, $D$, $\lambda$, $R$ and $\Theta$ (with at most of polynomial growth in $\|\Theta\|$) such that $\lvert \rm{A}_2\rvert\leq\frac{C_1}{n}$ for all $l=0, \cdots, n$. Hence, up to a modification of $C_1$, for all $\Theta \in \mathcal{R}(b)$, it holds
\begin{equation}\label{ErrDiscreBound1}
    \begin{aligned}
           \lvert   (\tilde   J^{(T),\Delta}-J^{(T),\Delta})(\Theta)\rvert&=\Bigg| h\E\Bigg[\sum_{l=0}^{n-1} e^{-\beta t_l}\big((\yb^{\Theta,\Delta}_{t_l})\trans (Q+\theta\trans R\theta) \yb^{\Theta,\Delta}_{t_l}-(\tilde\yb^{\Theta,\Delta}_{t_l})\trans (Q+\theta\trans R\theta) \tilde\yb^{\Theta,\Delta}_{t_l}\big)\Bigg] \Bigg|\\
   &=\Bigg| h\sum_{l=0}^{n-1} e^{-\beta t_l}\E\big[(\yb^{\Theta,\Delta}_{t_l})\trans (Q+\theta\trans R\theta) \yb^{\Theta,\Delta}_{t_l}-(\tilde\yb^{\Theta,\Delta}_{t_l})\trans (Q+\theta\trans R\theta) \tilde\yb^{\Theta,\Delta}_{t_l}\big] \Bigg|\\
   &\leq h\sum_{l=0}^{n-1} \frac{C_1(\theta)}{n}\\
   & \leq \frac{C_1(\theta)}{n}.
    \end{aligned}
\end{equation}

\noindent \emph{Step 3: }
We now establish an upper-bound for $\lvert   (J^{(T)}-\tilde J^{(T),\Delta})(\Theta)\rvert$. Let us recall that
\begin{align*}
    J^{(T)}(\Theta)&=\E\Big[ \int_0^T e^{-\beta t} \big(  ( Y^\Theta_t)\trans (Q+\theta\trans R\theta)Y^\Theta_t+( Z^\Theta_t)\trans (\hat Q+\zeta\trans R\zeta) Z^\Theta_t\big)\d t\Big]+\beta\upsilon(\lambda)\int_0^T e^{-\beta t}\d t.
\end{align*}

Note carefully that $(\tilde\yb^{\Theta,\Delta}_{t_l})_{0 \leq l \leq n}$ (resp. 
$( \tilde\zb^{\Theta,\Delta}_{t_l})_{0 \leq l \leq n}$) is the time discretized version of the process $( Y^\Theta_t)_{t\in[0,T]}$ (resp. $( Z^\Theta_t)_{t\in[0,T]}$) with linear drift and constant diffusion coefficients. Hence, it follows from standard results on the weak approximation error (see e.g. \cite{JMLR:v7:munos06b},\cite{NumericalSolu}) that
\begin{equation}\label{ErrDiscreBound2}
\begin{aligned}
& \Bigg| \E\Big[ \int_0^T e^{-\beta t} \big(  ( Y^\Theta_t)\trans (Q+\theta\trans R\theta) Y^\Theta_t+( Z^\Theta_t)\trans (\hat Q+\zeta\trans R\zeta) Z^\Theta_t\big)\d t\Big] \\ 
& - \E\Big[\sum_{l=0}^{n-1} e^{-\beta t_l} h\big((\tilde\yb^{\Theta,\Delta}_{t_l})\trans (Q+\theta\trans R\theta) \tilde\yb^{\Theta,\Delta}_{t_l}+(\tilde\zb^{\Theta,\Delta}_{t_l})\trans(\hat Q+\zeta\trans R\zeta)\tilde\zb^{\Theta,\Delta}_{t_l}\big)\Big] \Bigg|\leq c_3(\Theta) h, 
\end{aligned}
\end{equation}
for some $\Theta \mapsto c_3(\Theta)$ with at most polynomial growth.
Moreover, standard results on Riemann integrals give $\lvert {h}\sum_{l=0}^{n-1} e^{-\beta t_l}-\int_0^T e^{-\beta t}\d t\rvert\leq C{h}$. 

Combining the two previous bounds with \eqref{ErrDiscreBound1}, we conclude that there exists $c_3=c_3(\Theta)>0$ (with at most polynomial growth in $\Theta$) such that 
$$
\lvert  (J^{(T),\Delta}- J^{(T)})(\Theta)\rvert\leq \lvert  (J^{(T),\Delta}-\tilde J^{(T),\Delta})(\Theta)\rvert+\lvert  (\tilde J^{(T),\Delta}- J^{(T)})(\Theta)\rvert\leq c_3{h}.
$$
Since $\Rc(b)$ is compact, $c_3$ may be considered to depend on $\Theta$ only through $b$. 
\end{proof}
\vspace{3mm}

\noindent \textit{Proof of Proposition \ref{PropDiscre}}
    Recalling that $\P$-a.s. $(\theta+U_i ,\zeta+V_i)\in\Rc(2b)$, it follows from Lemma \ref{ErrJTDelta-JT} and the fact that $\lVert U_i\rVert_F=\lVert V_i\rVert=r$ that $\mathbb{P}$-a.s.
    \begin{align*}
       \lVert (\hat\nabla^{(T)}_\theta -\hat\nabla_\theta^{(T),\Delta}) J(\Theta)\rVert_F&\leq \frac{d}{r^2}\frac{1}{\tilde N}\sum_{i=1}^{\tilde N}\lvert (J^{(T)}-J^{(T),\Delta})(\Theta_i)\rvert\lVert U_i\rVert_F \leq \frac{d}{r}c_3(2b)\frac{T}{n}, \\
          \lVert (\hat\nabla^{(T)}_\zeta -\hat\nabla_\zeta^{(T),\Delta}) J(\Theta)\rVert_F&\leq \frac{d}{r^2}\frac{1}{\tilde N}\sum_{i=1}^{\tilde N}\lvert (J^{(T)}-J^{(T),\Delta})(\Theta_i)\rvert\lVert V_i\rVert_F \leq \frac{d}{r}c_3(2b)\frac{T}{n}.
    \end{align*}
\ep

\subsubsection{Proof of Proposition \ref{PropMCError}} \label{secMcerror} 


Taking the empirical average over the particles in both sides of \eqref{XDeltThet2}, we deduce that the dynamics of the process $(\hat\mu^{\Theta,\Delta,N}_{t_l}:=\frac{1}{N}\sum_{j=1}^N X^{\Theta,\Delta,(j)}_{t_l})_{0\leq l \leq N}$ is given by
\begin{equation}\label{Dymhatmu}
\begin{aligned}    \hat\mu^{\Theta,\Delta,N}_{t_{l+1}}&=\hat\mu^{\Theta,\Delta,N}_{t_l}+((\hat B+D\zeta)\hat\mu^{\Theta,\Delta,N}_{t_l}+D\sqrt{\frac{\lambda}{2}R^{-1}}\bar{\xib}^N_{t_l}){h}+ \sqrt{{h}} \gamma\bar\bw^{N}_l+\sqrt{{h}}\gamma_0\bw^0_l, \\
    \hat\mu^{\Theta,\Delta,N}_{0}&=\hat\mu^N_0:=\frac{1}{N}\sum_{j=1}^N X^{(j)}_0, 
\end{aligned}
\end{equation}

\noindent where $\bar{\xib}^N_{t_l}=\frac{1}{N}\sum_{j=1}^N \xib^{(j)}_{t_l}$ and $\bar\bw^N_l=\frac{1}{N}\sum_{j=1}^N \bw^{(j)}_l$. Then, let us introduce the auxiliary process $(Y^{\Theta,\Delta,(j)}_{t_l})$ defined by 
$$
Y^{\Theta,\Delta,(j)}_{t_l}=X^{\Theta,\Delta,(j)}_{t_l}-\hat{\mu}^{\Theta,\Delta,N}_{t_l}.
$$ 
From \eqref{XDeltThet2} and \eqref{Dymhatmu}, we get
\begin{equation}\label{DymYthetaDelta}
\begin{aligned}
    Y^{\Theta,\Delta,(j)}_{t_{l+1}}&=Y^{\Theta,\Delta,(j)}_{t_l}+\big((B+D\theta)Y^{\Theta,\Delta,(j)}_{t_l}+D\sqrt{\frac{\lambda}{2}R^{-1}}(\xib^{(j)}_{t_l}-\bar{\xib}^N_{t_l})\big){h}+\sqrt{{h}}\gamma(\bw^{(j)}_l- \bar\bw^N_l),\\
     Y^{\Theta,\Delta,(j)}_{0}&=Y^{(j)}_0:=X^{(j)}_0-\hat\mu^N_0.
\end{aligned}
\end{equation}

Note that $(Y^{\Theta, \Delta, (j)})_{1\leq j \leq N}$ (resp.  $\hat \mu^{\Theta,\Delta,N}$) depends on $\Theta$ only through $\theta$ (resp. $\zeta$). In order to simplify the notation, from now on, we will write $(Y^{\theta,\Delta,(j)})_{1\leq j\leq N}$ and $\hat \mu^{\zeta,\Delta,N}$. 

The average cost \eqref{EstimpopDef} can be decomposed as follows
\begin{equation}\label{Estimpop2}
\begin{aligned}
    \Jc^{\Delta,N}_{pop}(\Theta):&=\frac{{h}}{N}\sum_{j=1}^N \Bigg(\sum_{l=0}^{n-1} e^{-\beta t_l} \big((X^{\Theta,\Delta,(j)}_{t_l}-\hat\mu^{\Theta,\Delta,N}_{t_l})\trans (Q+\theta\trans R\theta)(X^{\Theta,\Delta,(j)}_{t_l}-\hat\mu^{\Theta,\Delta,N}_{t_l})\\
    &+(\hat\mu^{\Theta,\Delta,N}_{t_l})\trans (\hat Q+\zeta\trans R\zeta)\hat\mu^{\Theta,\Delta,N}_{t_l}+2(X^{\Theta,\Delta,(j)}_{t_l}-\hat\mu^{\Theta,\Delta,N}_{t_l})\trans\theta\trans R\zeta \hat\mu^{\Theta,\Delta,N}_{t_l}\\
    &+2(\theta(X^{\Theta,\Delta,(j)}_{t_{l}}-\hat\mu^{\Theta,\Delta,N}_{t_l})+\zeta\hat\mu^{\Theta,\Delta,N}_{t_l})\trans R\sqrt{\frac{\lambda}{2}R^{-1}}\xib^{(j)}_{t_l}+\big( \sqrt{\frac{\lambda}{2}R^{-1}}\xib^{(j)}_{t_l}\big)\trans R\sqrt{\frac{\lambda}{2}R^{-1}}\xib^{(j)}_{t_l}\\
        &- \big( \sqrt{\frac{\lambda}{2}R^{-1}}\xib^{(j)}_{t_l}\big)\trans R\sqrt{\frac{\lambda}{2}R^{-1}}\xib^{(j)}_{t_l}-\frac{\lambda}{2}\log(\frac{(\pi\lambda)^m}{\det (R)})\big)\Bigg)\\
        &=\frac{{h}}{N}\sum_{j=1}^N \Bigg(\sum_{l=0}^{n-1} e^{-\beta t_l} \big((Y^{\theta,\Delta,(j)}_{t_l})\trans (Q+\theta\trans R\theta)Y^{\theta,\Delta,(j)}_{t_l}+(\hat\mu^{\zeta,\Delta,N}_{t_l})\trans (\hat Q+\zeta\trans R\zeta)\hat\mu^{\zeta,\Delta,N}_{t_l}\\
        &+2(\theta Y^{\theta,\Delta,(j)}_{t_{l}}+\zeta\hat\mu^{\zeta,\Delta,N}_{t_l})\trans R\sqrt{\frac{\lambda}{2}R^{-1}}\xib^{(j)}_{t_l}+\beta\upsilon(\lambda)\big)\Bigg)\\
        &= \Jc_1(\theta)+\Jc_2(\zeta)+\beta\upsilon(\lambda){h}\sum_{l=0}^{n-1} e^{-\beta t_l}, 
\end{aligned}
\end{equation}
where
\begin{equation}\label{Estimpop2Auxi}
\begin{aligned}
    \Jc_1(\theta) &:=\frac{{h}}{N}\sum_{j=1}^N \sum_{l=0}^{n-1} e^{-\beta t_l} \big((Y^{\theta,\Delta,(j)}_{t_l})\trans (Q+\theta\trans R\theta)Y^{\theta,\Delta,(j)}_{t_l}+2(Y^{\theta,\Delta,(j)}_{t_{l}})\trans \theta\trans R \sqrt{\frac{\lambda}{2}R^{-1}} \xib^{(j)}_{t_l}\big),\\
    \Jc_2(\zeta) &:={h} \sum_{l=0}^{n-1}  e^{-\beta t_l}\big((\hat\mu^{\zeta,\Delta,N}_{t_l})\trans (\hat Q+\zeta\trans R\zeta)\hat\mu^{\zeta,\Delta,N}_{t_l}+2(\hat\mu^{\zeta,\Delta,N}_{t_l})\trans\zeta\trans R \sqrt{\frac{\lambda}{2}R^{-1}} \bar\xib^{N}_{t_l}\big).  
\end{aligned}
\end{equation}

\vspace{1mm}

\noindent \emph{Step 1.} Recall that, for $\Theta=(\theta, \zeta)$, Algorithm \ref{Gradientestim} gives
\begin{align*}
    \tilde\nabla^{\Delta,N,pop}_{\theta} J(\Theta)&=\frac{d}{r^2}\frac{1}{\tilde N}\sum_{i=1}^{\tilde N}\Jc^{\Delta,N,i}_{pop} U_i=\frac{d}{r^2}\frac{1}{\tilde N}\sum_{i=1}^{\tilde N}\Jc^{\Delta,N,i}_{pop} U_i,\\
     \tilde\nabla^{\Delta,N,pop}_{\zeta} J(\Theta)&=\frac{d}{r^2}\frac{1}{\tilde N}\sum_{i=1}^{\tilde N}\Jc^{\Delta, N, i}_{pop} V_i=\frac{d}{r^2}\frac{1}{\tilde N}\sum_{i=1}^{\tilde N}\Jc^{\Delta,N, i}_{pop} V_i,
\end{align*}

\noindent and observe that $\Jc^{\Delta,N,i}_{pop}$ can be decomposed as follows  
$$
\Jc^{\Delta,N,i}_{pop}=\Jc^{\Delta,N,i}_{pop,1}+\Jc^{\Delta,N,i}_{pop,2} +\beta\upsilon(\lambda){h}\sum_{l=1}^n e^{-\beta t_l}
$$ 

\noindent with
\begin{equation}\label{Estimpop12Aux}
    \begin{aligned}
     \Jc^{\Delta,N,i}_{pop,1} & := \frac{{h}}{N}\sum_{j=1}^N \sum_{l=0}^{n} e^{-\beta t_l} \big((Y^{\theta_i,\Delta,(j)}_{t_l})\trans (Q+\theta_i\trans R\theta_i)Y^{\theta_i,\Delta,(j)}_{t_l}+2(Y^{\theta_i,\Delta,(j)}_{t_{l}})\trans \theta_i\trans R \sqrt{\frac{\lambda}{2}R^{-1}} \xib^{i,(j)}_{t_l}\big),\\
     \Jc^{\Delta,N,i}_{pop,2} & := {h} \sum_{l=0}^{n-1}  e^{-\beta t_l}\big((\hat\mu^{\zeta_i,\Delta,N}_{t_l})\trans (\hat Q+\zeta_i\trans R\zeta_i)\hat\mu^{\zeta_i,\Delta,N}_{t_l}+2(\hat\mu^{\zeta_i,\Delta,N}_{t_l})\trans\zeta\trans R \sqrt{\frac{\lambda}{2}R^{-1}} \bar\xib^{i,N}_{t_l}\big),
    \end{aligned}
\end{equation}

 \noindent where $(\theta_i,\zeta_i)=(\theta+U_i,\zeta+V_i)$ and the two processes $(Y^{\theta_i,\Delta,(j)}_{t_l})_{0\leq l \leq n}$, $(\hat{\mu}^{\zeta_i,\Delta,N}_{t_l})_{0\leq l \leq n}$ satisfy
\begin{equation}\label{dynamics:y:thetai:delta:j}
\begin{aligned}
    Y^{\theta_i,\Delta,(j)}_{t_{l+1}}&=Y^{\theta_i,\Delta,(j)}_{t_l}+\big((B+D\theta_i)Y^{\theta_i,\Delta,(j)}_{t_l}+D\sqrt{\frac{\lambda}{2}R^{-1}}(\xib^{i,(j)}_{t_l}-\bar{\xib}^{i,N}_{t_l})\big){h}+\sqrt{{h}}\gamma(\bw^{i,(j)}_l- \bar\bw^{i,N}_l),\\
     Y^{\theta_i,\Delta,(j)}_{0}&=Y^{i,(j)}_0:=X^{i,(j)}_0-\hat\mu^{i,N}_0,\\
\hat\mu^{\zeta_i,\Delta,N}_{t_{l+1}}&=\hat\mu^{\zeta_i,\Delta,N}_{t_l}+((\hat B+D\zeta_i)\hat\mu^{\zeta_i,\Delta,N}_{t_l}+D\sqrt{\frac{\lambda}{2}R^{-1}}\bar{\xib}^{i,N}_{t_l}){h}+ \sqrt{{h}}\gamma \bar\bw^{i,N}_l+\sqrt{{h}}\gamma_0\bw^{i,0}_l, \\
   \hat\mu^{\zeta_i,\Delta,N}_{0}&=\hat\mu^{i,N}_0:=\frac{1}{N}\sum_{j=1}^N X^{i,(j)}_0.
\end{aligned}
\end{equation}
Here, $\left\{(\xib^{i,(j)},\bw^{i,(j)}, X_0^{i,(j)})_{1\leq j \leq N}, \bw^{i,0} ; i= 1, \dots, \tilde N \right\}$ are \emph{i.i.d.} copies of $(\xib^{(j)}, \bw^{(j)}, X_0^{(j)})_{1\leq j \leq N}$,  $\bw^0$ and $\bar\xib^{i,N}=\frac{1}{N}\sum_{j=1}^{N} \xib^{j} $, $\bar{\bw}^{i, N} = \frac{1}{N}\sum_{j=1}^{N} \bw^{j}$.

\vspace{5mm}

\noindent \emph{Step 2:}
We adapt the arguments of  Lemma 35 and Lemma 44 in \cite{carlautan19a}.  
We introduce the sub-exponential norm $\lVert\cdot \rVert_{\psi_1}$ and sub-Gaussian norm $\lVert\cdot \rVert_{\psi_2}$ of the random vector $X$:
$$
\lVert X\rVert_{\psi_1}=\inf\Bigg\{t>0,\E\Bigg[\exp\big(\frac{\lvert X\rvert}{t}\big)\Bigg]\leq 2\Bigg\}\;;\;\lVert X\rVert_{\psi_2}=\inf\Bigg\{t>0,\E\Bigg[\exp\big(\frac{\lvert X\rvert^2}{t^2}\big)\Bigg]\leq 2\Bigg\}.
$$
Recalling \eqref{grad:hat:pop:expression}, one has 
\begin{align*}
    &\lVert  (\tilde\nabla^{\Delta,N,pop}_{\theta} -\hat\nabla^{\Delta,N,pop}_{\theta}) J(\Theta)\rVert_F\\&=\lVert\frac{d}{r^2}\frac{1}{\tilde N}\sum_{i=1}^{\tilde N}\big( \Jc^{\Delta, N, i}_{pop} -\E[\Jc^{\Delta, N, i}_{pop}|U_i,V_i]\big) U_i\rVert_F\\
    &\leq \lVert \frac{d}{r^2}\frac{1}{\tilde N}\sum_{i=1}^{\tilde N}\big( \Jc^{\Delta, N, i}_{pop,1} -\E[\Jc^{\Delta, N, i}_{pop,1}|U_i]\big) U_i\rVert_F+\lVert\frac{d}{r^2}\frac{1}{\tilde N}\sum_{i=1}^{\tilde N}\big( \Jc^{\Delta, N, i}_{pop,2} -\E[\Jc^{\Delta, N, i}_{pop,2}|U_i]\big) U_i\rVert_F, 
\end{align*}
and 
\begin{align*}
    &\lVert  (\tilde\nabla^{\Delta,N,pop}_{\zeta} -\hat\nabla^{\Delta,N,pop}_{\zeta}) J(\Theta)\rVert_F\\&=\lVert\frac{d}{r^2}\frac{1}{\tilde N}\sum_{i=1}^{\tilde N}\big( \Jc^{\Delta, N, i}_{pop} -\E[\Jc^{\Delta, N, i}_{pop}|U_i,V_i]\big) V_i\rVert_F\\
    &\leq \lVert\frac{d}{r^2}\frac{1}{\tilde N}\sum_{i=1}^{\tilde N}\big( \Jc^{\Delta, N, i}_{pop,1} -\E[\Jc^{\Delta, N, i}_{pop,1}|V_i]\big) V_i\rVert_F+\lVert\frac{d}{r^2}\frac{1}{\tilde N}\sum_{i=1}^{\tilde N}\big( \Jc^{\Delta, N, i}_{pop,2} -\E[\Jc^{\Delta, N, i}_{pop,2}|V_i]\big) V_i\rVert_F.
\end{align*}
We now provide some upper-estimate on the probabilities $\mathbb{P}(\lVert \frac{d}{r^2}\frac{1}{\tilde N}\sum_{i=1}^{\tilde N}\big( \Jc^{\Delta, N, i}_{pop,\ell} -\E[\Jc^{\Delta, N, i}_{pop,\ell}|U_i]\big) U_i\rVert_F\geq \varepsilon)$ and $\mathbb{P}(\lVert \frac{d}{r^2}\frac{1}{\tilde N}\sum_{i=1}^{\tilde N}\big( \Jc^{\Delta, N, i}_{pop,\ell} -\E[\Jc^{\Delta, N, i}_{pop, \ell}|V_i]\big) V_i\rVert_F\geq \varepsilon)$ for $\ell=1,2$.

We only deal with $\mathbb{P}(\lVert \frac{d}{r^2}\frac{1}{\tilde N}\sum_{i=1}^{\tilde N}\big( \Jc^{\Delta, N, i}_{pop, 1} -\E[\Jc^{\Delta, N, i}_{pop, 1}|U_i]\big) U_i\rVert_F\geq \varepsilon)$ inasmuch the proof for the other terms are similar.

For $i=1,\dots,\tilde N$, $j=1,\dots,N$ and $l=0,\dots,n-1$, we let 
\begin{align*}
    \eta^{i,j}_l&=(Y^{\theta_i,\Delta,(j)}_{t_l})\trans (Q+\theta_i\trans R\theta_i)Y^{\theta_i,\Delta,(j)}_{t_l}+2(Y^{\theta_i,\Delta,(j)}_{t_{l}})\trans \theta_i\trans R \sqrt{\frac{\lambda}{2}R^{-1}} \xib^{i,(j)}_{t_l}\\
    &-\E[(Y^{\theta_i,\Delta,(j)}_{t_l})\trans (Q+\theta_i\trans R\theta_i)Y^{\theta_i,\Delta,(j)}_{t_l}+2(Y^{\theta_i,\Delta,(j)}_{t_{l}})\trans \theta_i\trans R \sqrt{\frac{\lambda}{2}R^{-1}} \xib^{i,(j)}_{t_l}|U_i] \\
    &=(Y^{\theta_i,\Delta,(j)}_{t_l})\trans (Q+\theta_i\trans R\theta_i)Y^{\theta_i,\Delta,(j)}_{t_l}+2(Y^{\theta_i,\Delta,(j)}_{t_{l}})\trans \theta_i\trans R \sqrt{\frac{\lambda}{2}R^{-1}} \xib^{i,(j)}_{t_l}\\
    &-\E[(Y^{\theta_i,\Delta,(j)}_{t_l})\trans (Q+\theta_i\trans R\theta_i)Y^{\theta_i,\Delta,(j)}_{t_l}|U_i]. 
\end{align*}
\noindent where the second equality come from the fact that, conditionally on $U_i$, the two random variables $Y_{t_l}^{\theta_i, \Delta, (j)}$ and $\xib^{i,(j)}_{t_l}$ are independent and that $\mathbb{E}[Y_{t_l}^{\theta_i, \Delta, (j)}|U_i] = \mathbb{E}[Y_{t_l}^{\theta, \Delta, (j)}]_{|\theta=\theta_i}=0$.
From \eqref{Estimpop12Aux} and the definition of $\eta^{i,j}_l$, we get 
\begin{align} 
\Jc^{\Delta, N, i}_{pop,1}-\E[\Jc^{\Delta, N, i}_{pop,1}|U_i] & = \; \frac{h}{N}\sum_{j=1}^N\sum_{l=0}^{n-1} e^{-\beta t_l}\eta^{i,j}_{l}
\end{align} 
so that
\begin{align*}
     & \lVert\frac{d}{r^2}\frac{1}{\tilde N} \sum_{i=1}^{\tilde N}\big(\Jc^{\Delta, N, i}_{pop,1}-\E[\Jc^{\Delta, N, i}_{pop,1}|U_i]\big) U_i\rVert_F \\
    & =  \; \lVert\frac{d}{r^2}\frac{1}{\tilde N}\sum_{i=1}^{\tilde N}\big( \frac{h}{N}\sum_{j=1}^N\sum_{l=0}^{n-1} e^{-\beta t_l}\eta^{i,j}_{l}\big) U_i\rVert_F\\
   & =  \; \frac{hd}{r^2}\lVert\big( \frac{1}{N}\sum_{j=1}^N\sum_{l=0}^{n-1}  \frac{e^{-\beta t_l}}{\tilde N}\sum_{i=1}^{\tilde N} \eta^{i,j}_{l}\big) U_i\rVert_F
   \; \\
   & \leq  \;  \frac{Td}{r^2}\sup_{1 \leq j\leq N, 0\leq l \leq n-1} \lVert \frac{e^{-\beta t_l}}{\tilde N}\sum_{i=1}^{\tilde N} \eta^{i,j}_l U_i\rVert_F.
\end{align*}

 \noindent Hence, noting that the random variables $(\eta^{i,j})_{1\leq j \leq N}$ have the same law and letting $\delta=\frac{r^2\varepsilon}{2Td}$, the previous estimate yields 
 \begin{equation}\label{BerInequalityAppliedForProbaeta}
     \begin{aligned}
              &\P\Bigg(  \lVert\frac{d}{r^2}\frac{1}{\tilde N}\sum_{i=1}^{\tilde N}\big( \Jc^{\Delta, N, i}_{pop,1}-\E[\Jc^{\Delta, N, i}_{pop,1}|U_i]\big) U_i\rVert_F\geq\frac{\varepsilon}{2}\Bigg)\\
     & \leq  \;  \P\Bigg( \frac{Td}{r^2}\sup_{j=1,\dots,N}\sup_{l=0,\dots,n-1} \lVert \frac{e^{-\beta t_l}}{\tilde N}\sum_{i=1}^{\tilde N} \eta^{i,j}_l U_i\rVert_F\geq \frac{\varepsilon}{2}\Bigg)\\
     & =  \; \P\Bigg( \sup_{j=1,\dots,N}\sup_{l=0,\dots,n-1} \lVert \frac{e^{-\beta t_l}}{\tilde N}\sum_{i=1}^{\tilde N} \eta^{i,j}_l U_i\rVert_F\geq \delta\Bigg)\\
     & \leq  \; \sum_{j=1}^N\sum_{l=0}^{n-1} \P\Bigg( \lVert \frac{e^{-\beta t_l}}{\tilde N}\sum_{i=1}^{\tilde N} \eta^{i,j}_l U_i\rVert_F\geq \delta\Bigg) \\
     & =  \; N \sum_{l=0}^{n-1}\P\Bigg( \lVert \frac{1}{\tilde N}\sum_{i=1}^{\tilde N} (e^{-\beta t_l}\eta^{i,1}_l) U_i\rVert_F\geq \delta\Bigg).
     \end{aligned}
 \end{equation}

 Now, it easily follows from standard computations based on the dynamics \eqref{dynamics:y:thetai:delta:j} of $Y^{\theta_i, \Delta, (1)}$ and since the  $\theta_i$ are bounded that the random variables $Y^{\theta_i, \Delta, (1)}_{t_l}$ have finite sub-Gaussian norm $\|Y^{\theta_i, \Delta, (1)}_{t_l}\|_{\Psi_2}$ and since $\|\xib^{i,(j)}_{t_l}\|_{\Psi_2}<\infty$ we deduce that $\|\eta_l^{i, 1}\|_{\Psi_1}<\infty$ uniformly in $(l,i)$. By Lemma 37 in \cite{carlautan19a}, we get
\begin{align} \label{BerInequalityApplied}
   \P\Bigg(  \lVert \frac{1}{\tilde N}\sum_{i=1}^{\tilde N} \eta^{i,1}_l U_i\rVert_F\geq \delta\Bigg)\leq 2d\exp\big(-\frac{\tilde N\delta^2}{2(\chi_l^2+\chi_l\delta)}\big), 
\end{align}
 where $\chi_l:=C_1r\sup_{\theta'\in\B(\theta,r)}\lVert\eta^{\theta',1}_l\rVert_{\psi_1}$ and $C_1$ is a universal constant\footnote{Referring to \cite{carlautan19a}, one would use the standard operator norm $\lVert\cdot\rVert$ in \eqref{BerInequalityApplied}. However, due to the equivalence of the matrix norms, we can replace $\lVert\cdot\rVert$ by the Frobenius norm $\lVert\cdot\rVert_F$, up to a modification of the constant $C_1>0$ which may depend on the dimensions $m$ and $d$.}. Hence, combining \eqref{BerInequalityAppliedForProbaeta} and \eqref{BerInequalityApplied}, we obtain   
\begin{align}\label{BerInequalityCombined}
\P\Bigg(  \lVert\frac{d}{r^2}\frac{1}{\tilde N}\sum_{i=1}^{\tilde N}\big( \Jc^{\Delta, N, i}_{pop,1}-\E[\Jc^{\Delta, N, i}_{pop,1}|U_i]\big) U_i\rVert_F\geq\frac{\varepsilon}{2}\Bigg) \leq 2dNn\exp\Big(-\frac{\tilde N\delta^2}{2(\check\chi^2+\check\chi\delta)}\Big) 
\end{align}
 
\noindent with $\check\chi=\sup_{0\leq l\leq n-1}\big(e^{-\beta t_l}\chi_l\big)$. 

Then, from Lemma \ref{AuxLemmaPropSampleError} (see below), taking $r\leq \min\{\check{r}(b),\inf_{\Theta\in\Rc(b)}\frac{1}{\h_1(\Theta)}\}$ (recalling that $\Rc(b)\subseteq\Sc(b)\times\hat\Sc(b)$ is compact and that according to Proposition \ref{Bds}, $\lVert \theta\rVert_F\leq\Bd_\theta(b)$) and letting
$$
\hat\chi(b)= C\check r(b) (1 + \Bd_\theta(b)^2 + \check{r}(b)^2)),
$$

\noindent one has $\check\chi\leq \hat\chi(b)$ uniformly for all $\Theta\in\Rc(b)$.

The previous bound on $\check\chi$ together with \eqref{BerInequalityCombined} implies that if 
$$
\tilde N\geq \h^{(1,1)}_{\tilde N}(r,b):=2\frac{(\hat\chi(b)^2+\hat\chi(b)\delta)}{\delta^2}\Big((d+1)\log(\frac{d}{\varepsilon})+\log N+\log n+\log(4\varepsilon)\Big),
$$

\noindent then, for all $\Theta\in\Rc(b)$, one has
$$
\P\Bigg(  \lVert\frac{d}{r^2}\frac{1}{\tilde N}\sum_{i=1}^{\tilde N}\big( \Jc^{\Delta, N, i}_{pop,1}-\E[\Jc^{\Delta, N, i}_{pop,1}|U_i]\big) U_i\rVert_F\geq\frac{\varepsilon}{2}\Bigg)\leq\frac{1}{2}\Bigg(\frac{d}{\varepsilon}\Bigg)^{-d}.
$$
Similarly, there exists $\h^{(1,2)}_{\tilde N}(r,b)$ such that for any $\tilde N\geq \h^{(1,2)}_{\tilde N}(r,b)$ and any $\Theta\in\Rc(b)$,
$$
\P\Bigg(  \lVert\frac{d}{r^2}\frac{1}{\tilde N}\sum_{i=1}^{\tilde N}\big( \Jc^{\Delta, N, i}_{pop,2}-\E[\Jc^{\Delta, N, 2}_{pop,1}|U_i]\big) U_i\rVert_F\geq\frac{\varepsilon}{2}\Bigg)\leq\frac{1}{2}\Bigg(\frac{d}{\varepsilon}\Bigg)^{-d}.
$$

 Hence, choosing $\tilde N\geq \max\{ \h^{(1,1)}_{\tilde N}(r,b),\h^{(1,2)}_{\tilde N}(r,b)\}$, we get 
\begin{align*}
    \P\Bigg(\lVert  \tilde\nabla^{\Delta,N,pop}_{\theta} J(\Theta)-\hat\nabla^{\Delta,N,pop}_{\theta} J(\Theta)\rVert_F\geq \varepsilon\Bigg)  &\leq \P\Bigg(  \lVert\frac{d}{r^2}\frac{1}{\tilde N}\sum_{i=1}^{\tilde N}\big( \Jc^{\Delta, N, i}_{pop,1}-\E[\Jc^{\Delta, N, i}_{pop,1}|U_i]\big) U_i\rVert_F\geq\frac{\varepsilon}{2}\Bigg)\\
    &+\P\Bigg(  \lVert\frac{d}{r^2}\frac{1}{\tilde N}\sum_{i=1}^{\tilde N}\big(\Jc^{\Delta, N, i}_{pop,2}-\E[\Jc^{\Delta, N, i}_{pop,2}|U_i]\big) U_i\rVert_F\geq\frac{\varepsilon}{2}\Bigg)\\
    &\leq \Bigg(\frac{d}{\varepsilon}\Bigg)^{-d}.
\end{align*}

The proof of the upper-bound on $\P(\lVert  \tilde\nabla^{\Delta,N,pop}_{\zeta} J(\Theta)-\hat\nabla^{\Delta,N,pop}_{\zeta} J(\Theta)\rVert_F\geq \varepsilon )$ being similar is omitted. The proof of Proposition \ref{PropMCError} is now complete.
\ep

 \begin{Lemma}\label{AuxLemmaPropSampleError}
     There exists $\h_1(\|\theta\|)$ with at most of polynomial growth in $\|\theta\|$ and a constant $C<\infty$ such that for all $r\leq \frac{1}{\h_1(\Theta)}$, it holds
     $$
     \check{\chi}\leq C r( 1 + \|\theta\|_F^2 + r^2).
     $$
 \end{Lemma}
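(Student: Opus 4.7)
The goal is to control $\check{\chi} = \sup_{0\leq l\leq n-1} e^{-\beta t_l}\chi_l$ where $\chi_l = C_1 r \sup_{\theta'\in\B(\theta,r)} \|\eta^{\theta',1}_l\|_{\psi_1}$, so the whole task reduces to obtaining a uniform upper bound on $e^{-\beta t_l/2}\,\|\eta^{\theta',1}_l\|_{\psi_1}$ of the form $C(1+\|\theta\|_F^2+r^2)$. Recall that
\begin{align*}
\eta^{\theta',1}_l &= (Y^{\theta',\Delta,(1)}_{t_l})\trans (Q+(\theta')\trans R\theta') Y^{\theta',\Delta,(1)}_{t_l} + 2(Y^{\theta',\Delta,(1)}_{t_l})\trans (\theta')\trans R \sqrt{\tfrac{\lambda}{2}R^{-1}}\xib^{(1)}_{t_l} - \text{(conditional mean)},
\end{align*}
so the estimate splits into a quadratic form in $Y^{\theta',\Delta,(1)}_{t_l}$ and a cross bilinear form in $(Y^{\theta',\Delta,(1)}_{t_l},\xib^{(1)}_{t_l})$.

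\textbf{First step: sub-Gaussian control of $Y^{\theta',\Delta,(1)}_{t_l}$ under discount.} Since $\theta' \in \B(\theta,r)$ and, for $r<\check r(b)$, we have $\theta'\in\Sc(2b)$ so that $B-\tfrac{\beta}{2}I_d + D\theta'$ is stable, the same argument used in Lemma \ref{LemmaB2} (applied to the discrete recursion \eqref{DymYthetaDelta} rather than the continuous SDE) gives that for $h$ small enough, there exists $c(b)>0$ and $\Lambda(b)<\infty$, depending only on the model parameters and on $b$, such that for every $l=0,\dots,n-1$ and every $\theta'\in\Sc(2b)$
\[
e^{-\beta t_l/2}\,\|Y^{\theta',\Delta,(1)}_{t_l}\|_{\psi_2} \;\leq\; \Lambda(b).
\]
Here we use that $X_0$ is sub-Gaussian (hence so is $Y^{(1)}_0$), that $(\bw^{(1)}_l - \bar\bw^N_l)$ and $(\xib^{(1)}_{t_l}-\bar\xib^N_{t_l})$ are sub-Gaussian, and that the sub-Gaussian norm of a linear combination of independent sub-Gaussians is controlled by the operator norm of the linear map times the bounds of the summands --- in particular, the iterated products of $I_d+h(B+D\theta')$ inherit exponential decay with rate $-\beta/2$ up to a universal prefactor depending on $b$.

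\textbf{Second step: sub-exponential norms of the two pieces.} Using the standard bounds (see e.g.\ Vershynin's textbook and also Lemma 37 of \cite{carlautan19a}) that (i) a centered quadratic form $Y\trans A Y - \E[Y\trans AY]$ of a sub-Gaussian vector $Y$ satisfies $\|Y\trans AY-\E[Y\trans AY]\|_{\psi_1}\leq C\|A\|_F\,\|Y\|_{\psi_2}^2$, and (ii) a product $Y\trans M\xi$ of two independent sub-Gaussian vectors satisfies $\|Y\trans M\xi\|_{\psi_1}\leq C\|M\|_F\,\|Y\|_{\psi_2}\,\|\xi\|_{\psi_2}$, together with $\|Q+(\theta')\trans R\theta'\|_F \leq \|Q\|_F + \|R\|_F\|\theta'\|_F^2$ and $\|(\theta')\trans R\sqrt{(\lambda/2)R^{-1}}\|_F \leq C_R\|\theta'\|_F$, we obtain
\[
\|\eta^{\theta',1}_l\|_{\psi_1} \;\leq\; C\big(1+\|\theta'\|_F^2\big)\,\|Y^{\theta',\Delta,(1)}_{t_l}\|_{\psi_2}^2 \;+\; C\|\theta'\|_F\,\|Y^{\theta',\Delta,(1)}_{t_l}\|_{\psi_2}\,\|\xib^{(1)}_{t_l}\|_{\psi_2},
\]
so combining with the first step yields
\[
e^{-\beta t_l}\,\|\eta^{\theta',1}_l\|_{\psi_1} \;\leq\; C(b)\,\big(1+\|\theta'\|_F^2\big).
\]

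\textbf{Third step: triangle inequality in $\theta'$ and conclusion.} For $\theta'\in\B(\theta,r)$ we have $\|\theta'\|_F^2 \leq 2\|\theta\|_F^2 + 2r^2$, so taking the supremum over $\theta'\in\B(\theta,r)$ and multiplying by $C_1 r$ gives
\[
e^{-\beta t_l}\chi_l \;\leq\; C_1 r\cdot C(b)\big(1+2\|\theta\|_F^2+2r^2\big),
\]
uniformly in $l$, which is the claimed bound $\check\chi\leq Cr(1+\|\theta\|_F^2+r^2)$ after renaming constants. The threshold $1/\h_1(\Theta)$ on $r$ arises from two requirements: $r\leq \check r(b)$ (so that $\theta'\in\Sc(2b)$, allowing the uniform sub-Gaussian bound of Step 1), and $r$ small enough that $h=T/n$ times the perturbed operator norm remains in the regime where the discrete stability argument of Step 1 is valid; both conditions hold as soon as $r\leq 1/\h_1(\Theta)$ for an $\h_1$ of polynomial growth in $\|\theta\|_F$.

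The main obstacle is Step 1: obtaining a \emph{discount-adjusted} uniform sub-Gaussian bound on $Y^{\theta',\Delta,(1)}_{t_l}$ requires adapting the continuous-time stability argument of Lemma \ref{LemmaB2} to the Euler scheme, i.e.\ transferring exponential stability of $B-(\beta/2)I_d+D\theta'$ to exponential contraction of $e^{-\beta h/2}(I_d+h(B+D\theta'))$ for all $\theta'\in\Sc(2b)$ uniformly, which is essentially the reason the threshold $\h_1$ depends on $\|\theta\|_F$ polynomially.
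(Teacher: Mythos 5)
Your proposal is correct and follows essentially the same route as the paper: the same split of $\eta^{\theta',1}_l$ into a centered quadratic form and a cross term, the same sub-exponential bounds $\lVert Y\trans AY\rVert_{\psi_1}\lesssim \lVert A\rVert\,\lVert Y\rVert_{\psi_2}^2$ and $\lVert Y\trans M\xi\rVert_{\psi_1}\lesssim\lVert M\rVert\,\lVert Y\rVert_{\psi_2}\lVert\xi\rVert_{\psi_2}$, and the same uniform sub-Gaussian control of $Y^{\theta',\Delta,(1)}_{t_l}$ via the discrete stability of $I_d+h(B+D\theta')$ for $\theta'\in\Sc(2b)$ (the paper's Lemma 36 argument from \cite{carlautan19a}). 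Your only refinement is stating the discounted sub-Gaussian bound with the exponent $e^{-\beta t_l/2}$, which is indeed the form actually needed to absorb $\lVert Y\rVert_{\psi_2}^2$ in the quadratic term.
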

\begin{proof}
Let
 $$
 \Psi^{\theta'}_l=(Y^{\theta',\Delta,(1)}_{t_l})\trans (Q+(\theta')\trans R\theta')Y^{\theta',\Delta,(1)}_{t_l}+2(Y^{\theta',\Delta,(1)}_{t_{l}})\trans (\theta')\trans R \sqrt{\frac{\lambda}{2}R^{-1}} \xib^{(1)}_{t_l}, \quad \theta'\in\B(\theta,r),
 $$
 
 \noindent recalling that $Y^{\theta',\Delta,(1)}$ is given by \eqref{DymYthetaDelta} with $\theta=\theta'$.

 Since 
 $$
 \lVert\Psi^{\theta'}_l-\E[\Psi^{\theta'}_l]\rVert_{\psi_1}\leq 2 \lVert \Psi^{\theta'}_l\rVert_{\psi_1},
 $$
 \noindent and $\eta^{\theta', 1}_l = \Psi^{\theta'}_l-\E[\Psi^{\theta'}_l]$, we get
 $$
 \check{\chi} \leq 2C_1 r \sup_{0\leq l\leq n-1, \theta'\in\B(\theta,r)}\big( e^{-\beta t_l} \lVert\Psi^{\theta'}_l\rVert_{\psi_1}\big).
 $$

\noindent The triangle inequality for the sub-exponential norm $\|.\|_{\Psi_1}$ yields
\begin{equation}\label{triangle:ineq:psi1:norm}
\lVert \Psi^{\theta'}_l\rVert_{\psi_1}\leq \lVert (Y^{\theta',\Delta,(1)}_{t_l})\trans (Q+(\theta')\trans R\theta')Y^{\theta',\Delta,(1)}_{t_l}\rVert_{\psi_1}+2\lVert(Y^{\theta',\Delta,(1)}_{t_{l}})\trans (\theta')\trans R \sqrt{\frac{\lambda}{2}R^{-1}} \xib^{(1)}_{t_l} \rVert_{\psi_1}.
\end{equation}

\noindent From Proposition 2.4 \cite{BoundTail},  
\begin{align*}
    \lVert (Y^{\theta',\Delta,(1)}_{t_l})\trans (Q+(\theta')\trans R\theta')Y^{\theta',\Delta,(1)}_{t_l}\rVert_{\psi_1}&\leq \text{Tr}(Q+(\theta')\trans R\theta')\lVert Y^{\theta',\Delta,(1)}_{t_l}\rVert_{\psi_2}^2\\
    &\leq Cd(\lVert Q\rVert_F+\lVert R\rVert_F(\lVert \theta\rVert_F+r)^2) \lVert Y^{\theta',\Delta,(1)}_{t_l}\rVert_{\psi_2}^2,
\end{align*}

\noindent and, from the inequality $\|XY\|_{\Psi_1}\leq \|X\|_{\Psi_2} \|Y\|_{\Psi_2}$ stemming from standard Young's inequality, 
\begin{align*}
    \lVert(Y^{\theta',\Delta,(1)}_{t_{l}})\trans (\theta')\trans R \sqrt{\frac{\lambda}{2}R^{-1}} \xib^{(1)}_{t_l} \rVert_{\psi_1}&=  \lVert\big( \theta' Y^{\theta',\Delta,(1)}_{t_{l}}\big)\cdot\big(R \sqrt{\frac{\lambda}{2}R^{-1}} \xib^{(1)}_{t_l} \big) \rVert_{\psi_1}\\
    &\leq \lVert \theta' Y^{\theta',\Delta,(1)}_{t_{l}}\rVert_{\psi_2}\lVert R \sqrt{\frac{\lambda}{2}R^{-1}} \xib^{(1)}_{t_l} \rVert_{\psi_2}\\
     &\leq C\lVert\theta'\rVert_F\lVert   Y^{\theta',\Delta,(1)}_{t_{l}}\rVert_{\psi_2}\lVert R \sqrt{\frac{\lambda}{2}R^{-1}} \xib^{(1)}_{t_l} \rVert_{\psi_2}\\
    &\leq C (\lVert\theta\rVert_F+r)\lVert Y^{\theta',\Delta,(1)}_{t_l}\rVert_{\psi_2} 
\end{align*}

\noindent for some constant $C=C(\lambda,\lVert R\rVert_F)$.


 Similarly to Lemma 36 \cite{carlautan19a}, we can establish an universal upper-bound for the sub-Gaussian norm of $Y^{\theta',\Delta,(j)}_{t_l}$ for all $\theta'\in\B(\theta,r)$, where $r\leq 1/\h_1(\|\theta\|)$, $\h_1(\|\theta\|)$ being at most of polynomial growth in $\|\theta\|$. Namely, it holds 
 \begin{align*}
    \sup_{0\leq l \leq n-1} e^{-\beta t_l}\lVert Y^{\theta',\Delta,(j)}_{t_l}\rVert_{\psi_2}&\leq C \sup\{ \lVert Y^{(j)}_0\rVert_{\psi_2}, \sup_{0\leq l \leq n-1} \lVert \bw^{(j)}_l- \bar\bw^N_l\rVert_{\psi_2}, \sup_{0\leq l \leq n-1}\lVert \xib^{(j)}_l- \bar\xib^N_l\rVert_{\psi_2}\}.
\end{align*}

Plugging the three previous inequalities into \eqref{triangle:ineq:psi1:norm}, we get 
$$
\lVert \Psi^{\theta'}_l\rVert_{\psi_1} \leq C ( 1 + \|\theta\|_F^2 + r^2)
$$

 \noindent which eventually yields
 \begin{align*}
    \check\chi&\leq 2C_1 r \sup_{l=1,\dots,n}\sup_{\theta'\in\B(\theta,r)}\big( e^{-\beta t_l} \lVert\Psi^{\theta'}_l\rVert_{\psi_1}\big)\leq C r( 1 + \|\theta\|_F^2 + r^2),
     \end{align*}
\noindent for all $r\leq 1/h_1(\|\theta\|)$. 
\end{proof}



\subsubsection{Proof of  Proposition \ref{PropN}.} \label{secparticle}

Recalling that $\bar \Jc^{\Delta,N}_{pop}(\Theta)=\E[\Jc^{\Delta,N}_{pop}(\Theta)]$ and using the fact that $(U_i, V_i)_{1\leq i\leq N}$ is independent of $((X_0^{(j)},(\xib^{(j)}_{t_l})_{0\leq l \leq n-1}, W^{(j)})_{j=1,\dots,N}, W^0)$, we get $\E[\Jc^{\Delta,N, i}_{pop}|U_i,V_i]= \bar \Jc^{\Delta,N}_{pop}(\Theta_i)$ and
\begin{align*}
    \hat\nabla^{\Delta,N,pop}_{\theta} J(\Theta)&=\frac{d}{r^2}\frac{1}{\tilde N}\sum_{i=1}^{\tilde N}\bar\Jc^{\Delta,N}_{pop}(\theta+U_i,\zeta +V_i) U_i,\\
     \hat\nabla^{\Delta,N,pop}_{\zeta} J(\Theta)&=\frac{d}{r^2}\frac{1}{\tilde N}\sum_{i=1}^{\tilde N}\bar\Jc^{\Delta,N}_{pop}(\theta+U_i,\zeta+ V_i)V_i. 
\end{align*}

\noindent \emph{Step 1:} We introduce the process $(\xb^{\Theta,\Delta,(j)})_{j=1,\dots,N}$ with dynamics
    \begin{align*}
            \xb^{\Theta,\Delta,(j)}_{t_{l+1}}&=\xb^{\Theta,\Delta,(j)}_{t_l}+(B\xb^{\Theta,\Delta, (j)}_{t_l}+\bar B\E_0[\xb^{\Theta,\Delta, (j)}_{t_l}]+D\boldsymbol{\alpha}^{\Theta,\Delta,(j)}_{t_l}){h}+\sqrt{{h}} \gamma\bw^{(j)}_l+ \sqrt{{h}}\gamma_0\bw^0_l,\\
            \xb^{\Theta,\Delta,(j)}_{0}&=X_0^{(j)},
    \end{align*} 
    
    \noindent with
    $$
    \boldsymbol{\alpha}^{\Theta,\Delta,(j)}_{t_l}=\theta(\xb^{\Theta,\Delta,(j)}_{t_l}-\E_0[\xb^{\Theta,\Delta, (j)}_{t_l}])+\zeta\E_0[\xb^{\Theta,\Delta, (j)}_{t_l}]+\sqrt{\frac{\lambda}{2}R^{-1}}{\xib}^{(j)}_{t_l},
    $$ 
    and where the random variables $(X_0^{(j)})_{1\leq j \leq N}$, $({\xib}^{(j)}:=({\xib}^{(j)}_{t_l})_{0\leq l\leq n-1})_{1\leq j\leq N}$, $(\bw^{(j)})_{1\leq j \leq N}$ and $\bw^0$ are the same as those employed in \eqref{ExecActRandom}-\eqref{XDeltThet2}.

    
    
We then define the two processes $\yb^{\theta,\Delta,(j)}$ and $\zb^{\zeta,\Delta}$ by
    \begin{equation*}
        \yb^{\Theta,\Delta,(j)}_{t_l}=\xb^{\Theta,\Delta,(j)}_{t_l}-\E_0[\xb^{\Theta,\Delta,(j)}_{t_l}], \quad \zb^{\Theta,\Delta}_{t_l}=\E_0[\xb^{\Theta,\Delta}_{t_l}], 
    \end{equation*}
    \noindent with $\yb^{\Theta,\Delta,(j)}_{0}= X_0^{(j)}-\E[X_0]$ and $\zb^{\Theta,\Delta}_{0}=\E[X_0]$.
    Their dynamics are given by
\begin{equation}\label{DynyzbThetaDelta}
\begin{aligned}
        \yb^{\Theta,\Delta,(j)}_{t_{l+1}}&=\yb^{\Theta,\Delta,(j)}_{t_l}+\big((B+D\theta)\yb^{\Theta,\Delta,(j)}_{t_l}+ D\sqrt{\frac{\lambda}{2}R^{-1}}{\xib}^{(j)}_{t_l} \big){h}+\gamma\sqrt{{h}}\bw^{(j)}_l,\\
\zb^{\Theta,\Delta}_{t_{l+1}}&=\zb^{\Theta,\Delta}_{t_l}+(\hat B+D\zeta)\zb^{\Theta,\Delta}_{t_l}{h}+\gamma_0\sqrt{{h}}\bw^0_{l}.
\end{aligned}
\end{equation}

    \noindent We again notice that $\yb^{\Theta,\Delta,(j)}$ (resp. $\zb^{\Theta,\Delta}$) depends only on $\theta$ (resp. $\zeta$). Hence, in order to simplify the notation, from now on, we will write $\yb^{\theta,\Delta,(j)}$ and $\zb^{\zeta,\Delta}$. Note that $(\yb^{\Theta,\Delta,(j)}_{t_l})_{j=1,\dots N}$ are \emph{i.i.d.} copies of $\yb^{\Theta,\Delta}$ and $\zb^{\zeta,\Delta} = \zb^{\Theta,\Delta}$ where $(\yb^{\Theta,\Delta},\zb^{\Theta,\Delta})$ are defined by \eqref{yzDeltTheta}. Hence,
 \begin{equation}\label{DiscApproFC3}
    \begin{aligned}
       J^{(T),\Delta,N}(\Theta)  & :=\frac{{h}}{N}\sum_{j=1}^N\E\Bigg[\sum_{l=0}^{n-1} e^{-\beta t_l}((\xb^{\Theta,\Delta,(j)}_{t_l}-\E_0[\xb^{\Theta,\Delta}_{t_l}])\trans Q (\xb^{\Theta,\Delta,(j)}_{t_l}-\E_0[\xb^{\Theta,\Delta}_{t_l}]) \\
       & \;  + \;  \E_0[\xb^{\Theta,\Delta,(j)}_{t_l}]\trans\hat Q\E_0[\xb^{\Theta,\Delta}_{t_l}]
        +(\boldsymbol{\alpha}^{\Theta,\Delta,(j)}_{t_l})\trans R \boldsymbol{\alpha}^{\Theta,\Delta,(j)}_{t_l}+\lambda\log\pi^\Theta(\boldsymbol{\alpha}^{\Theta,\Delta,(j)}_{t_l}|\xb^{\Theta,\Delta,(j)}_{t_l},\E_0[\xb^{\Theta,\Delta}_{t_l}])\Bigg]\\
        &=\frac{{h}}{N}\sum_{j=1}^N\E\Bigg[\sum_{l=0}^{n-1} e^{-\beta t_l}\big((\yb^{\theta,\Delta,(j)}_{t_l})\trans (Q+\theta\trans R\theta) \yb^{\theta,\Delta,(j)}_{t_l}+(\zb^{\zeta,\Delta}_{t_l})\trans(\hat Q+\zeta\trans R\zeta)\zb^{\zeta,\Delta}_{t_l}+\beta\upsilon(\lambda)\big)\Bigg]\\
        & = J^{(T),\Delta}(\Theta),
    \end{aligned}
\end{equation}
recalling that $J^{(T),\Delta}(\Theta)$ is defined by \eqref{DiscApproFC}. We will also use the decomposition $J^{(T),\Delta,N}= J^{(T),\Delta}_1 + J^{(T),\Delta}_2 + +\beta\upsilon(\lambda){h}\sum_{l=1}^n e^{-\beta t_l}$ with
\begin{align*}
    J^{(T),\Delta}_1(\theta)& := \frac{{h}}{N}\sum_{j=1}^N\sum_{l=0}^{n-1} e^{-\beta t_l}\E\Bigg[(\yb^{\theta,\Delta,(j)}_{t_l})\trans (Q+\theta\trans R\theta) \yb^{\theta,\Delta,(j)}_{t_l}\Bigg],\\
    J^{(T),\Delta}_2(\zeta)& : =  {h}\sum_{l=0}^{n-1} e^{-\beta t_l}\E\Bigg[ (\zb^{\zeta,\Delta}_{t_l})\trans(\hat Q+\zeta\trans R\zeta)\zb^{\zeta,\Delta}_{t_l}\Bigg]. 
\end{align*}
Then, according to \eqref{Estimpop2} and \eqref{Estimpop2Auxi}, 
$$
\bar\Jc^{\Delta,N}_{pop}(\Theta)=\E[\Jc^{\Delta,N}_{pop}(\Theta)]=\bar\Jc_1(\theta)+\bar\Jc_2(\zeta)+\beta\upsilon(\lambda){h}\sum_{l=1}^n e^{-\beta t_l}
$$
where
\begin{equation*}
\begin{aligned}
    \bar\Jc_1(\theta)&=\E[\Jc_1(\theta)]=\frac{{h}}{N}\sum_{j=1}^N \sum_{l=0}^{n-1} e^{-\beta t_l} \E[(Y^{\theta,\Delta,(j)}_{t_l})\trans (Q+\theta\trans R\theta)Y^{\theta,\Delta,(j)}_{t_l}]\\
    \bar\Jc_2(\zeta)&=\E[\Jc_2(\zeta)]={h} \sum_{l=0}^{n-1}  e^{-\beta t_l}\E\big[(\hat\mu^{\zeta,\Delta,N}_{t_l})\trans (\hat Q+\zeta\trans R\zeta)\hat\mu^{\zeta,\Delta,N}_{t_l}\big]. 
\end{aligned}
\end{equation*}

Hence, in order to establish an upper-bound for $(J^{(T),\Delta,N}-\bar\Jc^{\Delta,N}_{pop})(\Theta) = (J^{(T),\Delta}_1 -\bar\Jc_1)(\theta) + (  J^{(T),\Delta}_2 -\bar\Jc_2)(\zeta)$, we need to quantity the $L^{p}(\mathbb{P})$ error for the difference $\yb^{\theta,\Delta,(j)}_{t_l}-Y^{\theta,\Delta,(j)}_{t_l}$ and $\zb^{\zeta,\Delta}_{t_l}-\mu^{\zeta,\Delta,N}_{t_l}$, $j=1,\dots,N$ and $l=0,\cdots, n-1$. 

\vspace{1mm}

\noindent \emph{Step 2:} Let
\begin{align*}
    \Qc^{\theta,\Delta,(j)}_{t_l}&=\yb^{\theta,\Delta,(j)}_{t_l}-Y^{\theta,\Delta,(j)}_{t_l}, \quad j=1,\dots,N,\\    \hat\Qc^{\zeta,\Delta,N}_{t_l}&=\zb^{\zeta,\Delta}_{t_l}-\hat \mu^{\zeta,\Delta,N}_{t_l}.
\end{align*}
From \eqref{Dymhatmu}-\eqref{DymYthetaDelta} and \eqref{DynyzbThetaDelta}, we easily get
\begin{equation}\label{DYDZDim}
    \begin{aligned}
\Qc^{\theta,\Delta, (j)}_{t_{l+1}}&=\Qc^{\theta,\Delta, (j)}_{t_l}+{h}\big((B+D\theta)\Qc^{\theta,\Delta, (j)}_{t_l}+D\sqrt{\frac{\lambda}{2}R^{-1}}\bar\xib^N_{t_l}\big)+\sqrt{{h}}\gamma\bar\bw^N_l,\\
\hat\Qc^{\zeta,\Delta,N}_{t_{l+1}}&=\hat\Qc^{\zeta,\Delta,N}_{t_l}+{h}\big((\hat B+D\zeta)\hat\Qc^{\zeta,\Delta,N}_{t_l}-D\sqrt{\frac{\lambda}{2}R^{-1}}\bar\xib^N_{t_l}\big)-\sqrt{{h}}\gamma\bar\bw^N_l, 
    \end{aligned}
\end{equation}

\noindent with $\Qc^{\theta,\Delta, (j)}_{0}=\Qc_0:=\hat\mu^N_0-\E[X_0]$, $\hat\Qc^{\zeta,\Delta}_{0}=\hat\Qc_0:=\E[X_0]-\hat\mu^N_0$. Note carefully that for any $l=0,\dots, n$, the random variables $(\Qc^{\theta,\Delta, (j)}_{t_l})_{1\leq j \leq N}$ are equal. We thus omit the superscript $j$ and simply write $\Qc^{\theta,\Delta}_{t_l}$ in what follows. 

From \eqref{DYDZDim}, we get
\begin{align*}
|\Qc^{\theta,\Delta, (j)}_{t_{l+1}}|^2 & = |\Qc^{\theta,\Delta, (j)}_{t_{l}}|^2 + 2 \langle \Qc^{\theta,\Delta, (j)}_{t_{l}}, h(B+D\theta)) \Qc^{\theta,\Delta, (j)}_{t_{l}} + h D\sqrt{\frac{\lambda}{2}R^{-1}}\bar\xib^N_{t_l} + \sqrt{{h}}\gamma\bar\bw^N_l\rangle \\
& \quad + \Big|h(B+D\theta)) \Qc^{\theta,\Delta, (j)}_{t_{l}}+ h D\sqrt{\frac{\lambda}{2}R^{-1}}\bar\xib^N_{t_l} + \sqrt{{h}}\gamma\bar\bw^N_l\Big|^2\\
& \leq (1+h C_1(\theta)) |\Qc^{\theta,\Delta, (j)}_{t_{l}}|^2 + 2 \langle \Qc^{\theta,\Delta, (j)}_{t_{l}}, h D\sqrt{\frac{\lambda}{2}R^{-1}}\bar\xib^N_{t_l} + \sqrt{{h}}\gamma\bar\bw^N_l\rangle \\
& \quad + 2\Big|h D\sqrt{\frac{\lambda}{2}R^{-1}}\bar\xib^N_{t_l} + \sqrt{{h}}\gamma\bar\bw^N_l\Big|^2,
\end{align*}
\noindent for some $\theta\mapsto C_1(\theta)$ with at most of quadratic growth in $\|\theta\|$. Taking expectation in both sides of the previous inequality and recalling that $\Qc^{\theta,\Delta, (j)}_{t_{l}}$ is independent of $\bar\xib^N$ and $\bar\bw^N$, we get
\begin{align*}
\mathbb{E}[|\Qc^{\theta,\Delta, (j)}_{t_{l+1}}|^2]
& \leq (1+h C_1(\theta)) \mathbb{E}[|\Qc^{\theta,\Delta, (j)}_{t_{l}}|^2] + 2 \E\Big|h D\sqrt{\frac{\lambda}{2}R^{-1}}\bar\xib^N_{t_l} + \sqrt{{h}}\gamma\bar\bw^N_l\Big|^2\\
&\leq (1+h C_1(\theta)) \mathbb{E}[|\Qc^{\theta,\Delta, (j)}_{t_{l}}|^2] + C_2 \frac{h}{N},
\end{align*}

\noindent for some constant $C_2<\infty$ and where, for the last inequality, we used the fact that $\E|\bar\xib^N_{t_l}|^2 = m/N$ and $\E|\bar\bw^N_l|^2 = d/N$, $l=0,\dots, n-1$. From the discrete Gr\"onwall lemma, up to a modification of $C_2$, we get
$$
\mathbb{E}[|\Qc^{\theta,\Delta, (j)}_{t_{l}}|^2] \leq (1+hC_1(\theta))^{l} \Big(\mathbb{E}[|\Qc^{\theta,\Delta, (j)}_{0}|^2] + \frac{C_2}{C_1(\theta) N} \Big).
$$
Now, since $\mathbb{E}[|\Qc^{\theta,\Delta, (j)}_{0}|^2] = \mbox{Var}(X_0)/N$, using the standard inequality $1+x\leq \exp(x)$, $x\in \mathbb{R}$, the previous inequality eventually implies
$$
\sup_{0\leq l \leq n} \mathbb{E}[|\Qc^{\theta,\Delta, (j)}_{t_{l}}|^2]\leq \frac{\exp(C_1(\theta))}{N},
$$
up to a modification of $C_1(\theta)$ (with at most of quadratic growth).
Similar arguments yield
$$
\sup_{0\leq l \leq n} \mathbb{E}[|\hat\Qc^{\zeta,\Delta,N}_{t_{l}}|^2]\leq \frac{\exp(C_2(\zeta))}{N}.
$$ 
The two previous upper-bounds in turn imply
\begin{align*}
    \lvert (J_1^{(T),\Delta}-\bar\Jc_1)(\theta)\rvert&\leq C (\lVert Q\rVert_F+\lVert R\rVert_F\lVert\theta\rVert_F^2)\frac{{h}}{N}\sum_{j=1}^N \sum_{l=0}^{n-1} e^{-\beta t_l} \E[\lvert  Y^{\theta,\Delta,(j)}_{t_l}-\yb^{\theta,\Delta,(j)}_{t_l} \rvert^2]\\
    &\leq C T (\lVert Q\rVert_F+\lVert R\rVert_F\lVert\theta\rVert_F^2)\sup_{l=0,\dots,n-1}\E[\lvert  \Qc^{\theta,\Delta}_{t_l}\rvert^2]\\
    &\leq \frac{\exp(C_1(\theta)) }{N} (\lVert Q\rVert_F+\lVert R\rVert_F\lVert\theta\rVert_F^2),
\end{align*}
and similarly
\begin{align*}
    \lvert (J_2^{(T),\Delta}-\bar\Jc_2)(\zeta)\rvert
    &\leq \frac{\exp(C_2(\zeta))}{N} (\lVert Q\rVert_F+\lVert R\rVert_F\lVert\zeta\rVert_F^2),
\end{align*}
up to a modification of $C_1(\theta)$ and $C_2(\zeta)$. Recalling that $\Rc(b)$ is compact, the right-hand side of the two previous inequalities can be uniformly bounded by a constant depending only upon $b$. The proof of \eqref{estimlemD5} is now complete.

\vspace{3mm}

\noindent \textit{Step 3:} 
After recalling that $\Theta_i\in\Rc(2b)$, it directly follows from \eqref{estimlemD5} that
$$
    \lvert (\bar\Jc^{\Delta,N}_{pop}-J^{(T),\Delta})(\Theta_i)\rvert\leq \frac{c_4(2b)}{N} 
$$
     and, since $\lVert U_i\rVert_F=\lVert V_i\rVert=r$, $\P-$ \emph{a.s}
    \begin{align*}
       \lVert ( \hat\nabla^{\Delta,N,pop}_{\theta} -\hat\nabla_\theta^{(T),\Delta}) J(\Theta)\rVert_F&\leq \frac{d}{r^2}\frac{1}{\tilde N}\sum_{i=1}^{\tilde N}\lvert (\bar\Jc^{\Delta,N}_{pop}-J^{(T),\Delta})(\Theta_i)\rvert\lVert U_i\rVert_F \; 
        \leq \;  \frac{d}{r}\frac{c_4(2b)}{N},  \\
          \lVert ( \hat\nabla^{\Delta,N,pop}_{\zeta} -\hat\nabla_\zeta^{(T),\Delta}) J(\Theta)\rVert_F&\leq \frac{d}{r^2}\frac{1}{\tilde N}\sum_{i=1}^{\tilde N}\lvert (\bar\Jc^{\Delta,N}_{pop}-J^{(T),\Delta})(\Theta_i)\rvert\lVert V_i\rVert_F \; \leq \;  \frac{d}{r}\frac{c_4(2b)}{N},  
    \end{align*}
which ends the proof.     
\ep

\vspace{5
mm}

\bibliographystyle{plain}
\bibliography{Publish}
\end{document}